\newtheorem{theorem}{\sc Theorem}[section]
\newtheorem{proposition}[theorem]{\sc Proposition}
\newtheorem{lemma}[theorem]{\sc Lemma}
\newtheorem{corollary}[theorem]{\sc Corollary}
\newtheorem{question}[theorem]{\sc Question}
\theoremstyle{definition}
\newtheorem{definition}[theorem]{\sc Definition}
\newtheorem{example}[theorem]{\sc Example}
\theoremstyle{remark}
\newtheorem{remark}[theorem]{\sc Remark}
\newenvironment{invisible}{{\noindent\sc \colorbox{yellow}{Invisible:}\;}\color{gray}}{\medskip}
\newcommand{\Cc}{\mathcal{C}}
\newcommand{\Mm}{\mathcal{M}}
\newcommand{\mm}{\mathfrak{M}}
\newcommand{\Ff}{\mathcal{F}}
\newcommand{\Rr}{\mathcal{R}}
\newcommand{\ot}{\otimes}
\def\Vec{{\sf Vec}}
\newcommand{\rd}[1]{{\color{blue}{#1}}}
\newcommand{\unit}{\mathds{I}}
\newcommand{\op}{\mathrm{op}}
\begin{document}
\title[Infinitesimal braidings and pre-Cartier bialgebras]{Infinitesimal braidings and pre-Cartier bialgebras}

\subjclass{Primary 16T05; Secondary 18M05; 16E40}

\keywords{Infinitesimal braidings, quasitriangular bialgebras, Cartier categories, Hochschild cohomology}

\begin{abstract}
We propose an infinitesimal counterpart to the notion of braided category. The corresponding infinitesimal braidings are natural transformations which are compatible with an underlying braided monoidal structure in the sense that they constitute a first-order deformation of the braiding. This extends previously considered infinitesimal symmetric or Cartier categories, where involutivity of the braiding and an additional commutativity of the infinitesimal braiding with the symmetry are required. The generalized pre-Cartier framework is then elaborated in detail for the categories of (co)quasitriangular bialgebra (co)modules and we characterize the resulting infinitesimal $\mathcal{R}$-matrices (resp. $\mathcal{R}$-forms) on the bialgebra. It is proven that the latter are Hochschild 2-cocycles and that they satisfy an infinitesimal quantum Yang-Baxter equation, while they are Hochschild 2-coboundaries under the Cartier (co)triangular assumption in the presence of an antipode. We provide explicit examples of infinitesimal braidings, particularly on quantum $2\times 2$-matrices, $\mathrm{GL}_q(2)$, Sweedler’s Hopf algebra and via Drinfel’d twist deformation. As conceptual tools to produce examples of infinitesimal braidings we prove an infinitesimal version of the FRT construction and we provide a Tannaka-Krein reconstruction theorem for pre-Cartier coquasitriangular bialgebras. We comment on the deformation of infinitesimal braidings and construct a quasitriangular structure on formal power series of Sweedler’s Hopf algebra.
\end{abstract}

\author{Alessandro Ardizzoni}
\email{alessandro.ardizzoni@unito.it}
\urladdr{\url{https://sites.google.com/site/aleardizzonihome}}
\author{Lucrezia Bottegoni}
\email{lucrezia.bottegoni@edu.unito.it}
\author{Andrea Sciandra}
\email{andrea.sciandra@unito.it}
\author{Thomas Weber}
\email{thomas.weber@unito.it}
\address{\parbox[b]{\linewidth}{University of Turin, Department of Mathematics ``G. Peano'', via
Carlo Alberto 10, I-10123 Torino, Italy}}
\date{\today}

\maketitle

\tableofcontents

\section{Introduction}

Braided monoidal categories were first introduced in \cite{JoyalStreet} as tensor categories $(\mathcal{C},\otimes,\unit)$ endowed with a categorification of the flip operator, the braiding $\sigma$. On objects $X,Y$ of $\mathcal{C}$ the braiding is a morphism $\sigma_{X,Y}\colon X\otimes Y\to Y\otimes X$ which is compatible with the tensor product in the obvious way and, as a result, $\sigma_{X,X}$ satisfies the braid relations. For this reason braided monoidal categories have found numerous applications, particularly in knot theory, leading to topological invariants of $3$-manifolds \cite{ResTur}. Most relevant for our purpose, braided monoidal categories appear as the categorical counterpart of (co)quasitriangular bialgebras. The latter are bialgebras $H$ together with a solution of the quantum Yang-Baxter equation, the universal $\Rr$-matrix (or universal $\Rr$-form), originating in the seminal work \cite{Dr87} of Drinfel'd and expanding on earlier ideas \cite{FadTak} on the quantum inverse scattering method.
Given a (co)quasitriangular bialgebra $(H,\Rr)$, the monoidal category $(\mathcal{C},\otimes,\unit)$ of bialgebra (co)modules becomes braided with $\sigma$ given by the (co)action of $\Rr$ composed with the tensor flip \cite{Dr90,Majid-notes}. On the other hand, a braided monoidal category $(\mathcal{C},\otimes,\unit)$ satisfying certain representability conditions can be understood as the (co)representation category of a ``reconstructed'' (co)quasitriangular bialgebra \cite{Majid91}. This is referred to as Tannaka-Krein reconstruction of (co)quasitriangular bialgebras. Thus, morally we can think of braided monoidal categories and (co)quasitriangular bialgebras as two sides of the same coin.

The scope of this paper is to provide an infinitesimal version of braided monoidal category and to study the corresponding algebraic counterparts. In fact, the motivating idea is to start in the algebraic framework and consider a bialgebra $H$ with its corresponding trivial topological bialgebra $\tilde{H}=H[[\hbar]]$ of formal power series in a formal parameter $\hbar$ given by naive $\hbar$-linear extension of the bialgebra structure and replacing the tensor product with its topological completion. Then, given a (co)quasitriangular structure 
\begin{equation}\label{RTilde}
    \tilde{\Rr}=\Rr(1\otimes 1+\hbar\chi+\mathcal{O}(\hbar^2))
\end{equation}
on $\tilde{H}$ we obtain a (co)quasitriangular bialgebra $(H,\Rr)$ and ``read'' or ``declare'' the axioms of our infinitesimal $\Rr$-matrix $\chi$ as the axioms of $\tilde{\Rr}$ in first order of $\hbar$. This definition of infinitesimal $\Rr$-matrix can then be applied to any (co)quasitriangular bialgebra $(H,\Rr)$ independently of an imagined (co)quasitriangular structure \eqref{RTilde} on $\tilde{H}$. Reversing back to category theory we obtain a braided monoidal category $(\mathcal{C},\otimes,\unit,\sigma)$ with a natural transformation $t\colon\otimes\to\otimes$ now corresponding to $\chi$. The axioms of $\chi$ manifest in a certain compatibility of $t$ with $(\otimes,\sigma)$. It turns out that the resulting categories generalize Cartier categories, which already appeared in \cite{HV} and trace back to \cite{Ca93}. However, our definition is not bound to symmetric braidings (i.e., $\sigma^2=\mathrm{Id}$) and our proposed compatibilities of $t$ and $(\otimes,\sigma)$ are less restrictive. For this reason we call the categories of this paper pre-Cartier braided, while we refer to the ones studied in \cite{HV} as Cartier symmetric categories. The corresponding bialgebras are called pre-Cartier (co)(quasi)triangular. 
The idea to consider infinitesimal versions of (co)quasitriangular structures goes back to \cite{Dr83}, however exclusively in the framework of quasitriangular Lie bialgebras $(\mathfrak{g},[\cdot,\cdot],r)$ which correspond to quasitriangular bialgebras $(U\mathfrak{g},\Rr)$. Here, the classical $r$-matrix $r\in\mathfrak{g}\otimes\mathfrak{g}$ (or quasitriangular Lie bialgebra structure in general), a solution of the classical Yang-Baxter equation, is the infinitesimal counterpart of a quasitriangular structure $\tilde{\Rr}=1\otimes 1+\hbar r+\mathcal{O}(\hbar)$ on $U\mathfrak{g}[[\hbar]]$. Afterwards, in \cite{Majid00}, the symmetrization $\frac{1}{2}(r+r^{\mathrm{op}})$ of a quasitriangular Lie bialgebra structure has been used to obtain an ``infinitesimal braiding'', a tool to construct braided Lie bialgebras.
To our knowledge the concept of infinitesimal braiding has so far only been addressed in the previously mentioned context of Lie bialgebras $(\mathfrak{g},[\cdot,\cdot],r)$ with corresponding bialgebra $(U\mathfrak{g},\Rr=1\otimes 1)$ and in the context of Cartier triangular bialgebras \cite{HV} (in the dual case it was considered also for comodules over an abelian monoid algebra, still in \cite{HV}). Our approach goes genuinely beyond the established theory. The main results include a Tannaka-Krein reconstruction theorem for pre-Cartier coquasitriangular bialgebras and an FRT construction of pre-Cartier categories. The latter extends to well-known construction \cite{FRT} of coquasitriangular bialgebras from a solution of the braid relation to pre-Cartier coquasitriangular bialgebras, where additional infinitesimal braid relations, involving a second endomorphism, have to be satisfied. We further prove that an infinitesimal $\Rr$-matrix is a Hochschild $2$-cocycle and satisfies the ``infinitesimal quantum Yang-Baxter equation'', while in the Cartier symmetric Hopf algebra case it corresponds to a Hochschild $2$-coboundary. Accompanying the definitions are explicit examples of infinitesimal braidings, including on quantum $2\times 2$-matrices, $\mathrm{GL}_q(2)$ and Sweedler's Hopf algebra. As infinitesimal objects, pre-Cartier bialgebras naturally pose a quantization problem. We comment on the deformation of infinitesimal braidings, giving an explicit solution for Sweedler's Hopf algebra and further relating to Etingof-Kazhdan quantization of Lie bialgebras.\medskip

The paper is organized as follows. After the introduction we proceed by setting up the categorical framework of pre-Cartier braided categories. It includes a result on how to transfer infinitesimal braidings via a strong monoidal functor. The rest of the article will follow the outlined categorical setup, but the reader who is mainly interested in the results on Hopf algebras and their representations can skip the categorical build-up and understand the following sections independently. Section~\ref{Sec:2} treats quasitriangular bialgebras $(H,\Rr)$ endowed with an infinitesimal $\Rr$-matrix. We show that these are characterized by braided monoidal category of $H$-modules being pre-Cartier. 
 Several examples are provided, including infinitesimal $\Rr$-matrices arising from quasitriangular Lie bialgebras and a classification of infinitesimal $\Rr$-matrices on Sweedler's Hopf algebra. We further discuss the previously mentioned example of quasitriangular topological bialgebras in detail, which will motivate us to formulate a quantization problem. We devote Section~\ref{Sec:induced} to induce pre-Cartier quasitriangular structures and Section~\ref{secTwist} for Drinfel'd twist deformation of them. In Section~\ref{secCohom} we recall Hochschild cohomology theory for  coalgebras and prove the previously mentioned results about the infinitesimal $\Rr$-matrix being a $2$-cocycle. Section~\ref{sec:dual} is the dual counterpart of Section~\ref{Sec:2}. It mirrors the concepts and constructions for the corepresentation category of a coquasitriangular bialgebra. We combine both pictures in Section~\ref{Sec:FinDual} about the finite dual. The final Section~\ref{Sec:constructions} comprises the aforementioned FRT construction (Section~\ref{Sec:FRT}) and Tannaka-Krein reconstruction (Section~\ref{Sec:Tannaka}) of pre-Cartier coquasitriangular bialgebras.

\subsection{Preliminaries and notations}
Given an object $X$ in a category $\Cc$, the identity morphism on $X$ will be denoted either by $\mathrm{Id}_{X}$ or $X$ for short. By $(\Mm, \otimes, \unit)$ we denote a monoidal category, where $\otimes\colon\Mm\times\Mm\rightarrow\Mm$ is the tensor product functor and $\unit$ is the unit object. The associativity constraint $a$ and the left and right unit constraints $l,r$ will be usually omitted when clear from the context. If $\Mm$ is braided, we denote the braiding by $\sigma$. A \textit{strong monoidal} functor $(F,\phi^0,\phi^2):(\Mm,\otimes, \unit)\to(\Mm', \otimes',\unit')$ consists of a functor $F:\Mm\to\Mm'$, an isomorphism $\phi^2_{X,Y}:F(X)\otimes'F(Y)\to F(X\otimes Y)$ in $\Mm'$ which is natural in $X,Y$, for every $X$ and $Y$ in $\Mm$, and an isomorphism $\phi^0 : \unit'\to F(\unit)$ in $\Mm'$ such that the following identities hold true
\begin{gather*}
F(a_{X,Y,Z})\circ\phi^2_{X\otimes Y,Z}\circ (\phi^2_{X,Y}\otimes' F(Z))=\phi^2_{X,Y\otimes Z}\circ (F(X)\otimes' \phi^2_{Y,Z})\circ a'_{F(X),F(Y),F(Z)}\\  
F(l_X)\circ \phi^2_{\unit,X}\circ (\phi^0\otimes' F(X))=l'_{F(X)}, \quad F(r_X)\circ \phi^2_{X,\unit}\circ (F(X)\otimes' \phi^0)=r'_{F(X)} 
\end{gather*}
where $a$, $a'$ are the associativity constraints, and $l, r, l', r'$ are the unit constraints. If $\Mm$ and $\Mm'$ are braided, a \textit{braided strong monoidal} functor $F:\Mm\to\Mm'$ is a strong monoidal functor such that
$F(\sigma_{X,Y})\circ\phi^2_{X,Y}=\phi^2_{Y,X}\circ\sigma'_{F(X),F(Y)}$
where $\sigma$ and $\sigma'$ are the respective braidings.

All vector spaces are understood to be $\Bbbk$-vector spaces, where $\Bbbk$ is an arbitrary field. By a linear map we mean a $\Bbbk$-linear map and the unadorned tensor product $\otimes$ stands for $\otimes_{\Bbbk}$. We denote the canonical flip by $\tau_{M,N}:M\otimes N\to N\otimes M$, $m\otimes n\mapsto n\otimes m$, for vector spaces $M,N$, or simply by $\tau$ for sake of briefness. By ring we mean an associative ring with identity.  For an algebra we denote the multiplication and the unit by $m$ and $u$, respectively, while for a coalgebra the comultiplication and the counit are denoted by $\Delta$ and $\varepsilon$, respectively. We write $H$ for a bialgebra over $\Bbbk$. We will use the classical Sweedler’s notation for calculations with the comultiplication and we shall write $\Delta(h)= h_{1}\otimes h_{2}$ for any $h\in H$, where we omit the summation symbol. In case $H$ admits an antipode we denote it by $S:H\to H$. The center of an algebra $A$ is denoted by $\mathscr{Z}(A)$. For any right (left) $H$-module we denote without distinction the action by $\cdot$, even when there are different actions involved. For a right (left) $H$-comodule $M$ we write the coaction as $\rho^r: M\to M\otimes H$, $\rho^r(m)=m_0\otimes m_1$, (resp. $\rho^l:M\to H\otimes M$, $\rho^l(m)= m_{-1}\otimes m_0$), for any $m\in M$.

\subsection{Pre-Cartier categories}

In this section we introduce the notion of \textit{pre-Cartier} category. For this aim, recall that a category is \textit{pre-additive} when the set of morphisms between any two objects is an abelian group and the composition of morphisms satisfies the distributive law. A pre-additive braided monoidal category is a braided monoidal category which is also pre-additive and the tensor functor is additive in each entry, i.e., the tensor product and the addition of morphisms satisfy the distributive laws.

\begin{definition}\label{def:qC}
A  \textbf{pre-Cartier} category is a  pre-additive braided monoidal category $(\Mm,\otimes,\sigma)$ together with a natural transformation $t\colon\otimes\to\otimes$ such that the identities 
\begin{align}
    t_{X,Y\otimes Z}=t_{X,Y}\otimes\mathrm{Id}_{Z}+(\sigma^{-1}_{X,Y}\otimes\mathrm{Id}_{Z})\circ(\mathrm{Id}_{Y}\otimes t_{X,Z})\circ(\sigma_{X,Y}\otimes\mathrm{Id}_{Z}), \label{qC-I}\\
    t_{X\otimes Y,Z}=\mathrm{Id}_{X}\otimes t_{Y,Z}+(\mathrm{Id}_{X}\otimes\sigma^{-1}_{Y,Z})\circ(t_{X,Z}\otimes\mathrm{Id}_{Y})\circ(\mathrm{Id}_{X}\otimes\sigma_{Y,Z}), \label{qC-II}
\end{align}
 hold true for all objects $X,Y,Z$ in $\Mm$. In this case we say that $t$ is an \textbf{infinitesimal braiding} of $(\Mm,\otimes,\sigma)$.
A pre-Cartier category $(\mathcal{M},\otimes,\sigma,t)$ is called \textbf{Cartier} if in addition
\begin{equation}\label{cart}
    \sigma_{X,Y}\circ t_{X,Y}=t_{Y,X}\circ\sigma_{X,Y}
\end{equation}
holds for all objects $X,Y$ in $\mathcal{M}$.
In case the braiding is symmetric we refer to the above as \textbf{symmetric (pre-)Cartier} categories.
We summarise these definitions in the following table (where \ding{55} indicates that the corresponding condition is not required):\medskip
\begin{center}
   \begin{tabular}{|c|c|c|}
    \hline\cellcolor{gray!10} Terminology  &\cellcolor{gray!10} $\sigma_{X,Y}^{-1}=\sigma_{Y,X}$ &\cellcolor{gray!10}$\sigma_{X,Y}t_{X,Y}=t_{Y,X}\sigma_{X,Y}$ \\\hline\hline
  braided pre-Cartier category  & \ding{55} & \ding{55} \\\hline
   symmetric pre-Cartier category  & \ding{51}  & \ding{55}\\\hline
  braided Cartier category & \ding{55} & \ding{51} \\\hline
  symmetric Cartier category & \ding{51} & \ding{51} \\\hline
   \end{tabular}
 \end{center}
 \medskip
 We will sometimes use the term \textbf{Cartier type category} to refer to any of the above notions.
\end{definition}
\begin{remark} We collect here immediate consequences of the definition. The first two are  straightforward extensions of \cite[Remark 2.2]{HV} from symmetric Cartier to braided pre-Cartier categories.
\begin{enumerate}[$i)$]
\item Every pre-additive braided monoidal category is Cartier with respect to the trivial infinitesimal braiding $t_{X,Y}=0$ for all objects $X,Y$.

\item Setting $Y=\unit$ in \eqref{qC-I} and \eqref{qC-II}
implies $t_{X,\unit}=0$ and $t_{\unit,X}=0$, respectively.

\item For a symmetric monoidal category with a natural transformation $t\colon\otimes\to\otimes$, in presence of \eqref{cart}, the conditions \eqref{qC-I} and \eqref{qC-II} are equivalent as shown in \cite[XX.4]{Kassel}. Consequently, the notion of symmetric Cartier category in Definition~\ref{def:qC} coincides with that of Cartier category as introduced in \cite{HV}, which goes back to \cite{Ca93}. Note that in \cite[XX.4]{Kassel} such a category is called an \emph{infinitesimal symmetric category} in case of a strict tensor category.
\end{enumerate}

\end{remark}

The next result deals with transferring the Cartier type structures from one category to another. 

\begin{proposition}\label{prop:ff}
Let $\left( F,\phi ^{0},\phi ^{2}\right) \colon \Mm
\rightarrow \Mm^{\prime }$ be a strong monoidal functor between pre-additive categories $\Mm$ and $\Mm'$. Assume $F$ is additive and fully faithful (e.g., $F$ is an equivalence and the categories $\Mm$ and $\Mm'$ are additive).
\begin{enumerate}
  \item [i)] If $\Mm^{\prime }$ has a braiding (symmetry), then so does $\Mm$ and $F$ is braided strong monoidal.
  \item [ii)] If $\Mm^{\prime }$ has a natural transformation fulfilling \eqref{qC-I} and \eqref{qC-II}, then so does $\mathcal{M}$.
  \item [iii)]If $\Mm^{\prime }$ has a natural transformation  fulfilling \eqref{cart}, then so does $\mathcal{M}$.
\end{enumerate}
\end{proposition}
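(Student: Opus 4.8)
The plan is to establish all three items by \emph{transport of structure} along $F$. The key preliminary fact is that, $F$ being fully faithful, $F\colon\Mm(A,B)\to\Mm'(F(A),F(B))$ is a bijection for every pair of objects $A,B$; since $F$ is additive it is a group isomorphism, it is compatible with composition, and it reflects isomorphisms. Hence every morphism of $\Mm'$ between objects in the image of $F$ is uniquely of the form $F(f)$, and I can define the structures on $\Mm$ by prescribing their images under $F$. To keep the coherence bookkeeping to a minimum I would first invoke Mac Lane's coherence theorem to assume $\Mm$ and $\Mm'$ strict monoidal, so that only the structure isomorphism $\phi^2$ of $F$ remains in the formulas; reinstating the associativity and unit constraints in the general case is routine via the defining identities of a strong monoidal functor.

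For i), I would let $\sigma_{X,Y}\colon X\otimes Y\to Y\otimes X$ be the unique morphism with $F(\sigma_{X,Y})=\phi^2_{Y,X}\circ\sigma'_{F(X),F(Y)}\circ(\phi^2_{X,Y})^{-1}$. Then $\sigma_{X,Y}$ is an isomorphism because its image under $F$ is and $F$ reflects isomorphisms; and if $\sigma'$ is a symmetry, then $F$ sends both $\sigma_{Y,X}\circ\sigma_{X,Y}$ and $\mathrm{Id}_{X\otimes Y}$ to the same morphism, so $\sigma$ is a symmetry too. Naturality of $\sigma$ follows from naturality of $\sigma'$ and of $\phi^2$ together with faithfulness of $F$. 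For the hexagon axioms I would apply $F$ to each side, use naturality of $\phi^2$ to rewrite every $F(f\otimes g)$ as $\phi^2\circ(F(f)\otimes' F(g))\circ(\phi^2)^{-1}$, and read off the identity from the hexagon axioms for $\sigma'$; faithfulness then transfers it back to $\Mm$. The assertion that $F$ is braided strong monoidal is just the defining equality rearranged to $F(\sigma_{X,Y})\circ\phi^2_{X,Y}=\phi^2_{Y,X}\circ\sigma'_{F(X),F(Y)}$.

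For ii) and iii), with $\sigma$ from i) in hand, I would define $t_{X,Y}$ as the unique morphism $X\otimes Y\to X\otimes Y$ with $F(t_{X,Y})=\phi^2_{X,Y}\circ t'_{F(X),F(Y)}\circ(\phi^2_{X,Y})^{-1}$; naturality of $t$ follows as before. To verify \eqref{qC-I} I would apply $F$ to both sides: on the right-hand side additivity of $F$ splits the sum, after which substituting the definitions of $t$ and $\sigma^{\pm1}$ and moving all $\phi^2$'s to the outside (using naturality of $\phi^2$) turns it into the conjugate, by a fixed isomorphism $F(X)\otimes' F(Y)\otimes' F(Z)\to F(X\otimes Y\otimes Z)$, of the right-hand side of \eqref{qC-I} written for $t'$ at $F(X),F(Y),F(Z)$; the left-hand term $F(t_{X,Y\otimes Z})$ becomes the conjugate of $t'_{F(X),F(Y)\otimes' F(Z)}$ by the same isomorphism, using naturality of $t'$ along $\mathrm{Id}_{F(X)}\otimes'\phi^2_{Y,Z}$. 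Since \eqref{qC-I} holds in $\Mm'$, the two conjugated expressions coincide, and faithfulness of $F$ yields \eqref{qC-I} in $\Mm$. Equation \eqref{qC-II} is proved analogously, and \eqref{cart} for $t$ follows immediately from \eqref{cart} for $t'$ by the same conjugation.

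I expect the main obstacle to be purely organizational: matching up the associativity and unit constraints and the isomorphisms $\phi^2$ on the two sides when $\Mm$ and $\Mm'$ are not strict. Conceptually the proposition is nothing but structure transport along a fully faithful additive strong monoidal functor, and every axiom in $\Mm$ is obtained by conjugating the corresponding axiom in $\Mm'$; the effort lies entirely in writing the hexagons and \eqref{qC-I}--\eqref{qC-II} out with all coherence data visible and checking they match term by term, which is why strictifying at the outset is worthwhile.
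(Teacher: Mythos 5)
Your proposal is correct and follows essentially the same route as the paper: the braiding and infinitesimal braiding on $\Mm$ are defined by exactly the same conjugation formulas $F(\sigma_{X,Y})=\phi^2_{Y,X}\circ\sigma'_{F(X),F(Y)}\circ(\phi^2_{X,Y})^{-1}$ and $F(t_{X,Y})=\phi^2_{X,Y}\circ t'_{F(X),F(Y)}\circ(\phi^2_{X,Y})^{-1}$, and the axioms are transported back using additivity and faithfulness of $F$. The only cosmetic difference is that you strictify via coherence, while the paper's (suppressed) computation keeps the constraints and works with the composite isomorphisms $\phi^3_{X,Y,Z}$ explicitly.
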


\begin{proof}
The proof is straightforward and, as far as  the braiding is concerned, traces back to \cite[4.4.3]{Saavedra-Rivano72}.
We just point out that $\sigma$ and $t$ for $\Mm$ are uniquely defined in terms of $\sigma'$ and $t'$ for $\Mm'$ by the commutativity of the following diagrams, for all objects $X,Y$ in $\Mm$.
\begin{equation*}
\begin{tikzcd} F(X)\otimes' F(Y) \arrow{rr}{\sigma'_{F(X),F(Y)}}
\arrow{d}[swap]{\phi_{X,Y} ^{2}} & & F(Y)\otimes'F(X)\arrow{d}[swap]{\phi_{Y,X} ^{2}}\\ F(X\otimes Y)
\arrow{rr}{F(\sigma_{X,Y})} & & F(Y\otimes X) 
\end{tikzcd}
\qquad
\begin{tikzcd} F(X)\otimes' F(Y) \arrow{rr}{t'_{F(X),F(Y)}}
\arrow{d}[swap]{\phi_{X,Y} ^{2}} & & F(X)\otimes'F(Y)\arrow{d}[swap]{\phi_{X,Y} ^{2}}\\ F(X\otimes Y)
\arrow{rr}{F(t_{X,Y})} & & F(X\otimes Y)  \end{tikzcd}
\end{equation*}%
Note also that if $F$ is an equivalence, then it is automatically fully faithful and, being part of an adjunction, it is also additive if $\Mm$ and $\Mm'$ are additive, see e.g. \cite[Corollary 1.3]{Popescu73}.
\begin{invisible}
Denote by $\sigma ^{\prime }$ the braiding of $\mathcal{M}^{\prime }$. Since
$F$ is fully faithful, there is a unique morphism $\sigma _{X,Y}:X\otimes
Y\rightarrow Y\otimes X$ such that $F\left( \sigma _{X,Y}\right) =\phi
_{Y,X}^{2}\sigma _{F\left( X\right) ,F\left( Y\right) }^{\prime }\left( \phi
_{X,Y}^{2}\right) ^{-1}.$ Set
\begin{equation*}
\phi _{X,Y,Z}^{3}:=\phi _{X,Y\otimes Z}^{2}\left( F\left( X\right) \otimes
\phi _{Y,Z}^{2}\right) =\phi _{X\otimes Y,Z}^{2}\left( \phi
_{X,Y}^{2}\otimes F\left( Z\right) \right) .
\end{equation*}%
We have%
\begin{eqnarray*}
F\left( \sigma _{X,Y}\otimes Z\right) &=&\phi _{Y\otimes X,Z}^{2}\left(
F\left( \sigma _{X,Y}\right) \otimes F\left( Z\right) \right) \left( \phi
_{X\otimes Y,Z}^{2}\right) ^{-1} \\
&=&\phi _{Y\otimes X,Z}^{2}\left( \phi _{Y,X}^{2}\otimes F\left( Z\right)
\right) \left( \sigma _{F\left( X\right) ,F\left( Y\right) }^{\prime
}\otimes F\left( Z\right) \right) \left( \left( \phi _{X,Y}^{2}\right)
^{-1}\otimes F\left( Z\right) \right) \left( \phi _{X\otimes Y,Z}^{2}\right)
^{-1} \\
&=&\phi _{Y,X,Z}^{3}\left( \sigma _{F\left( X\right) ,F\left( Y\right)
}^{\prime }\otimes F\left( Z\right) \right) \left( \phi _{X,Y,Z}^{3}\right)
^{-1}
\end{eqnarray*}%
and%
\begin{eqnarray*}
F\left( X\otimes \sigma _{Y,Z}\right) &=&\phi _{X,Z\otimes Y}^{2}\left(
F\left( X\right) \otimes F\left( \sigma _{Y,Z}\right) \right) \left( \phi
_{X,Y\otimes Z}^{2}\right) ^{-1} \\
&=&\phi _{X,Z\otimes Y}^{2}\left( F\left( X\right) \otimes \phi
_{Z,Y}^{2}\right) \left( F\left( X\right) \otimes \sigma _{F\left( Y\right)
,F\left( Z\right) }^{\prime }\right) \left( F\left( X\right) \otimes \left(
\phi _{Y,Z}^{2}\right) ^{-1}\right) \left( \phi _{X,Y\otimes Z}^{2}\right)
^{-1} \\
&=&\phi _{X,Z,Y}^{3}\left( F\left( X\right) \otimes \sigma _{F\left(
Y\right) ,F\left( Z\right) }^{\prime }\right) \left( \phi
_{X,Y,Z}^{3}\right) ^{-1}.
\end{eqnarray*}%
Thus%
\begin{eqnarray*}
F\left( \sigma _{X,Y}\otimes Z\right) &=&\phi _{Y,X,Z}^{3}\left( \sigma
_{F\left( X\right) ,F\left( Y\right) }^{\prime }\otimes F\left( Z\right)
\right) \left( \phi _{X,Y,Z}^{3}\right) ^{-1} \\
F\left( X\otimes \sigma _{Y,Z}\right) &=&\phi _{X,Z,Y}^{3}\left( F\left(
X\right) \otimes \sigma _{F\left( Y\right) ,F\left( Z\right) }^{\prime
}\right) \left( \phi _{X,Y,Z}^{3}\right) ^{-1}.
\end{eqnarray*}%
Hence%
\begin{eqnarray*}
&&F\left( \sigma _{X,Z}\otimes Y\right) F\left( X\otimes \sigma _{Y,Z}\right)
\\
&=&\phi _{Z,X,Y}^{3}\left( \sigma _{F\left( X\right) ,F\left( Z\right)
}^{\prime }\otimes F\left( Y\right) \right) \left( \phi _{X,Z,Y}^{3}\right)
^{-1}\phi _{X,Z,Y}^{3}\left( F\left( X\right) \otimes \sigma _{F\left(
Y\right) ,F\left( Z\right) }^{\prime }\right) \left( \phi
_{X,Y,Z}^{3}\right) ^{-1} \\
&=&\phi _{Z,X,Y}^{3}\left( \sigma _{F\left( X\right) ,F\left( Z\right)
}^{\prime }\otimes F\left( Y\right) \right) \left( F\left( X\right) \otimes
\sigma _{F\left( Y\right) ,F\left( Z\right) }^{\prime }\right) \left( \phi
_{X,Y,Z}^{3}\right) ^{-1} \\
&=&\phi _{Z,X,Y}^{3}\sigma _{F\left( X\right) \otimes F\left( Y\right)
,F\left( Z\right) }^{\prime }\left( \phi _{X,Y,Z}^{3}\right) ^{-1} \\
&=&\phi _{Z,X,Y}^{3}\sigma _{F\left( X\right) \otimes F\left( Y\right)
,F\left( Z\right) }^{\prime }\left( \left( \phi _{X,Y}^{2}\right)
^{-1}\otimes F\left( Z\right) \right) \left( \phi _{X\otimes Y,Z}^{2}\right)
^{-1} \\
&=&\phi _{Z,X,Y}^{3}\left( F\left( Z\right) \otimes \left( \phi
_{X,Y}^{2}\right) ^{-1}\right) \sigma _{F\left( X\otimes Y\right) ,F\left(
Z\right) }^{\prime }\left( \phi _{X\otimes Y,Z}^{2}\right) ^{-1} \\
&=&\phi _{Z,X\otimes Y}^{2}\sigma _{F\left( X\otimes Y\right) ,F\left(
Z\right) }^{\prime }\left( \phi _{X\otimes Y,Z}^{2}\right) ^{-1}=F\left(
\sigma _{X\otimes Y,Z}\right)
\end{eqnarray*}%
so that $F\left( \sigma _{X,Z}\otimes Y\right) F\left( X\otimes \sigma
_{Y,Z}\right) =F\left( \sigma _{X\otimes Y,Z}\right) $ and hence $\left(
\sigma _{X,Z}\otimes Y\right) \left( X\otimes \sigma _{Y,Z}\right) =\sigma
_{X\otimes Y,Z}.$

Similarly
\begin{eqnarray*}
&&F\left( Y\otimes \sigma _{X,Z}\right) F\left( \sigma _{X,Y}\otimes Z\right)
\\
&=&\phi _{Y,Z,X}^{3}\left( F\left( Y\right) \otimes \sigma _{F\left(
X\right) ,F\left( Z\right) }^{\prime }\right) \left( \phi
_{Y,X,Z}^{3}\right) ^{-1}\phi _{Y,X,Z}^{3}\left( \sigma _{F\left( X\right)
,F\left( Y\right) }^{\prime }\otimes F\left( Z\right) \right) \left( \phi
_{X,Y,Z}^{3}\right) ^{-1} \\
&=&\phi _{Y,Z,X}^{3}\left( F\left( Y\right) \otimes \sigma _{F\left(
X\right) ,F\left( Z\right) }^{\prime }\right) \left( \sigma _{F\left(
X\right) ,F\left( Y\right) }^{\prime }\otimes F\left( Z\right) \right)
\left( \phi _{X,Y,Z}^{3}\right) ^{-1} \\
&=&\phi _{Y,Z,X}^{3}\sigma _{F\left( X\right) ,F\left( Y\right) \otimes
F\left( Z\right) }^{\prime }\left( \phi _{X,Y,Z}^{3}\right) ^{-1} \\
&=&\phi _{Y,Z,X}^{3}\sigma _{F\left( X\right) ,F\left( Y\right) \otimes
F\left( Z\right) }^{\prime }\left( F\left( X\right) \otimes \left( \phi
_{Y,Z}^{2}\right) ^{-1}\right) \left( \phi _{X,Y\otimes Z}^{2}\right) ^{-1}
\\
&=&\phi _{Y,Z,X}^{3}\left( \left( \phi _{Y,Z}^{2}\right) ^{-1}\otimes
F\left( X\right) \right) \sigma _{F\left( X\right) ,F\left( Y\otimes
Z\right) }^{\prime }\left( \phi _{X,Y\otimes Z}^{2}\right) ^{-1} \\
&=&\phi _{Y\otimes Z,X}^{2}\sigma _{F\left( X\right) ,F\left( Y\otimes
Z\right) }^{\prime }\left( \phi _{X,Y\otimes Z}^{2}\right) ^{-1}=F\left(
\sigma _{X,Y\otimes Z}\right)
\end{eqnarray*}%
so that $\left( Y\otimes \sigma _{X,Z}\right) \left( \sigma _{X,Y}\otimes
Z\right) =\sigma _{X,Y\otimes Z}.$

If $\sigma _{X,Y}^{\prime }$ is a symmetry, i.e., $\left( \sigma
_{X,Y}^{\prime }\right) ^{-1}=\sigma _{Y,X}^{\prime },$ then%
\begin{eqnarray*}
F\left( \sigma _{X,Y}^{-1}\right)  &=&F\left( \sigma _{X,Y}\right)
^{-1}=\left( \phi _{Y,X}^{2}\sigma _{F\left( X\right) ,F\left( Y\right)
}^{\prime }\left( \phi _{X,Y}^{2}\right) ^{-1}\right) ^{-1} \\
&=&\phi _{X,Y}^{2}\left( \sigma _{F\left( X\right) ,F\left( Y\right)
}^{\prime }\right) ^{-1}\left( \phi _{Y,X}^{2}\right) ^{-1} \\
&=&\phi _{X,Y}^{2}\sigma _{F\left( Y\right) ,F\left( X\right) }^{\prime
}\left( \phi _{Y,X}^{2}\right) ^{-1}=F\left( \sigma _{Y,X}\right)
\end{eqnarray*}%
and hence $\sigma _{X,Y}^{-1}=\sigma _{Y,X}$. Define now $t$ by setting $%
F\left( t_{X,Y}\right) =\phi _{X,Y}^{2}t_{F\left( X\right) ,F\left( Y\right)
}^{\prime }\left( \phi _{X,Y}^{2}\right) ^{-1}.$Then%
\begin{eqnarray*}
F\left( t_{X,Y}\otimes Z\right)  &=&\phi _{X\otimes Y,Z}^{2}\left( F\left(
t_{X,Y}\right) \otimes F\left( Z\right) \right) \left( \phi _{X\otimes
Y,Z}^{2}\right) ^{-1} \\
&&\phi _{X\otimes Y,Z}^{2}\left( \phi _{X,Y}^{2}\otimes F\left( Z\right)
\right) \left( t_{F\left( X\right) ,F\left( Y\right) }^{\prime }\otimes
F\left( Z\right) \right) \left( \left( \phi _{X,Y}^{2}\right) ^{-1}\otimes
F\left( Z\right) \right) \left( \phi _{X\otimes Y,Z}^{2}\right) ^{-1} \\
&=&\phi _{X,Y,Z}^{3}\left( t_{F\left( X\right) ,F\left( Y\right) }^{\prime
}\otimes F\left( Z\right) \right) \left( \phi _{X,Y,Z}^{3}\right) ^{-1}
\end{eqnarray*}%
and
\begin{eqnarray*}
F\left( X\otimes t_{Y,Z}\right)  &=&\phi _{X,Y\otimes Z}^{2}\left( F\left(
X\right) \otimes F\left( t_{Y,Z}\right) \right) \left( \phi _{X,Y\otimes
Z}^{2}\right) ^{-1} \\
&=&\phi _{X,Y\otimes Z}^{2}\left( F\left( X\right) \otimes \phi
_{Y,Z}^{2}\right) \left( F\left( X\right) \otimes t_{F\left( Y\right)
,F\left( Z\right) }^{\prime }\right) \left( F\left( X\right) \otimes \left(
\phi _{Y,Z}^{2}\right) ^{-1}\right) \left( \phi _{X,Y\otimes Z}^{2}\right)
^{-1} \\
&=&\phi _{X,Y,Z}^{3}\left( F\left( X\right) \otimes t_{F\left( Y\right)
,F\left( Z\right) }^{\prime }\right) \left( \phi _{X,Y,Z}^{3}\right) ^{-1}
\end{eqnarray*}%
so that
\begin{eqnarray*}
F\left( t_{X,Y}\otimes Z\right)  &=&\phi _{X,Y,Z}^{3}\left( t_{F\left(
X\right) ,F\left( Y\right) }^{\prime }\otimes F\left( Z\right) \right)
\left( \phi _{X,Y,Z}^{3}\right) ^{-1}, \\
F\left( X\otimes t_{Y,Z}\right)  &=&\phi _{X,Y,Z}^{3}\left( F\left( X\right)
\otimes t_{F\left( Y\right) ,F\left( Z\right) }^{\prime }\right) \left( \phi
_{X,Y,Z}^{3}\right) ^{-1}.
\end{eqnarray*}

If $t^{\prime }$ fulfils \eqref{qC-I}, then%
\begin{eqnarray*}
F\left( t_{X,Y\otimes Z}\right) &=&\phi _{X,Y\otimes Z}^{2}t_{F\left(
X\right) ,F\left( Y\otimes Z\right) }^{\prime }\left( \phi _{X,Y\otimes
Z}^{2}\right) ^{-1} \\
&=&\phi _{X,Y,Z}^{3}\left( F\left( X\right) \otimes \phi _{Y,Z}^{2}\right)
t_{F\left( X\right) ,F\left( Y\otimes Z\right) }^{\prime }\left( \phi
_{X,Y\otimes Z}^{2}\right) ^{-1} \\
&=&\phi _{X,Y,Z}^{3}t_{F\left( X\right) ,F\left( Y\right) \otimes F\left(
Z\right) }^{\prime }\left( F\left( X\right) \otimes \phi _{Y,Z}^{2}\right)
\left( \phi _{X,Y\otimes Z}^{2}\right) ^{-1} \\
&=&\phi _{X,Y,Z}^{3}t_{F\left( X\right) ,F\left( Y\right) \otimes F\left(
Z\right) }^{\prime }\left( \phi _{X,Y,Z}^{3}\right) ^{-1}
\end{eqnarray*}%
and
\begin{eqnarray*}
&&F(\sigma _{X,Y}^{-1}\otimes Z)F(Y\otimes t_{X,Z})F(\sigma _{X,Y}\otimes Z)
\\
&=&\left[ F(\sigma _{X,Y}\otimes Z)\right] ^{-1}F(Y\otimes t_{X,Z})F(\sigma
_{X,Y}\otimes Z) \\
&=&\left[
\begin{array}{c}
\left[ \phi _{Y,X,Z}^{3}\left( \sigma _{F\left( X\right) ,F\left( Y\right)
}^{\prime }\otimes F\left( Z\right) \right) \left( \phi _{X,Y,Z}^{3}\right)
^{-1}\right] ^{-1} \\
\phi _{Y,X,Z}^{3}\left( F\left( Y\right) \otimes t_{F\left( X\right)
,F\left( Z\right) }^{\prime }\right) \left( \phi _{Y,X,Z}^{3}\right) ^{-1}
\\
\phi _{Y,X,Z}^{3}\left( \sigma _{F\left( X\right) ,F\left( Y\right)
}^{\prime }\otimes F\left( Z\right) \right) \left( \phi _{X,Y,Z}^{3}\right)
^{-1}%
\end{array}%
\right] \\
&=&\phi _{X,Y,Z}^{3}\left( \sigma _{F\left( X\right) ,F\left( Y\right)
}^{\prime }\otimes F\left( Z\right) \right) \left( F\left( Y\right) \otimes
t_{F\left( X\right) ,F\left( Z\right) }^{\prime }\right) \left( \sigma
_{F\left( X\right) ,F\left( Y\right) }^{\prime }\otimes F\left( Z\right)
\right) \left( \phi _{X,Y,Z}^{3}\right) ^{-1}
\end{eqnarray*}%
so that
\begin{equation*}
F\left( t_{X,Y\otimes Z}\right) =F\left( t_{X,Y}\otimes Z\right) +F(\sigma
_{X,Y}^{-1}\otimes Z)F(Y\otimes t_{X,Z})F(\sigma _{X,Y}\otimes Z)
\end{equation*}%
and hence $t_{X,Y\otimes Z}=t_{X,Y}\otimes Z+(\sigma _{X,Y}^{-1}\otimes
Z)\circ (Y\otimes t_{X,Z})\circ (\sigma _{X,Y}\otimes Z).$

Similarly, If $t^{\prime }$ fulfils \eqref{qC-II}, then
\begin{eqnarray*}
F\left( t_{X\otimes Y,Z}\right) &=&\phi _{X\otimes Y,Z}^{2}t_{F\left(
X\otimes Y\right) ,F\left( Z\right) }^{\prime }\left( \phi _{X\otimes
Y,Z}^{2}\right) ^{-1} \\
&=&\phi _{X,Y,Z}^{3}\left( \phi _{X,Y}^{2}\otimes F\left( Z\right) \right)
t_{F\left( X\otimes Y\right) ,F\left( Z\right) }^{\prime }\left( \phi
_{X\otimes Y,Z}^{2}\right) ^{-1} \\
&=&\phi _{X,Y,Z}^{3}t_{F\left( X\right) \otimes F\left( Y\right) ,F\left(
Z\right) }^{\prime }\left( \phi _{X,Y}^{2}\otimes F\left( Z\right) \right)
\left( \phi _{X\otimes Y,Z}^{2}\right) ^{-1} \\
&=&\phi _{X,Y,Z}^{3}t_{F\left( X\right) \otimes F\left( Y\right) ,F\left(
Z\right) }^{\prime }\left( \phi _{X,Y,Z}^{3}\right) ^{-1}
\end{eqnarray*}%
and%
\begin{eqnarray*}
&&F(X\otimes \sigma _{Y,Z}^{-1})F(t_{X,Z}\otimes Y)F(X\otimes \sigma _{Y,Z})
\\
&=&F(X\otimes \sigma _{Y,Z})^{-1}F(t_{X,Z}\otimes Y)F(X\otimes \sigma _{Y,Z})
\\
&=&\left[
\begin{array}{c}
\left( \phi _{X,Z,Y}^{3}\left( F\left( X\right) \otimes \sigma _{F\left(
Y\right) ,F\left( Z\right) }^{\prime }\right) \left( \phi
_{X,Y,Z}^{3}\right) ^{-1}\right) ^{-1} \\
\phi _{X,Z,Y}^{3}\left( t_{F\left( X\right) ,F\left( Z\right) }^{\prime
}\otimes F\left( Y\right) \right) \left( \phi _{X,Z,Y}^{3}\right) ^{-1} \\
\phi _{X,Z,Y}^{3}\left( F\left( X\right) \otimes \sigma _{F\left( Y\right)
,F\left( Z\right) }^{\prime }\right) \left( \phi _{X,Y,Z}^{3}\right) ^{-1}%
\end{array}%
\right] \\
&=&\left( \phi _{X,Y,Z}^{3}\right) \left( F\left( X\right) \otimes \left(
\sigma _{F\left( Y\right) ,F\left( Z\right) }^{\prime }\right) ^{-1}\right)
\left( t_{F\left( X\right) ,F\left( Z\right) }^{\prime }\otimes F\left(
Y\right) \right) \left( F\left( X\right) \otimes \sigma _{F\left( Y\right)
,F\left( Z\right) }^{\prime }\right) \left( \phi _{X,Y,Z}^{3}\right) ^{-1}
\end{eqnarray*}%
so that $F\left( t_{X\otimes Y,Z}\right) =F\left( X\otimes t_{Y,Z}\right)
+F(X\otimes \sigma _{Y,Z}^{-1})F(t_{X,Z}\otimes Y)F(X\otimes \sigma _{Y,Z})$
and hence $t_{X\otimes Y,Z}=X\otimes t_{Y,Z}+(X\otimes \sigma
_{Y,Z}^{-1})\circ (t_{X,Z}\otimes Y)\circ (X\otimes \sigma _{Y,Z}).$

If $t^{\prime }$ fulfils \eqref{cart}, then%
\begin{eqnarray*}
F\left( \sigma _{X,Y}\right) F\left( t_{X,Y}\right) &=&\phi _{Y,X}^{2}\sigma
_{F\left( X\right) ,F\left( Y\right) }^{\prime }\left( \phi
_{X,Y}^{2}\right) ^{-1}\phi _{X,Y}^{2}t_{F\left( X\right) ,F\left( Y\right)
}^{\prime }\left( \phi _{X,Y}^{2}\right) ^{-1} \\
&=&\phi _{Y,X}^{2}\sigma _{F\left( X\right) ,F\left( Y\right) }^{\prime
}t_{F\left( X\right) ,F\left( Y\right) }^{\prime }\left( \phi
_{X,Y}^{2}\right) ^{-1} \\
&=&\phi _{Y,X}^{2}t_{F\left( Y\right) ,F\left( X\right) }^{\prime }\sigma
_{F\left( X\right) ,F\left( Y\right) }^{\prime }\left( \phi
_{X,Y}^{2}\right) ^{-1} \\
&=&\phi _{Y,X}^{2}t_{F\left( Y\right) ,F\left( X\right) }^{\prime }\left(
\phi _{Y,X}^{2}\right) ^{-1}\phi _{Y,X}^{2}\sigma _{F\left( X\right)
,F\left( Y\right) }^{\prime }\left( \phi _{X,Y}^{2}\right) ^{-1} \\
&=&F\left( t_{Y,X}\right) F\left( \sigma _{X,Y}\right)
\end{eqnarray*}%
and hence $F\left( \sigma _{X,Y}\right) F\left( t_{X,Y}\right) =F\left(
t_{Y,X}\right) F\left( \sigma _{X,Y}\right) $ so that $\sigma
_{X,Y}t_{X,Y}=t_{Y,X}\sigma _{X,Y}$.
\end{invisible}
\end{proof}

\begin{example}
Let $f:A\to B$ be a morphism of bialgebras which is an epimorphism in the category of rings. By \cite[Proposition XI.1.2]{Stenstrom75}, the restriction of scalars functor $f_*:{}_B\Mm\to{}_A\Mm$, which is always faithful, is also full. Since ${}_B\Mm$ and ${}_A\Mm$ are additive categories and $f_*$ is part of an adjunction, we get that  $f_*$ is an additive functor. Clearly $f_*$ is strict monoidal. Hence the previous result applies. As a consequence, if  $_A\Mm$ is a Cartier type category, so is $_B\Mm$.

In particular, since $f$ induces the surjective bialgebra map $f':A\to f(A),a\mapsto f(a)$, we get that, if $_A\Mm$ is a Cartier type category, so is $_{f (A)}\Mm$. A direct proof of this result will be provided in Proposition \ref{prop:image}. This can be obtained by looking at the notion of pre-Cartier quasitriangular bialgebra, which is object of the next section. 
\end{example}

\begin{example} Let $A$ and $B$ be bialgebras such that there is a strong monoidal equivalence between the categories $_A\Mm $ and $_B\Mm$.
By Proposition \ref{prop:ff}, $_A\Mm$ is a Cartier type category if and only if so is $_B\Mm$. This applies in particular if $A$ is obtained from $B$ by twisting with a Drinfel'd twist (we will return to this notion in more detail in the following section). A direct proof of this fact will be provided in Theorem \ref{thm:twist}.
\end{example}

\begin{example} Let $A$ and $B$ be two bialgebras which are \emph{monoidally co-Morita equivalent}, i.e., such that there is a strong monoidal $\Bbbk$-linear equivalence between the categories $^A\Mm $ and $^B\Mm$, see \cite[Definition 5.6]{Schauenburg96}. By Proposition \ref{prop:ff}, $^A\Mm$ is a Cartier type category if and only if so is $^B\Mm$. Note that, by \cite[Corollary 5.9]{Schauenburg96}, if $H$ is a finite-dimensional Hopf algebra, then every Hopf algebra monoidally co-Morita equivalent to $H$ is obtained from $H$ by twisting with a $2$-cocycle. 
\end{example}

\section{Pre-Cartier quasitriangular bialgebras}\label{Sec:2}

We are now going to discuss a class of examples of pre-Cartier categories modelled on the braided monoidal category $({}_H\Mm,\otimes,\sigma^\Rr)$ of left $H$-modules, where $\sigma^\Rr$ is the braiding attached to a quasitriangular bialgebra $(H,\Rr)$. Let us recall from \cite{Kassel} and \cite{Majid-book} some definitions and preliminary results beforehand. \\


Fix a bialgebra $(H,m,u,\Delta,\varepsilon)$ in the following. For an element $T\in H\otimes H$ we adopt the short notation $T=\sum_{i}{T^{i}\otimes T_{i}}=T^{i}\otimes T_{i
}$, omitting the summation symbol. 
Then, for example, $\tau_{H,H}(T)=T^{\mathrm{op}}=T_{i}\otimes T^{i}$, $(\mathrm{Id}_{H}\otimes\tau_{H,H})(T\otimes1_{H})=T^{i}\otimes1_{H}\otimes T_{i}$, etc. If $T$ is invertible we write $T^{-1}=\overline{T}^{i}\otimes\overline{T}_{i}$. We further employ \textit{leg notation} $T_{12}=T\otimes1_{H}$, $T_{23}=1_{H}\otimes T$, $T_{13}=(\mathrm{Id}_{H}\otimes\tau_{H,H})(T_{12})$, $T_{21}=T^{\mathrm{op}}\otimes1_{H}$, etc. \\


A bialgebra $H$ is said to be \textbf{quasitriangular} if there is an invertible element $\Rr\in H\otimes H$, the \textit{universal $\Rr$-matrix} or \textit{quasitriangular structure}, such that $H$ is quasi-cocommutative, i.e.,
\begin{equation}\label{qtr1}
    \Delta^\mathrm{op}(\cdot)=\Rr\Delta(\cdot)\Rr^{-1},
\end{equation}
and the hexagon equations
\begin{align}
    (\mathrm{Id}_H\otimes\Delta)(\Rr)&=\Rr_{13}\Rr_{12},\label{qtr2}\\
    (\Delta\otimes\mathrm{Id}_H)(\Rr)&=\Rr_{13}\Rr_{23},\label{qtr3}
\end{align}
are satisfied. If in addition $\Rr^{-1}=\Rr^\op$, then $(H,\Rr)$ is called \textbf{triangular}. Recall that, by \cite[Theorem VIII.2.4]{Kassel}, a quasitriangular bialgebra $(H,\Rr)$ satisfies the \textit{quantum Yang-Baxter} equations 
\begin{equation}\label{eq:QYB}
\Rr_{12}\Rr_{13}\Rr_{23}=\Rr_{23}\Rr_{13}\Rr_{12},\qquad\Rr^{-1}_{12}\Rr^{-1}_{13}\Rr^{-1}_{23}=\Rr^{-1}_{23}\Rr^{-1}_{13}\Rr^{-1}_{12}
\end{equation}
and $(\varepsilon\otimes\mathrm{Id}_{H})(\Rr^{\pm 1})=1_{H}=(\mathrm{Id}_{H}\otimes\varepsilon)(\Rr^{\pm 1})$. 
Moreover, we have 
\begin{equation}\label{eq:SotimesId(R)}
(S\otimes\mathrm{Id})(\Rr)=\Rr^{-1},\qquad (\mathrm{Id}\otimes S)(\Rr^{-1})=\Rr.
\end{equation}
Thus, we have that
\begin{equation}\label{eq:SotimesS(R)}
(S\otimes S)(\Rr)=\Rr,\qquad (S\otimes S)(\Rr^{-1})=\Rr^{-1}.
\end{equation}
One can prove (see \cite[Theorem 9.2.4 and paragraph thereafter]{Majid-book}) that a bialgebra $H$ is quasitriangular if and only if the monoidal category $({}_H\Mm,\otimes)$ of left $H$-modules is braided with braiding determined on objects $M,N$ by
\begin{equation*}
    \sigma^\Rr_{M,N}\colon M\otimes N\rightarrow N\otimes M,\qquad
    m\otimes n\mapsto\Rr^\op\cdot(n\otimes m)=(\Rr_i\cdot n)\otimes(\Rr^i\cdot m).
\end{equation*}
Note that $(\sigma^\Rr_{M,N})^{-1}:N\otimes M\to M\otimes N, n\otimes m\mapsto \Rr^{-1}\cdot (m\otimes n)=(\overline{\Rr}^i\cdot m)\otimes (\overline{\Rr}_{i}\cdot n)$ in this case.\par

\subsubsection*{\textbf{Definition and examples}}
\begin{definition}\label{defn:curved(quasi)triang}
We call a triple $(H,\Rr, \chi)$ a \textbf{pre-Cartier} (quasi)triangular bialgebra if $(H,\Rr)$ is a (quasi)triangular bialgebra and $\chi\in H\otimes H$ is such that
\begin{align}
    \chi\Delta(\cdot)&=\Delta(\cdot)\chi,\label{cqtr1}\\
    (\mathrm{Id}_H\otimes\Delta)(\chi)&=\chi_{12}+\Rr^{-1}_{12}\chi_{13}\Rr_{12},\label{cqtr2}\\
    (\Delta\otimes\mathrm{Id}_H)
    (\chi)&=\chi_{23}+\Rr^{-1}_{23}\chi_{13}\Rr_{23},\label{cqtr3}
\end{align}
hold. We call such a $\chi$  an \textbf{infinitesimal $\Rr$-matrix}. \\Moreover, the triple $(H,\Rr,\chi)$ is called \textbf{Cartier} if it is pre-Cartier and $\chi$ satisfies
\begin{equation}\label{eq:ctr2}
    \mathcal{R}\chi=\chi^\mathrm{op}\mathcal{R},
\end{equation}
in addition. 
\end{definition}
We summarise these notions in the following table (again, \ding{55} indicates that the corresponding condition is not required).  
\begin{center}
   \begin{tabular}{|c|c|c|}
    \hline\cellcolor{gray!10} Terminology &\cellcolor{gray!10} $\Rr^{-1}=\Rr^\op$ &\cellcolor{gray!10}$\Rr\chi=\chi^\op\Rr$ \\\hline\hline
   pre-Cartier quasitriangular & \ding{55} & \ding{55} \\\hline
   pre-Cartier triangular & \ding{51}  & \ding{55}\\\hline
   Cartier quasitriangular & \ding{55} & \ding{51} \\\hline
   Cartier triangular & \ding{51} & \ding{51} \\\hline
   \end{tabular}
 \end{center}
\medskip

\begin{remark}\label{rmk:chivs} The following statements obviously hold.
\begin{enumerate}[i)]
    \item Every (quasi)triangular bialgebra is Cartier with trivial $\chi=0$.
    \item Given a (quasi)triangular bialgebra $(H,\Rr)$, the set of its infinitesimal $\Rr$-matrices forms a vector space.
    \item By applying $\mathrm{Id}\otimes\varepsilon\otimes\mathrm{Id}$ to \eqref{cqtr2} and \eqref{cqtr3}, and multiplying, we get
    \begin{align}
(\mathrm{Id}\otimes \varepsilon)(\chi)&=0,\label{eq:eps-chi1}\\
(\varepsilon\otimes\mathrm{Id})(\chi)&=0,  \label{eq:eps-chi2}
\end{align}respectively.
\end{enumerate}
\end{remark}

\begin{example}\label{exa:chicntr} Let $(H,\Rr)$ be a (quasi)triangular bialgebra and let $\mathscr{Z}(H)$ be its center. If $\Rr\in\mathscr{Z}(H)\otimes\mathscr{Z}(H)$, then by \eqref{qtr1} $H$ is cocommutative. Moreover, \eqref{cqtr2} rewrites as $ (\mathrm{Id}_H\otimes\Delta)(\chi)=\chi_{12}+\chi_{13}$ which means that $\chi\in H\otimes P(H)$. Similarly, \eqref{cqtr3} means that $\chi\in P(H)\otimes H$, so that the two equalities together are equivalent to $\chi\in  P(H)\otimes P(H)$. Furthermore, \eqref{eq:ctr2} holds if and only if $\chi=\chi^\op$, i.e., $\chi$  is symmetric.
\begin{enumerate}[1)]
    \item As an example, since $H$ is cocommutative  a possible choice for the quasitriangular structure is $\Rr=1\otimes 1$ and in this case \eqref{cqtr2} and \eqref{cqtr3} are equivalent to $\chi\in  P(H)\otimes P(H)$, cf. \cite[Proposition XX.4.2]{Kassel}.
    \item Another example is the case when $H$ is commutative and then one trivially has $\Rr\in\mathscr{Z}(H)\otimes\mathscr{Z}(H)$ for every quasitriangular structure $\Rr$. Since \eqref{cqtr1} is trivially satisfied in this case, then, by the foregoing, $\chi\in H\otimes H$ is an infinitesimal $\Rr$-matrix if and only if $\chi\in P(H)\otimes P(H)$.    In particular, let $G$ be an abelian group and consider the group algebra $H=\Bbbk G$. Note that $\Rr$ needs not to be $1\otimes 1$ in this case, see e.g. \cite[Example 2.1.17]{Majid-book}. Since $H$ is commutative a possible infinitesimal $\Rr$-matrix $\chi$ must live in $P(H)\otimes P(H)$ but $P(\Bbbk G)=0$, as a direct computation shows. Thus $\chi=0$.
    \item Another example is the universal enveloping algebra $U\mathfrak{g}$ of a (complex) Lie algebra $(\mathfrak{g},[\cdot,\cdot])$. It is a cocommutative Hopf algebra and thus, if we consider $\Rr=1\otimes 1$, the above implies that infinitesimal $\Rr$-matrices $\chi$ are characterized as elements of $P(U\mathfrak{g})\otimes P(U\mathfrak{g})=\mathfrak{g}\otimes\mathfrak{g}$, such that $\chi\Delta(\cdot)=\Delta(\cdot)\chi$ holds. The latter condition is equivalent to $\chi$ being $\mathfrak{g}$-invariant, i.e., $(\mathrm{ad}_x\otimes\mathrm{Id}+\mathrm{Id}\otimes\mathrm{ad}_x)(\chi)=0$ for all $x\in\mathfrak{g}$, where $\mathrm{ad}_x(y):=[x,y]$ for $x,y\in\mathfrak{g}$.
    In summary, pre-Cartier structures for $(U\mathfrak{g},1\otimes 1)$ are precisely the elements of $(\mathfrak{g}\otimes\mathfrak{g})^\mathfrak{g}$ and the symmetric ones among those, i.e.,  $\chi\in(\mathfrak{g}\otimes\mathfrak{g})^\mathfrak{g}$ satisfying $\chi^{\mathrm{op}}=\chi$, correspond to the Cartier structures.
    Such elements arise in the study of quasitriangular Lie bialgebras.
    We briefly recall that a \textbf{Lie bialgebra} $(\mathfrak{g},[\cdot,\cdot],\delta)$ is a Lie algebra $(\mathfrak{g},[\cdot,\cdot])$ and a Lie coalgebra $(\mathfrak{g},\delta)$ such that $[\cdot,\cdot]\colon\mathfrak{g}\otimes\mathfrak{g}\to\mathfrak{g}$ and $\delta\colon\mathfrak{g}\to\mathfrak{g}\otimes\mathfrak{g}$ are compatible. A Lie bialgebra $(\mathfrak{g},[\cdot,\cdot],\delta)$ is called \textbf{quasitriangular} if there is an element $r\in\mathfrak{g}\otimes\mathfrak{g}$ such that $\delta(x)=(\mathrm{ad}_x\otimes\mathrm{Id}+\mathrm{Id}\otimes\mathrm{ad}_x)(r)$ for all $x\in\mathfrak{g}$, satisfying the \textbf{classical Yang-Baxter equation}
    $$
    [r_{12},r_{13}]+[r_{12},r_{23}]+[r_{13},r_{23}]=0
    $$
    and such that $r+r^{\mathrm{op}}\in(\mathfrak{g}\otimes\mathfrak{g})^{\mathfrak{g}}$. If $r$ is skew-symmetric in addition it is called a \textbf{classical $r$-matrix}.
    Thus, the symmetrization of any quasitriangular structure $r\in\mathfrak{g}\otimes\mathfrak{g}$ of a Lie bialgebra $(\mathfrak{g},[\cdot,\cdot],\delta)$ gives a Cartier structure $\chi=r+r^{\mathrm{op}}$ and classical $r$-matrices lead to $\chi=0$. We refer to \cite[Sections 2 and 3]{ES2010} for details on (quasitriangular) Lie bialgebras and $r$-matrices. Note that the classical Yang-Baxter equation was not used to verify the axioms of Definition~\ref{defn:curved(quasi)triang}. 
\end{enumerate} 
\end{example}

\begin{remark}
The element $\chi=r+r^{\mathrm{op}}$ associated to a quasitriangular Lie bialgebra $(\mathfrak{g},[\cdot,\cdot],r)$ we discussed in Example~\ref{exa:chicntr} already appeared in \cite{Majid00}. There it was associated with the ``infinitesimal braiding operator''
$$
\psi\colon V\otimes V\to V\otimes V,\qquad
\psi(v\otimes w):=\chi\cdot(v\otimes w-w\otimes v)
$$
for a left $\mathfrak{g}$-module $V$. It was shown in \cite[Lemma 2.1]{Majid00} that $\psi$ is a $2$-cocycle, a statement which we are going to generalize in Theorem~\ref{thm:cQYB}. The operator is further used to define ``braided Lie-bialgebras'' in \cite[Definition 2.2]{Majid00}, a notion which was extended in \cite[Section 5]{HV}. It would be interesting to adapt the concept of braided Lie-bialgebra to our notion of pre-Cartier quasitriangular bialgebra. However, this goes beyond the scope of the present paper.
\end{remark}

\begin{example}\label{exa:chisym}
Let $(H,\Rr)$ be a (quasi)triangular bialgebra and let $\chi \in \mathscr{Z}(H\otimes H)$. Then \eqref{cqtr1} is trivially satisfied, \eqref{cqtr2} and \eqref{cqtr3} together means that $\chi \in P(H)\otimes P(H)$ while \eqref{eq:ctr2} means that $\chi$ is symmetric. As a consequence, $(H,\Rr,\chi)$ is pre-Cartier if $\chi \in P(H)\otimes P(H)$ is central and it is Cartier when it is further symmetric.
\end{example}

In the following main theorem of this section we prove that pre-Cartier quasitriangular bialgebras $(H,\Rr,\chi)$ correspond to pre-Cartier braided monoidal structures on $({}_H\Mm,\otimes,\sigma^\Rr)$ in the same spirit in which $\Rr$ corresponds to $\sigma^\Rr$.

\begin{theorem}\label{thm}
Let $(H,\Rr)$ be a (quasi)triangular bialgebra. Then, there is a bijection between pre-Cartier structures of $(H,\Rr)$ and pre-Cartier structures of $(_{H}\mathcal{M},\otimes,\sigma^{\Rr})$. 
The corresponding infinitesimal braiding on ${}_H\Mm$ is defined for all objects $M,N$ in ${}_H\Mm$ by
\begin{equation}\label{t-chi}
    t_{M,N}\colon M\otimes N\rightarrow M\otimes N,\qquad
    m\otimes n\mapsto\chi\cdot(m\otimes n)=(\chi^i\cdot m)\otimes(\chi_i\cdot n),
\end{equation}
where $\chi=\chi^i\otimes\chi_i\in H\otimes H$ is the infinitesimal $\Rr$-matrix for $H$. Moreover, there is a bijective correspondence between Cartier structures of $(H,\Rr)$ and Cartier structures of $(_{H}\mathcal{M},\otimes,\sigma^{\Rr})$.
\end{theorem}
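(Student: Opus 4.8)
The plan is to set up the bijection explicitly and then check the defining conditions on each side correspond. Given an infinitesimal $\Rr$-matrix $\chi$ for $(H,\Rr)$, define $t_{M,N}$ by \eqref{t-chi}; conversely, given an infinitesimal braiding $t$ of $({}_H\Mm,\otimes,\sigma^\Rr)$, recover $\chi$ by evaluating $t_{H,H}$ on $1_H\otimes 1_H$, i.e.\ $\chi:=t_{H,H}(1_H\otimes 1_H)$, where $H$ is viewed as the left regular module. First I would verify that these two assignments are mutually inverse: for $t$ coming from $\chi$, naturality of $t$ with respect to the module maps $H\to M$, $1_H\mapsto m$, shows $t_{M,N}(m\otimes n)=\chi\cdot(m\otimes n)$ is forced, and in particular $t_{H,H}(1\otimes1)=\chi$; conversely one must show an arbitrary infinitesimal braiding $t$ is determined by left $H$-linearity and naturality to be of the form \eqref{t-chi}, which is the standard ``evaluate on generators of free modules'' argument (analogous to how $\Rr$ is recovered from $\sigma^\Rr$ in \cite{Majid-book}).

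Next I would check that \eqref{t-chi} genuinely defines a natural transformation $\otimes\to\otimes$ on ${}_H\Mm$: $H$-linearity of $t_{M,N}$ amounts exactly to $\chi$ commuting with $\Delta(h)$ for all $h\in H$, i.e.\ condition \eqref{cqtr1}; naturality in $M$ and $N$ is automatic since $t_{M,N}$ is given by the action of a fixed element $\chi\in H\otimes H$. Then the heart of the proof is the dictionary between the hexagon-type identities \eqref{qC-I}, \eqref{qC-II} and the bialgebra identities \eqref{cqtr2}, \eqref{cqtr3}. Here I would evaluate \eqref{qC-I} for $M,N,P$ the regular module $H$ on the element $1_H\otimes 1_H\otimes 1_H$: the left-hand side $t_{H,H\otimes H}(1\otimes 1\otimes1)$ unwinds, using the module structure on $H\otimes H$ via $\Delta$, to $(\mathrm{Id}\otimes\Delta)(\chi)$ acting on $1\otimes1\otimes1$; the first summand on the right gives $\chi_{12}$; and the conjugated middle term, using the explicit formulas for $\sigma^\Rr$ and $(\sigma^\Rr)^{-1}$ recalled just before the theorem, gives $\Rr_{12}^{-1}\chi_{13}\Rr_{12}$. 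This yields \eqref{cqtr2}; symmetrically, \eqref{qC-II} evaluated on $1\otimes1\otimes1$ gives \eqref{cqtr3}. For the converse direction one checks that if $\chi$ satisfies \eqref{cqtr2} and \eqref{cqtr3}, then the operators \eqref{t-chi} satisfy \eqref{qC-I} and \eqref{qC-II} on \emph{all} modules --- this follows because both sides of \eqref{qC-I}, as maps $M\otimes N\otimes P\to M\otimes N\otimes P$, are given by the action of elements of $H^{\otimes 3}$ determined solely by $\chi$ and $\Rr$, so the module-level identity reduces to the already-established identity in $H^{\otimes 3}$ applied after the module maps $H^{\otimes 3}\to M\otimes N\otimes P$.

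Finally, for the Cartier refinement I would compare \eqref{cart}, namely $\sigma^\Rr_{M,N}\circ t_{M,N}=t_{N,M}\circ\sigma^\Rr_{M,N}$, with \eqref{eq:ctr2}, $\Rr\chi=\chi^\mathrm{op}\Rr$: evaluating on $m\otimes n$ and using $\sigma^\Rr_{M,N}(m\otimes n)=\Rr^\mathrm{op}\cdot(n\otimes m)$, the left side is $\Rr^\mathrm{op}\chi^\mathrm{op}\cdot(n\otimes m)$ while the right side is $\chi^\mathrm{op}\Rr^\mathrm{op}\cdot(n\otimes m)$ --- wait, one must be careful with the flip bookkeeping, so the clean way is again to test on the regular modules and on $1\otimes1$, reducing \eqref{cart} to an identity in $H\otimes H$ which after reindexing is precisely \eqref{eq:ctr2}; conversely \eqref{eq:ctr2} forces \eqref{cart} on all modules by the same ``action of a fixed tensor'' argument.

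The main obstacle I anticipate is purely bookkeeping: correctly translating the compositions with $\sigma^\Rr$ and $(\sigma^\Rr)^{-1}$ in \eqref{qC-I}--\eqref{qC-II} into the leg-notation conjugations $\Rr_{12}^{-1}(-)\Rr_{12}$ and $\Rr_{23}^{-1}(-)\Rr_{23}$, keeping track of which tensor factor the flip acts on and in which order the actions compose; the module structure on $N\otimes M$ versus $M\otimes N$ and the appearance of $\Rr^\mathrm{op}$ rather than $\Rr$ in $\sigma^\Rr$ are the easiest places to drop a sign or swap a leg. No genuinely hard conceptual step is expected once the ``evaluate on $1_H^{\otimes k}$ in regular modules, everything is the action of a fixed element of $H^{\otimes k}$'' principle is in place.
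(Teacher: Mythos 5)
Your proposal is correct and follows essentially the same route as the paper's proof: recover $\chi=t_{H,H}(1_H\otimes 1_H)$, use naturality along the module maps $l_m\colon H\to M$, $h\mapsto h\cdot m$ to force the form \eqref{t-chi}, identify $H$-linearity with \eqref{cqtr1}, reduce \eqref{qC-I}--\eqref{qC-II} to \eqref{cqtr2}--\eqref{cqtr3} by evaluating on $1_H\otimes 1_H\otimes 1_H$, and match \eqref{cart} with \eqref{eq:ctr2}. The leg-notation computation you flag as the delicate point is carried out in the paper exactly as you sketch it, yielding $\Rr^{-1}_{12}\chi_{13}\Rr_{12}$ for the conjugated term.
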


\begin{proof}
Suppose there is an infinitesimal braiding $t$ on ${}_H\Mm$ and set $\chi:=t_{H,H}(1_{H}\otimes1_{H})$. Note that for every $M$ in ${}_H\Mm$ and $m\in M$ we can consider a map $l_m:H\to M$ in  ${}_H\Mm$ by setting $l_{m}(h)=h\cdot m$. 
Since $t$ is natural, for every $M,N$ in $_H\Mm$ and $m\in M,n\in N$, we have that $t_{M,N}(l_{m}\otimes l_{n})(1_{H}\otimes1_{H})=(l_{m}\otimes l_{n})t_{H,H}(1_{H}\otimes 1_{H})$, i.e., $t_{M,N}(m\otimes n)=(l_{m}\otimes l_{n})(\chi)$. By employing the notation $\chi=\chi^{i}\otimes\chi_{i}$, we arrive at $t_{M,N}(m\otimes n)=(\chi^{i}\cdot m)\otimes(\chi_{i}\cdot n)$. Now, the fact that $t_{M,N}$ is a morphism in ${}_H\Mm$ for all $M,N$ in ${}_H\Mm$ is equivalent to the fact that $t_{H,H}$ is a morphism in ${}_H\Mm$, by using the naturality of $t$. Indeed, if this is true for $H$, given $M$ and $N$ in ${}_H\Mm$ we obtain that
\[
\begin{split}
t_{M,N}&(h\cdot(m\otimes n))=t_{M,N}(h_{1}\cdot m\otimes h_{2}\cdot n)
=t_{M,N}(l_{m}(h_{1})\otimes l_{n}(h_{2}))
=t_{M,N}(l_{m}\otimes l_{n})(h_{1}\otimes h_{2})
\\&=(l_{m}\otimes l_{n})t_{H,H}(h\cdot(1_{H}\otimes 1_{H}))
=h\cdot(l_{m}\otimes l_{n})t_{H,H}(1_{H}\otimes 1_{H})
=h\cdot t_{M,N}(m\otimes n)
\end{split}
\]
for all $m\in M$ and $n\in N$. By using again naturality of $t$, the fact that $t_{H,H}$ is in ${}_H\Mm$ is equivalent to $h\cdot(t_{H,H}(1_{H}\otimes 1_{H}))=t_{H,H}(h\cdot(1_{H}\otimes 1_{H}))$ for every $h\in H$. This is exactly $h_{1}\chi^{i}\otimes h_{2}\chi_{i}=\chi^{i}h_{1}\otimes\chi_{i}h_{2}$ for every $h\in H$, i.e., \eqref{cqtr1}. 
Next, we prove that \eqref{qC-I} for $X, Y, Z$ in ${}_H\Mm$ is equivalent to \eqref{cqtr2}. In fact, from naturality of $t$, we reduce to the case $X=Y=Z=H$ and we have that
\[
t_{H,H\otimes H}(1_{H}\otimes1_{H}\otimes1_{H})=\chi^{i}\otimes(\chi_{i}\cdot(1_{H}\otimes1_{H}))=\chi^{i}\otimes\chi_{i1}\otimes\chi_{i2}=(\mathrm{Id}_{H}\otimes\Delta)(\chi),
\]
while
\[
(t_{H,H}\otimes\mathrm{Id}_{H})(1_{H}\otimes1_{H}\otimes1_{H})=\chi^{i}\otimes\chi_{i}\otimes1_{H}=\chi_{12}
\]
and
\begin{align*}
((\sigma^{\Rr}_{H,H})^{-1}\otimes\mathrm{Id}_{H})&(\mathrm{Id}_{H}\otimes t_{H,H})(\sigma^{\Rr}_{H,H}\otimes\mathrm{Id}_{H})(1_{H}\otimes1_{H}\otimes1_{H})\\
&=((\sigma^{\Rr}_{H,H})^{-1}\otimes\mathrm{Id}_{H})(\mathrm{Id}_{H}\otimes t_{H,H})(\Rr_{i}\otimes\Rr^{i}\otimes1_{H})\\
&=((\sigma^{\Rr}_{H,H})^{-1}\otimes\mathrm{Id}_{H})(\Rr_{i}\otimes\chi^{j}\Rr^{i}\otimes\chi_{j})
=\overline{\Rr}^{k}\chi^{j}\Rr^{i}\otimes \overline{\Rr}_{k}\Rr_{i}\otimes\chi_{j}\\
&=(\overline{\Rr}^{k}\otimes\overline{\Rr}_{k}\otimes1_{H})(\chi^{j}\otimes1_{H}\otimes\chi_{j})(\Rr^{i}\otimes\Rr_{i}\otimes1_{H})
=\Rr^{-1}_{12}\chi_{13}\Rr_{12}.
\end{align*}
Similarly, one proves that \eqref{qC-II} is equivalent to \eqref{cqtr3}.

Finally, we prove that \eqref{cart} is equivalent to \eqref{eq:ctr2}. Assume that \eqref{eq:ctr2} holds, then we have
\begin{align*}
    \sigma^\Rr_{M,N}(t_{M,N}(m\otimes n))
    &=\sigma^\Rr_{M,N}(\chi^i\cdot m\otimes\chi_i\cdot n)
    =(\Rr_j\chi_i)\cdot n\otimes(\Rr^j\chi^i)\cdot m\\
    &=(\Rr\chi)^\mathrm{op}\cdot(n\otimes m)
    \overset{\eqref{eq:ctr2}}=(\chi^\mathrm{op}\Rr)^\mathrm{op}\cdot(n\otimes m)
   =t_{N,M}(\sigma^\Rr_{M,N}(m\otimes n))
\end{align*}
for all left $H$-modules $M,N$ and $m\in M$, $n\in N$, hence \eqref{cart} holds. On the other hand, assume that \eqref{cart} is satisfied, then
$$
(\chi^\mathrm{op}\Rr)^\mathrm{op}=\chi\Rr^{\mathrm{op}}
=t_{H,H}(\sigma^\Rr_{H,H}(1_H\otimes 1_H))
=\sigma^\Rr_{H,H}(t_{H,H}(1_H\otimes 1_H))
=(\Rr\chi)^\mathrm{op},
$$
so \eqref{eq:ctr2} holds and the thesis follows.
\end{proof}

Now we test the notion of (pre-)Cartier bialgebra on some examples. The first example we consider is Sweedler's Hopf algebra. Note that, as it is shown in \cite{Ge92}, Taft Hopf algebras $H_{n^{2}}$ do not admit a quasitriangular structure for $n>2$, while for $n=2$, i.e., Sweedler's case, there is a $1$-parameter family of triangular structures which is considered below. \\

\textbf{Sweedler's Hopf Algebra.}
	Let $\Bbbk$ be a field of characteristic not $2$. Consider the free $\Bbbk$-algebra $H$ generated by two elements $g$ and $x$ modulo the relations $g^2=1$, $x^2=0$ and $xg=-gx$, where $1$ denotes the unity of $H$. It becomes a Hopf algebra with comultiplication, counit and antipode determined by $\Delta(g)=g\otimes g$, $\Delta(x)=x\otimes 1+g\otimes x$, $\varepsilon(g)=1$, $\varepsilon(x)=0$, $S(g)=g$ and $S(x)=-gx$, respectively. In particular, $H$ is neither commutative nor cocommutative. It is the smallest Hopf algebra with these properties and it is known as  \emph{Sweedler's 4-dimensional Hopf algebra}.
	There is an exhaustive $1$-parameter family of triangular structures on $H$ given by $\Rr$-matrices
	\begin{equation}\label{trngSweed}
	\Rr_\lambda:=1\otimes 1-2\nu\otimes \nu
	+\lambda(x\otimes x+2x\nu\otimes x\nu-2x\otimes x\nu)
	\end{equation}
	for $\lambda\in\Bbbk$, where $\nu:=\frac{1-g}{2}$, see \cite[Exercise 2.1.7]{Majid-book}. Expanding $\nu$ gives 
 \[\textstyle \Rr_\lambda=\frac{1}{2}(1\otimes 1+g\otimes 1+1\otimes g-g\otimes g)+\frac{\lambda}{2}(x\otimes x-xg\otimes x+x\otimes xg+xg\otimes xg).\] 
If $(H,\Rr)$ is quasitriangular then $\Rr=\Rr_{\lambda}$ for some $\lambda\in\Bbbk$, see \cite[Exercise 12.2.11]{Radford}.
In the next result, whose current more conceptual proof was suggested by the referee, we classify the infinitesimal $\Rr$-matrices for Sweedler's Hopf Algebra. 

\begin{proposition}\label{prop:Sweedler}
   Let $(H,\Rr_\lambda)$ be Sweedler's Hopf algebra with triangular structure $\Rr_\lambda$ as in \eqref{trngSweed}. Then, there is an exhaustive 1-parameter family of infinitesimal $\Rr$-matrices $\chi_\alpha$ on $H$, for $\alpha\in\Bbbk$, given by $\chi_\alpha=\alpha xg\otimes x$.
\end{proposition}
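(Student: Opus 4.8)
The strategy is to solve the defining equations \eqref{cqtr1}--\eqref{cqtr3} directly, exploiting the small dimension of Sweedler's Hopf algebra $H$. First I would fix the basis $\{1,g,x,xg\}$ of $H$, so that $H\otimes H$ is $16$-dimensional with basis $\{a\otimes b : a,b\in\{1,g,x,xg\}\}$, and write a general $\chi=\sum c_{ab}\, a\otimes b$. The key simplification is the centrality-type condition \eqref{cqtr1}: $\chi$ must commute with $\Delta(h)$ for all $h$, equivalently for $h=g$ and $h=x$ since these generate $H$. Commuting with $\Delta(g)=g\otimes g$ forces $\chi$ to lie in the subspace spanned by those $a\otimes b$ with $a,b$ of the same parity under the $\mathbb Z/2$-grading $|1|=|g|=0$, $|x|=|xg|=1$ (because $g$-conjugation acts by $(-1)^{|a|+|b|}$). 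Then commuting with $\Delta(x)=x\otimes 1+g\otimes x$ cuts this down substantially. This step, being a finite linear-algebra computation, should be routine but is where the bulk of the bookkeeping lives.

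**Using the hexagon-type equations.** Next I would impose \eqref{cqtr2} and \eqref{cqtr3}. Since by Remark~\ref{rmk:chivs}(iii) we already know $(\varepsilon\otimes\mathrm{Id})(\chi)=0=(\mathrm{Id}\otimes\varepsilon)(\chi)$, any surviving $\chi$ from the previous step must have vanishing ``$1\otimes-$'', ``$-\otimes 1$'' components after the appropriate counit contractions, which eliminates several more coefficients. The equations \eqref{cqtr2}--\eqref{cqtr3} themselves are then evaluated by expanding $(\mathrm{Id}\otimes\Delta)(\chi)$ and $\Rr_\lambda^{-1}\chi_{13}\Rr_\lambda$ in the basis; here one uses that $\Rr_\lambda$ is triangular so $\Rr_\lambda^{-1}=\Rr_\lambda^{\mathrm{op}}$, and that the ``$x$-part'' of $\Rr_\lambda$ squares to zero modulo the relations, keeping the conjugation $\Rr_\lambda^{-1}\chi_{13}\Rr_\lambda$ manageable. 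The referee's ``more conceptual'' proof presumably streamlines this: one could instead observe that $H$ is generated by the grouplike $g$ and the $(g,1)$-skew primitive $x$, reduce \eqref{cqtr2}--\eqref{cqtr3} to their effect on generators, and use that $xg\otimes x$ is (up to scalar) the unique element of $P(H)$-type in each leg compatible with the twist by $\Rr_\lambda$; I would present whichever is cleaner.

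**Verification and the main obstacle.** Having narrowed the solution space to (at most) the line $\Bbbk\cdot(xg\otimes x)$, I would close the argument by the converse: verify that $\chi_\alpha=\alpha\, xg\otimes x$ genuinely satisfies \eqref{cqtr1}, \eqref{cqtr2}, \eqref{cqtr3}. For \eqref{cqtr1} one checks $xg\otimes x$ commutes with $g\otimes g$ (clear, since $|xg|+|x|=0$) and with $x\otimes 1+g\otimes x$ (a short computation using $xg\cdot x=0$, $gx=-xg$, etc.). For \eqref{cqtr2}--\eqref{cqtr3} one computes $(\mathrm{Id}\otimes\Delta)(xg\otimes x)=xg\otimes x\otimes 1+xg\otimes g\otimes x$ and checks it equals $(xg\otimes x)_{12}+\Rr_\lambda^{-1}(xg\otimes x)_{13}\Rr_\lambda$; the conjugation by $\Rr_\lambda$ must produce exactly the ``$g$'' in the middle leg, which is the crunch of the verification and, in my estimation, the main obstacle — not because it is deep, but because $\Rr_\lambda$ carries the parameter $\lambda$ and one must see that all $\lambda$-dependent terms cancel, so that the family $\chi_\alpha$ is independent of the choice of triangular structure. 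Once both inclusions are established, exhaustiveness is immediate and the proposition follows; I would also remark (consistent with Example~\ref{exa:chisym} in spirit) whether any $\chi_\alpha$ with $\alpha\ne 0$ is Cartier, i.e.\ whether $\Rr_\lambda\chi_\alpha=\chi_\alpha^{\mathrm{op}}\Rr_\lambda$, which by the asymmetry $xg\otimes x\ne x\otimes xg$ one expects to fail, pinpointing these as genuinely pre-Cartier but not Cartier examples.
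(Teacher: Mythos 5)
Your plan is correct and follows essentially the same route as the paper's proof: impose \eqref{cqtr1} on the generators together with the counit conditions to force $\chi\in xH_{0}\otimes xH_{0}$ (the paper organizes this bookkeeping via the decomposition $H=H_{0}\oplus xH_{0}$, with $H_0=\Bbbk\langle g\rangle$, and the square-zero Hopf ideal $(x)$ rather than a raw $16$-coefficient expansion), then use \eqref{cqtr2}--\eqref{cqtr3} with $\Rr_\lambda^{-1}=\Rr_\lambda^{\mathrm{op}}$ and $x^2=0$ --- which, as you anticipated, kills all $\lambda$-dependence --- to pin $\chi$ down to $\alpha\, xg\otimes x$, and finally verify the converse on generators. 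Your closing observation that no nonzero $\chi_\alpha$ can satisfy \eqref{eq:ctr2} is exactly the content of Remark~\ref{rmk:Sw} in the paper.
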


 \begin{proof} If $H$ is pre-Cartier, by definition there exists an element $\chi\in H\otimes H$ satisfying the equations \eqref{cqtr1}, \eqref{cqtr2} and \eqref{cqtr3}. 
  We look at \eqref{cqtr1} on generators. Clearly \eqref{cqtr1} on $1$ does not give information.
  The condition $\chi\Delta(g)=\Delta(g)\chi$ is equivalent to $\chi=g\chi^ig^{-1}\otimes g\chi_i g^{-1}$. Let $H_{0}:=\Bbbk\langle g\rangle$ be the coradical of $H$. Then, $H=H_{0}\oplus xH_{0}$. The conjugation by $g$ is trivial on $H_{0}$ and it is the multiplication by $-1$ on $xH_{0}$. Then, since $\mathrm{char}(\Bbbk)\neq2$, we obtain $\chi\in H_{0}\otimes H_{0}+xH_{0}\otimes xH_{0}$.
From $(\mathrm{Id}\otimes\varepsilon)(\chi)=0$ and $(\varepsilon\otimes\mathrm{Id})(\chi)=0$, we obtain $\chi\in\Bbbk(1-g)\otimes\Bbbk(1-g)+xH_{0}\otimes xH_{0}$. 
We observe that $(x)=xH_{0}$ is a Hopf ideal of $H$ with $(x)^{2}=0$. Therefore, equation $0=\chi\Delta(x)-\Delta(x)\chi$ mod $(x)\otimes(x)$ implies that $\chi\in xH_{0}\otimes xH_{0}$, since $(1-g)x=x(1+g)$. 
	The condition \eqref{cqtr1} for $xg$ does not give further information. 
We write now $\chi=xh\otimes xh'$, for $h,h'\in H_0$. Then, from \eqref{cqtr2} and $\Rr_{\lambda}^{-1}=\Rr_{\lambda}^\op$, we get \[xh\otimes (x\otimes 1)\Delta(h')+xh\otimes (g\otimes x)\Delta(h')=\chi\otimes 1+(\Rr_0)_{12}(xh\otimes 1\otimes xh')(\Rr_0)_{12}=\chi\otimes 1+xh\otimes 1\otimes xh',
\]
so that $xh\otimes (x\otimes 1)\Delta (h')=\chi\otimes 1$, and hence we can assume $h'=1$. 
Similarly, from \eqref{cqtr3} we obtain
\[
(x\otimes1)\Delta(h)\otimes x+(g\otimes x)\Delta(h)\otimes x=1\otimes xh\otimes x+(\Rr_{0})_{23}(xh\otimes1\otimes x)(\Rr_{0})_{23}=1\otimes xh\otimes x+xh\otimes1\otimes x,
\]
so that $(g\otimes x)\Delta(h)\otimes x=1\otimes xh\otimes x$, hence $h=\alpha g$, $\alpha\in\Bbbk$. 
It is immediate to verify that $\chi=\alpha xg\otimes x$, $\alpha\in \Bbbk$,  satisfies \eqref{cqtr2} and \eqref{cqtr3}. \qedhere
\begin{invisible}
Indeed, we have $(\mathrm{Id}\otimes\Delta)(\chi)=r(xg\otimes x\otimes 1+xg\otimes g\otimes x)$ and, by noting that $(\Rr_\lambda^{-1})_{12}\chi_{13}{\Rr_\lambda}_{12}=(\Rr_0^{-1})_{12}\chi_{13}{\Rr_0}_{12}$ as $x^2=0$, we get  $\chi_{12}+{\Rr_\lambda}^{-1}_{12}\chi_{13}{\Rr_\lambda}_{12}=rxg\otimes x\otimes 1+\frac{1}{4}r(xg\otimes 1\otimes x+x\otimes 1\otimes x+xg\otimes g\otimes x-x\otimes g\otimes x-x\otimes 1\otimes x-xg\otimes 1\otimes x-x\otimes g\otimes x+xg\otimes g\otimes x+xg\otimes g\otimes x+x\otimes g\otimes x+xg\otimes 1\otimes x-x\otimes 1\otimes x+x\otimes g\otimes x+xg\otimes g\otimes x+x\otimes 1\otimes x -xg\otimes 1\otimes x)=r(xg\otimes x\otimes 1+xg\otimes g\otimes x)$, hence \eqref{cqtr2} holds. Similarly, we have $(\Delta\otimes\mathrm{Id})(\chi)=r(xg\otimes g\otimes x+1\otimes xg\otimes x)$, and observing that $(\Rr_\lambda^{-1})_{23}\chi_{13}{\Rr_\lambda}_{23}=(\Rr_0^{-1})_{23}\chi_{13}{\Rr_0}_{23}$, we get $\chi_{23}+(\Rr_\lambda^{-1})_{23}\chi_{13}{\Rr_\lambda}_{23}=r(1\otimes xg\otimes x)+\frac{1}{4}r(xg\otimes 1\otimes x+xg\otimes g\otimes x+xg\otimes 1\otimes xg-xg\otimes g\otimes xg+xg\otimes g\otimes x+xg\otimes 1\otimes x+xg\otimes g\otimes xg-xg\otimes 1\otimes xg-xg\otimes 1\otimes xg-xg\otimes g\otimes xg-xg\otimes 1\otimes x+xg\otimes g\otimes x+xg\otimes g\otimes xg+xg\otimes 1\otimes xg+xg\otimes g\otimes x-xg\otimes 1\otimes x)=r(xg\otimes g\otimes x+1\otimes xg\otimes x)$, thus \eqref{cqtr3} holds. So, by denoting $\alpha =r\in \Bbbk$, for Sweedler's Hopf algebra $H$ with triangular structure $\Rr_{\lambda}$ ($\lambda\in \Bbbk$), the infinitesimal $\Rr$-matrix is of the form $\chi_\alpha=\alpha(xg\otimes x)$.
\end{invisible}
\end{proof}

\begin{remark}\label{rmk:Sw}
Note that $\Rr_\lambda\chi_\alpha=\frac{1}{2}\alpha(xg\otimes x-xg\otimes xg-x\otimes x-x\otimes xg)=-\chi_\alpha^{\mathrm{op}}\Rr_\lambda$, i.e., \begin{equation}\label{eq:ctr2-1}
 \Rr_\lambda\chi_\alpha=-\chi_\alpha^{\mathrm{op}}\Rr_\lambda .  
\end{equation}
If we further assume that $\chi_\alpha$ satisfies \eqref{eq:ctr2}, i.e., $\Rr_\lambda\chi_\alpha=\chi_\alpha^{\mathrm{op}}\Rr_\lambda$, then we get $\chi_\alpha^{\mathrm{op}}\Rr_\lambda=0$, hence $\chi_\alpha=0$ as $\Rr_\lambda$ is invertible. Thus, Sweedler's Hopf algebra is pre-Cartier through a $1$-parameter family of infinitesimal $\Rr$-matrices, while it is Cartier only with the trivial infinitesimal $\Rr$-matrix. 
\end{remark}

Note that \eqref{eq:ctr2-1} rewrites as $\Rr\chi=q\chi^\op\Rr$ for $q=-1$ which could be seen as  $q$-analogue of \eqref{eq:ctr2}. For this reason, we were tempted to introduce a notion of $q$-Cartier category by replacing \eqref{cart} by $\sigma_{X,Y}\circ t_{X,Y}=qt_{Y,X}\circ \sigma_{X,Y}$ for some $q\in\Bbbk$. However, we could not go too far beyond the example of Sweedler's $4$-dimensional Hopf algebra to justify this new notion. \medskip

\noindent \textbf{Topological bialgebra and quantization.}
Let $H$ be a bialgebra and consider the corresponding trivial topological bialgebra $\tilde{H}=H[[\hbar]]$ of formal power series with formal parameter $\hbar$, see for example \cite[Section XVI.4, Example 3]{Kassel}. The bialgebra structure of $\tilde{H}$ is obtained by $\hbar$-linearly extending the bialgebra structure of $H$, where one has to replace the tensor product with its topological completion.
If $\tilde{\Rr}=\Rr+\mathcal{O}(\hbar)\in(H\otimes H)[[\hbar]]\cong\tilde{H}\tilde{\otimes}\tilde{H}$, where $\tilde{\otimes}$ denotes the topological tensor product, is a quasitriangular structure on $\tilde{H}$, it follows that $\Rr$ is a quasitriangular structure on $H$. In particular, $\Rr\in H\otimes H$ is invertible and we can write
$$
\tilde{\Rr}=\Rr(1\otimes 1+\hbar\chi+\mathcal{O}(\hbar^2))
$$
for an element $\chi\in H\otimes H$. We have the following result.
\begin{proposition}\label{prop:h-bar}
Given $(\tilde{H},\tilde{\Rr})$ as above, then $(H,\Rr,\chi)$ is pre-Cartier.
\end{proposition}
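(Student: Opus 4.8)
The plan is to extract the first-order part in $\hbar$ of the quasitriangularity axioms for $\tilde{\Rr}$ and show it reduces exactly to \eqref{cqtr1}, \eqref{cqtr2}, \eqref{cqtr3}. Write $\tilde{\Rr}=\Rr(1\otimes 1+\hbar\chi)+\mathcal{O}(\hbar^2)$ and note that $\tilde{\Rr}^{-1}=(1\otimes 1-\hbar\chi)\Rr^{-1}+\mathcal{O}(\hbar^2)$ since the bialgebra operations on $\tilde{H}$ are the $\hbar$-linear extensions of those on $H$. First I would treat \eqref{qtr1} for $\tilde{\Rr}$: expanding $\Delta^{\mathrm{op}}(h)=\tilde{\Rr}\Delta(h)\tilde{\Rr}^{-1}$ in powers of $\hbar$, the zeroth order gives \eqref{qtr1} for $\Rr$ (already known), and the first order gives, after multiplying by $\Rr^{-1}$ on the left and $\Rr$ on the right and using \eqref{qtr1} for $\Rr$, the relation $\chi\Delta(h)=\Delta(h)\chi$, which is \eqref{cqtr1}.

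Next I would handle the hexagon \eqref{qtr2} for $\tilde{\Rr}$. Since $\mathrm{Id}\otimes\Delta$ is $\hbar$-linear, $(\mathrm{Id}\otimes\Delta)(\tilde{\Rr})=(\mathrm{Id}\otimes\Delta)(\Rr)\bigl(1\otimes 1\otimes 1+\hbar(\mathrm{Id}\otimes\Delta)(\chi)\bigr)+\mathcal{O}(\hbar^2)$, while the right-hand side $\tilde{\Rr}_{13}\tilde{\Rr}_{12}=\Rr_{13}(1+\hbar\chi_{13})\Rr_{12}(1+\hbar\chi_{12})+\mathcal{O}(\hbar^2)$. The zeroth order is \eqref{qtr2} for $\Rr$. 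Collecting the $\hbar^1$ terms and then cancelling $\Rr_{13}\Rr_{12}=(\mathrm{Id}\otimes\Delta)(\Rr)$ from the left yields $(\mathrm{Id}\otimes\Delta)(\chi)=\chi_{12}+\Rr_{12}^{-1}\chi_{13}\Rr_{12}$, which is \eqref{cqtr2}; here one uses $\Rr_{13}\chi_{13}\Rr_{12}=\Rr_{13}\Rr_{12}(\Rr_{12}^{-1}\chi_{13}\Rr_{12})$ and leg-notation bookkeeping. The same computation applied to \eqref{qtr3} gives \eqref{cqtr3}. Having verified \eqref{cqtr1}, \eqref{cqtr2} and \eqref{cqtr3}, the triple $(H,\Rr,\chi)$ is pre-Cartier by Definition~\ref{defn:curved(quasi)triang}.

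The only genuinely delicate point is keeping the leg-notation bookkeeping straight when isolating the first-order coefficient — in particular making sure that the factor $\Rr$ (as opposed to $\Rr^{-1}$) appears in front of $\chi$ in the expansion of $\tilde{\Rr}$, so that conjugation by $\Rr_{12}$ (and $\Rr_{23}$) comes out on the correct side in \eqref{cqtr2} and \eqref{cqtr3}; this is exactly why $\tilde{\Rr}$ is written as $\Rr(1\otimes 1+\hbar\chi+\mathcal{O}(\hbar^2))$ rather than $(1\otimes 1+\hbar\chi+\dots)\Rr$. Everything else is a routine first-order Taylor expansion that commutes with the ($\hbar$-linearly extended) structure maps $\Delta$, $\varepsilon$, $m$, so no convergence or completion subtleties intervene beyond working modulo $\hbar^2$.
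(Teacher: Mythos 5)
Your proposal is correct and follows essentially the same route as the paper: expand each quasitriangularity axiom of $\tilde{\Rr}$ to first order in $\hbar$, use the corresponding zeroth-order axiom for $\Rr$, and multiply by the appropriate inverse ($\Rr^{-1}$, $(\Rr_{13}\Rr_{12})^{-1}$, $(\Rr_{13}\Rr_{23})^{-1}$) to isolate \eqref{cqtr1}, \eqref{cqtr2}, \eqref{cqtr3}. The only cosmetic difference is that you phrase quasi-cocommutativity as conjugation by $\tilde{\Rr}$ while the paper uses the two-sided form, which amounts to the same first-order computation.
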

\begin{proof}
The axioms of $\chi$ are precisely the defining axioms of $\tilde{\Rr}$ in first order of $\hbar$. Explicitly, \eqref{qtr1} in first order of $\hbar$ reads
$\Rr\chi\Delta(\cdot)=\Delta^\mathrm{op}(\cdot)\Rr\chi$.
Using that $\Delta$ is quasi-cocommutative with respect to $\Rr$ implies
$\Rr\chi\Delta(\cdot)=\Rr\Delta(\cdot)\chi$.
Then, multiplying the latter by $\Rr^{-1}$ from the left gives \eqref{cqtr1}.
Next, \eqref{qtr2} in order one of $\hbar$ gives $(\mathrm{Id}_H\otimes\Delta)(\Rr)(\mathrm{Id}_H\otimes\Delta)(\chi)=\Rr_{13}\chi_{13}\Rr_{12}+\Rr_{13}\Rr_{12}\chi_{12}$. Using that $\Rr$ satisfies \eqref{qtr2} and multiplying the former with $(\Rr_{13}\Rr_{12})^{-1}$ from the left implies \eqref{cqtr2}. Similarly, \eqref{qtr3} reads 
$(\Delta\otimes\mathrm{Id}_H)(\Rr)(\Delta\otimes\mathrm{Id}_H)(\chi)=\Rr_{13}\chi_{13}\Rr_{23}+\Rr_{13}\Rr_{23}\chi_{23}$ in order one of $\hbar$. Since $\Rr$ satisfies \eqref{qtr3} this gives \eqref{cqtr3} if we multiply the former with $(\Rr_{13}\Rr_{23})^{-1}$ from the left.
\end{proof}

The above result suggests the following quantization problem.
\begin{question}\label{Q1} Given a pre-Cartier quasitriangular bialgebra $(H,\Rr,\chi)$ is there a quasitriangular structure $\tilde{\Rr}$ on the trivial topological bialgebra $\tilde{H}=H[[\hbar]]$ such that $\tilde{\Rr}=\Rr(1\otimes 1+\hbar\chi+\mathcal{O}(\hbar^2))$?
\end{question}
This scenario bares similarity to the deformation quantization of Poisson manifolds. In the following lines we want to give some intuition why this is the case and refer to \cite{Esposito} for more information on symplectic/Poisson manifolds and their quantization. A star product is an associative unital product $\star$ on the formal power series of smooth functions $\mathscr{C}^\infty(M)[[\hbar]]$ on a manifold $M$, which is also a smooth deformation of the pointwise product of functions. The skew-symmetrization of its first order structures $M$ as a Poisson manifold and it was shown by Kontsevich in \cite{Kont} that every Poisson manifold admits a star product quantization. An infinitesimal $\Rr$-matrix can be understood as a ``noncommutative bivector field'', as it satisfies the axioms \eqref{cqtr2} and \eqref{cqtr3} and every quasitriangular structure on formal power series leads to an infinitesimal $\Rr$-matrix , as observed in Proposition~\ref{prop:h-bar}. The question about existence of a quantization, as formulated in Question~\ref{Q1}, is less obvious.

In the symplectic case it is known (see for example \cite{Fed}) that equivalence classes of star product deformations are in bijection with formal power series of the second de Rham cohomology $\mathrm{H}_\mathrm{dR}^2(M,\mathbb{C})[[\hbar]]$. This last observation motivates the following question about pre-Cartier bialgebras.

\begin{question}\label{Q2} Are the ``quantizations'' $\tilde{\Rr}$ of $\chi$ controlled by a certain cohomology class?
\end{question}

In the following we give an affirmative answer to the Question \ref{Q1} in the case of Sweedler's Hopf algebra.

\begin{proposition}
Let $\chi$ be an arbitrary infinitesimal $\Rr$-matrix on Sweedler's
Hopf algebra $(H,\Rr_\lambda)$ over $\mathbb{C}$ and let $\lambda\in\mathbb{C}$. Then
    $\tilde{\Rr}:=\Rr_\lambda\exp(\hbar\chi)
    =\Rr_\lambda(1\otimes 1+\hbar\chi)
    \in(H\otimes H)[[\hbar]]$
is a quasitriangular structure on the trivial topological bialgebra $\tilde{H}
=H[[\hbar]]$. 
\end{proposition}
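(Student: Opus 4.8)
The plan is to use the explicit classification of Proposition~\ref{prop:Sweedler}: an arbitrary infinitesimal $\Rr$-matrix on $(H,\Rr_\lambda)$ is $\chi=\alpha\,xg\otimes x$ with $\alpha\in\mathbb{C}$, whence $\chi^{2}=\alpha^{2}\,(xg)^{2}\otimes x^{2}=0$ because $x^{2}=0$. Therefore $\exp(\hbar\chi)=1\otimes1+\hbar\chi$ truncates, $\tilde{\Rr}=\Rr_\lambda+\hbar\,\Rr_\lambda\chi$ is a polynomial in $\hbar$ lying in $(H\otimes H)[[\hbar]]\cong\tilde{H}\tilde{\otimes}\tilde{H}$, and $(1\otimes1-\hbar\chi)\Rr_\lambda^{-1}$ is a two-sided inverse of $\tilde{\Rr}$, since $\Rr_\lambda(1\otimes1+\hbar\chi)(1\otimes1-\hbar\chi)\Rr_\lambda^{-1}=\Rr_\lambda(1\otimes1-\hbar^{2}\chi^{2})\Rr_\lambda^{-1}=1\otimes1$. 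It then remains to verify the quasi-cocommutativity~\eqref{qtr1} and the two hexagons~\eqref{qtr2}--\eqref{qtr3} for $\tilde{\Rr}$ on the trivial topological bialgebra $\tilde{H}$; the counit identities follow automatically by \cite[Theorem VIII.2.4]{Kassel}.

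For \eqref{qtr1} I would compute, for $a\in H$ and then extend $\hbar$-linearly,
\begin{align*}
\tilde{\Rr}\Delta(a)&=\Rr_\lambda\Delta(a)+\hbar\,\Rr_\lambda\chi\Delta(a)\overset{\eqref{cqtr1}}{=}\Rr_\lambda\Delta(a)+\hbar\,\Rr_\lambda\Delta(a)\chi\\
&\overset{\eqref{qtr1}}{=}\Delta^{\mathrm{op}}(a)\Rr_\lambda+\hbar\,\Delta^{\mathrm{op}}(a)\Rr_\lambda\chi=\Delta^{\mathrm{op}}(a)\tilde{\Rr},
\end{align*}
which uses only that $\chi$ commutes with $\Delta$ and that $\Rr_\lambda$ implements $\Delta^{\mathrm{op}}$ — in fact this step holds for any pre-Cartier quasitriangular bialgebra. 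For the hexagons I would use that $\mathrm{Id}_H\otimes\Delta$ and $\Delta\otimes\mathrm{Id}_H$ are algebra maps, apply \eqref{qtr2} and \eqref{cqtr2} (resp. \eqref{qtr3} and \eqref{cqtr3}) to expand both sides in powers of $\hbar$, and find that $(\mathrm{Id}_H\otimes\Delta)(\tilde{\Rr})$ and $\tilde{\Rr}_{13}\tilde{\Rr}_{12}$ coincide except for the single extra term $\hbar^{2}(\Rr_\lambda)_{13}\chi_{13}(\Rr_\lambda)_{12}\chi_{12}$, and likewise $(\Delta\otimes\mathrm{Id}_H)(\tilde{\Rr})$ and $\tilde{\Rr}_{13}\tilde{\Rr}_{23}$ differ only by $\hbar^{2}(\Rr_\lambda)_{13}\chi_{13}(\Rr_\lambda)_{23}\chi_{23}$.

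The main obstacle — and the only place where the explicit form of $\chi$ enters — is to show that these two $\hbar^{2}$-terms vanish. For this I would invoke the square-zero Hopf ideal $(x)=xH_{0}$, $(x)^{2}=0$, used already in the proof of Proposition~\ref{prop:Sweedler}. In $\chi_{13}(\Rr_\lambda)_{12}\chi_{12}$ the first tensor leg is, summand by summand of $\Rr_\lambda$, a product $xg\cdot h\cdot xg$ with $h\in H$; since $xg\in(x)$ and $(x)$ is an ideal, this lies in $(x)H(x)\subseteq(x)^{2}=0$, so $\chi_{13}(\Rr_\lambda)_{12}\chi_{12}=0$. Symmetrically, in $\chi_{13}(\Rr_\lambda)_{23}\chi_{23}$ the third tensor leg has the form $x\cdot h\cdot x\in(x)H(x)\subseteq(x)^{2}=0$, giving $\chi_{13}(\Rr_\lambda)_{23}\chi_{23}=0$. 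This yields \eqref{qtr2} and \eqref{qtr3} for $\tilde{\Rr}$ and completes the proof; everything other than this vanishing is routine bookkeeping of first-order terms.
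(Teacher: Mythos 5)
Your proposal is correct and follows essentially the same route as the paper: truncate $\exp(\hbar\chi)$ using $\chi^2=0$, exhibit the inverse $(1\otimes1-\hbar\chi)\Rr_\lambda^{-1}$, deduce quasi-cocommutativity from \eqref{cqtr1} and \eqref{qtr1}, and obtain the hexagons from \eqref{cqtr2}--\eqref{cqtr3} once the $\hbar^2$-terms are seen to vanish. Your justification of that vanishing via the square-zero ideal $(x)=xH_0$ is just a slightly more explicit version of the paper's appeal to $x^2=0$ (i.e.\ $\chi_{13}\Rr_{12}\chi_{12}=0=\chi_{13}\Rr_{23}\chi_{23}$), so no substantive difference.
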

\begin{proof}
Recall from Proposition~\ref{prop:Sweedler} that 
$
	\Rr_\lambda:=1\otimes 1-2\nu\otimes\nu
	+\lambda(x\otimes x+2x\nu\otimes x\nu-2x\otimes x\nu)
$,
with $\lambda\in\mathbb{C}$ and $\nu:=\frac{1-g}{2}$,
is a family of triangular structures for $H$ and that 
all infinitesimal $\Rr$-matrices on $(H,\Rr_\lambda)$ are of the form 
$\chi=\alpha xg\otimes x\in H\otimes H$, with
$\alpha\in\mathbb{C}$. Since $x^2=0$ and $xg=-gx$ we have
$\exp(\hbar\chi)=\sum_{n=0}^\infty\frac{\hbar^n}{n!}\chi^n
=1\otimes 1+\hbar\chi$ and thus $\tilde{\Rr}:=\Rr_\lambda\exp(\hbar\chi)
=\Rr_\lambda(1\otimes 1+\hbar\chi)\in(H\otimes H)[[\hbar]]$.
We prove that $\tilde{\Rr}$ is a quasitriangular structure for 
$\tilde{H}$. First of all, $\tilde{\Rr}$ is invertible, since its zeroth order
in $\hbar$, namely $\Rr_\lambda$, is. Explicitly, its inverse is $\tilde{\Rr}^{-1}=(1\otimes 1-\hbar\chi)\Rr_\lambda^{-1}$.
Next, from the quasi-cocommutativity of $\Rr$ and \eqref{cqtr1} we obtain
$\Rr_\lambda(1\otimes 1+\hbar\chi)\Delta(\cdot)
=\Rr_\lambda\Delta(\cdot)(1\otimes 1+\hbar\chi)
=\Delta^\mathrm{op}(\cdot)\Rr_\lambda(1\otimes 1+\hbar\chi),$
i.e., quasi-cocommutativity of $\tilde{\Rr}$.
The hexagon equations for $\tilde{\Rr}$ follow from the ones of $\Rr$,
the relations \eqref{cqtr2},\eqref{cqtr3}
and the fact that $\chi_{13}\chi_{12}=0=\chi_{13}\chi_{23}$
as a consequence of $x^2=0$. Explicitly,
\begin{align*}
    (\mathrm{Id}\otimes\Delta)(\tilde{\Rr})
    &=(\mathrm{Id}\otimes\Delta)(\Rr)(1\otimes 1\otimes 1+\hbar(\mathrm{Id}\otimes\Delta)(\chi))\\
    &\overset{\eqref{cqtr2}}=\Rr_{13}\Rr_{12}(1\otimes 1\otimes 1+\hbar\chi_{12}
    +\hbar\Rr_{12}^{-1}\chi_{13}\Rr_{12})
\end{align*}
coincides with
\begin{align*}
    \tilde{\Rr}_{13}\tilde{\Rr}_{12}
    &=\Rr_{13}(1\otimes 1\otimes 1+\hbar\chi_{13})
    \Rr_{12}(1\otimes 1\otimes 1+\hbar\chi_{12})\\
    &=\Rr_{13}\Rr_{12}
    +\hbar\Rr_{13}\Rr_{12}\chi_{12}
    +\hbar\Rr_{13}\chi_{13}\Rr_{12}
    +\hbar^2\underbrace{\Rr_{13}\chi_{13}\Rr_{12}\chi_{12}}_{=0}
\end{align*}
and similarly $(\Delta\otimes\mathrm{Id})(\tilde{\Rr})$ coincides with $ \tilde{\Rr}_{13}\tilde{\Rr}_{23}$.
This completes the proof.
\end{proof}
One could try to attack the previous quantization problem in greater generality. Here, we just comment on the relation of Question \ref{Q1} to the quantization of quasitriangular Lie bialgebras.
Recall from Example \ref{exa:chicntr} 3) that given a quasitriangular Lie bialgebra $(\mathfrak{g},[\cdot,\cdot],r)$ we obtain a Cartier triangular bialgebra $(U\mathfrak{g},\Rr,\chi)$ with $\Rr=1\otimes 1$ and $\chi=r+r^{\mathrm{op}}$.
According to \cite{EtingofKazhdan} there is a quasitriangular topological bialgebra $(\tilde{H},\tilde{\Delta},\tilde{\Rr})$ such that $\tilde{H}/\hbar\tilde{H}\cong U\mathfrak{g}$, $\tilde{\Delta}=\Delta+\mathcal{O}(\hbar)$ and
$$
\tilde{\Rr}=1\otimes 1+\hbar r+\mathcal{O}(\hbar^2)\in\tilde{H}\otimes\tilde{H}.
$$
 Note that the bialgebra structure of $\tilde{H}$ might differ from the $\hbar$-linear extension of $\Delta$, it is just required that in zeroth order of $\hbar$ the bialgebra $(H,\Delta)$ is recovered. This indicates that Question \ref{Q1} might be too restrictive and should be formulated in terms of arbitrary topological bialgebras rather than the trivial one.

\subsection{Induced structures}\label{Sec:induced}
In this subsection we study the behaviour of (pre-)Cartier (quasi)triangular bialgebras with respect to induced structures, i.e., images, quotients and tensor products.
\begin{proposition}\label{prop:image}
Let $f:H\to H'$ be a bialgebra map. If $(H,\Rr,\chi)$ is a (pre-)Cartier (quasi)triangular bialgebra, then so is $(f(H), (f\otimes f)(\Rr), (f\otimes f)(\chi))$.
\end{proposition}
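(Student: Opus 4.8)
The plan is to verify the defining axioms \eqref{qtr1}--\eqref{qtr3} and \eqref{cqtr1}--\eqref{cqtr3} (and \eqref{eq:ctr2} in the Cartier case, and the triangularity condition $\Rr^{-1}=\Rr^{\op}$ when relevant) for the candidate data on $f(H)$, by pushing the corresponding identities on $H$ forward along the bialgebra map $f$. The key structural remark is that $f\colon H\to f(H)$ is a surjective bialgebra map, so $f\otimes f\colon H\otimes H\to f(H)\otimes f(H)$ is a surjective algebra map which is also compatible with the comultiplications, i.e.\ $\Delta_{f(H)}\circ f=(f\otimes f)\circ\Delta_H$ and likewise in higher tensor powers; in particular $f\otimes f$ intertwines all the leg-notation operations ($T\mapsto T_{12}$, $T_{13}$, $T_{23}$, $T^{\op}$, $(\mathrm{Id}\otimes\Delta)$, $(\Delta\otimes\mathrm{Id})$, etc.) between $H^{\otimes n}$ and $f(H)^{\otimes n}$.

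First I would record that $\Rr':=(f\otimes f)(\Rr)$ is invertible in $f(H)\otimes f(H)$ with inverse $(f\otimes f)(\Rr^{-1})$, since $f\otimes f$ is an algebra map; this already uses that $f\otimes f$ is unital. Then applying $f\otimes f$ (resp.\ $f^{\otimes 3}$) to \eqref{qtr1}, \eqref{qtr2}, \eqref{qtr3} and using the intertwining properties above yields the quasi-cocommutativity and hexagon equations for $(f(H),\Rr')$; here one also uses that every element of $f(H)$ is of the form $f(h)$ to turn the identity $\Delta^{\op}_{f(H)}(f(h))\,\Rr'=\Rr'\,\Delta_{f(H)}(f(h))$ into a genuine identity on all of $f(H)$. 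The same mechanism applied to \eqref{cqtr1}, \eqref{cqtr2}, \eqref{cqtr3} gives that $\chi':=(f\otimes f)(\chi)$ is an infinitesimal $\Rr'$-matrix: for \eqref{cqtr1} one evaluates on $f(h)$ and uses surjectivity of $f$; for \eqref{cqtr2} and \eqref{cqtr3} one simply applies $f^{\otimes 3}$ and moves it through the leg operations and the products. Finally, if $(H,\Rr)$ is triangular then applying $f\otimes f$ to $\Rr^{-1}=\Rr^{\op}$ gives $(\Rr')^{-1}=(\Rr')^{\op}$, and if $(H,\Rr,\chi)$ is Cartier then applying $f\otimes f$ to \eqref{eq:ctr2} gives $\Rr'\chi'=(\chi')^{\op}\Rr'$, so all four flavours in the table are preserved.

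There is essentially no serious obstacle here; the statement is a compatibility check and every step is a routine application of $f^{\otimes n}$ followed by bookkeeping of the leg notation. The only mild subtlety worth spelling out is the passage from an identity that holds ``for all $f(h)$'' to one that holds ``for all elements of $f(H)$'' in the cocommutativity axioms \eqref{qtr1} and \eqref{cqtr1} and in \eqref{eq:ctr2} only indirectly --- this is exactly where surjectivity of $f\colon H\to f(H)$ is used, as opposed to $f$ being merely a bialgebra map. (Note that this also reproves, at the level of bialgebras, the categorical statement of the Example following Proposition~\ref{prop:ff}, via the dictionary of Theorem~\ref{thm}.)
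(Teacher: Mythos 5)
Your proposal is correct and follows essentially the same route as the paper's own proof: apply $f^{\otimes 2}$ and $f^{\otimes 3}$ to the defining identities, use that $f$ is an algebra and coalgebra morphism (so it commutes with the leg operations and sends $\Rr^{-1}$ to the inverse of $\Rr'$), and invoke surjectivity of $f\colon H\to f(H)$ precisely for the pointwise axioms \eqref{qtr1} and \eqref{cqtr1}. The only cosmetic difference is that the paper outsources the quasitriangularity of $(f(H),(f\otimes f)(\Rr))$ to a cited exercise in Radford rather than reverifying it.
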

\begin{proof}
Assume that $(H,\Rr)$ is a quasitriangular bialgebra, i.e., \eqref{qtr1}, \eqref{qtr2}, \eqref{qtr3} hold for the invertible element $\Rr=\Rr^i\otimes \Rr_i\in H\otimes H$. 
 It is easy to check that $(f(H), \Rr')$ is quasitriangular, where $\Rr':=(f\otimes f)(\Rr)$ cf. \cite[Exercise 12.2.2]{Radford}.
\begin{invisible}
We put in invisible the proof of Radford's exercise, (see also \cite[Definition 12.2.7 and Exercise 12.2.2]{Radford}).
We have that $f(H)$ is a sub-bialgebra of $H'$ and $(f\otimes f)(\Rr)\in H'\otimes H'$ is invertible with inverse $(f\otimes f)(\Rr^{-1})$, where $\Rr^{-1}=\overline{\Rr}^j\otimes\overline{\Rr}_j\in H\otimes H$ is the inverse of $\Rr$, since $f\otimes f$ is a morphism of algebras. 
Now we prove that equations \eqref{qtr1}, \eqref{qtr2}, \eqref{qtr3} hold for $(f\otimes f)(\Rr)$, simply using that $f$ is a morphism of algebras and of coalgebras and the respective equations for $\Rr$. In fact, since \eqref{qtr1} holds for $\Rr$, we have that for any $x\in H$ 
\[
\begin{split}
\Delta^{\mathrm{op}}_{H'}(f(x))&=(f\otimes f)(\Delta_H^{\mathrm{op}}(x))=(f\otimes f)(\Rr\Delta_H(x)\Rr^{-1})\\&=(f\otimes f)(\Rr)(f\otimes f)(\Delta_H(x))(f\otimes f)(\Rr^{-1})\\&=(f\otimes f)(\Rr)\Delta_{H'}(f(x))(f\otimes f)(\Rr)^{-1},
\end{split}
\]
hence $(f\otimes f)(\Rr)$ satisfies \eqref{qtr1}. Next, since \eqref{qtr2} holds for $\Rr$, i.e., $\Rr^i\otimes \Rr_{i_1}\otimes\Rr_{i_2}=\Rr^i\Rr^j\otimes\Rr_j\otimes \Rr_i$, we have that 
\[
\begin{split}
(\mathrm{Id}_{H'}\otimes\Delta_{H'})((f\otimes f)(\Rr))&=f(\Rr^i)\otimes f(\Rr_{i_1})\otimes f(\Rr_{i_2})=(f\otimes f\otimes f)(\Rr^i\otimes \Rr_{i_1}\otimes\Rr_{i_2})\\&=(f\otimes f\otimes f)(\Rr^i\Rr^j\otimes\Rr_j\otimes \Rr_i)=f(\Rr^i)f(\Rr^j)\otimes f(\Rr_j)\otimes f(\Rr_i)\\&=(f(\Rr^i)\otimes 1\otimes f(\Rr_i))(f(\Rr^j)\otimes f(\Rr_j)\otimes 1)=(f\otimes f)(\Rr)_{13}(f\otimes f)(\Rr)_{12},
\end{split}
\]
hence $(f\otimes f)(\Rr)$ satisfies \eqref{qtr2}. Similarly, since \eqref{qtr3} holds for $\Rr$, then \eqref{qtr3} holds for $(f\otimes f)(\Rr)$, too.
 \end{invisible}
Moreover, if $H$ is triangular, i.e., $\Rr^{-1}=\Rr^\mathrm{op}$, we get
\[
(\Rr')^{-1}=(f\otimes f)(\Rr)^{-1}=(f\otimes f)(\Rr^{-1})=(f\otimes f)(\Rr^{\mathrm{op}})=f(\Rr_i)\otimes f(\Rr^i)=(f\otimes f)(\Rr)^\op=(\Rr')^\op, 
\]
hence also $(f(H), \Rr')$ is triangular. 
Now, assume that $(H,\Rr)$ is a pre-Cartier quasitriangular bialgebra, i.e., there exists $\chi=\chi^i\otimes\chi_i\in H\otimes H$ which satisfies \eqref{cqtr1}, \eqref{cqtr2}, \eqref{cqtr3}. We show that $(f(H), \Rr')$ is pre-Cartier through $\chi':=(f\otimes f)(\chi)\in H'\otimes H'$. Indeed, since \eqref{cqtr1} holds for $\chi$, we have that 
\[
\begin{split}
\chi'&\Delta_{H'}(f(x))=(f\otimes f)(\chi)(f\otimes f)(\Delta_H(x))=(f\otimes f)(\chi\Delta_H(x))\\&=(f\otimes f)(\Delta_H(x)\chi)=(f\otimes f)(\Delta_H(x))(f\otimes f)(\chi)=\Delta_{H'}(f(x))\chi'
\end{split}
\]
for any $x\in H$, thus \eqref{cqtr1} holds for $\chi'$. Next, since \eqref{cqtr2} holds for $\chi$, i.e., $\chi^i\otimes\chi_{i_1}\otimes\chi_{i_2}=\chi^i\otimes\chi_i\otimes 1_H+\overline{\Rr}^j\chi^i\Rr^k\otimes\overline{\Rr}_j\Rr_k\otimes\chi_i$, we have that 
\[
\begin{split}
(\mathrm{Id}_{H'}\otimes\Delta_{H'})&((f\otimes f)(\chi))=f(\chi^i)\otimes f(\chi_{i_1})\otimes f(\chi_{i_2})=(f\otimes f\otimes f)(\chi^i\otimes\chi_{i_1}\otimes\chi_{i_2})\\
&=(f\otimes f\otimes f)(\chi^i\otimes\chi_i\otimes 1_H+\overline{\Rr}^j\chi^i\Rr^k\otimes\overline{\Rr}_j\Rr_k\otimes\chi_i)\\&=f(\chi^i)\otimes f(\chi_i)\otimes1_{H'}+f(\overline{\Rr}^j)f(\chi^i)f(\Rr^k)\otimes f(\overline{\Rr}_j)f(\Rr_k)\otimes f(\chi_i)\\
&=\chi'_{12}+(f(\overline{\Rr}^j)\otimes f(\overline{\Rr}_j)\otimes 1_{H'})(f(\chi^i)\otimes 1_{H'}\otimes f(\chi_i))(f(\Rr^k)\otimes f(\Rr_k)\otimes 1_{H'})\\
&=\chi'_{12}+(\Rr')^{-1}_{12}\chi'_{13}\Rr'_{12},
\end{split}
\]
hence $\chi'$ satisfies \eqref{cqtr2}.
Similarly, since \eqref{cqtr3} holds for $\chi$, then $\chi'$ satisfies \eqref{cqtr3}.
Thus, $(f(H), \Rr')$ is pre-Cartier through $\chi'$. Furthermore, if the quasitriangular bialgebra $(H, \Rr)$ is assumed to be Cartier, i.e., $\chi$ satisfies \eqref{eq:ctr2} in addition, then $(f(H), \Rr')$ is Cartier. In fact, 
\[
\begin{split}
\Rr'\chi'&=(f\otimes f)(\Rr)(f\otimes f)(\chi)=(f\otimes f)(\Rr\chi)=(f\otimes f)(\chi^{\mathrm{op}}\Rr)=(f\otimes f)(\chi^{\mathrm{op}})(f\otimes f)(\Rr)\\&=(f\otimes f)(\chi)^{\mathrm{op}}(f\otimes f)(\Rr)=(\chi')^\op\Rr'. 
\end{split}
\]
The Cartier triangular case follows from the previous ones.
\end{proof}
As a consequence, we obtain the following result.
\begin{corollary}\label{cor:quotient}
Let $H$ be a bialgebra and $I\subseteq H$ a bi-ideal.
If $(H,\Rr,\chi)$ is a (pre-)Cartier (quasi)triangular bialgebra, then so is the quotient bialgebra $H/I$.
\end{corollary}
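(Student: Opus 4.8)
The plan is to obtain Corollary~\ref{cor:quotient} as a direct consequence of Proposition~\ref{prop:image}. The key observation is that a bi-ideal $I\subseteq H$ is precisely the kernel of the canonical projection $\pi\colon H\to H/I$, which is a surjective bialgebra map. Thus I would first note that $\pi$ is a bialgebra map and that $\pi(H)=H/I$.

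Now apply Proposition~\ref{prop:image} to $f=\pi$. Since $(H,\Rr,\chi)$ is a (pre-)Cartier (quasi)triangular bialgebra, the proposition yields that $(\pi(H),(\pi\otimes\pi)(\Rr),(\pi\otimes\pi)(\chi))$ is a (pre-)Cartier (quasi)triangular bialgebra of the same type (quasitriangular, triangular, pre-Cartier, or Cartier, according to the hypotheses on $(H,\Rr,\chi)$). Because $\pi(H)=H/I$, this says exactly that $\bigl(H/I,(\pi\otimes\pi)(\Rr),(\pi\otimes\pi)(\chi)\bigr)$ is a (pre-)Cartier (quasi)triangular bialgebra, which is the assertion. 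Explicitly one sets $\bar{\Rr}:=(\pi\otimes\pi)(\Rr)$ and $\bar\chi:=(\pi\otimes\pi)(\chi)$, i.e., the images of $\Rr$ and $\chi$ in $(H/I)\otimes(H/I)$.

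There is essentially no obstacle here: the only thing to check is the trivial fact that $H/I$ carries a bialgebra structure making $\pi$ a bialgebra map, which is standard once $I$ is a bi-ideal, and that $\pi$ is surjective. Everything else is transported verbatim through Proposition~\ref{prop:image}. I would therefore keep the proof to a single sentence: apply Proposition~\ref{prop:image} to the canonical surjection $\pi\colon H\to H/I$, whose image is $H/I$.
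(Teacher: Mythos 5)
Your proposal is correct and matches the paper's intended argument: the corollary is stated immediately after Proposition~\ref{prop:image} as a consequence of it, obtained exactly by applying that proposition to the canonical surjection $\pi\colon H\to H/I$ with $\pi(H)=H/I$. Nothing further is needed.
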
	


Next, we show that the tensor product of (quasi)triangular bialgebras is (pre-)Cartier if at least one of them is (pre-)Cartier.

\begin{proposition}\label{prop:tensor}
Let $(H, \Rr)$, $(H',\Rr')$ be (quasi)triangular bialgebras. Then, so is $H\otimes H'$ through $\tilde{\Rr}=(\mathrm{Id}_{H}\otimes\tau_{H,H'}\otimes\mathrm{Id}_{H'})(\Rr\otimes \Rr')$, where $\tau$ is the canonical flip. Furthermore, if at least one of $(H,\Rr)$ and $(H',\Rr')$ is (pre-)Cartier, then  so is $(H\otimes H',\tilde{\Rr})$.
\end{proposition}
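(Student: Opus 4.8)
The statement has two parts: the (quasi)triangular claim for $(H\otimes H',\tilde{\Rr})$ and the (pre-)Cartier enhancement. For the first part I would argue as usual: identifying $(H\otimes H')^{\otimes 2}\cong H\otimes H'\otimes H\otimes H'$ via the shuffle isomorphism, $\tilde{\Rr}$ becomes $\Rr_{13}\,\Rr'_{24}$, with $\Rr_{13}$ living in the two $H$-legs and $\Rr'_{24}$ in the two $H'$-legs. Since multiplication in $H\otimes H'$ is componentwise, the factors $\Rr_{13}$ and $\Rr'_{24}$ commute, $\tilde{\Rr}$ is invertible with $\tilde{\Rr}^{-1}=\Rr_{13}^{-1}(\Rr'_{24})^{-1}$, and each of \eqref{qtr1}, \eqref{qtr2}, \eqref{qtr3} for $\tilde{\Rr}$ decouples into the corresponding identity for $\Rr$ in the $H$-legs and for $\Rr'$ in the $H'$-legs; similarly $\tilde{\Rr}^{-1}=\tilde{\Rr}^{\mathrm{op}}$ whenever $\Rr^{-1}=\Rr^{\mathrm{op}}$ and $(\Rr')^{-1}=(\Rr')^{\mathrm{op}}$. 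This part is standard (cf.\ \cite{Radford,Majid-book}).

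For the (pre-)Cartier claim, by the symmetry of the two tensor factors I may assume that $(H,\Rr,\chi)$ is (pre-)Cartier, and I would set
\[
\tilde{\chi}:=(\mathrm{Id}_{H}\otimes\tau_{H,H'}\otimes\mathrm{Id}_{H'})(\chi\otimes 1_{H'}\otimes 1_{H'})\in(H\otimes H')\otimes(H\otimes H'),
\]
which in the leg notation above is $\chi_{13}$, the two $H'$-legs carrying $1_{H'}$. The plan is to check that each of \eqref{cqtr1}, \eqref{cqtr2}, \eqref{cqtr3} for $(H\otimes H',\tilde{\Rr},\tilde{\chi})$ is equivalent to the same axiom for $(H,\Rr,\chi)$. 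For \eqref{cqtr1} this is immediate: the $H'$-factor of $\Delta_{H\otimes H'}$ passes freely past the $1_{H'}$-legs of $\tilde{\chi}$, so \eqref{cqtr1} for $\tilde{\chi}$ reduces to $\chi^i h_1\otimes\chi_i h_2=h_1\chi^i\otimes h_2\chi_i$, i.e.\ \eqref{cqtr1} for $\chi$. For \eqref{cqtr2} and \eqref{cqtr3} the key observation is that in the conjugation appearing on the right-hand sides the $(\Rr')_{24}$-part of $\tilde{\Rr}$ occupies legs disjoint from those of $\tilde{\chi}$ and hence drops out: $(\mathrm{Id}_{H\otimes H'}\otimes\Delta_{H\otimes H'})(\tilde{\chi})$ is $(\mathrm{Id}_H\otimes\Delta_H)(\chi)$ placed in the three $H$-legs, while $\tilde{\chi}_{12}+\tilde{\Rr}^{-1}_{12}\tilde{\chi}_{13}\tilde{\Rr}_{12}$ collapses to $\chi_{12}+\Rr^{-1}_{12}\chi_{13}\Rr_{12}$ placed in those same legs, so \eqref{cqtr2} for $\tilde{\chi}$ is precisely \eqref{cqtr2} for $\chi$, and \eqref{cqtr3} is handled identically. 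In the Cartier case, \eqref{eq:ctr2} for $\tilde{\chi}$ reads $\Rr_{13}\Rr'_{24}\,\chi_{13}=(\chi^{\mathrm{op}})_{13}\,\Rr_{13}\Rr'_{24}$; moving $\Rr'_{24}$ past $\chi_{13}$, using $\Rr\chi=\chi^{\mathrm{op}}\Rr$ in the $H$-legs and noting $(\tilde{\chi})^{\mathrm{op}}=(\chi^{\mathrm{op}})_{13}$, yields the claim, and the Cartier triangular case then combines this with the triangular part.

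The computations are routine once the indexing is fixed; the only load-bearing point, and hence the step I would be most careful with, is the identification under the shuffle isomorphism of the tensor legs occupied by $\tilde{\chi}$ and by the $\Rr'$-part of $\tilde{\Rr}$, since it is exactly their disjointness that makes the conjugations in \eqref{cqtr2}, \eqref{cqtr3} and \eqref{eq:ctr2} collapse to the $H$-level identities. I note that a purely functorial derivation through Proposition~\ref{prop:ff} is not available here, since the restriction functor ${}_{H\otimes H'}\Mm\to{}_{H}\Mm$ along $h\mapsto h\otimes 1_{H'}$ is faithful but in general not full, and in any case it would transfer structures in the wrong direction.
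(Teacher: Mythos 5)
Your argument is correct and follows essentially the same route as the paper: the same element $\tilde{\chi}=\chi^i\otimes 1_{H'}\otimes\chi_i\otimes 1_{H'}$, with \eqref{cqtr1}--\eqref{cqtr3} and \eqref{eq:ctr2} reducing componentwise to the corresponding axioms for $\chi$ because the $\Rr'$-part of $\tilde{\Rr}$ only conjugates the identity legs of $\tilde{\chi}$ (the paper records this as $\overline{\Rr}'^{m}\Rr'^{n}\otimes\overline{\Rr}'_{m}\Rr'_{n}=1_{H'}\otimes 1_{H'}$). The only cosmetic differences are that the paper obtains the quasitriangularity of $(H\otimes H',\tilde{\Rr})$ by citing \cite{Ch98} rather than by the direct decoupling you sketch, and it treats the case where $(H',\Rr')$ is (pre-)Cartier by the analogous element $\hat{\chi}=1_H\otimes\chi'^m\otimes 1_H\otimes\chi'_m$ instead of your symmetry reduction.
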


\begin{proof}
The first assertion can be obtained as a particular case of \cite[Theorem 2.2]{Ch98} once observed that $H\bowtie^{1\otimes 1} H'=H\otimes H'$ and $[\Rr,\Rr']=\tilde{\Rr}$, using the notations therein.
\begin{invisible}
Assume that $(H, \Rr)$, $(H',\Rr')$ are quasitriangular bialgebras and observe that $\tilde{\Rr}$ is an invertible element of $H\otimes H'\otimes H\otimes H'$, with inverse $\tilde{\Rr}^{-1}=(\mathrm{Id}_{H}\otimes\tau_{H,H'}\otimes\mathrm{Id}_{H'})(\Rr^{-1}\otimes\Rr'^{-1})$. Since $(H,\Rr)$ and $(H',\Rr')$ satisfy \eqref{qtr1}, i.e., for every $c\in H$ and $h\in H'$
\[
c_{2}\otimes c_{1}=\Rr^{i}c_{1}\overline{\Rr}^{j}\otimes\Rr_{i}c_{2}\overline{\Rr}_{j}\ \text{ and }\ h_{2}\otimes h_{1}=\Rr'^{k}h_{1}\overline{\Rr'}^{l}\otimes\Rr'_{k}h_{2}\overline{\Rr'}_{l},
\]
then \eqref{qtr1} holds for $(H\otimes H',\tilde{\Rr})$. Indeed we have that
\[
\begin{split}
\tilde{\Rr}&\Delta_{H\otimes H'}(c\otimes h)\tilde{\Rr}^{-1}=(\Rr^{i}\otimes\Rr'^{k}\otimes \Rr_{i}\otimes\Rr'_k)(c_{1}\otimes h_{1}\otimes c_{2}\otimes h_{2})(\overline{\Rr}^{j}\otimes\overline{\Rr'}^{l}\otimes\overline{\Rr}_{j}\otimes\overline{\Rr'}_{l})\\&=\Rr^{i}c_{1}\overline{\Rr}^{j}\otimes\Rr'^{k}h_{1}\overline{\Rr'}^{l}\otimes\Rr_{i}c_{2}\overline{\Rr}_{j}\otimes\Rr'_{k}h_{2}\overline{\Rr'}_{l}=c_{2}\otimes h_{2}\otimes c_{1}\otimes h_{1}=\Delta_{H\otimes H'}^{\mathrm{op}}(c\otimes h).
\end{split}
\]
Since $(H, \Rr)$, $(H',\Rr')$ satisfy \eqref{qtr2}, we have $\Rr^i\otimes\Rr_{i_1}\otimes\Rr_{i_2}=\Rr^i\Rr^j\otimes\Rr_j\otimes\Rr_i$ and $\Rr'^{k}\otimes\Rr'_{k_1}\otimes\Rr'_{k_2}=\Rr'^k\Rr'^l\otimes\Rr'_l\otimes\Rr'_k$. Then, we get \begin{displaymath}
\begin{split}
(\mathrm{Id}\otimes\Delta_{H\otimes H'})(\tilde{\Rr})&=(\mathrm{Id}\otimes\Delta_{H\otimes H'})(\Rr^i\otimes\Rr'^k\otimes\Rr_i\otimes\Rr'_k)\\
&=\Rr^i\otimes\Rr'^k\otimes\Rr_{i_1}\otimes\Rr'_{k_1}\otimes\Rr_{i_2}\otimes\Rr'_{k_2}\\
&=\Rr^i\Rr^j\otimes\Rr'^k\Rr'^l\otimes\Rr_j\otimes\Rr'_l\otimes\Rr_i\otimes \Rr'_k\\
&=(\Rr^i\otimes\Rr'^k\otimes 1_H\otimes 1_{H'}\otimes\Rr_i\otimes\Rr'_k)(\Rr^j\otimes\Rr'^l\otimes\Rr_j\otimes\Rr'_l \otimes 1_H\otimes 1_{H'})\\
&=(\Rr^i\otimes\Rr'^k\otimes\Rr_i\otimes\Rr'_k)_{13}(\Rr^j\otimes\Rr'^l\otimes\Rr_j\otimes\Rr'_l)_{12}
=\tilde{\Rr}_{13}\tilde{\Rr}_{12},
\end{split}
\end{displaymath}
thus \eqref{qtr2} holds for $(H\otimes H',\tilde{\Rr})$. Similarly, since $(H, \Rr)$, $(H',\Rr')$ satisfy \eqref{qtr3}, we have that \eqref{qtr3} holds for $(H\otimes H',\tilde{\Rr})$.
Hence $(H\otimes H',\tilde{\Rr})$ is a quasitriangular bialgebra.  
\end{invisible}

We show that if $\Rr^{-1}=\Rr^{\mathrm{op}}$ and $\Rr'^{-1}=\Rr'^{\mathrm{op}}$ then $\tilde{\Rr}^{-1}=\tilde{\Rr}^{\mathrm{op}}$. Indeed,
\[
\begin{split}
\tilde{\Rr}^{-1}&=(\mathrm{Id}_{H}\otimes\tau_{H,H'}\otimes\mathrm{Id}_{H'})(\Rr^{-1}\otimes\Rr'^{-1})=(\mathrm{Id}_{H}\otimes\tau_{H,H'}\otimes\mathrm{Id}_{H'})(\Rr^{\mathrm{op}}\otimes\Rr'^{\mathrm{op}})\\&=\Rr_i\otimes\Rr'_{j}\otimes\Rr^i\otimes\Rr'^j=\tau_{H\otimes H',H\otimes H'}(\tilde{\Rr})=\tilde{\Rr}^{\mathrm{op}}.
\end{split}
\]
Now, we suppose that $(H,\Rr)$ and $(H',\Rr')$ are  quasitriangular bialgebras and that $(H,\Rr)$ is pre-Cartier and we show that $H\otimes H'$ is pre-Cartier. Since $(H,\Rr)$ is pre-Cartier, there exists $\chi=\chi^i\otimes\chi_i\in H\otimes H$ 
such that \eqref{cqtr1}, \eqref{cqtr2} and \eqref{cqtr3} are satisfied. We define $\tilde{\chi}:=\chi^i\otimes1_{H'}\otimes\chi_{i}\otimes1_{H'}\in H\otimes H'\otimes H\otimes H'$.
Since $\chi$ satisfies \eqref{cqtr1}, we have $\chi^ic_{1}\otimes\chi_{i}c_{2}=c_{1}\chi^{i}\otimes c_{2}\chi_{i}$ 
for every $c\in H$, thus 
\[
\begin{split}
    \tilde{\chi}&\Delta_{H\otimes H'}(c\otimes d)=(\chi^i\otimes1_{H'}\otimes\chi_{i}\otimes1_{H'})(c_{1}\otimes d_{1}\otimes c_{2}\otimes d_{2})=\chi^ic_{1}\otimes d_{1}\otimes\chi_{i}c_{2}\otimes d_{2}\\&=c_{1}\chi^i\otimes d_{1}\otimes c_{2}\chi_{i}\otimes d_{2}=(c_{1}\otimes d_{1}\otimes c_{2}\otimes d_{2})(\chi^{i}\otimes1_{H'}\otimes\chi_{i}\otimes1_{H'})=\Delta_{H\otimes H'}(c\otimes d)\tilde{\chi},
\end{split}
\]
so that $\tilde{\chi}$ satisfies \eqref{cqtr1}. Now, since $\chi$ satisfies \eqref{cqtr2}, i.e., $\chi^i\otimes\chi_{i_1}\otimes\chi_{i_2}=\chi^{i}\otimes\chi_{i}\otimes1_{H}+\overline{\Rr}^{j}\chi^{i}\Rr^{k}\otimes\overline{\Rr}_{j}\Rr_{k}\otimes\chi_{i}$, 
we obtain that
\[
\begin{split}
    &(\mathrm{Id}\otimes\Delta_{H\otimes H'})(\tilde{\chi})=\chi^i\otimes1_{H'}\otimes\chi_{i_1}\otimes1_{H'}\otimes\chi_{i_2}\otimes1_{H'}\\&=\chi^{i}\otimes1_{H'}\otimes\chi_{i}\otimes1_{H'}\otimes1_{H}\otimes1_{H'}+\overline{\Rr}^{j}\chi^i\Rr^{k}\otimes1_{H'}\otimes\overline{\Rr}_{j}\Rr_{k}\otimes1_{H'}\otimes\chi_{i}\otimes1_{H'}\\
    &=\tilde{\chi}_{12}+(\overline{\Rr}^{j}\otimes\overline{\Rr}'^{m}\otimes\overline{\Rr}_{j}\otimes\overline{\Rr}'_{m}\otimes1_{H}\otimes1_{H'})(\chi^{i}\otimes1_{H'}\otimes1_{H}\otimes1_{H'}\otimes\chi_{i}\otimes1_{H'})(\Rr^{k}\otimes\Rr'^{n}\otimes\Rr_{k}\otimes\Rr'_{n}\otimes1_{H}\otimes1_{H'})\\
    &=\tilde{\chi}_{12}+\tilde{\Rr}^{-1}_{12}\tilde{\chi}_{13}\tilde{\Rr}_{12},
\end{split}
\]
where in the third equality we use that $\overline{\Rr}'^{m}\Rr'^{n}\otimes\overline{\Rr}'_{m}\Rr'_{n}=\Rr'^{-1}\Rr'=1_{H'}\otimes1_{H'}$ and so $\tilde{\chi}$ satisfies \eqref{cqtr2}. Similarly, since $\chi$ satisfies \eqref{cqtr3}, then $\tilde{\chi}$ satisfies \eqref{cqtr3} and thus $H\otimes H'$ is a pre-Cartier quasitriangular bialgebra through $\tilde{\chi}$. 
Moreover, if $\Rr\chi=\chi^{\mathrm{op}}\Rr$, 
i.e., $\Rr^{k}\chi^i\otimes\Rr_{k}\chi_{i}=\chi_{i}\Rr^{k}\otimes\chi^{i}\Rr_{k}$, 
we obtain that
\[
\begin{split}
    \tilde{\Rr}\tilde{\chi}&=(\Rr^{k}\otimes\Rr'^{j}\otimes\Rr_{k}\otimes\Rr'_{j})(\chi^{i}\otimes1_{H'}\otimes\chi_{i}\otimes1_{H'})
    =\Rr^{k}\chi^{i}\otimes\Rr'^{j}\otimes\Rr_{k}\chi_{i}\otimes\Rr'_{j}\\
    &=\chi_{i}\Rr^{k}\otimes\Rr'^{j}\otimes\chi^{i}\Rr_{k}\otimes\Rr'_{j}
    =(\chi_{i}\otimes1_{H'}\otimes\chi^{i}\otimes1_{H'})(\Rr^{k}\otimes\Rr'^{j}\otimes\Rr_{k}\otimes\Rr'_{j})=\tilde{\chi}^{\mathrm{op}}\tilde{\Rr},
\end{split}
\]
then $H\otimes H'$ is Cartier.
\end{proof}

Observe that, if $(H',\Rr')$ is (pre-)Cartier with $\chi'=\chi'^{m}\otimes\chi'_{m}$, then $H\otimes H'$ is (pre-)Cartier with $\hat{\chi}:=1_{H}\otimes\chi'^{m}\otimes1_{H}\otimes\chi'_{m}$ by analogue computations. 
In view of Remark \ref{rmk:chivs} we can even take a linear combination 
of $\tilde{\chi}$ and $\hat{\chi}$.

\begin{example}
    Given Sweedler's Hopf algebra $(H,\Rr_{\lambda})$ with infinitesimal $\Rr$-matrix $\chi_{\alpha}=\alpha xg\otimes x$, with $\alpha\in\Bbbk$, and an arbitrary (quasi)triangular bialgebra $(H',\Rr')$, we obtain that $H\otimes H'$ is (quasi)triangular with $\tilde{\Rr}:=(\mathrm{Id}_{H}\otimes\tau_{H,H'}\otimes\mathrm{Id}_{H'})(\Rr_{\lambda}\otimes\Rr')$ and it is pre-Cartier with infinitesimal $\Rr$-matrix $\tilde{\chi_{\alpha}}:=\alpha xg\otimes1_{H'}\otimes x\otimes1_{H'}$.  
\end{example}


\subsection{Twisting pre-Cartier quasitriangular bialgebras}\label{secTwist}

First we recall the notion of Drinfel'd twist. An invertible element $\Ff\in H\otimes H$ is said to be a \textbf{Drinfel'd twist} if the (dual) $2$-cocycle condition and the normalization properties
\begin{align}
    (\Ff\otimes 1_{H})(\Delta\otimes\mathrm{Id}_H)(\Ff)&=(1_{H}\otimes\Ff)(\mathrm{Id}_H\otimes\Delta)(\Ff),\label{2-cocy}\\
    (\varepsilon\otimes\mathrm{Id}_H)(\Ff)&=1_{H}=(\mathrm{Id}_H\otimes\varepsilon)(\Ff),\label{norm}
\end{align}
are satisfied. This notion goes back to Drinfel'd \cite{Dr87}.

Given a Drinfel'd twist $\Ff$ we consider the linear map $\Delta_\Ff\colon H\rightarrow H\otimes H$ defined via
\begin{equation*}
    \Delta_\Ff(\cdot):=\Ff\Delta(\cdot)\Ff^{-1}.
\end{equation*}
We employ the short notation $\Delta_\Ff(h)=h_{1_\Ff}\otimes h_{2_\Ff}$ for $h\in H$.
Then, $H_\Ff:=(H,m,u,\Delta_\Ff,\varepsilon)$ is a bialgebra. If $H$ is a Hopf algebra with antipode $S$, then also $H_\Ff$ is a Hopf algebra with antipode $S_\Ff$ given by $S_{\Ff}(a)=US(a)U^{-1}$, for any $a\in H$, where $U:=\Ff^i S(\Ff_i)$ is invertible.

If $(H,\Rr)$ is a (quasi)triangular bialgebra, so is $H_\Ff$ with universal $\Rr$-matrix $\Rr_\Ff:=\Ff^\op\Rr\Ff^{-1}$, c.f. \cite[Theorem 2.3.4]{Majid-book}. Observe that every quasitriangular structure $\Rr$ on a bialgebra $H$ is a Drinfel’d twist. In fact, $\Rr$ is normalized and satisfies the 2-cocycle property since
\[
\Rr_{12}(\Delta\otimes\mathrm{Id}_H)(\Rr)\overset{\eqref{qtr3}}=\Rr_{12}\Rr_{13}\Rr_{23}\overset{\eqref{eq:QYB}}{=}\Rr_{23}\Rr_{13}\Rr_{12}\overset{\eqref{qtr2}}=\Rr_{23}(\mathrm{Id}_H\otimes\Delta)(\Rr).\]

Note that every left $H$-module is also a left $H_\Ff$-module (and viceversa), since the algebra structures of $H$ and $H_\Ff$ coincide. This defines an isomorphism of categories
\begin{equation*}
    \mathrm{Drin}_\Ff\colon{}_H\Mm\rightarrow{}_{H_\Ff}\Mm,\quad
    M\mapsto M_\Ff,
\end{equation*}
where $M_\Ff$ is the vector space $M$ endowed with the left $H_\Ff$-action instead of the given $H$-action. The inverse functor is denoted by $\mathrm{Drin}_\Ff^{-1}\colon{}_{H_\Ff}\Mm\rightarrow{}_H\Mm$, $M\mapsto M_{\Ff^{-1}}$. Note that $\Ff^{-1}$ is a Drinfel'd twist on $H_\Ff$. Since the comultiplications of $H$ and $H_\Ff$ are different, the monoidal product of ${}_{H_\Ff}\Mm$ is denoted by $M\otimes_\Ff N$ for objects $M,N$ in ${}_{H_\Ff}\Mm$, where $M\otimes_\Ff N$ equals $M\otimes N$ as a vector space but is understood as a left $H_\Ff$-module via
\begin{equation*}
    h\cdot(m\otimes_\Ff n):=\Delta_\Ff(h)\cdot(m\otimes n)=(h_{1_\Ff}\cdot m)\otimes(h_{2_\Ff}\cdot n)
\end{equation*}
for all $h\in H_\Ff$, $m\in M$ and $n\in N$. One can show that the functor $\mathrm{Drin}_\Ff$ is strong monoidal with natural isomorphisms
\begin{equation*}
    M_\Ff\otimes_\Ff N_\Ff\xrightarrow{\cong}(M\otimes N)_\Ff,\quad
    m\otimes_\Ff n\mapsto\Ff^{-1}\cdot(m\otimes n)=(\overline{\Ff}^i\cdot m)\otimes(\overline{\Ff}_i\cdot n),
\end{equation*}
for all $M,N$ objects in ${}_H\Mm$.
If $(H,\Rr)$ is quasitriangular and $\Ff$ a Drinfel'd twist on $H$ we have a braided strong monoidal equivalence $({}_H\Mm,\otimes,\sigma^\Rr)\cong({}_{H_F}\Mm,\otimes_\Ff,\sigma^{\Rr_\Ff})$ given by the Drinfel'd functor $\mathrm{Drin}_\Ff$, see e.g. \cite[Lemma XV.3.7]{Kassel}, \cite[Section 5.3]{AS14}. \\

Using the Drinfel'd twist $\Ff$, we can obtain an infinitesimal $\Rr$-matrix for $H_{\Ff}$.

\begin{theorem}\label{thm:twist}
Consider a pre-Cartier (quasi)triangular bialgebra $(H,\Rr,\chi)$ and a Drinfel'd twist $\Ff$ on $H$. Then, $(H_\Ff,\Rr_\Ff,\chi_\Ff)$ is a pre-Cartier (quasi)triangular bialgebra, where $\chi_\Ff:=\Ff\chi\Ff^{-1}$. If $(H,\Rr,\chi)$ is Cartier, so is $(H_\Ff,\Rr_\Ff,\chi_\Ff)$.
\end{theorem}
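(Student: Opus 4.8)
The plan is to verify directly that $\chi_\Ff:=\Ff\chi\Ff^{-1}$ satisfies the defining axioms \eqref{cqtr1}, \eqref{cqtr2}, \eqref{cqtr3} (and \eqref{eq:ctr2} in the Cartier case) with respect to the twisted data $\Delta_\Ff=\Ff\Delta(\cdot)\Ff^{-1}$ and $\Rr_\Ff=\Ff^\op\Rr\Ff^{-1}$; that $(H_\Ff,\Rr_\Ff)$ is again (quasi)triangular is already recorded above. The recurring principle is that conjugation by an element of $H\otimes H$ (by $\Ff$, or by $\Ff_{23}$ in the identity $\mathrm{Id}\otimes\Delta_\Ff=\mathrm{Ad}_{\Ff_{23}}\circ(\mathrm{Id}\otimes\Delta)$, etc.) is algebra-multiplicative, so that each twisted identity can be manufactured from the corresponding untwisted one after inserting $\Ff^{\pm1}$ and invoking the $2$-cocycle property \eqref{2-cocy} and quasi-cocommutativity \eqref{qtr1}. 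As a byproduct one records $(\chi_\Ff)_{12}=\Ff_{12}\chi_{12}\Ff_{12}^{-1}$, $(\chi_\Ff)_{13}=\Ff_{13}\chi_{13}\Ff_{13}^{-1}$, $(\Rr_\Ff)_{12}=\Ff_{21}\Rr_{12}\Ff_{12}^{-1}$.

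I would first dispatch \eqref{cqtr1}: $\chi_\Ff\Delta_\Ff(h)=\Ff\chi\Delta(h)\Ff^{-1}\overset{\eqref{cqtr1}}{=}\Ff\Delta(h)\chi\Ff^{-1}=\Delta_\Ff(h)\chi_\Ff$. The Cartier condition is equally quick once one notes that the flip $\op=\tau$ is an algebra automorphism of $H\otimes H$, so $(\Ff\chi\Ff^{-1})^\op=\Ff^\op\chi^\op(\Ff^\op)^{-1}$; then $\Rr_\Ff\chi_\Ff=\Ff^\op\Rr\chi\Ff^{-1}\overset{\eqref{eq:ctr2}}{=}\Ff^\op\chi^\op\Rr\Ff^{-1}=\chi_\Ff^\op\,\Ff^\op\Rr\Ff^{-1}=\chi_\Ff^\op\Rr_\Ff$.

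The heart of the argument is \eqref{cqtr2} (and, symmetrically, \eqref{cqtr3}). Using that $\mathrm{Id}\otimes\Delta_\Ff$ is an algebra map equal to $\mathrm{Ad}_{\Ff_{23}}\circ(\mathrm{Id}\otimes\Delta)$ and applying \eqref{cqtr2} for $\chi$, one gets
\[
(\mathrm{Id}\otimes\Delta_\Ff)(\chi_\Ff)=\Ff_{23}(\mathrm{Id}\otimes\Delta)(\Ff)\bigl[\chi_{12}+\Rr^{-1}_{12}\chi_{13}\Rr_{12}\bigr](\mathrm{Id}\otimes\Delta)(\Ff)^{-1}\Ff_{23}^{-1}.
\]
Next I would use \eqref{2-cocy} to rewrite $\Ff_{23}(\mathrm{Id}\otimes\Delta)(\Ff)=\Ff_{12}(\Delta\otimes\mathrm{Id})(\Ff)$. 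In the resulting sum the $\chi_{12}$-summand collapses to $\Ff_{12}\chi_{12}\Ff_{12}^{-1}=(\chi_\Ff)_{12}$, because $(\Delta\otimes\mathrm{Id})(\Ff)$ commutes with $\chi_{12}$ — a direct consequence of \eqref{cqtr1}. For the $\Rr^{-1}_{12}\chi_{13}\Rr_{12}$-summand I would conjugate $(\Delta\otimes\mathrm{Id})(\Ff)$ past $\Rr_{12}$ via \eqref{qtr1}, turning it into $(\Delta^\op\otimes\mathrm{Id})(\Ff)$, and then apply the image of \eqref{2-cocy} under the flip of the first two legs, namely $\Ff_{21}(\Delta^\op\otimes\mathrm{Id})(\Ff)=\Ff_{13}\Theta$ with $\Theta$ again commuting with $\chi_{13}$ by \eqref{cqtr1}; reassembling identifies this summand with $(\Rr_\Ff^{-1})_{12}(\chi_\Ff)_{13}(\Rr_\Ff)_{12}$. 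Equation \eqref{cqtr3} is proved the same way with the roles of the leg pairs $(1,2)$ and $(2,3)$ interchanged, using \eqref{cqtr3} for $\chi$.

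The main obstacle is precisely the leg-notation bookkeeping in the $\chi_{13}$-term: one must manage three interleaved conjugations — by $\Ff$, by $(\Delta\otimes\mathrm{Id})(\Ff)$, and by $\Rr_{12}$ — and recognize when the residual factor $\Theta$ from the flipped $2$-cocycle relation drops out against $\chi_{13}$. A more conceptual route that avoids this calculation is also available: by Theorem~\ref{thm}, $\chi$ corresponds to the infinitesimal braiding $t_{M,N}(m\otimes n)=\chi\cdot(m\otimes n)$ on $({}_H\Mm,\otimes,\sigma^\Rr)$; transporting $t$ along the braided strong monoidal equivalence $\mathrm{Drin}_\Ff$ via Proposition~\ref{prop:ff}, and using that the monoidal constraint of $\mathrm{Drin}_\Ff$ is ``rescaling by $\Ff^{-1}$'' while the action of $H\otimes H$ on tensor products is multiplicative, the transported infinitesimal braiding acts on underlying spaces as $(\Ff\chi\Ff^{-1})\cdot(-)$; Theorem~\ref{thm} applied to $(H_\Ff,\Rr_\Ff)$ then reads off $\chi_\Ff=\Ff\chi\Ff^{-1}$ as the corresponding infinitesimal $\Rr$-matrix, and Proposition~\ref{prop:ff}(iii) covers the Cartier case.
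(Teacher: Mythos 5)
Your proposal is correct and follows essentially the same route as the paper's proof: a direct check of \eqref{cqtr1} and \eqref{eq:ctr2} by conjugation, and for \eqref{cqtr2}--\eqref{cqtr3} exactly the paper's manipulation using \eqref{2-cocy}, \eqref{qtr1}, \eqref{cqtr1}, and the flipped $2$-cocycle identity $\Ff_{21}(\Delta^\mathrm{op}\otimes\mathrm{Id})(\Ff)=\Ff_{13}((\Ff_i)_1\otimes\Ff^i\otimes(\Ff_i)_2)$ to handle the $\chi_{13}$-term. The alternative categorical argument via Theorem~\ref{thm} and Proposition~\ref{prop:ff} that you sketch is also the one the paper itself acknowledges (in the example preceding Section~\ref{Sec:2}) as an indirect proof, so nothing essential differs.
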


\begin{proof}
We have to prove that $\chi_{\Ff}$ satisfies \eqref{cqtr1}, \eqref{cqtr2} and \eqref{cqtr3}. We have that
\[
\chi_{\Ff}\Delta_{\Ff}(h)=\Ff\chi\Ff^{-1}\Ff\Delta(h)\Ff^{-1}=\Ff\chi\Delta(h)\Ff^{-1}=\Ff\Delta(h)\chi\Ff^{-1}=\Ff\Delta(h)\Ff^{-1}\Ff\chi\Ff^{-1}=\Delta_{\Ff}(h)\chi_{\Ff}
\]
for every $h\in H$, i.e., $\chi_{\Ff}$ satisfies \eqref{cqtr1}.
Next, 
\begin{align*}
    (\mathrm{Id}&\otimes\Delta_\Ff)(\chi_\Ff)
    =\Ff_{23}(\mathrm{Id}\otimes\Delta)(\Ff)(\mathrm{Id}\otimes\Delta)(\chi)(\mathrm{Id}\otimes\Delta)(\Ff^{-1})\Ff^{-1}_{23}\\
    &\overset{\eqref{cqtr2}}{=}\Ff_{23}(\mathrm{Id}\otimes\Delta)(\Ff)\chi_{12}(\mathrm{Id}\otimes\Delta)(\Ff^{-1})\Ff^{-1}_{23}
    +\Ff_{23}(\mathrm{Id}\otimes\Delta)(\Ff)\Rr^{-1}_{12}\chi_{13}\Rr_{12}(\mathrm{Id}\otimes\Delta)(\Ff^{-1})\Ff^{-1}_{23}\\
    &\overset{\eqref{2-cocy}}{=}\Ff_{12}(\Delta\otimes\mathrm{Id})(\Ff)\chi_{12}(\Delta\otimes\mathrm{Id})(\Ff^{-1})\Ff^{-1}_{12}
    +\Ff_{12}(\Delta\otimes\mathrm{Id})(\Ff)\Rr^{-1}_{12}\chi_{13}\Rr_{12}(\Delta\otimes\mathrm{Id})(\Ff^{-1})\Ff^{-1}_{12}\\
    &\overset{\eqref{cqtr1},\eqref{qtr1}}{=}\Ff_{12}\chi_{12}(\Delta\otimes\mathrm{Id})(\Ff)(\Delta\otimes\mathrm{Id})(\Ff^{-1})\Ff^{-1}_{12}
    +\Ff_{12}\Rr^{-1}_{12}(\Delta^\mathrm{op}\otimes\mathrm{Id})(\Ff)\chi_{13}(\Delta^\mathrm{op}\otimes\mathrm{Id})(\Ff^{-1})\Rr_{12}\Ff^{-1}_{12}\\
    &=(\chi_\Ff)_{12}
    +(\Rr^{-1}_\Ff)_{12}\Ff_{21}(\Delta^\mathrm{op}\otimes\mathrm{Id})(\Ff)\chi_{13}(\Delta^\mathrm{op}\otimes\mathrm{Id})(\Ff^{-1})\Ff_{21}^{-1}(\Rr_\Ff)_{12}\\
    &\overset{(*)}{=}(\chi_\Ff)_{12}
    +(\Rr^{-1}_\Ff)_{12}\Ff_{13}((\Ff_i)_1\otimes\Ff^i\otimes(\Ff_i)_2)\chi_{13}(\Delta^\mathrm{op}\otimes\mathrm{Id})(\Ff^{-1})\Ff_{21}^{-1}(\Rr_\Ff)_{12}\\
    &\overset{\eqref{cqtr1}}{=}(\chi_\Ff)_{12}
    +(\Rr^{-1}_\Ff)_{12}\Ff_{13}\chi_{13}((\Ff_i)_1\otimes\Ff^i\otimes(\Ff_i)_2)(\Delta^\mathrm{op}\otimes\mathrm{Id})(\Ff^{-1})\Ff_{21}^{-1}(\Rr_\Ff)_{12}\\
    &\overset{(**)}{=}(\chi_\Ff)_{12}
    +(\Rr^{-1}_\Ff)_{12}\Ff_{13}\chi_{13}\Ff_{13}^{-1}(\Rr_\Ff)_{12}
    =(\chi_\Ff)_{12}
    +(\Rr^{-1}_\Ff)_{12}(\chi_\Ff)_{13}(\Rr_\Ff)_{12},
\end{align*}
where in the equations $(*)$ and $(**)$ we used the $2$-cocycle property $\Ff_{12}(\Delta\otimes\mathrm{Id})(\Ff)=\Ff_{23}(\mathrm{Id}\otimes\Delta)(\Ff)$ with $\tau_{H,H}\otimes\mathrm{Id}_H$ applied to both sides.

Similarly, $(\Delta_\Ff\otimes\mathrm{Id})(\chi_\Ff)=(\chi_\Ff)_{23}
    +(\Rr^{-1}_\Ff)_{23}(\chi_\Ff)_{13}(\Rr_\Ff)_{23}$ is proven.
\begin{invisible}
\begin{align*}
    (\Delta_\Ff&\otimes\mathrm{Id})(\chi_\Ff)
    =\Ff_{12}(\Delta\otimes\mathrm{Id})(\Ff)(\Delta\otimes\mathrm{Id})(\chi)(\Delta\otimes\mathrm{Id})(\Ff^{-1})\Ff^{-1}_{12}\\
    &\overset{\eqref{cqtr3}}{=}\Ff_{12}(\Delta\otimes\mathrm{Id})(\Ff)\chi_{23}(\Delta\otimes\mathrm{Id})(\Ff^{-1})\Ff^{-1}_{12}
    +\Ff_{12}(\Delta\otimes\mathrm{Id})(\Ff)\Rr^{-1}_{23}\chi_{13}\Rr_{23}(\Delta\otimes\mathrm{Id})(\Ff^{-1})\Ff^{-1}_{12}\\
    &\overset{\eqref{2-cocy}}{=}\Ff_{23}(\mathrm{Id}\otimes\Delta)(\Ff)\chi_{23}(\mathrm{Id}\otimes\Delta)(\Ff^{-1})\Ff^{-1}_{23}
    +\Ff_{23}(\mathrm{Id}\otimes\Delta)(\Ff)\Rr^{-1}_{23}\chi_{13}\Rr_{23}(\mathrm{Id}\otimes\Delta)(\Ff^{-1})\Ff^{-1}_{23}\\
    &\overset{\eqref{cqtr1},\eqref{qtr1}}{=}(\chi_\Ff)_{23}
    +\Ff_{23}\Rr^{-1}_{23}(\mathrm{Id}\otimes\Delta^\mathrm{op})(\Ff)\chi_{13}(\mathrm{Id}\otimes\Delta^\mathrm{op})(\Ff^{-1})\Rr_{23}\Ff^{-1}_{23}\\
    &=(\chi_\Ff)_{23}
    +(\Rr^{-1}_\Ff)_{23}\Ff_{32}(\mathrm{Id}\otimes\Delta^\mathrm{op})(\Ff)\chi_{13}(\mathrm{Id}\otimes\Delta^\mathrm{op})(\Ff^{-1})\Ff^{-1}_{32}(\Rr_\Ff)_{23}\\
    &\overset{(***)}{=}(\chi_\Ff)_{23}
    +(\Rr^{-1}_\Ff)_{23}\Ff_{13}(\Ff^i_1\otimes\Ff_i\otimes\Ff^i_2)\chi_{13}(\mathrm{Id}\otimes\Delta^\mathrm{op})(\Ff^{-1})\Ff^{-1}_{32}(\Rr_\Ff)_{23}\\
    &\overset{\eqref{cqtr1}}{=}(\chi_\Ff)_{23}
    +(\Rr^{-1}_\Ff)_{23}\Ff_{13}\chi_{13}(\Ff^i_1\otimes\Ff_i\otimes\Ff^i_2)(\mathrm{Id}\otimes\Delta^\mathrm{op})(\Ff^{-1})\Ff^{-1}_{32}(\Rr_\Ff)_{23}\\
    &\overset{(****)}{=}(\chi_\Ff)_{23}
    +(\Rr^{-1}_\Ff)_{23}\Ff_{13}\chi_{13}\Ff^{-1}_{13}(\Rr_\Ff)_{23}
    =(\chi_\Ff)_{23}
    +(\Rr^{-1}_\Ff)_{23}(\chi_\Ff)_{13}(\Rr_\Ff)_{23},
\end{align*}
where in $(***)$ and $(****)$ we used the $2$-cocycle property $\Ff_{23}(\mathrm{Id}\otimes\Delta)(\Ff)=\Ff_{12}(\Delta\otimes\mathrm{Id})(\Ff)$ with $\mathrm{Id}_H\otimes\tau_{H,H}$ applied to both sides.
\end{invisible}

If $\Rr\chi=\chi^\mathrm{op}\Rr$, it follows that
$
\Rr_\Ff\chi_\Ff
=\Ff^\mathrm{op}\Rr\Ff^{-1}\Ff\chi\Ff^{-1}
=\Ff^\mathrm{op}\chi^\mathrm{op}\Rr\Ff^{-1}
=\chi_\Ff^\mathrm{op}\Rr_\Ff
$.
We have thus proved that $(H_\Ff,\Rr_\Ff,\chi_\Ff)$ is Cartier if $(H,\Rr,\chi)$ is.
\end{proof}

\begin{remark}
  Observe that, on the other hand, given a (quasi)triangular bialgebra $(H,\Rr)$ and a Drinfel'd twist $\Ff$ on $H$, if $(H_\Ff,\Rr_\Ff,\chi)$ is (pre-)Cartier, then $(H,\Rr,\Ff^{-1}\chi\Ff)$ is (pre-)Cartier. 
\end{remark}

\begin{remark}\label{rmk:Rtwist}
    If we have a (pre-)Cartier quasitriangular bialgebra $(H,\Rr,\chi)$, by using $\Rr$ as a Drinfel'd twist, we obtain immediately that $(H_{\Rr},\Rr_{\Rr},\chi_{\Rr})$ is (pre-)Cartier quasitriangular with $\Delta_{\Rr}=\Rr\Delta(\cdot)\Rr^{-1}=\Delta^{\mathrm{op}}$, $\Rr_{\Rr}=\Rr^{\mathrm{op}}$ and $\chi_{\Rr}=\Rr\chi\Rr^{-1}$. 
\end{remark}

\begin{example}
It is known (see \cite{AEG01}) that Sweedler's Hopf algebra has a 1-parameter family of Drinfel'd twists $\Ff_{t}\in H\otimes H$ of the form
\[
\textstyle \Ff_{t}:=1\otimes1+\frac{t}{2}xg\otimes x,
\]
for $t\in\Bbbk$. 
\begin{invisible}
 If we use the notation  $\chi_{\alpha}=\alpha(xg\otimes x)$, we have that   $\Ff_{t}:=1\otimes1+\chi_{t/2}$.
 Thus 
 \[
\begin{split}
(\Rr_{\lambda})_{\Ff_{t}}&=\Ff_{t}^{\mathrm{op}}\Rr_{\lambda}\Ff_{t}^{-1}=(1\otimes1+\chi_{t/2})\Rr_{\lambda}(1\otimes1-\chi_{t/2})\\
&=(1\otimes1+\chi_{t/2})(\Rr_{\lambda}-\Rr_{\lambda}\chi_{t/2})\\
&=\Rr_{\lambda}+\chi_{t/2}\Rr_{\lambda}-\Rr_{\lambda}\chi_{t/2}-\chi_{t/2}\Rr_{\lambda}\chi_{t/2}\\
&\overset{(*)}{=}\Rr_{\lambda}+[\chi_{t/2},\Rr_{\lambda}]+\chi_{t/2}\chi_{t/2}^\op\Rr_{\lambda}
=\Rr_{\lambda}+[\chi_{t/2},\Rr_{\lambda}]
\end{split}
\]
where in $(*)$ we used Remark \ref{rmk:Sw}.
\end{invisible}
We know that $(H,\Rr_{\lambda})$ is triangular for every $\lambda\in\Bbbk$, thus we have that $(H_{\Ff_{t}},(\Rr_{\lambda})_{\Ff_{t}})$ is triangular with
\[
\begin{split}
(\Rr_{\lambda})&_{\Ff_{t}}=\Ff_{t}^{\mathrm{op}}\Rr_{\lambda}\Ff_{t}^{-1}=(1\otimes1+\frac{t}{2}x\otimes xg)\Rr_{\lambda}(1\otimes1-\frac{t}{2}xg\otimes x)\\&=\Rr_{\lambda}-\frac{t}{4}(1\otimes1+1\otimes g+g\otimes 1-g\otimes g)(xg\otimes x)+\frac{t}{4}(x\otimes xg)(1\otimes1+1\otimes g+g\otimes1-g\otimes g)\\&=\Rr_{\lambda}+\frac{t}{2}(x\otimes xg+x\otimes x+xg\otimes xg-xg\otimes x).
\end{split}
\]
Furthermore, we have shown that, for every $\lambda\in\Bbbk$, $(H,\Rr_{\lambda})$ becomes pre-Cartier with $\chi_{\alpha}=\alpha xg\otimes x$, $\alpha\in\Bbbk$. Hence we have that $(H_{\Ff_{t}},(\Rr_{\lambda})_{\Ff_{t}})$ becomes pre-Cartier with 
\[
(\chi_{\alpha})_{\Ff_{t}}=\Ff_{t}\chi_{\alpha}\Ff_{t}^{-1}=\alpha(1\otimes1+\frac{t}{2}xg\otimes x)(xg\otimes x)(1\otimes 1-\frac{t}{2}xg\otimes x)=\alpha xg\otimes x=\chi_{\alpha}.
\]
Thus, $\chi_{\alpha}$ is invariant under Drinfel'd twist deformations. Furthermore, by Remark \ref{rmk:Rtwist}, we can also take $\Ff=\Rr_{\lambda}$ and then we obtain that $(H_{\Rr_\lambda},\Rr_\lambda^{\mathrm{op}},\Rr_\lambda\chi_{\alpha}\Rr_\lambda^{\mathrm{op}})$ is pre-Cartier. But $\chi_{\alpha}=\alpha xg\otimes x$ and then we can show $\Rr_{\lambda}\chi_{\alpha}\Rr_{\lambda}^{\mathrm{op}}=
-\frac{\alpha}{4}(x\otimes xg)$. 
\end{example}

\subsection{The role of Hochschild cohomology of coalgebras}\label{secCohom} In this subsection we show one of the most relevant properties of the infinitesimal $\Rr$-matrix $\chi$ of a pre-Cartier quasitriangular bialgebra $(H,\Rr,\chi)$, that is, it is always a 2-cocycle in Hochschild cohomology. Furthermore, if $H$ is a Cartier triangular Hopf algebra on a field $\Bbbk$ with $\mathrm{char}\left( \Bbbk \right) \neq 2$, then $\chi$ is a 2-coboundary.
\\

We use \cite[XVIII.5]{Kassel} for the notion of (Hochschild) cohomology for a coalgebra. Let $(H, m, u,\Delta,\varepsilon)$ be a bialgebra and regard $\Bbbk $ as an $H$-bicomodule via the unit $u$, i.e., with left and right coactions $k\mapsto1_{H}\otimes k$ and $k\mapsto k\otimes1_{H}$, respectively. Then, we can consider the cobar complex of $H$
$$\xymatrix{\Bbbk\ar[r]^-{b^0}& H\ar[r]^-{b^1}& H\otimes H\ar[r]^-{b^2}& H\otimes H\otimes H\ar[r]^-{b^3}&\cdots }$$
The differential $b^n:H^{\otimes n}\to H^{\otimes n+1}$ is given by $b^n=\sum_{i=0}^{n+1}(-1)^i\delta_n^i$ where $\delta^i_n : H^{\otimes n}\to H^{\otimes n+1}$ are the linear maps
\[
\delta^i_n(x_1\otimes \cdots\otimes x_n) = \begin{cases}
1\otimes x_1\otimes\cdots\otimes x_n, \text{ if } i=0\\
x_1\otimes\cdots\otimes x_{i-1}\otimes\Delta(x_i)\otimes x_{i+1}\otimes\cdots\otimes x_n, \text{ if } 1\leq i \leq n\\
x_1\otimes \cdots\otimes x_{n}\otimes 1, \text{ if } i=n+1.
\end{cases}
\]
Here, if $n=0$, we set $H^{\otimes 0}=\Bbbk$ and $\delta^0_0 (1_{\Bbbk})=\delta^1_0(1_{\Bbbk})=1_{H}$. For instance, for small values of $n$, we have $b^0(k)=0$ for $k\in\Bbbk$, $b^1(x)=1\otimes x-\Delta(x)+x\otimes 1$ for $x\in H$, $b^2(x\otimes y)=1\otimes x\otimes y-(\Delta\otimes\mathrm{Id}_H)(x\otimes y)+(\mathrm{Id}_H\otimes \Delta)(x\otimes y)-x\otimes y\otimes 1$, for $x\otimes y\in H\otimes H$, and so on. The elements in $\mathrm{Z}^n(H):=\mathrm{Ker}(b^n)$ are the \textit{Hochschild $n$-cocycles}, while the elements in $\mathrm{B}^n(H):=\mathrm{Im}(b^{n-1})$ are the \textit{Hochschild $n$-coboundaries}. The $n$-th cohomology group is $\mathrm{H}^n( H)=\frac{\mathrm{Z}^n(H)}{\mathrm{B}^n(H)}$.

\begin{theorem}\label{thm:cQYB}
Let $(H,\Rr,\chi)$ be a pre-Cartier quasitriangular bialgebra. Then the following equivalent statements hold.
\begin{enumerate}[i)]
\item $\chi$ is a Hochschild $2$-cocycle, i.e., 
$\chi_{12}+(\Delta\otimes\mathrm{Id})(\chi)=\chi_{23}+(\mathrm{Id}\otimes\Delta)(\chi)$;
\medskip
\item the \textbf{infinitesimal QYB equation} 
\begin{equation}\label{cQYB}
\begin{split}
    \Rr_{12}\chi_{12}\Rr_{13}\Rr_{23}
    &+\Rr_{12}\Rr_{13}\chi_{13}\Rr_{23}
    +\Rr_{12}\Rr_{13}\Rr_{23}\chi_{23}\\
    &=\Rr_{23}\chi_{23}\Rr_{13}\Rr_{12}
    +\Rr_{23}\Rr_{13}\chi_{13}\Rr_{12}
    +\Rr_{23}\Rr_{13}\Rr_{12}\chi_{12}
\end{split}
\end{equation}
holds true;
\medskip
\item $\Rr_{12}^{-1}\chi_{13}\Rr_{12}=\Rr_{23}^{-1}\chi_{13}\Rr_{23}$.
\footnote{As we will see afterwards, this condition is equivalent to say that $\chi$ belongs to the cotensor
product $H\square_HH=\{h^i\otimes h_i\in H\otimes H~|~\rho^l(h^i)\otimes h_i=h^i\otimes\rho^l(h_i)\}$, where
$\rho^r,\rho^l\colon H\to H\otimes H$ are coactions defined by
$\rho^r(h)=\Rr^{-1}(h\otimes 1)\Rr$ and $\rho^l(h)=\Rr^{-1}(1\otimes h)\Rr$.}
\end{enumerate}
\end{theorem}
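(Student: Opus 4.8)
The plan is to prove the two equivalences i)$\Leftrightarrow$iii) and ii)$\Leftrightarrow$iii) by bare hands and then to verify that iii) actually holds via a short formal-deformation argument. For i)$\Leftrightarrow$iii), recall $b^{2}(\chi)=\chi_{23}-(\Delta\otimes\mathrm{Id})(\chi)+(\mathrm{Id}\otimes\Delta)(\chi)-\chi_{12}$; rewriting \eqref{cqtr2} as $(\mathrm{Id}\otimes\Delta)(\chi)-\chi_{12}=\Rr_{12}^{-1}\chi_{13}\Rr_{12}$ and \eqref{cqtr3} as $(\Delta\otimes\mathrm{Id})(\chi)-\chi_{23}=\Rr_{23}^{-1}\chi_{13}\Rr_{23}$ and substituting gives $b^{2}(\chi)=\Rr_{12}^{-1}\chi_{13}\Rr_{12}-\Rr_{23}^{-1}\chi_{13}\Rr_{23}$, so $b^{2}(\chi)=0$ is literally iii). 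This step is purely formal and uses nothing beyond \eqref{cqtr2} and \eqref{cqtr3}.

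For ii)$\Leftrightarrow$iii), I would split the difference of the two sides of \eqref{cQYB} into its $\chi_{12}$-, $\chi_{13}$- and $\chi_{23}$-parts. Using the Yang--Baxter equation \eqref{eq:QYB} for $\Rr$ to trade $\Rr_{23}\Rr_{13}\Rr_{12}$ for $\Rr_{12}\Rr_{13}\Rr_{23}$ where convenient, the $\chi_{12}$-part becomes $\Rr_{12}\bigl(\chi_{12}\Rr_{13}\Rr_{23}-\Rr_{13}\Rr_{23}\chi_{12}\bigr)$ and the $\chi_{23}$-part becomes $\Rr_{23}\bigl(\Rr_{13}\Rr_{12}\chi_{23}-\chi_{23}\Rr_{13}\Rr_{12}\bigr)$. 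Since $\Rr_{13}\Rr_{23}=(\Delta\otimes\mathrm{Id})(\Rr)$ and $\Rr_{13}\Rr_{12}=(\mathrm{Id}\otimes\Delta)(\Rr)$ by \eqref{qtr3} and \eqref{qtr2}, and $\chi$ commutes with $\Delta(H)$ by \eqref{cqtr1}, both of these parts vanish. What is left is the $\chi_{13}$-part, whose vanishing reads $\Rr_{12}\Rr_{13}\chi_{13}\Rr_{23}=\Rr_{23}\Rr_{13}\chi_{13}\Rr_{12}$; multiplying on the left by $\Rr_{13}^{-1}\Rr_{12}^{-1}$ and using \eqref{eq:QYB} once more in the form $\Rr_{12}^{-1}\Rr_{23}\Rr_{13}=\Rr_{13}\Rr_{23}\Rr_{12}^{-1}$ converts it into $\Rr_{23}^{-1}\chi_{13}\Rr_{23}=\Rr_{12}^{-1}\chi_{13}\Rr_{12}$, i.e.\ iii). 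Hence \eqref{cQYB}$\Leftrightarrow$iii).

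It then remains to see that iii) (equivalently i), equivalently ii)) holds, and the convenient way is to pass to the trivial topological bialgebra $\tilde H=H[[\hbar]]$ and set $\tilde\Rr:=\Rr(1\otimes1+\hbar\chi)$. As in the proof of Proposition~\ref{prop:h-bar}, \eqref{qtr1} and \eqref{cqtr1} show that $\tilde\Rr$ is quasi-cocommutative --- an \emph{exact} identity, since $(1\otimes1+\hbar\chi)\Delta(h)=\Delta(h)(1\otimes1+\hbar\chi)$ --- while \eqref{qtr2}, \eqref{qtr3}, \eqref{cqtr2}, \eqref{cqtr3} show that $\tilde\Rr$ satisfies the hexagon equations modulo $\hbar^{2}$ (the discrepancy in \eqref{qtr2}, for instance, being the $\hbar^{2}$-term $\Rr_{13}\chi_{13}\Rr_{12}\chi_{12}$). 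The usual chain $\tilde\Rr_{12}\tilde\Rr_{13}\tilde\Rr_{23}=\tilde\Rr_{12}(\Delta\otimes\mathrm{Id})(\tilde\Rr)=(\Delta^{\mathrm{op}}\otimes\mathrm{Id})(\tilde\Rr)\,\tilde\Rr_{12}=\tilde\Rr_{23}\tilde\Rr_{13}\tilde\Rr_{12}$ --- the outer steps using the hexagons, the middle one quasi-cocommutativity applied to the left tensor-leg of $\tilde\Rr$ --- therefore holds modulo $\hbar^{2}$. Since the $\hbar^{0}$-parts of the two sides agree by \eqref{eq:QYB} for $\Rr$, comparing $\hbar^{1}$-coefficients yields exactly \eqref{cQYB}. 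Thus ii), and with it i) and iii), hold.

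The bookkeeping of leg notation in the first two paragraphs is routine; the one genuinely non-formal point is the last paragraph, where one must exploit that $\chi$ commutes with $\Delta(H)$ together with the full quasitriangularity of $\Rr$. I expect this to be the main obstacle: applying coassociativity to \eqref{cqtr2} and \eqref{cqtr3} alone produces only tautologies, so the interplay with \eqref{qtr1}, \eqref{qtr2}, \eqref{qtr3} is unavoidable, and the place to be careful is the control of the $\hbar^{2}$-error terms in the hexagons, where a leg or sign slip would be easiest to make.
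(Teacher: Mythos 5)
Your proposal is correct, and the two equivalence steps coincide with the paper's: i)$\Leftrightarrow$iii) is the same substitution of \eqref{cqtr2} and \eqref{cqtr3} into the cobar differential, and your splitting of \eqref{cQYB} into $\chi_{12}$-, $\chi_{13}$- and $\chi_{23}$-parts, killed via \eqref{qtr2}, \eqref{qtr3}, \eqref{eq:QYB} and centrality \eqref{cqtr1}, is exactly how the paper reduces ii) to iii). Where you genuinely diverge is in proving that the statements actually hold: the paper verifies iii) directly inside $H^{\otimes 3}$, by multiplying \eqref{cqtr2} on the left by $\Rr_{23}$, pushing $\Rr_{23}$ through with quasi-cocommutativity \eqref{qtr1} applied in the second leg, flipping with $\mathrm{Id}\otimes\tau$, re-expanding via \eqref{cqtr2}, and cancelling the $\chi_{12}$-terms using the already established identity $\chi_{12}\Rr_{13}\Rr_{23}=\Rr_{13}\Rr_{23}\chi_{12}$; you instead verify ii) by the order-one deformation trick: $\tilde\Rr=\Rr(1\otimes1+\hbar\chi)$ is exactly quasi-cocommutative and satisfies the hexagons modulo $\hbar^2$, so the standard derivation of \eqref{eq:QYB} from the quasitriangular axioms goes through modulo $\hbar^2$, and comparing $\hbar^1$-coefficients yields \eqref{cQYB}. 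Your route is sound (congruences mod the ideal $\hbar^2 H^{\otimes3}[[\hbar]]$ are preserved under multiplication and under $\tau\otimes\mathrm{Id}$, and the middle step of the chain is an exact identity), and it has the merit of making the name ``infinitesimal QYB equation'' self-explanatory: it upgrades the paper's Example following Theorem~\ref{thm:cQYB} — which presupposes an honest quasitriangular $\tilde\Rr$ on $H[[\hbar]]$, something whose existence is precisely the open quantization Question~\ref{Q1} — into a proof that needs only the truncated deformation, since the QYB derivation never uses the hexagons beyond first order. What the paper's computation buys in exchange is that it stays entirely inside $H$, needs no topological completion or bookkeeping of error terms, and directly exhibits iii) (the cotensor/balancedness condition used later for the coactions $\rho^r,\rho^l$) rather than obtaining it a posteriori from ii).
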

\begin{proof}
Consider a pre-Cartier quasitriangular bialgebra $(H,\Rr,\chi)$.
The equivalence between the above statements {\it i)} and {\it iii)} follows by the identities \eqref{cqtr2} and \eqref{cqtr3}.
\begin{invisible}
We prove that {\it ii)} is also equivalent to {\it i)}.
Multiplying \eqref{cQYB} with $\Rr_{23}^{-1}\Rr_{13}^{-1}\Rr_{12}^{-1}\overset{\mathrm{QYB}}{=}\Rr_{12}^{-1}\Rr_{13}^{-1}\Rr_{23}^{-1}$ shows that {\it ii)} is equivalent to
\begin{align*}
    \Rr_{23}^{-1}\Rr_{13}^{-1}\chi_{12}\underbrace{\Rr_{13}\Rr_{23}}_{\overset{\eqref{qtr3}}{=}(\Delta\otimes\mathrm{Id})(\Rr)}
    +\underbrace{\Rr_{23}^{-1}\chi_{13}\Rr_{23}
    +\chi_{23}}_{\overset{\eqref{cqtr3}}{=}(\Delta\otimes\mathrm{Id})(\chi)}
    =\Rr_{12}^{-1}\Rr_{13}^{-1}\chi_{23}\underbrace{\Rr_{13}\Rr_{12}}_{\overset{\eqref{qtr2}}{=}(\mathrm{Id}\otimes\Delta)(\Rr)}
    +\underbrace{\Rr_{12}^{-1}\chi_{13}\Rr_{12}
    +\chi_{12}}_{\overset{\eqref{cqtr2}}{=}(\mathrm{Id}\otimes\Delta)(\chi)}.
\end{align*}
Using \eqref{cqtr1} and again \eqref{qtr2},\eqref{qtr3} it follows that
the above is equivalent to {\it i)}.    
\end{invisible}
Moreover,
\begin{align*}
   \chi_{12}\Rr_{13}\Rr_{23}
    &\overset{\eqref{qtr3}}{=}\chi_{12}(\Delta\otimes\mathrm{Id})(\Rr)
    \overset{\eqref{cqtr1}}{=}(\Delta\otimes\mathrm{Id})(\Rr)\chi_{12}
    \overset{\eqref{qtr3}}{=}\Rr_{13}\Rr_{23}\chi_{12}
\end{align*}
so that 
\begin{align*}
    \Rr_{12}\chi_{12}\Rr_{13}\Rr_{23}
    =\Rr_{12}\Rr_{13}\Rr_{23}\chi_{12}\overset{\eqref{eq:QYB}}{=}\Rr_{23}\Rr_{13}\Rr_{12}\chi_{12}
\end{align*}
and similarly one verifies that $\Rr_{23}\chi_{23}\Rr_{13}\Rr_{12}=\Rr_{12}\Rr_{13}\Rr_{23}\chi_{23}$. Thus, assumption {\it ii)} is equivalent to 
$\Rr_{12}\Rr_{13}\chi_{13}\Rr_{23}=\Rr_{23}\Rr_{13}\chi_{13}\Rr_{12}$,
which itself is equivalent (via multiplication with
$\Rr_{23}^{-1}\Rr_{13}^{-1}\Rr_{12}^{-1}=\Rr_{12}^{-1}\Rr_{13}^{-1}\Rr_{23}^{-1}$)
to {\it iii)}.

We continue to prove that {\it iii)} holds in general, which then implies that {\it i)} and {\it ii)} are satisfied, as well, by the previously shown equivalence.
On the one hand we have
\begin{eqnarray*}
\mathcal{R}_{23}\left( \chi _{12}+\mathcal{R}_{12}^{-1}\chi _{13}\mathcal{R}%
_{12}\right)  
&\overset{\eqref{cqtr2}}{=}&\mathcal{R}_{23}\left( \mathrm{Id}_{H}\otimes
\Delta \right) \left( \chi \right)  \\
&\overset{\eqref{qtr1}}{=}&\left( \mathrm{Id}_{H}\otimes \Delta ^{\mathrm{op}%
}\right) \left( \chi \right) \mathcal{R}_{23} \\
&=&\left( \mathrm{Id}_{H}\otimes \tau \right) \left( \left( \mathrm{Id}%
_{H}\otimes \Delta \right) \left( \chi \right) \right) \mathcal{R}_{23} \\
&\overset{\eqref{cqtr2}}{=}&\left( \mathrm{Id}_{H}\otimes \tau \right) \left(
\chi _{12}+\mathcal{R}_{12}^{-1}\chi _{13}\mathcal{R}_{12}\right) \mathcal{R}%
_{23} \\
&=&\left( \chi _{13}+\mathcal{R}_{13}^{-1}\chi _{12}\mathcal{R}_{13}\right)
\mathcal{R}_{23}
\end{eqnarray*}%
so that $\mathcal{R}_{23}\chi _{12}+\mathcal{R}_{23}\mathcal{R}%
_{12}^{-1}\chi _{13}\mathcal{R}_{12}=\chi _{13}\mathcal{R}_{23}+\mathcal{R}%
_{13}^{-1}\chi _{12}\mathcal{R}_{13}\mathcal{R}_{23}$. 
Since we already proved that $\chi _{12}\mathcal{R}_{13}\mathcal{R}_{23}=\mathcal{R}_{13}\mathcal{R}_{23}\chi _{12}$, the last summand becomes $\mathcal{R}_{23}\chi _{12}$ 
so that the equality is equivalent to $\mathcal{R}_{23}%
\mathcal{R}_{12}^{-1}\chi _{13}\mathcal{R}_{12}=\chi _{13}\mathcal{R}_{23}$,
i.e., $\mathcal{R}_{12}^{-1}\chi _{13}\mathcal{R}_{12}=\mathcal{R}%
_{23}^{-1}\chi _{13}\mathcal{R}_{23}.$
\end{proof}

The following example motivates the terminology ``infinitesimal QYB equation'' a posteriori.

\begin{example}
Let $\tilde{H}=H[[\hbar]]$ be a trivial topological bialgebra with arbitrary quasitriangular structure $\tilde{\Rr}=\Rr(1\otimes 1+\hbar\chi+\mathcal{O}(\hbar^2))\in(H\otimes H)[[\hbar]]$. From Proposition \ref{prop:h-bar} we know that $\Rr\in H\otimes H$ is a quasitriangular structure and $\chi$ is an infinitesimal $\Rr$-matrix for $(H,\Rr)$. Thus, Theorem~\ref{thm:cQYB} implies that 
\begin{equation*}
\begin{split}
    \Rr_{12}\chi_{12}\Rr_{13}\Rr_{23}
    &+\Rr_{12}\Rr_{13}\chi_{13}\Rr_{23}
    +\Rr_{12}\Rr_{13}\Rr_{23}\chi_{23}\\
    &=\Rr_{23}\chi_{23}\Rr_{13}\Rr_{12}
    +\Rr_{23}\Rr_{13}\chi_{13}\Rr_{12}
    +\Rr_{23}\Rr_{13}\Rr_{12}\chi_{12}.
\end{split}
\end{equation*}
This is precisely the quantum Yang-Baxter equation \eqref{eq:QYB} of $\tilde{\Rr}$ in order $\hbar^1$. 
\end{example}

\begin{example}
If $(\mathfrak{g},[\cdot,\cdot],r)$ is a quasitriangular Lie bialgebra we have seen in Example~\ref{exa:chicntr} 3) that $(U\mathfrak{g},\Rr,\chi)$ is a Cartier triangular bialgebra with $\Rr=1\otimes 1$ and $\chi=r+r^{\mathrm{op}}$. By Theorem~\ref{thm:cQYB} $\chi$ is a Hochschild $2$-cocycle and \eqref{cQYB} is satisfied.
\end{example}

In particular, all infinitesimal $\Rr$-matrices $\chi_\alpha$ on Sweedler's Hopf algebra
(see Proposition \ref{prop:Sweedler}) are Hochschild $2$-cocycles and
satisfy \eqref{cQYB}. 
In fact, we would like to see that Theorem \ref{thm:cQYB}  provides an alternative proof of the classification of infinitesimal $\Rr$-matrices on Sweedler's Hopf algebra. To this aim we first need the following result.

\begin{proposition}
\label{prop:projtoab}
 Let $(H,\Rr,\chi)$ be a pre-Cartier quasitriangular Hopf algebra endowed with a Hopf algebra projection $\pi:H\to L$ onto a commutative Hopf algebra $L$. Then, we have $(\pi\otimes\pi)(\chi)\in P(L)\otimes P(L)$, $(\mathrm{Id}\otimes\pi)(\chi)\in\mathscr{Z}(H)\otimes L$ and $(\pi\otimes\mathrm{Id})(\chi)\in L\otimes\mathscr{Z}(H)$.
\end{proposition}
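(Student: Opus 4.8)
The plan is to exploit the fact that $\pi\colon H\to L$ is a Hopf algebra map, so that applying suitable tensor powers of $\pi$ to the defining identities \eqref{cqtr2}, \eqref{cqtr3} of an infinitesimal $\Rr$-matrix turns them into identities in $L$, where commutativity of $L$ forces the correction terms involving $\Rr^{\pm 1}$ to collapse. First I would set $\chi_L:=(\pi\otimes\pi)(\chi)$ and $\Rr_L:=(\pi\otimes\pi)(\Rr)$. By Proposition \ref{prop:image}, $(L,\Rr_L,\chi_L)$ is again a pre-Cartier quasitriangular bialgebra, so $\chi_L$ satisfies \eqref{cqtr2} and \eqref{cqtr3} with respect to $\Rr_L$. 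Since $L$ is commutative, $\Rr_L\in\mathscr{Z}(L)\otimes\mathscr{Z}(L)$, so the arguments of Example~\ref{exa:chicntr} apply verbatim: \eqref{cqtr2} becomes $(\mathrm{Id}\otimes\Delta)(\chi_L)=(\chi_L)_{12}+(\chi_L)_{13}$, i.e.\ $\chi_L\in L\otimes P(L)$, and \eqref{cqtr3} gives $\chi_L\in P(L)\otimes L$; together $\chi_L\in P(L)\otimes P(L)$, which is the first assertion.

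For the second assertion, I would apply $\mathrm{Id}_H\otimes\pi\otimes\pi$ to \eqref{cqtr2}. Writing $\chi=\chi^i\otimes\chi_i$, the left-hand side becomes $\chi^i\otimes(\pi\otimes\pi)(\Delta(\chi_i))=\chi^i\otimes\Delta_L(\pi(\chi_i))$; on the right-hand side, the term $\Rr^{-1}_{12}\chi_{13}\Rr_{12}$, after applying $\mathrm{Id}\otimes\pi\otimes\pi$, has its second and third legs landing in $L$, where the conjugating factors $\pi(\Rr^{\pm1})$-legs sit in $\mathscr{Z}(L)$ and hence commute away; thus the right-hand side reduces to $(\mathrm{Id}\otimes\pi)(\chi)_{12}+(\mathrm{Id}\otimes\pi)(\chi)_{13}$. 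Setting $\eta:=(\mathrm{Id}\otimes\pi)(\chi)=\chi^i\otimes\pi(\chi_i)\in H\otimes L$, this reads $(\mathrm{Id}_H\otimes\Delta_L)(\eta)=\eta_{12}+\eta_{13}$, i.e.\ $\eta\in H\otimes P(L)$. It remains to see the first leg lies in $\mathscr{Z}(H)$: here I would use \eqref{cqtr1}, $\chi\Delta(h)=\Delta(h)\chi$, apply $\mathrm{Id}\otimes\pi$, and combine with $\eta\in H\otimes P(L)$ and the projection $\pi$ being split — more precisely, I would pair the second leg against a suitable functional or use that $L$ being a quotient Hopf algebra with $\pi$ surjective lets us choose group-likes/primitives freely, so that \eqref{cqtr1} localized in the second leg against primitives of $L$ yields $\chi^i\,\varphi(\pi(\chi_i))\,h = h\,\chi^i\,\varphi(\pi(\chi_i))$ for all $h\in H$ and all $\varphi\in L^*$, forcing $\chi^i\otimes\pi(\chi_i)\in\mathscr{Z}(H)\otimes L$. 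The third assertion $(\pi\otimes\mathrm{Id})(\chi)\in L\otimes\mathscr{Z}(H)$ follows by the symmetric argument using \eqref{cqtr3} in place of \eqref{cqtr2}.

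I expect the main obstacle to be the final step showing the $H$-leg of $(\mathrm{Id}\otimes\pi)(\chi)$ is central, rather than just establishing the primitivity of the $L$-leg: the collapse of the $\Rr$-conjugation is a clean consequence of commutativity of $L$, but deducing centrality in $H$ requires carefully combining \eqref{cqtr1} with the structural information that $\pi$ is a Hopf projection. The cleanest route is likely to observe that the equation $(\mathrm{Id}\otimes\Delta_L)(\eta)=\eta_{12}+\eta_{13}$ together with $(\mathrm{Id}\otimes\varepsilon_L)(\eta)=0$ (from \eqref{eq:eps-chi1}) lets us write $\eta$ in terms of a basis of $P(L)$, and then \eqref{cqtr1} read leg-wise against this basis gives commutation of each $H$-coefficient with all of $H$. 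One should double-check that no use of the antipode or of triangularity is needed — only \eqref{cqtr1}, \eqref{cqtr2}, \eqref{cqtr3}, \eqref{eq:eps-chi1}, \eqref{eq:eps-chi2} and commutativity of $L$ enter, so the statement indeed holds for pre-Cartier quasitriangular Hopf algebras as stated.
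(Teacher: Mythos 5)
Your first assertion is fine and matches the paper: reduce to $L$ via Proposition \ref{prop:image} and invoke the commutative case of Example \ref{exa:chicntr}. The problems are in the other two assertions. First, the intermediate claim obtained by applying $\mathrm{Id}\otimes\pi\otimes\pi$ to \eqref{cqtr2} does not go through as you state it: writing $\Rr^{-1}_{12}\chi_{13}\Rr_{12}=\overline{\Rr}^{j}\chi^{i}\Rr^{k}\otimes\overline{\Rr}_{j}\Rr_{k}\otimes\chi_{i}$, only the second and third legs are pushed into $L$, while the factors $\overline{\Rr}^{j}$ and $\Rr^{k}$ in the first leg stay in the (noncommutative) algebra $H$, multiplied against $\chi^{i}$. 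Commutativity of $L$ lets you reorder $\pi(\overline{\Rr}_{j})$ and $\pi(\Rr_{k})$ in the second leg, but it cannot cancel the first-leg conjugation, so the right-hand side does not reduce to $\eta_{12}+\eta_{13}$; that collapse only happens when \emph{both} legs of $\Rr$ land in $L$, i.e.\ for $\pi\otimes\pi\otimes\pi$. (This intermediate primitivity of the $L$-leg is in any case not needed for the statement, which only claims $\eta\in\mathscr{Z}(H)\otimes L$.)

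Second, and more importantly, the centrality step — the actual content of the second and third assertions — is left as a vague "pair against functionals" argument that does not close. Applying $\mathrm{Id}\otimes(\varphi\circ\pi)$ to \eqref{cqtr1} gives $\chi^{i}h_{1}\,\varphi(\pi(\chi_{i})\pi(h_{2}))=h_{1}\chi^{i}\,\varphi(\pi(\chi_{i})\pi(h_{2}))$ after using commutativity of $L$, but the spurious legs $h_{1},h_{2}$ remain and you cannot conclude $\chi^{i}h\,\varphi(\pi(\chi_{i}))=h\chi^{i}\,\varphi(\pi(\chi_{i}))$. The paper closes exactly this gap with an antipode trick: convolving \eqref{cqtr1} with $S$ in the second leg yields $\chi^{i}h\otimes\chi_{i}=h_{1}\chi^{i}\otimes h_{2}\chi_{i}S(h_{3})$, and then applying $\mathrm{Id}\otimes\pi$ and using commutativity of $L$ turns the second leg into $\pi(\chi_{i})\pi(h_{2}S(h_{3}))=\pi(\chi_{i})\varepsilon(h_{2})$, giving $\chi^{i}h\otimes\pi(\chi_{i})=h\chi^{i}\otimes\pi(\chi_{i})$, i.e.\ $(\mathrm{Id}\otimes\pi)(\chi)\in\mathscr{Z}(H)\otimes L$ (and symmetrically for $(\pi\otimes\mathrm{Id})(\chi)$ via $S(h_{1})\chi^{i}h_{2}\otimes\chi_{i}h_{3}=\chi^{i}\otimes h\chi_{i}$). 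So, contrary to your closing remark, the antipode is genuinely used — this is why the proposition is stated for Hopf algebras — and without it (or an equivalent device to absorb the coproduct legs) your sketch does not prove centrality.
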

\begin{proof}
Since $H$ is pre-Cartier, by Proposition \ref{prop:image}, so is $\pi(H)=L$ and the corresponding infinitesimal $\Rr$-matrix is  $(\pi\otimes\pi)(\chi)$. By Example \ref{exa:chicntr}\,2), we get $(\pi\otimes\pi)(\chi)\in P(L)\otimes P(L)$. Now for every $h\in H$ the equality \eqref{cqtr1} means $\chi^i h_1\otimes\chi_i h_2=h_1\chi^i\otimes h_2\chi_i$. This is equivalent to  $\chi^i h\otimes\chi_i=h_1\chi^i\otimes h_2\chi_iS(h_3)$ so that, by applying $\mathrm{Id}\otimes\pi$ on both sides and using the fact that $L$ is commutative, we get $\chi^i h\otimes\pi(\chi_i)=h\chi^i\otimes \pi(\chi_i)$ for every $h\in H$, i.e., $\chi^i\otimes \pi(\chi_i)\in\mathscr{Z}(H)\otimes L$. 
Similarly, by rewriting $\chi^i h_1\otimes\chi_i h_2=h_1\chi^i\otimes h_2\chi_i$ as $S(h_1)\chi^i h_2\otimes\chi_i h_3=\chi^i\otimes h\chi_i$ and by applying $\pi\otimes\mathrm{Id}$ we conclude that $\pi(\chi^i)\otimes \chi_i\in L\otimes\mathscr{Z}(H)$.
\end{proof}

\begin{remark}
We include here the announced alternative proof of Proposition \ref{prop:Sweedler}, where a classification of infinitesimal $\Rr$-matrices on Sweedler's Hopf algebra $H$ is given, that takes advantages of the new techniques developed so far. 
The Hopf algebra $H$ has a Hopf algebra projection $\pi:H\to H_0:=\Bbbk\langle g\rangle$ defined on the basis by setting $\pi (x^mg^a):=\delta_{m,0}g^a$. By Proposition \ref{prop:projtoab}, we have $(\pi\otimes\pi)(\chi)\in P(H_0)\otimes P(H_0)=0$. Since $H=H_{0}\oplus xH_{0}$ we get $\chi\in xH_{0}\otimes H_{0}+H_{0}\otimes xH_{0}+xH_{0}\otimes xH_{0}$.
Still by Proposition \ref{prop:projtoab}, we have $(\mathrm{Id}\otimes \pi)(\chi)\in \mathscr{Z}(H)\otimes H_0$. A direct computation shows that, since  $\mathrm{char}(\Bbbk)\neq2$, one has $\mathscr{Z}(H)=\Bbbk1$, thus $(\mathrm{Id}\otimes\pi)(\chi)\in \Bbbk1\otimes H_0$. On the other hand, $(\mathrm{Id}\otimes\pi)(\chi)\in xH_{0}\otimes H_{0}$.
Thus, we get $\chi\in H_{0}\otimes xH_{0}+xH_{0}\otimes xH_{0}$.
Once more  by Proposition \ref{prop:projtoab}, we conclude that $(\pi\otimes\mathrm{Id})(\chi)\in H_0\otimes\mathscr{Z}(H)=H_0\otimes\Bbbk1$. But $(\pi\otimes\mathrm{Id})(\chi)\in H_{0}\otimes xH_{0}$, so $\chi\in xH_{0}\otimes xH_{0}$. 
Since $(H,\Rr)$ is pre-Cartier triangular, by Theorem \ref{thm:cQYB} we get that $\chi$ is a Hochschild $2$-cocycle, i.e.,
$\chi_{12}+(\Delta\otimes\mathrm{Id})(\chi)=\chi_{23}+(\mathrm{Id}\otimes\Delta)(\chi)$. If we write this equality explicitly on $\chi=xh\otimes xh'$, for $h,h'\in H_{0}$, we get 
$\chi=\alpha xg\otimes x$, $\alpha\in\Bbbk$. Note that in this proof we did not use the specific form of $\Rr_\lambda$.

 Note also that, e.g. by  \cite[Lemma 2.13]{Da13}, the second cohomology group $\mathrm{H}^2(H)$ of Sweedler's Hopf algebra is one-dimensional and it is generated by $xg\otimes x$ (in loc. cit. the generator is $x\otimes gx=-x\otimes xg$ because the given comultiplication is the opposite of the one we are considering on $x$). In particular, $xg\otimes x$ is not a $2$-coboundary.
 \end{remark}

Our next aim is to investigate whether the infinitesimal $\Rr$-matrix $\chi$ of a Cartier triangular Hopf algebra is a coboundary. To this 
 end we first need to obtain some relations that hold for $\chi$ and that can be deduced by employing two coactions.

\begin{definition}\label{def:coact}
Given a quasitriangular bialgebra $\left( H,\mathcal{R} \right) $ we define the maps $\rho^r\colon H\to H\otimes H$ and $\rho^l\colon H\to H\otimes H$
by setting 
\begin{align*}
    \rho^r(a)&=\Rr^{-1}(a\otimes 1_H)\Rr=\overline{\Rr}^ia\Rr^j\otimes \overline{\Rr}_i\Rr_j;\\
    \rho^l(a)&=\Rr^{-1}(1_H\otimes a)\Rr=\overline{\Rr}^i\Rr^j\otimes\overline{\Rr}_iaR_j.
\end{align*}
\end{definition}

\begin{lemma}
 Let $\left( H,\mathcal{R} \right) $ be a quasitriangular bialgebra. Then, the maps  $\rho^r$ and $\rho^l$ define a right and left coaction on $H$, respectively.
\end{lemma}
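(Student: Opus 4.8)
The plan is to verify, for each of $\rho^r$ and $\rho^l$, the two defining identities of a (right, resp.\ left) $H$-comodule structure on $H$, namely counitality and coassociativity. The key observation is that both maps are linear and factor through the \emph{conjugation automorphism} $\mathrm{conj}_{\Rr}\colon H\otimes H\to H\otimes H$, $w\mapsto\Rr^{-1}w\Rr$, which is an algebra automorphism because $\Rr$ is invertible; explicitly $\rho^r=\mathrm{conj}_{\Rr}\circ(-\otimes 1_H)$ and $\rho^l=\mathrm{conj}_{\Rr}\circ(1_H\otimes -)$. Combined with the fact that $\varepsilon$ and $\Delta$ applied in a single tensor leg, as well as $\rho^r\otimes\mathrm{Id}_H$ and $\mathrm{Id}_H\otimes\rho^l$, are all algebra homomorphisms out of $H^{\otimes 2}$, the verification reduces entirely to the hexagon equations \eqref{qtr2}, \eqref{qtr3} and the identities $(\varepsilon\otimes\mathrm{Id}_H)(\Rr^{\pm 1})=1_H=(\mathrm{Id}_H\otimes\varepsilon)(\Rr^{\pm 1})$ recalled above.

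For counitality I apply the algebra map $\mathrm{Id}_H\otimes\varepsilon$ to $\rho^r(a)=\Rr^{-1}(a\otimes 1_H)\Rr$, obtaining $(\mathrm{Id}_H\otimes\varepsilon)(\Rr^{-1})\,a\,(\mathrm{Id}_H\otimes\varepsilon)(\Rr)=1_H\,a\,1_H=a$, so $(\mathrm{Id}_H\otimes\varepsilon)\circ\rho^r=\mathrm{Id}_H$; symmetrically, applying $\varepsilon\otimes\mathrm{Id}_H$ to $\rho^l(a)=\Rr^{-1}(1_H\otimes a)\Rr$ gives $(\varepsilon\otimes\mathrm{Id}_H)(\Rr^{-1})\,a\,(\varepsilon\otimes\mathrm{Id}_H)(\Rr)=a$, i.e.\ $(\varepsilon\otimes\mathrm{Id}_H)\circ\rho^l=\mathrm{Id}_H$.

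For coassociativity of $\rho^r$, I apply the algebra homomorphism $\mathrm{Id}_H\otimes\Delta$ to $\rho^r(a)=\Rr^{-1}(a\otimes 1_H)\Rr$; using $(\mathrm{Id}_H\otimes\Delta)(a\otimes 1_H)=a\otimes 1_H\otimes 1_H$ together with $(\mathrm{Id}_H\otimes\Delta)(\Rr)=\Rr_{13}\Rr_{12}$ from \eqref{qtr2} (whence $(\mathrm{Id}_H\otimes\Delta)(\Rr^{-1})=\Rr_{12}^{-1}\Rr_{13}^{-1}$) yields $(\mathrm{Id}_H\otimes\Delta)\rho^r(a)=\Rr_{12}^{-1}\Rr_{13}^{-1}(a\otimes 1_H\otimes 1_H)\Rr_{13}\Rr_{12}$. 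On the other hand, since $\rho^r\otimes\mathrm{Id}_H$ is an algebra map sending $b\otimes c$ to $\Rr^{-1}(b\otimes 1_H)\Rr\otimes c$, a short computation in leg notation gives $(\rho^r\otimes\mathrm{Id}_H)(w)=\Rr_{12}^{-1}w_{13}\Rr_{12}$ for every $w\in H\otimes H$ (where $w_{13}$ denotes $w$ placed in the first and third legs); specialising $w=\rho^r(a)$ and using $(\rho^r(a))_{13}=\Rr_{13}^{-1}(a\otimes 1_H\otimes 1_H)\Rr_{13}$ produces the same expression $\Rr_{12}^{-1}\Rr_{13}^{-1}(a\otimes 1_H\otimes 1_H)\Rr_{13}\Rr_{12}$, so $(\rho^r\otimes\mathrm{Id}_H)\circ\rho^r=(\mathrm{Id}_H\otimes\Delta)\circ\rho^r$. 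The argument for $\rho^l$ is entirely parallel: applying $\Delta\otimes\mathrm{Id}_H$ to $\rho^l(a)=\Rr^{-1}(1_H\otimes a)\Rr$ and invoking $(\Delta\otimes\mathrm{Id}_H)(\Rr)=\Rr_{13}\Rr_{23}$ from \eqref{qtr3} gives $\Rr_{23}^{-1}\Rr_{13}^{-1}(1_H\otimes 1_H\otimes a)\Rr_{13}\Rr_{23}$, while the identity $(\mathrm{Id}_H\otimes\rho^l)(w)=\Rr_{23}^{-1}w_{13}\Rr_{23}$ together with $(\rho^l(a))_{13}=\Rr_{13}^{-1}(1_H\otimes 1_H\otimes a)\Rr_{13}$ shows that $(\mathrm{Id}_H\otimes\rho^l)\rho^l(a)$ equals the same element, establishing $(\Delta\otimes\mathrm{Id}_H)\circ\rho^l=(\mathrm{Id}_H\otimes\rho^l)\circ\rho^l$.

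I do not expect a genuine obstacle here: the entire content is that the hexagon relations \eqref{qtr2} and \eqref{qtr3} are precisely what makes the two "conjugation" maps coassociative, and the only care needed is careful bookkeeping with the leg notation and the repeated use of the fact that conjugation by an invertible element and the structure maps $\Delta,\varepsilon$ are algebra morphisms. One could alternatively phrase the whole proof in Sweedler-type notation for $\Rr$ and $\Rr^{-1}$, but the algebra-map route above keeps the computation short.
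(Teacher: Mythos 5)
Your proof is correct and follows essentially the same route as the paper: counitality via $(\mathrm{Id}_H\otimes\varepsilon)(\Rr^{\pm 1})=1_H=(\varepsilon\otimes\mathrm{Id}_H)(\Rr^{\pm 1})$, and coassociativity by applying $\mathrm{Id}_H\otimes\Delta$ (resp.\ $\Delta\otimes\mathrm{Id}_H$) multiplicatively and invoking the hexagon equations \eqref{qtr2} and \eqref{qtr3} to match $\Rr_{12}^{-1}\Rr_{13}^{-1}(a\otimes 1_H\otimes 1_H)\Rr_{13}\Rr_{12}$ with $(\rho^r\otimes\mathrm{Id}_H)\rho^r(a)$. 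The only difference is cosmetic: you organize the bookkeeping through the conjugation-is-an-algebra-map observation and the identity $(\rho^r\otimes\mathrm{Id}_H)(w)=\Rr_{12}^{-1}w_{13}\Rr_{12}$, where the paper writes out the same computation in explicit Sweedler-type leg notation.
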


\begin{proof}
The map $\rho^r$ is a right coaction on $H$ as
\[
(\mathrm{Id}_{H}\otimes\varepsilon)\rho^r(a)=\overline{\Rr}^ia\Rr^j\otimes\varepsilon(\overline{\Rr}_i\Rr_j)=\overline{\Rr}^ia\Rr^j\otimes\varepsilon(\overline{\Rr}_i)\varepsilon(\Rr_j)=a\otimes1_{\Bbbk}
\]
and
\[
\begin{split}
    (\mathrm{Id}\otimes\Delta)\rho^r(a)
    &=\overline{\Rr}^ia\Rr^j\otimes\Delta(\overline{\Rr}_i\Rr_j)
    =\overline{\Rr}^ia\Rr^j\otimes\Delta(\overline{\Rr}_i)\Delta(\Rr_j)\\
    &=(\mathrm{Id}\otimes\Delta)(\Rr^{-1})(a\otimes 1_{H}\otimes 1_{H})(\mathrm{Id}\otimes\Delta)(\Rr)\\
    &\overset{\eqref{qtr2}}=(\Rr_{13}\Rr_{12})^{-1}(a\otimes1_{H}\otimes1_{H})\Rr_{13}\Rr_{12}\\
    &=\Rr_{12}^{-1}\Rr_{13}^{-1}(a\otimes1_{H}\otimes1_{H})\Rr_{13}\Rr_{12}\\
    &=(\overline{\Rr}^i\overline{\Rr}^j\otimes\overline{\Rr}_i\otimes\overline{\Rr}_j)(a\otimes1_{H}\otimes1_{H})(\Rr^k\Rr^\ell\otimes \Rr_\ell\otimes \Rr_k)\\
    &=\overline{\Rr}^i\overline{\Rr}^ja\Rr^k\Rr^\ell\otimes \overline{\Rr}_i\Rr_\ell\otimes\overline{\Rr}_j\Rr_k
    =(\rho^r\otimes\mathrm{Id})\rho^r(a).
\end{split}
\]
Similarly, one can prove that  $\rho^l$ is a left coaction on $H$.
\end{proof}

As a consequence, given a pre-Cartier quasitriangular bialgebra $(H,\Rr,\chi)$, we can rewrite \eqref{cqtr2} and \eqref{cqtr3} as 
\begin{align}
    (\mathrm{Id}\otimes\Delta)(\chi)
    &=\chi\otimes 1_{H}+(\rho^r\otimes\mathrm{Id})(\chi), \label{eq:CC3'2}\\
      (\Delta\otimes\mathrm{Id})(\chi)
    &=1_{H}\otimes\chi+(\mathrm{Id}\otimes\rho^l)(\chi).  \label{eq:CC4'2}
\end{align}

 Recall that if $(H, \Rr,\chi)$ is pre-Cartier quasitriangular, then by Theorem \ref{thm:cQYB} $\chi$ is a 2-cocycle and the property $\Rr^{-1}_{12}\chi_{13}\Rr_{12}=\Rr^{-1}_{23}\chi_{13}\Rr_{23}$ can be reformulated as $(\rho^r\otimes\mathrm{Id})(\chi)=(\mathrm{Id}\otimes\rho^l)(\chi)$.\medskip
 
Until the end of the section we consider $H$ as Hopf algebra with antipode $S$. We are now ready to show that for a pre-Cartier triangular Hopf algebra the right and left coactions defined above satisfy the following relations. 

\begin{lemma}\label{lem:chiS1}
Let $(H,\Rr,\chi)$ be a pre-Cartier triangular Hopf algebra. Then, for all $a\in H$, the following properties hold: 
\begin{eqnarray}
\rho^l(S(a))&=&\tau(S\otimes S)\rho^r(a);  \label{eq:Striang2} \\
(m\otimes\mathrm{Id})(S\otimes\rho^{l})(\chi)&=&-\chi.  \label{eq:chiS2}
\end{eqnarray}
Furthermore, if $(H,\Rr,\chi)$ is Cartier, then we have
\begin{eqnarray}
 \tau(S\otimes S)(\chi)&=&\chi \label{eq:chiSS2}
\end{eqnarray}
in addition.
\end{lemma}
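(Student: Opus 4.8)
The plan is to establish the three identities \eqref{eq:Striang2}, \eqref{eq:chiS2} and \eqref{eq:chiSS2} in this order, the first two being short direct computations and the last reusing them together with the Cartier hypothesis \eqref{eq:ctr2}. For \eqref{eq:Striang2} the key observation is that $\tau\circ(S\otimes S)\colon H\otimes H\to H\otimes H$, $a\otimes b\mapsto S(b)\otimes S(a)$, is an algebra anti-automorphism (since $S$ is an algebra anti-endomorphism of $H$, the map $S\otimes S$ is an algebra anti-endomorphism of $H\otimes H$, and $\tau$ is an algebra automorphism). By \eqref{eq:SotimesS(R)} together with triangularity $\Rr^{-1}=\Rr^\op$ we get $\tau(S\otimes S)(\Rr)=\Rr^\op=\Rr^{-1}$ and $\tau(S\otimes S)(\Rr^{-1})=(\Rr^{-1})^\op=\Rr$, while clearly $\tau(S\otimes S)(a\otimes 1_H)=1_H\otimes S(a)$. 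Applying $\tau(S\otimes S)$ to $\rho^r(a)=\Rr^{-1}(a\otimes 1_H)\Rr$ and using that an anti-homomorphism reverses the order of the three factors, we obtain $\tau(S\otimes S)\rho^r(a)=\Rr^{-1}(1_H\otimes S(a))\Rr=\rho^l(S(a))$, which is \eqref{eq:Striang2}. The same computation with the two legs interchanged yields the mirror identity $\rho^r(S(a))=\tau(S\otimes S)\rho^l(a)$, which I record for later use.

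For \eqref{eq:chiS2} I would start from the reformulation \eqref{eq:CC4'2}, that is $(\Delta\otimes\mathrm{Id})(\chi)=1_H\otimes\chi+(\mathrm{Id}\otimes\rho^l)(\chi)$, and apply $\big(m\circ(S\otimes\mathrm{Id})\big)\otimes\mathrm{Id}\colon H\otimes H\otimes H\to H\otimes H$, i.e. contract the first two tensor factors via $a\otimes b\mapsto S(a)b$. On the left-hand side this gives $S((\chi^i)_1)(\chi^i)_2\otimes\chi_i=\varepsilon(\chi^i)1_H\otimes\chi_i=(\varepsilon\otimes\mathrm{Id})(\chi)=0$ by \eqref{eq:eps-chi2}; on the right-hand side it gives $S(1_H)\chi^i\otimes\chi_i+(m\otimes\mathrm{Id})(S\otimes\rho^l)(\chi)=\chi+(m\otimes\mathrm{Id})(S\otimes\rho^l)(\chi)$. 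Hence $(m\otimes\mathrm{Id})(S\otimes\rho^l)(\chi)=-\chi$. This argument uses only that $(H,\Rr,\chi)$ is pre-Cartier quasitriangular and that $H$ has an antipode.

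The identity \eqref{eq:chiSS2} is the main obstacle. First I would record the auxiliary fact $(S^2\otimes S^2)(\chi)=\chi$: in the triangular case the Drinfel'd element $u$ is grouplike (because $\Rr^\op\Rr=1_H\otimes 1_H$), so \eqref{cqtr1} applied to $u$ gives $(u\otimes u)\chi=\chi(u\otimes u)$, and since $S^2(\cdot)=u(\cdot)u^{-1}$ we get $(S^2\otimes S^2)(\chi)=(u\otimes u)\chi(u\otimes u)^{-1}=\chi$. Then I would push \eqref{eq:chiS2} through $\tau(S\otimes S)$: the coaction factor transforms by $\tau(S\otimes S)\rho^l(\chi_i)=\rho^r(S(\chi_i))$ via the mirror of \eqref{eq:Striang2}, the remaining coaction leg can be transported to the other tensor slot using the cotensor identity $(\rho^r\otimes\mathrm{Id})(\chi)=(\mathrm{Id}\otimes\rho^l)(\chi)$ from Theorem \ref{thm:cQYB}, and the $S^2$ produced in the process is absorbed by the auxiliary fact. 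This reduces the claim to an identity expressed purely through $\Rr$ and $\chi$, which closes upon feeding in the Cartier relation \eqref{eq:ctr2} (together with $(S\otimes S)(\Rr)=\Rr$ and $\Rr^{-1}=\Rr^\op$) and yields $\tau(S\otimes S)(\chi)=\chi$. I expect the bookkeeping of the several conjugations by $\Rr$ — in particular keeping distinct copies of $\Rr$ straight through the cotensor substitution — to be the genuinely delicate point; conceptually the content is just that \eqref{eq:chiS2}, the two antipode symmetries of $\rho^r,\rho^l$, and Cartierness together force $\chi$ to be fixed by $\tau(S\otimes S)$.
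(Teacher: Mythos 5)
Your treatment of \eqref{eq:Striang2} and \eqref{eq:chiS2} is correct and is essentially the paper's own proof (packaging $\tau(S\otimes S)$ as an algebra anti-endomorphism is a mild reorganization of the same computation; the derivation of \eqref{eq:chiS2} from \eqref{eq:CC4'2} and \eqref{eq:eps-chi2} is identical), and your auxiliary fact $(S^2\otimes S^2)(\chi)=\chi$ via the grouplike Drinfel'd element is correct, though it rests on standard properties of $u$ not proved in the paper.

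The genuine gap is \eqref{eq:chiSS2}: you never carry out the computation, and the plan as described does not visibly close. Writing the left-hand side of \eqref{eq:chiS2} as $\sum_i(S(\chi^i)\otimes 1)\rho^l(\chi_i)$ and applying $\tau(S\otimes S)$ with your mirror identity, what you actually obtain is $\sum_i\rho^r(S(\chi_i))\,(1\otimes S^2(\chi^i))=-\tau(S\otimes S)(\chi)$, which (setting $\psi:=\tau(S\otimes S)(\chi)$) is precisely the statement $(\mathrm{Id}\otimes m)(\rho^r\otimes S)(\psi)=-\psi$, i.e.\ the companion identity that $\chi$ itself already satisfies by \eqref{eq:eps-chi1}. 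Both $\chi$ and $\psi$ being $(-1)$-eigenvectors of the same linear operator does not identify them. Moreover the cotensor identity $(\rho^r\otimes\mathrm{Id})(\chi)=(\mathrm{Id}\otimes\rho^l)(\chi)$ cannot be applied in the form you indicate: after your first move the coaction sits on $S(\chi_i)$ with $S^2(\chi^i)$ already multiplied into its output, not on a bare leg of $\chi$, and you give no formula for the "transport". Finally, "closes upon feeding in the Cartier relation" is an assertion, not a step: note that everything you do establish — \eqref{eq:Striang2}, \eqref{eq:chiS2}, $(S^2\otimes S^2)(\chi)=\chi$ and the transformed identity above — holds for Sweedler's Hopf algebra $(H,\Rr_\lambda,\chi_\alpha)$, which is pre-Cartier triangular, yet there $\tau(S\otimes S)(\chi_\alpha)=-\chi_\alpha$; so Cartierness must enter through an explicit computation, which is exactly what is missing. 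In the paper this is the bulk of the proof: one inserts $\varepsilon(\Rr_j\Rr^k)1_H=\Rr_{j1}\Rr^k_1S(\Rr_{j2}\Rr^k_2)$, uses both hexagon equations \eqref{qtr2},\eqref{qtr3} to redistribute the coproducts of the $\Rr$-legs so that \eqref{eq:chiS2} can be applied inside a product, and separately rewrites $\chi$ using \eqref{eq:chiS2}, \eqref{eq:SotimesId(R)}, \eqref{eq:SotimesS(R)} and \eqref{eq:ctr2}, comparing the two outputs. Your sketch contains no analogue of these manipulations (the hexagon equations never appear), so the Cartier part of the lemma remains unproved as written.
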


\begin{proof}
Recall that $\Rr^{-1}=\Rr^{\mathrm{op}}$. For all $a\in H$ we have that 
\[
\begin{split}
    \rho^l(S(a))
    &=\Rr_{i}\Rr^{j}\otimes \Rr^{i}S(a)\Rr_{j}
    \overset{\eqref{eq:SotimesS(R)}}=S(\Rr_{i})S(\Rr^{j})\otimes S(\Rr^{i})S(a)S(\Rr_{j})\\
    &=S(\Rr^{j}\Rr_{i})\otimes S(\Rr_{j}a\Rr^{i})
    =\tau(S\otimes S)\rho^r(a).
\end{split}
\]
Next, from \eqref{eq:eps-chi2} we have that
$\varepsilon(\chi^{i})\otimes\chi_{i}=0$ and then
\[
\begin{split}
0&=\varepsilon(\chi^{i})1_{H}\otimes\chi_{i}=(m\otimes\mathrm{Id})(S\otimes\mathrm{Id}\otimes\mathrm{Id})(\Delta\otimes\mathrm{Id})(\chi)\\&\overset{\eqref{eq:CC4'2}}=(m\otimes\mathrm{Id})(S\otimes\mathrm{Id}\otimes\mathrm{Id})(1_{H}\otimes\chi+(\mathrm{Id}\otimes\rho^{l})(\chi))\\&=\chi+(m\otimes\mathrm{Id})(S\otimes\rho^{l})(\chi).
\end{split}
\]
Thus, we get $(m\otimes\mathrm{Id})(S\otimes\rho^{l})(\chi)=-\chi$, that is, $S(\chi^{i})\Rr_{j}\Rr^{k}\otimes \Rr^{j}\chi_{i}\Rr_{k}=-\chi^{i}\otimes\chi_{i}$. Note that analogously from \eqref{eq:eps-chi1} we have $\chi^i\otimes\varepsilon(\chi_i)=0$ and then $(\mathrm{Id}\otimes m)(\rho^r\otimes S)(\chi)=-\chi$. \\
Furthermore, if $H$ is Cartier, i.e., \eqref{eq:ctr2} holds, then we have that 
\[
\begin{split}
    (S\otimes S)(\chi)
    &=S(\chi^{i})\otimes S(\chi_{i})=S(\chi^{i})\varepsilon(\Rr_{j}\Rr^{k})1_{H}\otimes S(\Rr^{j}\chi_{i}\Rr_{k})\\
    &=S(\chi^{i})\Rr_{j1}\Rr^{k}_{1}S(\Rr_{j2}\Rr^{k}_{2})\otimes S(\Rr^{j}\chi_{i}\Rr_{k})\\
    &\overset{\eqref{qtr2},\eqref{qtr3}}=S(\chi^{i})\Rr_{j}\Rr^{k}S(\Rr_{m}\Rr^{\ell})\otimes S(\Rr^{m}\Rr^{j}\chi_{i}\Rr_{k}\Rr_{\ell})\\
    &\overset{\eqref{eq:chiS2}}{=}-\chi^{i}S(\Rr_{m}\Rr^{\ell})\otimes S(\Rr^{m}\chi_{i}\Rr_{\ell})\\
    &=-\chi^{i}S(\Rr^{\ell})S(\Rr_{m})\otimes S(\Rr_{\ell})S(\chi_{i})S(\Rr^{m})\\
    &\overset{\eqref{eq:SotimesId(R)}}{=}-\chi^{i}\Rr_{\ell}S(\Rr_{m})\otimes S(\Rr^{\ell})S(\chi_{i})S(\Rr^{m})\\
    &=-\chi^{i}\Rr_{\ell}S(\Rr_{m})\otimes S(\Rr^{m}\chi_{i}\Rr^{\ell})
\end{split}
\]
and 
\[
\begin{split}
    \chi&\overset{\eqref{eq:chiS2}}{=}-S(\chi^{i})\Rr_{j}\Rr^{k}\otimes \Rr^{j}\chi_{i}\Rr_{k}
    \overset{\eqref{eq:SotimesS(R)}}{=}-S(\chi^{i})\Rr_{j}S(\Rr^{k})\otimes \Rr^{j}\chi_{i}S(\Rr_{k})\\
    &\overset{\eqref{eq:SotimesId(R)}}{=}-S(\chi^{i})S(\Rr^{j})S(\Rr^{k})\otimes \Rr_{j}\chi_{i}S(\Rr_{k})
    =-S(\Rr^{k}\Rr^{j}\chi^{i})\otimes \Rr_{j}\chi_{i}S(\Rr_{k})\\
    &\overset{\eqref{eq:ctr2}}{=}-S(\Rr^{k}\chi_{i}\Rr^{j})\otimes\chi^{i}\Rr_{j}S(\Rr_{k}),
\end{split}
\]
hence $\tau(S\otimes S)(\chi)=\chi$.
\end{proof}

In order to show that for a Cartier triangular Hopf algebra $(H,\Rr,\chi)$ on a field $\Bbbk$ with $\mathrm{char}\left( \Bbbk \right) \neq 2$, the infinitesimal $\Rr$-matrix $\chi$ is a 2-coboundary, we first prove the following result.

\begin{proposition}\label{pro:trivHoch-cohomol-coalg}
Let $H$ be a Hopf algebra and $\chi=\chi^i\otimes\chi_i \in \mathrm{Z}^2(H)$. Assume that $(\mathrm{Id}\otimes \varepsilon)(\chi)=0$.
	Define $\gamma \in H$ as $\gamma:= m(S\otimes\mathrm{Id})(\chi)=S(\chi^i)\chi_i$. Then, we get $b^{1}\left( \gamma \right) =\chi +\tau(S\otimes S)(\chi)$. 
	Moreover, if we	assume $\tau(S\otimes S)(\chi) =\chi $, then $b^{1}\left(\gamma \right) =2\chi $. In particular, if $\mathrm{char}\left( \Bbbk \right) \neq 2
	$ we have $\chi =b^{1}\left(\frac{\gamma}{2} \right) \in \mathrm{B}^2(H)$.
\end{proposition}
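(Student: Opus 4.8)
The plan is to evaluate the coboundary $b^{1}(\gamma)=1\otimes\gamma-\Delta(\gamma)+\gamma\otimes 1$ directly, using only the cocycle identity for $\chi$ and the antipode axioms. Write $\chi=\chi^{i}\otimes\chi_{i}$, so that $\chi\in\mathrm{Z}^{2}(H)$ reads
\[
\chi\otimes 1_{H}+(\Delta\otimes\mathrm{Id})(\chi)=1_{H}\otimes\chi+(\mathrm{Id}\otimes\Delta)(\chi).
\]
As a preliminary step I would observe that the hypothesis $(\mathrm{Id}\otimes\varepsilon)(\chi)=0$ actually forces $(\varepsilon\otimes\mathrm{Id})(\chi)=0$ as well: applying $\varepsilon\otimes\mathrm{Id}_{H}\otimes\mathrm{Id}_{H}$ to $b^{2}(\chi)=0$ and setting $z:=(\varepsilon\otimes\mathrm{Id})(\chi)\in H$ gives $\Delta(z)=z\otimes 1_{H}$; applying $\varepsilon\otimes\mathrm{Id}_{H}$ then yields $z=\varepsilon(z)1_{H}$, and $\varepsilon(z)=(\varepsilon\otimes\varepsilon)(\chi)=0$ since $(\mathrm{Id}\otimes\varepsilon)(\chi)=0$. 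This will be used to kill boundary terms.

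Next I would feed the cocycle identity into two auxiliary $\Bbbk$-linear maps $H^{\otimes 3}\to H^{\otimes 2}$. The first is $\mu\colon a\otimes b\otimes c\mapsto S(a)b\otimes c$: it sends $\chi\otimes 1_{H}$ to $\gamma\otimes 1_{H}$, sends $(\Delta\otimes\mathrm{Id})(\chi)$ to $\varepsilon(\chi^{i})1_{H}\otimes\chi_{i}=1_{H}\otimes(\varepsilon\otimes\mathrm{Id})(\chi)=0$ by the antipode axiom and the preliminary step, sends $1_{H}\otimes\chi$ to $\chi$, and sends $(\mathrm{Id}\otimes\Delta)(\chi)$ to $S(\chi^{i})\chi_{i_{1}}\otimes\chi_{i_{2}}$. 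Comparing sides this gives the intermediate identity
\[
S(\chi^{i})\chi_{i_{1}}\otimes\chi_{i_{2}}=\gamma\otimes 1_{H}-\chi.
\]

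The decisive step is the second map $\theta\colon a\otimes b\otimes c\mapsto S(b)c_{1}\otimes S(a)c_{2}$. Applied to the cocycle identity: the term $\chi\otimes 1_{H}$ maps to $S(\chi_{i})\otimes S(\chi^{i})=\tau(S\otimes S)(\chi)$, using $\Delta(1_{H})=1_{H}\otimes 1_{H}$; the term $(\Delta\otimes\mathrm{Id})(\chi)=\chi^{i}_{1}\otimes\chi^{i}_{2}\otimes\chi_{i}$ maps to $S(\chi^{i}_{2})\chi_{i_{1}}\otimes S(\chi^{i}_{1})\chi_{i_{2}}$, which is exactly $\Delta\bigl(S(\chi^{i})\chi_{i}\bigr)=\Delta(\gamma)$ because the antipode is an anti-coalgebra morphism; the term $1_{H}\otimes\chi$ maps to $S(\chi^{i})\chi_{i_{1}}\otimes\chi_{i_{2}}$, which by the intermediate identity equals $\gamma\otimes 1_{H}-\chi$; and $(\mathrm{Id}\otimes\Delta)(\chi)$ maps to an expression in which, after reassociating the resulting comultiplications of the $\chi_{i}$-leg as $\Delta^{2}(\chi_{i})$ by coassociativity, the antipode axiom collapses the first two tensor factors, leaving $1_{H}\otimes S(\chi^{i})\chi_{i}=1_{H}\otimes\gamma$. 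Putting these four evaluations together turns the cocycle identity into
\[
\tau(S\otimes S)(\chi)+\Delta(\gamma)=\bigl(\gamma\otimes 1_{H}-\chi\bigr)+1_{H}\otimes\gamma ,
\]
so $\Delta(\gamma)=\gamma\otimes 1_{H}+1_{H}\otimes\gamma-\chi-\tau(S\otimes S)(\chi)$; substituting this into $b^{1}(\gamma)=1\otimes\gamma-\Delta(\gamma)+\gamma\otimes 1$ gives precisely $b^{1}(\gamma)=\chi+\tau(S\otimes S)(\chi)$. The remaining assertions are then immediate: if $\tau(S\otimes S)(\chi)=\chi$ then $b^{1}(\gamma)=2\chi$, and since $b^{1}$ is $\Bbbk$-linear and $\mathrm{char}(\Bbbk)\neq 2$, we obtain $\chi=b^{1}(\gamma/2)\in\mathrm{B}^{2}(H)$.

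I expect the only genuinely delicate point to be the choice of the maps $\mu$ and $\theta$ and, once $\theta$ is fixed, the evaluation $\theta\bigl((\mathrm{Id}\otimes\Delta)(\chi)\bigr)=1_{H}\otimes\gamma$: one has to recognise the triple comultiplication of the $\chi_{i}$-leg, reassociate it via coassociativity, and then apply the antipode axiom $S(h_{1})h_{2}=\varepsilon(h)1_{H}$ to the appropriate pair of legs. Everything else is routine bookkeeping with Sweedler notation, together with the standard facts that the antipode is an anti-coalgebra morphism and that $b^{1}$ is linear.
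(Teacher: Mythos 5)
Your proof is correct and follows essentially the same route as the paper's: both amount to computing $\Delta(\gamma)$ from the $2$-cocycle identity, used twice, together with the anti-comultiplicativity of $S$, the antipode axiom and $(\mathrm{Id}\otimes\varepsilon)(\chi)=0$, and then reading off $b^{1}(\gamma)=\chi+\tau(S\otimes S)(\chi)$. The differences are organizational rather than substantive: you package the two uses of the cocycle identity as the contractions $\mu$ and $\theta$ (the paper instead expands $\Delta(\gamma)=\Delta(S(\chi^{i}))\Delta(\chi_{i})$ directly in Sweedler notation), and you first derive $(\varepsilon\otimes\mathrm{Id})(\chi)=0$ from the hypotheses --- a correct observation which your $\mu$-step requires but which the paper's arrangement of the computation never needs.
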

\proof
Let $\chi\in \mathrm{Z}^2(H)=\mathrm{Ker}(b^2)$, that is, 
$1\otimes \chi-(\Delta\otimes\mathrm{Id})(\chi)+(\mathrm{Id}\otimes\Delta)(\chi)-\chi\otimes 1=0,$
i.e., 
\begin{equation}\label{eq:b2(chi)'}
\chi^{i}\otimes\chi_{i1}\otimes\chi_{i2}=\chi^{i}\otimes\chi_{i}\otimes1_{H}-1_{H}\otimes\chi^{i}\otimes\chi_{i}+\chi^{i}_{1}\otimes\chi^{i}_{2}\otimes\chi_{i}.
\end{equation} 
We have \begin{equation*}
\begin{split}
\Delta(\gamma)=&\Delta(S(\chi^{i})\chi_{i})=\Delta(S(\chi^{i}))\Delta(\chi_{i})\\\overset{\eqref{eq:b2(chi)'}}{=}&\Delta(S(\chi^{i}))(\chi_{i}\otimes1_{H})-\Delta(S(1_{H}))\chi+\Delta(S(\chi^{i}_{1}))(\chi^{i}_{2}\otimes\chi_{i})=\\=&(S(\chi^{i}_{2})\otimes S(\chi^{i}_{1}))(\chi_{i}\otimes1_{H})-\chi+(S(\chi^{i}_{1_{2}})\otimes S(\chi^{i}_{1_{1}}))(\chi^{i}_{2}\otimes\chi_{i})\\=&S(\chi^{i}_{2})\chi_{i}\otimes S(\chi^{i}_{1})-\chi+S(\chi^{i}_{1_{2}})\chi^{i}_{2}\otimes S(\chi^{i}_{1_{1}})\chi_{i}\\
\overset{\eqref{eq:b2(chi)'}}{=}&S(\chi_{i1})\chi_{i2}\otimes S(\chi^{i})-S(\chi_{i})1_{H}\otimes S(\chi^{i})+S(\chi^{i})\chi_{i}\otimes S(1_{H})-\chi+S(\chi^{i}_{2_{1}})\chi^{i}_{2_{2}}\otimes S(\chi^{i}_{1})\chi_{i}\\=&\varepsilon(\chi_{i})1_{H}\otimes S(\chi^{i})-\tau(S\otimes S)(\chi)+\gamma\otimes1_{H}-\chi+\varepsilon(\chi^{i}_{2})1_{H}\otimes S(\chi^{i}_{1})\chi_{i}\\=&1_{H}\otimes S(\chi^{i}\varepsilon(\chi_{i}))-\tau(S\otimes S)(\chi)+\gamma\otimes1_{H}-\chi+1_{H}\otimes\gamma\\
=&-\chi+1_{H}\otimes\gamma +\gamma\otimes1_{H}-\tau(S\otimes S)(\chi),
\end{split}	
\end{equation*}
where the last equality follows from the assumption $(\mathrm{Id}\otimes \varepsilon)(\chi)=0$. Thus, we have
\[
b^{1}\left( \gamma \right)=1_{H}\otimes \gamma -\Delta(\gamma)+\gamma\otimes1_{H}=\chi+\tau(S\otimes S)(\chi).
\]
If we assume $%
\tau(S\otimes S)(\chi)=\chi $, we get $b^{1}\left( \gamma \right) =2\chi $. In particular, if $\mathrm{char}\left( \Bbbk \right) \neq 2
	$ we have $\chi =b^{1}\left(\frac{\gamma}{2} \right) \in \mathrm{B}^2(H)$.
\endproof

Let $\left( H,\mathcal{R},\chi \right) $ be a pre-Cartier quasitriangular Hopf
algebra. By Theorem \ref{thm:cQYB}, we know that $\chi=\chi^i\otimes\chi_i \in \mathrm{Z}^2(H)$ and by \eqref{eq:eps-chi1} we have that $(\mathrm{Id}\otimes \varepsilon)(\chi)=0$. As a consequence, Proposition \ref{pro:trivHoch-cohomol-coalg} applies and we can consider the element $\gamma=S(\chi^i)\chi_i$ that we call  the \textbf{Casimir element} of the pre-Cartier quasitriangular Hopf algebra $\left( H,\mathcal{R},\chi \right) $. Note that $\gamma$ is central as, for every $h\in H$,  
\[\gamma h=S(\chi^i)\chi_ih=h_1S(\chi^ih_2)\chi_ih_3\overset{\eqref{cqtr1}}{=}h_1S(h_2\chi^i)h_3\chi_i=hS(\chi^i)\chi_i=h\gamma.\]

\begin{remark}
  Given an $\Rr$-matrix $\Rr=\Rr^i\otimes \Rr_i$ the element $u:=S(\Rr_i)\Rr^i$ is sometimes called the Drinfel'd element, see e.g. \cite[Definition 12.2.9]{Radford}. This definition resembles the one for the Casimir element $\gamma$. 
\end{remark}

We now prove the announced result for a Cartier triangular Hopf algebra.

\begin{theorem}\label{thm:Hoch-cohom-coalg}
Let $\gamma$ be the Casimir element of a Cartier triangular Hopf
algebra $\left( H,\mathcal{R},\chi \right) $. Then $b^{1}\left( \gamma
\right) =2\chi$. In particular,
if $\mathrm{char}\left( \Bbbk \right) \neq 2$ we have $\chi =b^{1}\left(
\frac{\gamma }{2}\right)\in \mathrm{B}^2( H) $, that is, $\chi$ is a 2-coboundary.
\end{theorem}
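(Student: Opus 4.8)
The plan is to combine the two results already in hand. By Theorem~\ref{thm:cQYB}, for a pre-Cartier quasitriangular Hopf algebra $(H,\Rr,\chi)$ the infinitesimal $\Rr$-matrix $\chi=\chi^i\otimes\chi_i$ is a Hochschild $2$-cocycle, i.e. $\chi\in\mathrm{Z}^2(H)$; and by Remark~\ref{rmk:chivs}~iii) (equation \eqref{eq:eps-chi1}) we have $(\mathrm{Id}\otimes\varepsilon)(\chi)=0$. These are exactly the two hypotheses needed to invoke Proposition~\ref{pro:trivHoch-cohomol-coalg}, which gives $b^1(\gamma)=\chi+\tau(S\otimes S)(\chi)$ for the Casimir element $\gamma=m(S\otimes\mathrm{Id})(\chi)=S(\chi^i)\chi_i$.

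The remaining ingredient is the symmetry property $\tau(S\otimes S)(\chi)=\chi$, and this is precisely the content of \eqref{eq:chiSS2} in Lemma~\ref{lem:chiS1}, which holds for a \emph{Cartier triangular} Hopf algebra. So first I would note that $(H,\Rr,\chi)$ being Cartier triangular puts us in the hypotheses of Lemma~\ref{lem:chiS1}, hence $\tau(S\otimes S)(\chi)=\chi$. Substituting this into the formula from Proposition~\ref{pro:trivHoch-cohomol-coalg} immediately yields $b^1(\gamma)=\chi+\chi=2\chi$.

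For the final assertion, if $\mathrm{char}(\Bbbk)\neq 2$ then $2$ is invertible in $\Bbbk$, and by $\Bbbk$-linearity of the cobar differential $b^1$ we get $\chi=b^1\!\left(\tfrac{\gamma}{2}\right)$, so $\chi\in\mathrm{B}^2(H)$ is a $2$-coboundary. This is already spelled out in the last two sentences of Proposition~\ref{pro:trivHoch-cohomol-coalg}, so the proof reduces to checking that its hypotheses are met and that $\tau(S\otimes S)(\chi)=\chi$ in the Cartier triangular case.

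There is essentially no obstacle here: the theorem is a direct corollary of Theorem~\ref{thm:cQYB}, Lemma~\ref{lem:chiS1}, and Proposition~\ref{pro:trivHoch-cohomol-coalg}, all established earlier. The only mild subtlety worth a sentence is recording why the hypothesis ``triangular'' (rather than merely quasitriangular) is used: it enters through Lemma~\ref{lem:chiS1}, whose proof of \eqref{eq:chiSS2} relies on $\Rr^{-1}=\Rr^{\mathrm{op}}$ together with \eqref{eq:chiS2} and the Cartier condition \eqref{eq:ctr2}. Thus the proof is:
\begin{proof}
Since $(H,\Rr,\chi)$ is in particular pre-Cartier quasitriangular, Theorem~\ref{thm:cQYB} gives $\chi\in\mathrm{Z}^2(H)$, and by \eqref{eq:eps-chi1} we have $(\mathrm{Id}\otimes\varepsilon)(\chi)=0$. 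Hence Proposition~\ref{pro:trivHoch-cohomol-coalg} applies and yields $b^1(\gamma)=\chi+\tau(S\otimes S)(\chi)$. As $(H,\Rr,\chi)$ is Cartier triangular, Lemma~\ref{lem:chiS1}, equation \eqref{eq:chiSS2}, gives $\tau(S\otimes S)(\chi)=\chi$, whence $b^1(\gamma)=2\chi$. If moreover $\mathrm{char}(\Bbbk)\neq 2$, then by $\Bbbk$-linearity of $b^1$ we obtain $\chi=b^1\!\left(\tfrac{\gamma}{2}\right)\in\mathrm{B}^2(H)$, i.e. $\chi$ is a $2$-coboundary.
\end{proof}
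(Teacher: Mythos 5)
Your proposal is correct and follows essentially the same route as the paper: the paper's proof likewise invokes Lemma~\ref{lem:chiS1} for $\tau(S\otimes S)(\chi)=\chi$, uses \eqref{eq:eps-chi1} together with the $2$-cocycle property from Theorem~\ref{thm:cQYB} (recorded in the paragraph defining the Casimir element), and concludes via Proposition~\ref{pro:trivHoch-cohomol-coalg}. Your extra remark on where triangularity enters (through Lemma~\ref{lem:chiS1}) is accurate but not needed.
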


\proof
By Lemma \ref{lem:chiS1} we have that $\tau(S\otimes S)(\chi)=\chi$. Since \eqref{eq:eps-chi1} holds true, we conclude by Proposition \ref{pro:trivHoch-cohomol-coalg}.
\endproof

Note that for a Cartier triangular Hopf algebra with Casimir element $\gamma$, one has $S(\gamma)=\chi^iS(\chi_i)$. The latter element is in the center even in the pre-Cartier quasitriangular case by the same argument used for $\gamma$.

\begin{example} Consider a cocommutative Hopf algebra $H$. Assume $(H,\Rr)$, with $\Rr=1\otimes 1$, is Cartier with infinitesimal $\Rr$-matrix $\chi$. We have observed in Example \ref{exa:chicntr}\,1) that $\chi$, fulfilling \eqref{cqtr2} and \eqref{cqtr3}, must belong to $ P(H)\otimes P(H)$. By Theorem \ref{thm:Hoch-cohom-coalg}, we have  $b^{1}\left( \gamma
\right) =2\chi$ where $\gamma =S(\chi^i)\chi_i=-\chi^i\chi_i$. Apart the sign, which here differs for our choice of the definition of $b^1$, this is essentially \cite[page 496]{Kassel}. A particular instance of this construction yields the Casimir element of the universal enveloping algebra $U\mathfrak{g}$ of a semisimple Lie algebra $\mathfrak{g}$.
\end{example}

\begin{example}
 For Sweedler's Hopf algebra $H$ with the infinitesimal $\Rr$-matrix $\chi_\alpha=\alpha xg\otimes x$ of Proposition \ref{prop:Sweedler}, the Casimir element is $\alpha S(xg) x=0$. Thus, in this case, $b^{1}\left( \gamma
\right) \neq 2\chi$ unless $\alpha=0$. This confirms that $(H,\Rr_\lambda,\chi)$ is not Cartier unless $\chi$ is trivial, as we already observed in Remark \ref{rmk:Sw}.    
\end{example}

\begin{example}\label{exa:psc} Consider a quasitriangular Hopf algebra $(H,\Rr)$  and let $\chi \in  P(H)\otimes P(H)$ be a symmetric central element. In view of Example \ref{exa:chisym}, we know that  $(H,\Rr,\chi)$ is Cartier and we can consider the Casimir element $\gamma=S(\chi^i)\chi_i=-\chi^i\chi_i$. Since $\chi \in  P(H)\otimes P(H)$ and $\chi$ is symmetric, we get 
$\tau(S\otimes S)(\chi)=\tau(\chi)=\chi$ so that $b^{1}\left( \gamma \right) =\chi +\tau(S\otimes S)(\chi)=2\chi$ and hence $\chi$ is a 2-coboundary if $\mathrm{char}\left( \Bbbk \right) \neq 2$. Thus, $\chi$ can be a 2-coboundary even though $(H,\Rr)$ is not triangular.
\end{example}

\section{The dual picture}\label{sec:dual}

\subsection{Pre-Cartier coquasitriangular bialgebras}

Here we want to investigate pre-Cartier categories modelled on the braided monoidal category $(\Mm^{H},\otimes,\sigma^\Rr)$ of right $H$-comodules, where $\sigma^\Rr$ is the braiding attached to a coquasitriangular bialgebra $(H,\Rr)$. We refer to \cite{Kassel} and \cite{Majid-book} for basic notions about coquasitriangular bialgebras. \\

Recall that, given a coalgebra $C$ and an algebra $A$, then $\mathrm{Hom}_{\Bbbk}(C,A)$ is an algebra such that, for every $f,g\in\mathrm{Hom}_{\Bbbk}(C,A)$, 
$(f*g)(c)=f(c_{1})g(c_{2})$ (\textit{convolution product}) and $1(c)=\varepsilon(c)1_{A}$. Fix a bialgebra $(H,m,u,\Delta,\varepsilon)$ in the following. Given a linear map $f:H\otimes H\to\Bbbk$ we will employ leg notation $f_{12}=f\otimes\varepsilon$, $f_{23}=\varepsilon\otimes f$ and $f_{13}=(\varepsilon\otimes f)(\tau_{H,H}\otimes\mathrm{Id}_{H})$. We also denote $f\circ\tau_{H,H}$ by $f^{\mathrm{op}}$. \\

A bialgebra $H$ is called \textbf{coquasitriangular} when the category $\Mm^H$ of right $H$-comodules is braided. This condition is equivalent to the existence of a convolution invertible linear map $\Rr:H\otimes H\to \Bbbk$, called \textit{universal $\Rr$-form}, such that
\begin{align}
 \Rr*m_H=m_H^{\mathrm{op}}*\Rr,\text{ i.e., } \Rr(a_1\otimes b_1)a_2 b_2&=b_1 a_1\Rr(a_2\otimes b_2),\text{ for every } a,b\in H,\label{eq:ct1}\\
 \Rr(\mathrm{Id}_{H}\otimes m_{H})=\Rr_{13}*\Rr_{12},\text{ i.e., } \Rr(a\otimes bc)&=\Rr(a_1\otimes c)\Rr(a_2\otimes b),\text{ for every } a,b,c\in H,\label{eq:ct2}\\
 \Rr(m_{H}\otimes\mathrm{Id}_{H})=\Rr_{13}*\Rr_{23}, \text{ i.e., } \Rr(ab\otimes c)&=\Rr(a\otimes c_1)\Rr(b\otimes c_2),\text{ for every } a,b,c\in H\label{eq:ct3}.
\end{align}
The corresponding braiding on $\Mm^H$ is defined for all $X,Y$ in $\Mm^H$ by setting $$\sigma^{\Rr}_ {X,Y}:X\otimes Y\to Y\otimes X,\qquad
x\otimes y\mapsto y_0\otimes x_0\Rr(x_{1}\otimes y_{1})$$
as discussed in \cite[Exercise 9.2.9]{Majid-book}. The conditions \eqref{eq:ct2} and \eqref{eq:ct3} say that $\Rr$ is a bialgebra bicharacter (see \cite{Majid-book}, after Definition 2.2.1).  Furthermore, $H$ is called \textbf{cotriangular} when the category $\Mm^H$ is symmetric, i.e., if $\Rr$ further satisfies
\begin{align}\Rr^{-1}=\Rr^{\mathrm{op}}, \text{ i.e., }\Rr(a_1\otimes b_1)\Rr( b_2\otimes a_2)&=\varepsilon(a)\varepsilon(b)\ \text{ for every }a,b\in H.\label{eq:ct4}\end{align}
As a consequence, by \cite[Lemma 2.2.3]{Majid-book}, a coquasitriangular bialgebra $(H,\Rr)$ satisfies the \textit{quantum Yang-Baxter} equations
\begin{equation}\label{eq:cQYB}
\Rr_{12}*\Rr_{13}*\Rr_{23}=\Rr_{23}*\Rr_{13}*\Rr_{12},\qquad\Rr^{-1}_{12}*\Rr^{-1}_{13}*\Rr^{-1}_{23}=\Rr^{-1}_{23}*\Rr^{-1}_{13}*\Rr^{-1}_{12}.
\end{equation}

Recall that, if $(H,\Rr)$ is a coquasitriangular bialgebra then $\Rr(a\otimes1_{H})=\varepsilon(a)=\Rr(1_{H}\otimes a)$ for all $a\in H$ and, if $H$ is a Hopf algebra, then $\Rr(S(a)\otimes b)=\Rr^{-1}(a\otimes b)$, $\Rr^{-1}(a\otimes S(b))=\Rr(a\otimes b)$ and $\Rr(S(a)\otimes S(b))=\Rr(a\otimes b)$ for all $a,b\in H$ in addition (see \cite[Lemma 2.2.2]{Majid-book}).
Dual to Definition \ref{defn:curved(quasi)triang}, we have the following:
	\begin{definition}\label{defn:curvedco(quasi)triang}
		We call a triple $(H,\Rr,\chi)$ a \textbf{pre-Cartier} co(quasi)triangular bialgebra if $(H,\Rr)$ is a co(quasi)triangular bialgebra and $\chi:H\otimes H\to \Bbbk$ is such that
\begin{align}
 (u_{H}\chi)*m_{H}&=m_{H}*(u_{H}\chi),\label{eq:CC1}\\
 \chi(\mathrm{Id}_{H}\otimes m_{H})&=\chi_{12}+\Rr^{-1}_{12}*\chi_{13}*\Rr_{12},\label{eq:CC2}\\
\chi(m_{H}\otimes\mathrm{Id}_{H})&=\chi_{23}+\Rr^{-1}_{23}*\chi_{13}*\Rr_{23},\label{eq:CC3}
\end{align}
hold. We call such a $\chi$ an \textbf{infinitesimal $\Rr$-form}. \\
Moreover, the triple $(H,\Rr,\chi)$ is called \textbf{Cartier} if it is pre-Cartier and $\chi$ satisfies
  \begin{equation}\label{eq:CC4}
	\mathcal{R}*\chi=\chi^{\mathrm{op}}*\mathcal{R}, 
\end{equation}
in addition.
\end{definition}

Observe that \eqref{eq:CC1}, \eqref{eq:CC2}, \eqref{eq:CC3} and \eqref{eq:CC4} on elements are
\begin{gather*}
\chi(a_1\otimes b_1)a_2b_2=a_1b_1\chi(a_2\otimes b_2),\\
\chi(a\otimes bc)=\chi(a\otimes b)\varepsilon(c)+\Rr^{-1}(a_1\otimes b_1)\chi(a_2\otimes c)\Rr(a_3\otimes b_2),\\
\chi (ab\otimes c)=\varepsilon (a)\chi (b\otimes c)+\mathcal{R}^{-1}
        (b_{1}\otimes c_{1})\chi (a\otimes c_{2})\mathcal{R}(b_{2}\otimes c_{3}),\\
   \Rr(a_1\otimes b_1)\chi(a_2\otimes b_2)=\chi(b_1\otimes a_1)\Rr(a_2\otimes b_2).     
\end{gather*}
for any $a,b,c\in H$, respectively.

\begin{theorem}\label{thm:curv-cotr}
Let $(H,\Rr)$ be a co(quasi)triangular bialgebra. Then, there is a bijection between pre-Cartier structures of $(H,\Rr)$ and pre-Cartier structures of $(\Mm^{H},\otimes,\sigma^{\Rr})$. The corresponding infinitesimal braiding on $\Mm^H$ is defined for all objects $M,N$ in $\Mm^H$ by $$t_ {M,N}:M\otimes N\to M\otimes N,\qquad m\otimes n\mapsto m_0\otimes n_0\chi(m_{1}\otimes n_{1}),$$ where $\chi:H \otimes H\to\Bbbk$ is the infinitesimal $\Rr$-form for $H$. Moreover, there is a bijective correspondence between Cartier structures of $(H,\Rr)$ and Cartier structures of $(\Mm^{H},\otimes,\sigma^{\Rr})$.
\end{theorem}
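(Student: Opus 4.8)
The plan is to mimic the proof of Theorem~\ref{thm} in the dual setting, replacing modules by comodules, actions by coactions, and products in $H$ by convolution products in $H^*$ (or rather in $\mathrm{Hom}_\Bbbk(H\otimes H,\Bbbk)$). First I would set up the correspondence in one direction: given a pre-Cartier co(quasi)triangular structure $(H,\Rr,\chi)$, define $t_{M,N}\colon M\otimes N\to M\otimes N$ by $m\otimes n\mapsto m_0\otimes n_0\,\chi(m_1\otimes n_1)$ and check that this is a natural transformation $\otimes\to\otimes$ on $\Mm^H$. Naturality in $M,N$ is routine from colinearity of comodule maps; the more substantial point is that each $t_{M,N}$ is itself a morphism of right $H$-comodules, which will come down exactly to condition \eqref{eq:CC1}, the relation $(u_H\chi)*m_H=m_H*(u_H\chi)$, i.e. $\chi(m_1\otimes n_1)m_2n_2=m_1n_1\chi(m_2\otimes n_2)$ evaluated against the coaction. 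Then I would verify that \eqref{qC-I} and \eqref{qC-II} for $t$ translate, via the explicit formula for $\sigma^\Rr$ and its inverse, into precisely \eqref{eq:CC2} and \eqref{eq:CC3}; here the key computation is that, on elements, $(\sigma^\Rr_{X,Y})^{-1}(y\otimes x)=x_0\otimes y_0\,\Rr^{-1}(x_1\otimes y_1)$, so the composite $(\sigma^{-1}\otimes\mathrm{Id})\circ(\mathrm{Id}\otimes t)\circ(\sigma\otimes\mathrm{Id})$ produces the $\Rr^{-1}_{12}*\chi_{13}*\Rr_{12}$ term. Finally \eqref{cart} for $t$ unravels to \eqref{eq:CC4}, $\Rr*\chi=\chi^{\mathrm{op}}*\Rr$, by the same kind of direct check as in the proof of Theorem~\ref{thm}.

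For the converse direction, the idea is the dual of the ``free module'' trick used in Theorem~\ref{thm}: there one evaluates a natural transformation on $H$ itself and uses the maps $l_m\colon H\to M$; dually, for a comodule $M$ and $m\in M$ one uses the colinear map $\rho_m\colon M\to H$ obtained from the coaction, or more directly one recovers $\chi$ by $\chi:=(\varepsilon\otimes\varepsilon)\circ t_{H,H}$ applied suitably, i.e. $\chi(a\otimes b):=(\varepsilon\otimes\varepsilon)\big(t_{H,H}(a\otimes b)\big)$ where $H$ carries the regular right coaction $\Delta$. Given an infinitesimal braiding $t$ on $\Mm^H$, naturality with respect to colinear maps $M\to H$, $m\mapsto$ (components of $\rho(m)$), forces $t_{M,N}$ to have the stated form with this $\chi$; then the fact that $t_{M,N}$ is colinear gives \eqref{eq:CC1}, and \eqref{qC-I}, \eqref{qC-II}, \eqref{cart} specialized to $M=N=H$ give back \eqref{eq:CC2}, \eqref{eq:CC3}, \eqref{eq:CC4}. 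One checks that the two assignments are mutually inverse, which is immediate once both formulas are pinned down.

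The main obstacle I expect is the ``cogeneration'' step in the converse: in the module case one exploits that $H$ is a generator (every $m\in M$ is $l_m(1_H)$ for a module map $l_m\colon H\to M$), whereas dually one needs that $H$ is a cogenerator of $\Mm^H$ in the relevant sense — concretely, that for every right $H$-comodule $M$ and $m\in M$ the coaction $\rho^r\colon M\to M\otimes H$ together with evaluation lets one write $m$ as the image of a colinear map $H\to M$ only after choosing a functional, so one must instead argue that $t$ is determined by its value on cofree comodules $V\otimes H$ and then by its value on $H$. This is the place where one has to be careful to phrase naturality correctly (using colinear maps $H\to M$ of the form $a\mapsto m_0\,\phi(m_1 a)$ or, cleaner, the comodule map $M\otimes H\to M$ given by the coaction) so that evaluating the naturality square at the identity reproduces $\chi$; the rest is bookkeeping with Sweedler notation and the explicit braiding formula. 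A secondary (purely computational) obstacle is keeping track of $\Rr^{-1}$ versus $\Rr^{\mathrm{op}}$ and the leg-notation conventions $f_{13}=(\varepsilon\otimes f)(\tau_{H,H}\otimes\mathrm{Id}_H)$ when expanding the compositions, but this is exactly parallel to the proof of Theorem~\ref{thm} and presents no conceptual difficulty. I would therefore write the proof by stating ``the proof is dual to that of Theorem~\ref{thm}'' and then spelling out the two or three computations above in element form, exactly as the displayed equalities following Definition~\ref{defn:curvedco(quasi)triang} suggest.
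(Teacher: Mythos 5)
Your overall strategy is the same as the paper's: for the forward direction one checks that \eqref{eq:CC1} is exactly colinearity of $t_{M,N}$ and that \eqref{qC-I}, \eqref{qC-II}, \eqref{cart} translate into \eqref{eq:CC2}, \eqref{eq:CC3}, \eqref{eq:CC4}, and for the converse the paper also sets $\chi:=(\varepsilon\otimes\varepsilon)t_{H,H}$ and uses naturality against colinear maps into the regular comodule $H$. The only shaky point in your plan is the choice of those maps. The map $a\mapsto m_0\,\phi(m_1a)$ you suggest goes from $H$ to $M$ and is not right $H$-colinear (take $M=H$, $m=1$, $\phi=\varepsilon$: colinearity would force $1\otimes a=\varepsilon(a)\,1\otimes 1$), and ``the comodule map $M\otimes H\to M$ given by the coaction'' does not exist as stated, since $\mathrm{Id}_M\otimes\varepsilon$ is colinear for neither the cofree nor the diagonal coaction on $M\otimes H$. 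The paper's device is simpler and avoids any cogenerator or cofree-comodule detour: for every $\alpha\in M^*$ the map $\widehat{\alpha}:=(\alpha\otimes\mathrm{Id}_H)\circ\rho^r\colon M\to H$, $m\mapsto\alpha(m_0)m_1$, is colinear and satisfies $\varepsilon\circ\widehat{\alpha}=\alpha$; applying naturality of $t$ to $\widehat{\alpha}\otimes\widehat{\beta}$ and composing with $\varepsilon\otimes\varepsilon$ gives $(\alpha\otimes\beta)\,t_{M,N}(m\otimes n)=\alpha(m_0)\beta(n_0)\chi(m_1\otimes n_1)$, and since functionals separate points this forces $t_{M,N}(m\otimes n)=m_0\otimes n_0\,\chi(m_1\otimes n_1)$. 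Your fallback via the colinear embedding $\rho^r\colon M\to M\otimes H$ into a cofree comodule does work, and once unwound it is the same computation, because $\widehat{\alpha}$ is precisely $(\alpha\otimes\mathrm{Id}_H)\circ\rho^r$; with that step repaired, the remaining verifications you outline are exactly those carried out (dually to Theorem~\ref{thm}) in the paper.
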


\begin{proof}
Suppose there is an infinitesimal braiding $t$ on $\Mm^H$ and set $\chi:=(\varepsilon\otimes\varepsilon)t_{H,H}$.
For  every $M$ in $\Mm^H$ and $\alpha\in M^*$, define $\widehat{\alpha}:M\to H$ in $\Mm^H$ by setting $\widehat{\alpha}(m)=\alpha(m_0)m_{1}$ and observe that $\varepsilon\circ\widehat{\alpha}=\alpha$. Now, let $M,N$ in $\Mm^H$ and consider $\alpha\in M^*$ and $\beta\in N^*$. Since $t$ is natural, we get
$(\alpha\otimes \beta)t_{M,N}(m\otimes n)
=(\varepsilon\otimes\varepsilon)(\widehat{\alpha}\otimes \widehat{\beta})t_{M,N}(m\otimes n)
=(\varepsilon\otimes\varepsilon)t_{H,H}(\widehat{\alpha}\otimes \widehat{\beta})(m\otimes n)
=\chi(\widehat{\alpha}\otimes \widehat{\beta})(m\otimes n)
=\alpha(m_0)\beta(n_0)\chi(m_{1}\otimes n_{1})
=(\alpha\otimes \beta)(m_0\otimes n_0\chi(m_{1}\otimes n_{1})).
$ Since $\alpha$ and $\beta$ are arbitrary, we arrive at $t_{M,N}(m\otimes n)=m_0\otimes n_0\chi(m_{1}\otimes n_{1})$. Now, the fact that $t_{M,N}$ is a morphism in $\Mm^H$ for all $M,N$ in $\Mm^H$ is equivalent to \eqref{eq:CC1}.
The condition \eqref{qC-I} is equivalent to \eqref{eq:CC2} and the condition \eqref{qC-II} is equivalent to \eqref{eq:CC3}. Finally, \eqref{cart} is equivalent to \eqref{eq:CC4}.
\end{proof}


The following example is the dual version of Example \ref{exa:chicntr}\,2).

\begin{example} Let $(H,\Rr)$ be a co(quasi)triangular bialgebra and let $\chi:H\ot H\to\Bbbk$ be a linear map. Assume $H$ is cocommutative and let us check when $(H,\Rr,\chi)$ is a pre-Cartier bialgebra. 
First note that  \eqref{eq:ct1} is equivalent to commutativity of $H$ while \eqref{eq:CC1} is always true. Furthermore, \eqref{eq:CC2} is equivalent to $\chi(a\otimes bc)=\chi(a\otimes b)\varepsilon(c)+\varepsilon(b)\chi(a\otimes c)$ which means that the map $\chi(a,-):H\to \Bbbk$ is a derivation.
\begin{invisible}  
First note that $b_1 a_1\Rr(a_2\ot b_2)=b_2 a_2\Rr(a_1\ot b_1)=\Rr(a_1\ot b_1)b_2 a_2$  so that \eqref{eq:ct1} is equivalent to $\Rr(a_1\ot b_1) a_2 b_2=\Rr(a_1\ot b_1)b_2 a_2$. Since $\Rr$ is convolution invertible, this leads to $ab=ba$, i.e., to commutativity of $H$. Similarly, we have $a_1b_1\chi(a_2\ot b_2)=a_2b_2\chi(a_1\ot b_1)=\chi(a_1\ot b_1)a_2b_2$ so that \eqref{eq:CC1} is always true.      
Furthermore 
\[
\begin{split}
\chi(a\otimes b)\varepsilon(c)+\Rr^{-1}(a_1\otimes b_1)\chi(a_2\otimes c)\Rr(a_3\otimes b_2)&=\chi(a\otimes b)\varepsilon(c)+\Rr^{-1}(a_1\otimes b_1)\chi(a_3\otimes c)\Rr(a_2\otimes b_2)\\&=\chi(a\otimes b)\varepsilon(c)+\Rr^{-1}(a_1\otimes b_1)\Rr(a_2\otimes b_2)\chi(a_3\otimes c)\\&=\chi(a\otimes b)\varepsilon(c)+\varepsilon(b)\chi(a\otimes c)
\end{split}
 \] 
 \end{invisible}
 Similarly, \eqref{eq:CC3} is equivalent to $\chi(ab\otimes c)=\varepsilon(a)\chi(b\otimes c)+\varepsilon(b)\chi(a\otimes c)$ which means that the map $\chi(-,c):H\to\Bbbk$ is a derivation. Thus $(H,\Rr,\chi)$ is pre-Cartier if and only if $\chi$ is a biderivation, i.e., a derivation in both entries. 
 Consider $Q(H)=H^+/(H^+)^2$, the so-called space of indecomposables of $H$, where $H^+=\mathrm{Ker}(\varepsilon)$ is the augmentation ideal, and the canonical projection $\pi:H\to Q(H),h\mapsto h-\varepsilon(h)1+(H^+)^2$. Since any derivation vanishes on  $\mathrm{Ker}(\pi)=(H^+)^2\oplus\Bbbk1$, we get that $\chi:H\otimes H\to \Bbbk$ is a biderivation if and only if it factors through $\pi\otimes\pi$, i.e., there exists $\chi^+:Q(H)\otimes Q(H)\to\Bbbk$ such that $\chi^+\circ (\pi\otimes\pi)=\chi$.
 
 Moreover, since $\Rr$ is convolution invertible, \eqref{eq:CC4} is equivalent to $\chi(a\ot b)=\chi(b\ot a)$, i.e., to the fact that $\chi$ is symmetric.
 \begin{invisible}
 Since $\chi(b_1\ot a_1)\Rr(a_2\ot b_2)=\chi(b_2\ot a_2)\Rr(a_1\ot b_1)=\Rr(a_1\ot b_1)\chi(b_2\ot a_2)$, we have that \eqref{eq:CC4} is equivalent to $\Rr(a_1\ot b_1)\chi(a_2\ot b_2)=\Rr(a_1\ot b_1)\chi(b_2\ot a_2)$.    
 \end{invisible}
  Hence $(H,\Rr,\chi)$ is Cartier if and only if $\chi$ is a symmetric biderivation.

    Let us specify two instances of this situation.
  \begin{itemize}
    \item Let $H:=\Bbbk G$, where $(G,+)$ is an abelian monoid and let $\Rr=\varepsilon\otimes\varepsilon$. Then, $\chi:H\ot H\to\Bbbk$ gives $(H,\Rr,\chi)$ a structure of Cartier cotriangular bialgebra if and only if $G\times G\to\Bbbk,(a,b)\mapsto \chi(a\ot b)$, is a symmetric additive bicharacter.
    This is consistent with \cite[Remark 2.2 (3)]{HV}, where we note that the authors work with left comodules.
    \item Let $H:=S(V)$ be the symmetric algebra over a vector space $V$, regarded as a bialgebra where the elements in $V$ are primitive. Since $\chi$ is a derivation on each entry, by the foregoing it vanishes on $\mathrm{Ker}(\pi)=(S(V)^+)^2=V^{\otimes 2}\oplus V^{\otimes 3}\oplus\cdots$ on each entry and hence it vanishes everywhere excepted on its restriction to $V\ot V$. Thus, $\chi$ is completely determined by $\chi_{\mid V\ot V}:V\ot V\to \Bbbk$  on which it only has to be symmetric in order to  conclude that $H$ is Cartier.
  \end{itemize}
\end{example}

\begin{remark}
Note that any derivation $\partial:H\to \Bbbk$ can be regarded as a primitive element in the finite dual $H^\circ$ and, by a result due to Michaelis, $P(H^\circ)$ is isomorphic to  $Q(H)^*$. Explicitly, see \cite[\S2]{Michaelis90}, we can write this isomorphism as $P(H^\circ)\to Q(H)^*,\partial\mapsto [x+(H^+)^2\mapsto \partial(x)].$   
\end{remark} 

Dually to Example \ref{exa:chisym} we have the following one.
\begin{example}\label{exa:chisymco} Let $(H,\Rr)$ be a co(quasi)triangular bialgebra and let $\chi\in \mathscr{Z}((H\ot H)^*)$. 
Then \eqref{eq:CC1} is trivially satisfied, \eqref{eq:CC2} and \eqref{eq:CC3} together mean that $\chi $ is a biderivation while \eqref{eq:CC4} means that $\chi$ is symmetric. As a consequence, $(H,\Rr,\chi)$ is pre-Cartier if $\chi$ is a central biderivation and it is Cartier when it is further symmetric.
\end{example} 

\textbf{The bialgebra $\mathrm{M}_q(2)$ and the quantum groups $\mathrm{GL}_q(2)$ and $\mathrm{SL}_q(2)$.}
Let $q\in\mathbb{C}$ be a non-zero complex number. Following the convention of \cite[Section IV]{Kassel} we recall that $\mathrm{M}_q(2)$ is the (unital) algebra freely generated by $\alpha,\beta,\gamma,\delta$, modulo the Manin relations
\begin{center}
    $\beta\alpha=q\alpha\beta,\qquad
    \gamma\alpha=q\alpha\gamma,\qquad
    \delta\beta=q\beta\delta,\qquad
    \delta\gamma=q\gamma\delta,$\\
    $\gamma\beta=\beta\gamma,\qquad
    \alpha\delta-\delta\alpha=(q^{-1}-q)\beta\gamma$.
\end{center}
It is a bialgebra, which is neither commutative nor cocommutative, with coproduct and counit determined on generators by
\begin{equation}\label{Mq2bialg}
\Delta\begin{pmatrix}
    \alpha & \beta\\
    \gamma & \delta
\end{pmatrix}=\begin{pmatrix}
    \alpha & \beta\\
    \gamma & \delta
\end{pmatrix}\otimes\begin{pmatrix}
    \alpha & \beta\\
    \gamma & \delta
\end{pmatrix},\qquad
\varepsilon\begin{pmatrix}
    \alpha & \beta\\
    \gamma & \delta
\end{pmatrix}=\begin{pmatrix}
    1 & 0\\
    0 & 1
\end{pmatrix},
\end{equation}
where the former is to be understood as matrix comultiplication, e.g., $\Delta(\alpha)=\alpha\otimes\alpha+\beta\otimes\gamma$. There is a coquasitriangular structure $\Rr$ on $\mathrm{M}_q(2)$ determined on generators by
\begin{equation}\label{Mq2R}
\Rr\begin{pmatrix}
    \alpha\otimes\alpha & \beta\otimes\beta &\alpha\otimes\beta & \beta\otimes\alpha\\
    \gamma\otimes\gamma & \delta\otimes\delta & \gamma\otimes\delta & \delta\otimes\gamma\\
    \alpha\otimes\gamma & \beta\otimes\delta & \alpha\otimes\delta & \beta\otimes\gamma\\
    \gamma\otimes\alpha & \delta\otimes\beta & \gamma\otimes\beta & \delta\otimes\alpha
\end{pmatrix}=q^{-\frac{1}{2}}\begin{pmatrix}
    q & 0 & 0 & 0\\
    0 & q & 0 & 0\\
    0 & 0 & 1 & q-q^{-1}\\
    0 & 0 & 0 & 1
\end{pmatrix},    
\end{equation}
c.f. \cite[Corollary VIII.7.2]{Kassel}. It is straightforward to check that
\begin{eqnarray*}
\mathcal{R}^{-1}%
\begin{pmatrix}
\alpha \otimes \alpha  & \beta \otimes \beta  & \alpha \otimes \beta  &
\beta \otimes \alpha  \\
\gamma \otimes \gamma  & \delta \otimes \delta  & \gamma \otimes \delta  &
\delta \otimes \gamma  \\
\alpha \otimes \gamma  & \beta \otimes \delta  & \alpha \otimes \delta  &
\beta \otimes \gamma  \\
\gamma \otimes \alpha  & \delta \otimes \beta  & \gamma \otimes \beta  &
\delta \otimes \alpha
\end{pmatrix}
&=&q^{\frac{1}{2}}%
\begin{pmatrix}
q^{-1} & 0 & 0 & 0 \\
0 & q^{-1} & 0 & 0 \\
0 & 0 & 1 & q^{-1}-q \\
0 & 0 & 0 & 1%
\end{pmatrix}.%
\end{eqnarray*}

\begin{remark}\label{rmk:hateps}
Note that the free algebra $\Bbbk\{\alpha,\beta,\gamma,\delta\}$  is a $\mathbb{N}$-graded algebra. The degree of an arbitrary homogeneous element $\xi$ is its length $|\xi|$, so for instance $|\alpha^{n}\beta^{m}\gamma^{k}\delta^{\ell}|=n+m+k+\ell$, where $n,m,k,\ell\in\mathbb{N}$.
Since $\mathrm{M}_q(2)$ is obtained by factoring out $\Bbbk\{\alpha,\beta,\gamma,\delta\}$ by an ideal generated by homogeneous elements, then $\mathrm{M}_q(2)$ inherits the above graduation and hence becomes a $\mathbb{N}$-graded algebra, as well. In particular, $\mathrm{M}_q(2)$ becomes a left $\Bbbk[X]$-comodule algebra, where the coalgebra structure of the polynomial ring $\Bbbk[X]$ is the one with $X$ grouplike, with coaction $\rho^l:\mathrm{M}_q(2)\to \Bbbk[X]\otimes \mathrm{M}_q(2)$ defined by $\rho^l(\xi):=\xi_{-1}\otimes\xi_0:=X^{|\xi|}\otimes\xi$, which is an algebra map. On the other hand, consider the derivation $\partial:\Bbbk[X]\to \Bbbk$ defined by $\partial(X^n)=n1_\Bbbk$.
\begin{invisible}
Given $a,b\in\mathbb{N}$, we have $\partial(X^a\cdot X^b)=\partial(X^{a+b})=(a+b)1=a1+b1=\partial(X^a)\varepsilon(X^b)+\varepsilon(X^a)\partial(X^b).$
\end{invisible}
Thus the algebra map $\varepsilon:\mathrm{M}_q(2)\to \Bbbk$ induces the derivation $\partial_q:\mathrm{M}_q(2)\to \Bbbk$ defined by $\partial_q(\xi)= \partial(\xi_{-1})\varepsilon(\xi_{0})$, for every $\xi\in \mathrm{M}_q(2)$,
in the sense that $\partial_q(\xi\eta)=\partial_q(\xi)\varepsilon(\eta)+\varepsilon(\xi)\partial_q(\eta)$ for all $\xi,\eta\in \mathrm{M}_q(2)$.
If $\xi$ is homogeneous, we get 
$\partial_q(\xi)=\partial(X^{|\xi|})\varepsilon(\xi)=|\xi|\varepsilon(\xi)$. 
Note that $\partial_q(\xi_1)\xi_2=|\xi|\varepsilon(\xi_1)\xi_2=|\xi|\xi=\xi_1\varepsilon(\xi_2)|\xi|=\xi_1\partial_q(\xi_2)$.
This is equivalent to the condition $\partial_q\in\mathscr{Z}(H^*)$.
\end{remark}

\begin{proposition}\label{prop:Mq2chi}
There is a non-trivial infinitesimal $\Rr$-form on $\mathrm{M}_q(2)$ given by $\chi=\partial_q\otimes\partial_q$
\begin{invisible}
\begin{equation}
    \chi\begin{pmatrix}
    \alpha\otimes\alpha & \beta\otimes\beta &\alpha\otimes\beta & \beta\otimes\alpha\\
    \gamma\otimes\gamma & \delta\otimes\delta & \gamma\otimes\delta & \delta\otimes\gamma\\
    \alpha\otimes\gamma & \beta\otimes\delta & \alpha\otimes\delta & \beta\otimes\gamma\\
    \gamma\otimes\alpha & \delta\otimes\beta & \gamma\otimes\beta & \delta\otimes\alpha
\end{pmatrix}=\begin{pmatrix}
    1 & 0 & 0 & 0\\
    0 & 1 & 0 & 0\\
    0 & 0 & 1 & 0\\
    0 & 0 & 0 & 1
\end{pmatrix}
\end{equation}
\end{invisible}
such that $(\mathrm{M}_q(2),\Rr,\chi)$ is Cartier.
On homogeneous elements $\xi$ and $\eta$  it reads $ \chi(\xi\otimes\eta)=|\xi||\eta|\varepsilon(\xi\eta).$
\begin{invisible}
\begin{equation}\label{chiSL2}
    \chi(\xi\otimes\eta)=|\xi|\cdot|\eta|\varepsilon(\xi\eta).
\end{equation}
\end{invisible}
\end{proposition}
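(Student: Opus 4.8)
The plan is to exploit the description of $\chi=\partial_q\otimes\partial_q$ in terms of the grading of $\mathrm{M}_q(2)$ together with the fact that $\partial_q\in\mathscr{Z}(H^*)$, which reduces everything to the dual of Example~\ref{exa:chisymco}. First I would record the three things we already know about $\partial_q$ from Remark~\ref{rmk:hateps}: it is a derivation, i.e. $\partial_q(\xi\eta)=\partial_q(\xi)\varepsilon(\eta)+\varepsilon(\xi)\partial_q(\eta)$; it is central in $H^*$, i.e. $\partial_q(\xi_1)\xi_2=\xi_1\partial_q(\xi_2)$ for all $\xi$; and on a homogeneous element $\partial_q(\xi)=|\xi|\varepsilon(\xi)$. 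From the last formula and multiplicativity of $\varepsilon$ the claimed expression $\chi(\xi\otimes\eta)=|\xi||\eta|\varepsilon(\xi\eta)$ for homogeneous $\xi,\eta$ is immediate, so the real content is verifying \eqref{eq:CC1}, \eqref{eq:CC2}, \eqref{eq:CC3} and \eqref{eq:CC4} for $\chi=\partial_q\otimes\partial_q$.

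For \eqref{eq:CC1} I would compute $\chi(a_1\otimes b_1)a_2b_2$ and $a_1b_1\chi(a_2\otimes b_2)$ and show both equal $\partial_q(a_1)a_2\,\partial_q(b_1)b_2$, respectively $a_1\partial_q(a_2)\,b_1\partial_q(b_2)$; centrality of $\partial_q$ in $H^*$ gives $\partial_q(a_1)a_2=a_1\partial_q(a_2)$ and similarly for $b$, so the two sides agree (this is exactly the remark in Example~\ref{exa:chisymco} that a central $\chi$ satisfies \eqref{eq:CC1} automatically). For \eqref{eq:CC2}, evaluating on $a\otimes b\otimes c$ the left side is $\partial_q(a)\partial_q(bc)=\partial_q(a)\big(\partial_q(b)\varepsilon(c)+\varepsilon(b)\partial_q(c)\big)$ by the derivation property, while the right side is $\chi_{12}+\Rr^{-1}_{12}*\chi_{13}*\Rr_{12}$ evaluated there, which equals $\partial_q(a)\partial_q(b)\varepsilon(c)+\Rr^{-1}(a_1\otimes b_1)\partial_q(a_2)\partial_q(c)\Rr(a_3\otimes b_2)$; using centrality of $\partial_q$ to move $\partial_q(a_2)$ past the $\Rr^{\pm1}$ factors and then the convolution-inverse identity $\Rr^{-1}(a_1\otimes b_1)\Rr(a_2\otimes b_2)=\varepsilon(a)\varepsilon(b)$ collapses the second term to $\varepsilon(b)\partial_q(a)\partial_q(c)$, matching the left side. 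Equation \eqref{eq:CC3} is the mirror computation, using that $\partial_q(-,\,)$ is a derivation in the first slot and centrality again. Finally \eqref{eq:CC4} reduces, after cancelling $\Rr$ via convolution invertibility exactly as in Example~\ref{exa:chicntr}'s dual, to symmetry of $\chi$, and $\chi=\partial_q\otimes\partial_q$ is visibly symmetric.

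Strictly speaking one should justify that these identities, which I have checked on homogeneous (indeed generator) elements, hold on all of $\mathrm{M}_q(2)$: since $\mathrm{M}_q(2)$ is graded and every element is a sum of homogeneous ones, and since $\Delta$ preserves the grading and $\varepsilon,\partial_q,\Rr$ are all expressed through it, linearity extends everything. I expect the main obstacle to be purely bookkeeping: keeping the three tensor legs and the Sweedler indices straight when pushing the central element $\partial_q(a_2)$ through the $\Rr^{\pm1}$ factors in \eqref{eq:CC2}, \eqref{eq:CC3}, and making sure the grading argument covering non-homogeneous elements is stated cleanly. No new idea beyond ``$\partial_q$ is a central biderivation, so apply the dual of Example~\ref{exa:chisymco}'' is needed, so I would keep the write-up short, spelling out \eqref{eq:CC2} in detail and leaving \eqref{eq:CC3} to the reader as ``similarly''.
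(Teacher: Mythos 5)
Your proposal is correct and follows essentially the same route as the paper: the paper's proof consists precisely of observing (via Remark~\ref{rmk:hateps}) that $\partial_q$ is a central derivation, so $\chi=\partial_q\otimes\partial_q$ is a central symmetric biderivation, and then invoking Example~\ref{exa:chisymco}, whose verification of \eqref{eq:CC1}--\eqref{eq:CC4} is exactly the computation you spell out by hand. The only difference is that you unpack that example's argument explicitly rather than citing it, which changes nothing of substance.
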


\begin{proof}
According to Remark \ref{rmk:hateps}, $\chi=\partial_q\otimes \partial_q:\mathrm{M}_q(2)\otimes \mathrm{M}_q(2)\to \Bbbk$ is a well-defined symmetric biderivation. Moreover, since we observed that $\partial_q\in\mathscr{Z}(H^*)$ we have that $\chi\in\mathscr{Z}((H\otimes H)^*)$, i.e., it is central. By Example \ref{exa:chisymco}, we conclude that $(\mathrm{M}_q(2),\Rr,\chi)$ is Cartier.
\begin{invisible}
    For the moment, we just verify that \eqref{chiSL2} is well-defined. The only Manin relation which is not obviously compatible with the definition is $\alpha\delta-\delta\alpha=(q-q^{-1})\beta\gamma$. Let $\xi,\eta\in \mathrm{M}_q(2)$ be arbitrary homogeneous elements of length $|\xi|,|\eta|\in\mathbb{N}$. Note that the length is well-defined since the Manin relations are homogeneous. Then
\begin{align*}
    \chi(\xi\otimes\alpha\delta\eta)
    &=|\xi|(2+|\eta|)\varepsilon(\xi\alpha\delta\eta)\\
    &=|\xi|(2+|\eta|)\varepsilon(\xi\eta)\\
    &=|\xi|(2+|\eta|)\varepsilon(\xi\delta\alpha\eta)+(q-q^{-1})|\xi|(2+|\eta|)\underbrace{\varepsilon(\xi\beta\gamma)}_{=0}\\
    &=\chi(\xi\otimes(\delta\alpha+(q-q^{-1})\beta\gamma)\eta)
\end{align*}
(and similarly with the arguments of $\chi$ exchanged) shows that \eqref{chiSL2} is further compatible with the last Manin relation.
\end{invisible}
\end{proof}

As we are going to see in Proposition~\ref{Prop:xFRT} the above result also follows as  an instance of the infinitesimal FRT construction. \medskip

The algebra $\mathrm{GL}_q(2)$ is defined as $\mathrm{M}_q(2)[\theta]/\left( \theta {\det}_{q}-1\right)$ where we used the quantum determinant
$$
{\det}_{q}:=\alpha\delta-q^{-1}\beta\gamma
$$ and it turns out that the bialgebra structure \eqref{Mq2bialg} and coquasitriangular structure \eqref{Mq2R} pass to $\mathrm{GL}_q(2)$, see e.g. \cite[Theorem IV.6.1 and Corollary VIII.7.4]{Kassel} in such a way that $\theta$ is grouplike.

Now, the derivation $\partial_q:\mathrm{M}_q(2)\to \Bbbk$ of Remark \ref{rmk:hateps} induces a unique derivation 
$\hat\partial_q:\mathrm{M}_q(2)[\theta] \rightarrow \Bbbk $ that
restricted to $\mathrm{M}_q(2)$ is $\partial_q$ and maps $\theta$ to $-2.$ Note that this forces  $\hat\partial_q(\theta^{n})=-2n$.

\begin{proposition}\label{prop:GLq} The derivation $\hat\partial_q:\mathrm{M}_q(2)[\theta] \rightarrow \Bbbk $ descends to a derivation $\hat\partial_q:\mathrm{GL}_q(2) \rightarrow \Bbbk $. Moreover, $\chi :=\hat\partial_q\otimes \hat\partial_q:\mathrm{GL}%
_{q}(2)\otimes \mathrm{GL}_{q}(2)\rightarrow \Bbbk$ is a non-trivial infinitesimal $\Rr$-form on $\mathrm{GL}_{q}(2)$ 
such that $(\mathrm{GL}_{q}(2),\Rr,\chi)$ is Cartier.
\end{proposition}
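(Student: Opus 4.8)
The plan is to deduce this statement from the $\mathrm{M}_q(2)$ case (Proposition~\ref{prop:Mq2chi}) together with the general machinery on central biderivations (Example~\ref{exa:chisymco}) and the descent argument already used implicitly for $\hat\partial_q$. First I would verify that $\hat\partial_q\colon\mathrm{M}_q(2)[\theta]\to\Bbbk$ actually descends to $\mathrm{GL}_q(2)$: since $\mathrm{GL}_q(2)=\mathrm{M}_q(2)[\theta]/(\theta{\det}_q-1)$, it suffices to check that $\hat\partial_q$ kills the generator $\theta{\det}_q-1$ of the defining ideal. As $\hat\partial_q$ is a derivation with $\varepsilon(\theta)=\varepsilon({\det}_q)=1$, we get $\hat\partial_q(\theta{\det}_q-1)=\hat\partial_q(\theta)\varepsilon({\det}_q)+\varepsilon(\theta)\hat\partial_q({\det}_q)=-2+\hat\partial_q({\det}_q)$, and $\hat\partial_q({\det}_q)=\partial_q(\alpha\delta-q^{-1}\beta\gamma)=\partial_q(\alpha)\varepsilon(\delta)+\varepsilon(\alpha)\partial_q(\delta)-q^{-1}(\partial_q(\beta)\varepsilon(\gamma)+\varepsilon(\beta)\partial_q(\gamma))=1+1-0=2$, so the combination vanishes and $\hat\partial_q$ is well defined on $\mathrm{GL}_q(2)$. (One should also note the ideal is a bi-ideal, so $\mathrm{GL}_q(2)$ is a quotient bialgebra and $\hat\partial_q$ remains a derivation into $\Bbbk$ regarded as the counital bimodule.)

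Next I would check that $\hat\partial_q$ is central in the convolution algebra, i.e. $\hat\partial_q\in\mathscr{Z}(\mathrm{GL}_q(2)^*)$, equivalently $\hat\partial_q(\xi_1)\xi_2=\xi_1\hat\partial_q(\xi_2)$ for all $\xi$. On $\mathrm{M}_q(2)$ this is exactly the computation in Remark~\ref{rmk:hateps}; since $\theta$ is grouplike with $\Delta(\theta)=\theta\otimes\theta$, one has $\hat\partial_q(\theta_1)\theta_2=\hat\partial_q(\theta)\theta=-2\theta=\theta\hat\partial_q(\theta)=\theta_1\hat\partial_q(\theta_2)$, and the property is multiplicative in $\xi$ (if it holds for $\xi$ and $\eta$ it holds for $\xi\eta$, using that $\hat\partial_q$ is a derivation), hence it holds on all of the algebra generated by $\mathrm{M}_q(2)$ and $\theta$, that is on $\mathrm{GL}_q(2)$. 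Therefore $\chi=\hat\partial_q\otimes\hat\partial_q$ is a central biderivation on $\mathrm{GL}_q(2)$; by Example~\ref{exa:chisymco} it is an infinitesimal $\Rr$-form and $(\mathrm{GL}_q(2),\Rr,\chi)$ is Cartier (symmetry of $\chi$ being immediate since $\hat\partial_q\otimes\hat\partial_q$ is visibly symmetric). Non-triviality follows because $\hat\partial_q(\alpha)=1\neq 0$, so $\chi(\alpha\otimes\alpha)=1\neq 0$.

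I do not expect a serious obstacle here; the only point requiring slight care is the well-definedness of the descent, which hinges on the numerical coincidence $\hat\partial_q({\det}_q)=2=\hat\partial_q(\theta^{-1})$ — this is precisely why the value $-2$ on $\theta$ was the forced choice, and it is worth spelling out. A secondary minor point is making sure centrality survives the passage to the quotient: since centrality of $\hat\partial_q$ in $\mathrm{GL}_q(2)^*$ is the statement $\hat\partial_q(\xi_1)\xi_2=\xi_1\hat\partial_q(\xi_2)$ evaluated in $\mathrm{GL}_q(2)$, and $\mathrm{GL}_q(2)$ is generated as an algebra by the images of $\alpha,\beta,\gamma,\delta,\theta$, the multiplicativity argument above closes it. Alternatively, one could invoke that $\mathrm{M}_q(2)[\theta]\to\mathrm{GL}_q(2)$ is a surjective bialgebra map and apply Proposition~\ref{prop:image} after first establishing the Cartier structure on $\mathrm{M}_q(2)[\theta]$ itself — but the direct check via Example~\ref{exa:chisymco} is shorter.
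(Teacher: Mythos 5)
Your argument is correct and matches the paper's proof: the same computation $\hat\partial_q(\theta{\det}_q)=0$ (together with $\varepsilon(\theta{\det}_q-1)=0$) gives the descent, and centrality plus symmetry of the biderivation $\chi=\hat\partial_q\otimes\hat\partial_q$ yields the Cartier structure via Example~\ref{exa:chisymco}; the paper checks centrality directly on monomials $\xi\theta^{n}$, which is just a packaged form of your generators-plus-multiplicativity induction. Only your parenthetical alternative via Proposition~\ref{prop:image} is off target, since that result concerns quasitriangular rather than coquasitriangular bialgebras, but your main argument does not rely on it.
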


\begin{proof}
We compute
$
 \hat\partial _q\left( \theta {\det}_q\right)
 = \hat{\partial }_{q}\left( \theta \right) \varepsilon
\left( {\det}_q \right) +\varepsilon \left(
\theta \right)  \hat{\partial }_{q}\left(  {\det}_q
\right)  
=-2+\partial _{q}\left( \alpha \delta -q^{-1}\beta \gamma \right) =0.
$
Thus $\theta \det_{q}-1\in \mathrm{Ker}\left( \hat{\partial }_{q}\right) \cap
\mathrm{Ker}\left( \varepsilon\right) $ and hence the derivation $\partial_q:\mathrm{M}_q(2)\to \Bbbk$ induces the derivation $\hat\partial_q:\mathrm{GL}%
_{q}(2)\rightarrow \Bbbk $. Consider now
$\chi :=\hat\partial_q\otimes \hat\partial_q:\mathrm{GL}%
_{q}(2)\otimes \mathrm{GL}_{q}(2)\rightarrow \Bbbk $. 
This is a symmetric biderivation.  
Now, for every $\xi \in {M}_{q}(2)$ we have
$\hat\partial_q(\xi \theta ^{n})
=\hat\partial_q(\xi )\varepsilon(\theta ^{n})+\varepsilon(\xi)\hat\partial_q(\theta ^{n} )
=\partial_q(\xi )-2n\varepsilon(\xi)$ so that
we get
$ \hat{\partial }_q\left( \xi _{1}\theta ^{n}\right) \xi _{2}\theta ^{n} 
=\partial _{q}(\xi_1)\xi _{2}\theta ^{n}-2n\varepsilon(\xi_1)\xi _{2}\theta ^{n}
=\xi _{1}\theta ^{n}\partial _{q}(\xi_2)-\xi _{1}\theta ^{n}2n\varepsilon(\xi_2)
=\xi _{1}\theta ^{n} \hat{\partial }_q\left( \xi _{2}\theta ^{n}\right).$
As a consequence, since every monomial $\zeta \in \mathrm{GL}%
_{q}(2)$ is of the form $\zeta =\xi \theta ^{n}$ where $\xi \in {M}_{q}(2)$, we deduce that 
$ \hat{\partial }_q\left( \zeta _{1}\right) \zeta _{2} =\zeta _{1} \hat{\partial }_q\left( \zeta _{2}\right) $
for every $\zeta \in \mathrm{GL}_{q}(2)$. Thus $ \hat{\partial }_q$ is central and hence $\chi$ is central. By Example \ref{exa:chisymco}, we conclude that $(\mathrm{GL}_{q}(2),\Rr,\chi)$ is Cartier.
\end{proof}

The algebra $\mathrm{SL}_q(2)$ is defined as $\mathrm{M}_q(2)$ where we further quotient the quantum determinant relation
$$
\alpha\delta-q^{-1}\beta\gamma=1
$$
and it turns out that the bialgebra structure \eqref{Mq2bialg} and coquasitriangular structure \eqref{Mq2R} descend to the quotient, see e.g. \cite[Corollary VIII.7.4]{Kassel}.
However, the infinitesimal $\Rr$-form $\chi$ of Proposition~\ref{prop:Mq2chi} does not descend, since for example
\begin{align*}
    \chi(\alpha\otimes(\alpha\delta-q^{-1}\beta\gamma))
    =2\varepsilon(\alpha^2\delta)
    -2q^{-1}\varepsilon(\alpha\beta\gamma)
    =2
    \neq 0=\chi(\alpha\otimes 1).
\end{align*}
Moreover, we can show that $\mathrm{SL}_q(2)$ does not admit any non-trivial infinitesimal $\Rr$-form (excluding certain values of $q$).

\begin{proposition}
If $q\neq \pm1$, there is no non-trivial infinitesimal $\Rr$-form on $\mathrm{SL}_q(2)$. 
\end{proposition}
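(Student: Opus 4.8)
The plan is to prove that every infinitesimal $\Rr$-form $\chi$ on $\mathrm{SL}_q(2)$ vanishes identically. The idea is that the structure equations \eqref{eq:CC2} and \eqref{eq:CC3} make $\chi$ ``finitely determined'', so that the claim reduces to a homogeneous linear system in finitely many unknowns, and that the extra defining relation $\alpha\delta-q^{-1}\beta\gamma=1$ present in $\mathrm{SL}_q(2)$ — but absent in $\mathrm{M}_q(2)$ — makes this system nondegenerate precisely when $q\neq\pm1$.

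First I would record the normalizations $\chi(a\otimes 1)=0=\chi(1\otimes a)$: these follow by setting two of the three arguments equal to $1$ in \eqref{eq:CC2} and \eqref{eq:CC3} and using $\Rr^{\pm1}(a\otimes 1)=\varepsilon(a)=\Rr^{\pm1}(1\otimes a)$, exactly as \eqref{eq:eps-chi1}--\eqref{eq:eps-chi2} are obtained in the module picture. Next, iterating \eqref{eq:CC2} expresses $\chi(a\otimes w)$, for a word $w$ in $\alpha,\beta,\gamma,\delta$, as a $\Bbbk$-linear combination — with coefficients assembled from the explicit entries of $\Rr$ and $\Rr^{-1}$ in \eqref{Mq2R} — of the values $\chi(a'\otimes g)$ with $g$ a generator and $a'$ a component of an iterated coproduct of $a$; iterating \eqref{eq:CC3} does the same to the first slot. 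Hence $\chi$ is determined by the sixteen scalars $c_{g,g'}:=\chi(g\otimes g')$, $g,g'\in\{\alpha,\beta,\gamma,\delta\}$. The independence of this reduction from the chosen bracketing is guaranteed by the axioms \eqref{eq:CC1}--\eqref{eq:CC3}; equivalently one may use that $\chi$ is a Hochschild $2$-cocycle, the coquasitriangular counterpart of Theorem~\ref{thm:cQYB}.

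Now the defining relations of $\mathrm{SL}_q(2)$ become linear conditions on the $c_{g,g'}$. The six Manin relations, being length-homogeneous, yield the same homogeneous system that (via the infinitesimal FRT construction of Section~\ref{Sec:FRT}) controls the infinitesimal $\Rr$-forms on $\mathrm{M}_q(2)$ occurring in Proposition~\ref{prop:Mq2chi}; the quantum determinant relation supplies, for each generator $g$, the additional equations $\chi\bigl((\alpha\delta-q^{-1}\beta\gamma)\otimes g\bigr)=\chi(1\otimes g)=0$ and $\chi\bigl(g\otimes(\alpha\delta-q^{-1}\beta\gamma)\bigr)=0$, which after expansion with \eqref{eq:CC3}, resp.\ \eqref{eq:CC2}, and the coproducts $\Delta(\alpha)=\alpha\otimes\alpha+\beta\otimes\gamma$, etc., become explicit linear equations in the $c_{g,g'}$. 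Solving the combined system, the factors $q-q^{-1}$ and $q^2-1$ arise as pivots, so the hypothesis $q\neq\pm1$ forces $c_{g,g'}=0$ for all $g,g'$, whence $\chi=0$; in the $\mathrm{M}_q(2)$ reduction this is simply the observation that $\partial_q\otimes\partial_q$ sends ${\det}_q\otimes\alpha$ to $\partial_q({\det}_q)\,\partial_q(\alpha)=2\neq0$, so no nonzero multiple of it can descend (cf.\ the computation preceding the statement). The main obstacle is the bookkeeping of this last step: one must write out the iterated coproducts of the generators, substitute into \eqref{eq:CC2}--\eqref{eq:CC3} with the explicit matrix \eqref{Mq2R} and its inverse, and check that the resulting system in the sixteen unknowns is nonsingular exactly for $q\neq\pm1$ — together with the routine but necessary verification that these finitely many equations already encode all the global consistency required of the structure equations.
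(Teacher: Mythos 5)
There is a genuine gap: your argument stops exactly where the proof has to be done. Reducing $\chi$ to the sixteen values $c_{g,g'}=\chi(g\otimes g')$ via \eqref{eq:CC2}--\eqref{eq:CC3} and observing that the defining relations of $\mathrm{SL}_q(2)$ impose linear conditions on these values is only a setup; the statement to be proved is precisely that the resulting homogeneous system has trivial kernel for $q\neq\pm 1$, and you assert this (``the factors $q-q^{-1}$ and $q^2-1$ arise as pivots'') without exhibiting a single equation of the system or solving it. The fallback you offer does not close the hole: the computation $\partial_q(\det_q)\partial_q(\alpha)=2\neq 0$ only shows that the particular form of Proposition~\ref{prop:Mq2chi} (and its scalar multiples) does not descend to $\mathrm{SL}_q(2)$. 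Since neither you nor the paper classifies \emph{all} infinitesimal $\Rr$-forms on $\mathrm{M}_q(2)$, ruling out multiples of $\partial_q\otimes\partial_q$ does not rule out other candidates for $\chi$ on $\mathrm{SL}_q(2)$, so this remark cannot substitute for the missing linear algebra.

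For comparison, the paper makes the computation tractable by a step you skip: it first exploits \eqref{eq:CC1} on pairs of generators, using that monomials such as $\gamma\gamma$, $\beta\beta$, $\gamma\alpha$, $\beta\alpha$ are not linear combinations of the other degree-two monomials, to show that the sixteen values $c_{g,g'}$ are forced into a two-parameter family $(k,h)$. Only then does it feed the relation $\alpha\delta-q^{-1}\beta\gamma=1$ into \eqref{eq:CC2}, evaluated at $\chi(\alpha\otimes\alpha\delta)$ and $\chi(\delta\otimes\alpha\delta)$, obtaining $h=-k$ and $2(1-q^{-2})k=0$, whence $k=h=0$ for $q\neq\pm1$; finally \eqref{eq:CC2} and \eqref{eq:CC3} propagate the vanishing on generators to all of $\chi$. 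If you want to salvage your plan, you must either carry out the sixteen-unknown elimination explicitly or, better, insert the \eqref{eq:CC1} reduction first, after which the determinant relation yields the two short computations above and the dependence on $q\neq\pm1$ appears explicitly.
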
 
\begin{proof}
Let $\chi$ be an infinitesimal $\Rr$-form on $\mathrm{SL}_q(2)$. It is straightforward to check that \eqref{eq:CC1}, i.e., $\chi \left( a_{1}\otimes b_{1}\right)
a_{2}b_{2}=a_{1}b_{1}\chi \left( a_{2}\otimes b_{2}\right) $ with $a,b\in \mathrm{SL}_q(2)$, holds for $a,b$ generators, if and only if 
\begin{gather*}
\chi
\begin{pmatrix}
\alpha \otimes \alpha & \beta \otimes \beta & \alpha \otimes \beta & \beta
\otimes \alpha \\
\gamma \otimes \gamma & \delta \otimes \delta & \gamma \otimes \delta &
\delta \otimes \gamma \\
\alpha \otimes \gamma & \beta \otimes \delta & \alpha \otimes \delta & \beta
\otimes \gamma \\
\gamma \otimes \alpha & \delta \otimes \beta & \gamma \otimes \beta & \delta
\otimes \alpha%
\end{pmatrix}%
=%
\begin{pmatrix}
k & 0 & 0 & 0 \\
0 & k & 0 & 0 \\
0 & 0 & h & q^{-1}k-q^{-1}h \\
0 & 0 & q^{-1}k-q^{-1}h &\left( 1-q^{-2}\right) k+q^{-2}h%
\end{pmatrix}
\end{gather*} for some $k,h\in \Bbbk$. 
For instance let us compute $\chi \left( a_{1}\otimes b_{1}\right)
a_{2}b_{2}=a_{1}b_{1}\chi \left( a_{2}\otimes b_{2}\right) $ for $a=b=\alpha.$
The left-hand side is
\begin{align*}
\chi \left( \alpha _{1}\otimes \alpha
_{1}\right) \alpha _{2}\alpha _{2}
&=\chi \left( \alpha \otimes \alpha \right)
\alpha \alpha +\chi \left( \alpha \otimes \beta \right) \alpha \gamma +\chi
\left( \beta \otimes \alpha \right) \gamma \alpha +\chi \left( \beta \otimes
\beta \right) \gamma \gamma \\
&=\chi \left( \alpha \otimes \alpha \right) \alpha \alpha +q^{-1}\chi \left(
\alpha \otimes \beta \right) \gamma \alpha +\chi \left( \beta \otimes \alpha
\right) \gamma \alpha +\chi \left( \beta \otimes \beta \right) \gamma \gamma
\end{align*}
while the right-hand side is
\begin{align*}\alpha _{1}\alpha _{1}\chi \left( \alpha _{2}\otimes \alpha _{2}\right)
&=\alpha \alpha \chi \left( \alpha \otimes \alpha \right) +\alpha \beta \chi
\left( \alpha \otimes \gamma \right) +\beta \alpha \chi \left( \gamma
\otimes \alpha \right) +\beta \beta \chi \left( \gamma \otimes \gamma
\right) \\
&=\alpha \alpha \chi \left( \alpha \otimes \alpha \right) +q^{-1}\beta \alpha
\chi \left( \alpha \otimes \gamma \right) +\beta \alpha \chi \left( \gamma
\otimes \alpha \right) +\beta \beta \chi \left( \gamma \otimes \gamma
\right).\end{align*}
Equating the two sides yields
\[q^{-1}\chi \left( \alpha \otimes \beta \right) \gamma \alpha +\chi
\left( \beta \otimes \alpha \right) \gamma \alpha +\chi \left( \beta \otimes
\beta \right) \gamma \gamma =q^{-1}\beta \alpha \chi \left( \alpha \otimes \gamma
\right) +\beta \alpha \chi \left( \gamma \otimes \alpha \right) +\beta \beta
\chi \left( \gamma \otimes \gamma \right) . \]
Since $\gamma \gamma$ and $\beta \beta$ cannot be written as a linear combination of the remaining generators in degree two, the respective coefficients must be zero, i.e.,  $\chi \left( \beta \otimes \beta \right) =\chi \left( \gamma \otimes \gamma
\right) =0$. Also $\gamma\alpha$ and $\beta\alpha$ cannot be written as a linear combination of the remaining generators so that we get $\chi \left( \beta \otimes \alpha \right) =-q^{-1}\chi \left( \alpha
\otimes \beta \right) $ and $\chi \left( \gamma \otimes \alpha \right) =-q^{-1}\chi
\left( \alpha \otimes \gamma \right)$. Proceeding this way one shows that $\chi$ is as desired.

Now the equality $\alpha\delta-q^{-1}\beta\gamma=1$ and the fact that $\chi(\alpha\otimes 1)=0$ imply
$\chi(\alpha\otimes \alpha\delta)=q^{-1}\chi (\alpha\otimes \beta\gamma) $.
If we now apply \eqref{eq:CC2} to compute explicitly the two sides of this equality, we get $\chi(\alpha\otimes \alpha\delta)=\chi(\alpha\otimes \alpha)+\chi(\alpha\otimes \delta)=k+h$ and $\chi (\alpha\otimes \beta\gamma)=0$ so that we get $h=-k$. Similarly, one has $\chi(\delta\otimes \alpha\delta)=q^{-1}\chi (\delta\otimes \beta\gamma) $ and one checks that  
$\chi(\delta\otimes \alpha\delta)=\chi(\delta\otimes\alpha)+\chi(\delta\otimes\delta)$ and $\chi (\delta\otimes \beta\gamma)=0$ so that we get $0=\chi(\delta\otimes\alpha)+\chi(\delta\otimes\delta)=\left( 1-q^{-2}\right) k+q^{-2}h+k
=\left( 1-q^{-2}\right) k-q^{-2}k+k=2\left( 1-q^{-2}\right) k.$ Therefore, if $q\neq\pm 1$, we obtain $k=0$ and also $h=-k=0$.
This means that $\chi$ is zero on generators. By means of \eqref{eq:CC2} and \eqref{eq:CC3} one easily gets that $\chi=0$.
\end{proof}

\subsection{Twisting pre-Cartier coquasitriangular bialgebras}

Let $(H,m,u,\Delta,\varepsilon)$ be a bialgebra.
Recall from \cite[Section 2.3]{Majid-book} that a convolution invertible map $\Ff:H\otimes H\to\Bbbk$ is a \textbf{$2$-cocycle twist} if the normalization properties and the 2-cocycle condition are satisfied. Explicitly, 
\begin{align}
 &\Ff(1\otimes a)=\varepsilon(a)=\Ff(a\otimes 1), \text{ for every } a\in H,\label{Dt1} \\ & \Ff(a_{1}\otimes b_{1})\Ff(a_{2}b_{2}\otimes c)=\Ff(b_{1}\otimes c_{1})\Ff(a\otimes b_{2}c_{2}), \text{ for every }a,b,c \in H \label{Dt2}.
\end{align}
Note that \eqref{Dt2} is equivalent to
\begin{equation}\label{Dt2'}
(\Ff\otimes\varepsilon)*\Ff(m\otimes\mathrm{Id})=(\varepsilon\otimes \Ff)*\Ff(\mathrm{Id}\otimes m), \text{ i.e., } \Ff_{12}*\Ff(m\otimes\mathrm{Id})=\Ff_{23}*\Ff(\mathrm{Id}\otimes m) .
\end{equation}
Given a 2-cocycle twist $\Ff:H\otimes H\to\Bbbk$ we consider the linear map $m_\Ff:H\otimes H\to H$ defined for any $a,b\in H$ by
$$m_{\Ff}(a\otimes b)=a\cdot_{\Ff}b:=\Ff(a_{1}\otimes b_{1})a_{2}b_{2}\Ff^{-1}(a_{3}\otimes b_{3}).$$
Then, by the dual of \cite[Theorem 2.3.4]{Majid-book}, $H_{\Ff}:=(H,m_{\Ff},u,\Delta,\varepsilon)$ is a bialgebra. If $H$ is a Hopf algebra with antipode $S$, then also $H_\Ff$ is a Hopf algebra with antipode $S_\Ff$ given by $S_{\Ff}(a)=U(a_{1})S(a_{2})U^{-1}(a_{3})$ where $U(a):=\Ff(a_{1}\otimes S(a_{2}))$, for any $a\in H$. Moreover, if $(H,\Rr)$ is co(quasi)triangular, then also $(H_{\Ff},\Rr_{\Ff})$ is co(quasi)triangular with universal $\Rr$-form
$$\Rr_{\Ff}(a\otimes b)=\Ff(b_{1}\otimes a_{1})\Rr(a_{2}\otimes b_{2})\Ff^{-1}(a_{3}\otimes b_{3}).$$
Thus, it follows that $(\Mm^{H_{\Ff}},\otimes_{\Ff},\sigma^{\Rr_{\Ff}})$ is a braided monoidal category. Note that, for any $X,Y$ in $\Mm^{H_{\Ff}}$, $X\otimes_\Ff Y$ is $X\otimes Y$ as vector space with right coaction $x\otimes y\mapsto x_0\otimes y_0\otimes x_1\cdot_\Ff y_1$, and the braiding is given by $\sigma^{\Rr_{\Ff}}_{X,Y}(x\otimes y)=y_{0}\otimes x_{0}\Rr_{\Ff}(x_{1}\otimes y_{1})$. We can consider the functor $\mathrm{Drin}:(\Mm^H, \otimes,\sigma^{\Rr})\to (\Mm^{H_\Ff},\otimes_\Ff,\sigma^{\Rr_\Ff})$ given by $\mathrm{Drin}(M)=M_\Ff$ where $M_\Ff$ is $M$ viewed as right $H_\Ff$-comodule. Given a coquasitriangular bialgebra $(H,\Rr)$ and $\Ff:H\otimes H\to\Bbbk$ a $2$-cocycle twist, then $\mathrm{Drin}:(\Mm^H, \otimes,\sigma^\Rr)\to (\Mm^{H_\Ff},\otimes_\Ff,\sigma^{\Rr_\Ff})$ is a braided monoidal equivalence of categories (dual of \cite[Lemma XV.3.7]{Kassel}).

The following result is dual to Theorem \ref{thm:twist}.

\begin{theorem}
Let $(H,\Rr,\chi)$ be a pre-Cartier co(quasi)triangular bialgebra and let $\Ff:H\otimes H\to\Bbbk$ be a 2-cocycle twist on $H$. Then, $(H_\Ff,\Rr_\Ff,\chi_\Ff)$ is a pre-Cartier co(quasi)triangular bialgebra, where $\chi_\Ff:=\Ff*\chi*\Ff^{-1}:H_\Ff\otimes H_\Ff\to\Bbbk$. 
Moreover, if $(H,\Rr,\chi)$ is Cartier, then so is $(H_\Ff,\Rr_\Ff,\chi_\Ff)$.
\end{theorem}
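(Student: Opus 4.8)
The plan is to prove the statement in the same way as the dual result Theorem~\ref{thm:twist}, either by dualizing that argument directly or — more efficiently — by invoking the categorical transfer principle of Proposition~\ref{prop:ff} together with Theorem~\ref{thm:curv-cotr}. I would choose the latter route, since it avoids reproducing the long cocycle manipulations. Concretely: the Drinfel'd functor $\mathrm{Drin}\colon(\Mm^H,\otimes,\sigma^{\Rr})\to(\Mm^{H_\Ff},\otimes_\Ff,\sigma^{\Rr_\Ff})$ is a $\Bbbk$-linear braided strong monoidal equivalence, as recalled just before the statement (the dual of \cite[Lemma XV.3.7]{Kassel}). Since $\Mm^H$ and $\Mm^{H_\Ff}$ are additive categories, Proposition~\ref{prop:ff} applies to its quasi-inverse $F=\mathrm{Drin}^{-1}$: if $\Mm^H$ carries a natural transformation $t$ satisfying \eqref{qC-I} and \eqref{qC-II} (resp.\ \eqref{cart}), then so does $\Mm^{H_\Ff}$. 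By Theorem~\ref{thm:curv-cotr}, the infinitesimal $\Rr$-form $\chi$ on $H$ corresponds to such a $t$ on $\Mm^H$; hence $\Mm^{H_\Ff}$ becomes pre-Cartier (resp.\ Cartier), and applying Theorem~\ref{thm:curv-cotr} again in the other direction produces an infinitesimal $\Rr$-form on $H_\Ff$. The only remaining point is to identify this transported form with the explicit $\chi_\Ff=\Ff*\chi*\Ff^{-1}$, which one reads off from the formula $t^{H_\Ff}_{M,N}(m\otimes n)=m_0\otimes n_0\,\chi_\Ff(m_1\otimes n_1)$ and the description of the monoidal structure isomorphism of $\mathrm{Drin}$.

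Alternatively, and perhaps preferably for the reader, I would give the direct computational proof dual to Theorem~\ref{thm:twist}: verify that $\chi_\Ff$ satisfies \eqref{eq:CC1}, \eqref{eq:CC2}, \eqref{eq:CC3}, and — under the Cartier hypothesis — \eqref{eq:CC4}. For \eqref{eq:CC1} one computes $(u_{H_\Ff}\chi_\Ff)*m_{H_\Ff}$ and $m_{H_\Ff}*(u_{H_\Ff}\chi_\Ff)$: since $m_\Ff=\Ff*m*\Ff^{-1}$ (with the appropriate leg placement) and $\chi_\Ff=\Ff*\chi*\Ff^{-1}$, the outer $\Ff^{\pm1}$ factors cancel and the identity reduces to \eqref{eq:CC1} for $\chi$. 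For \eqref{eq:CC2}, I would expand $\chi_\Ff(\mathrm{Id}\otimes m_{H_\Ff})$, insert $\Ff*m*\Ff^{-1}$ for $m_{H_\Ff}$ and $\Ff*\chi*\Ff^{-1}$ for $\chi_\Ff$, and then repeatedly use the $2$-cocycle identity \eqref{Dt2'} — in both its original form $\Ff_{12}*\Ff(m\otimes\mathrm{Id})=\Ff_{23}*\Ff(\mathrm{Id}\otimes m)$ and the form obtained by composing with $\tau_{H,H}\otimes\mathrm{Id}_H$ — to move the $\Ff$'s past the comultiplications, exactly mirroring the chain of equalities marked $\eqref{2-cocy}$, $(*)$, $(**)$ in the proof of Theorem~\ref{thm:twist}. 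One also uses \eqref{eq:CC1} for $\chi$ (to commute $\chi$ with the relevant products) and \eqref{eq:ct1} (quasi-cocommutativity of $\Rr$) together with the formula $\Rr_\Ff=\Ff^{\mathrm{op}}*\Rr*\Ff^{-1}$ to recognize the conjugate $\Rr_\Ff^{-1}*(\chi_\Ff)_{13}*\Rr_\Ff$. Equation \eqref{eq:CC3} is proven symmetrically, with the roles of $12$ and $23$ legs interchanged. Finally, if \eqref{eq:CC4} holds for $\chi$, then
\[
\Rr_\Ff*\chi_\Ff
=\Ff^{\mathrm{op}}*\Rr*\Ff^{-1}*\Ff*\chi*\Ff^{-1}
=\Ff^{\mathrm{op}}*\Rr*\chi*\Ff^{-1}
=\Ff^{\mathrm{op}}*\chi^{\mathrm{op}}*\Rr*\Ff^{-1}
=\chi_\Ff^{\mathrm{op}}*\Rr_\Ff,
\]
using $(\Ff*\chi*\Ff^{-1})^{\mathrm{op}}=\Ff^{\mathrm{op}}*\chi^{\mathrm{op}}*(\Ff^{-1})^{\mathrm{op}}$ and $(\Ff^{-1})^{\mathrm{op}}=(\Ff^{\mathrm{op}})^{-1}$.

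The main obstacle is the verification of \eqref{eq:CC2} (and \eqref{eq:CC3}): here one must carefully track the positions of the $\Ff^{\pm1}$ factors as convolution products when they are conjugated through $(\mathrm{Id}\otimes\Delta)$, and apply the two avatars of the $2$-cocycle condition in the right order. In the module case (Theorem~\ref{thm:twist}) this is exactly where the auxiliary equalities $(*)$ and $(**)$ enter, and the dual version is bookkeeping-intensive but entirely parallel; no new idea beyond \eqref{Dt2'}, \eqref{eq:CC1} and the quasi-cocommutativity \eqref{eq:ct1} is needed. I would therefore present the computation for \eqref{eq:CC2} in the main text and leave \eqref{eq:CC3} to the reader as ``proven similarly'', exactly as in Theorem~\ref{thm:twist}; the categorical argument via Proposition~\ref{prop:ff} can be mentioned as an alternative, conceptual justification.
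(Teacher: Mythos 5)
Your proposal is correct and, in its main (direct) form, coincides with the paper's own argument: the paper proves this result exactly by dualizing Theorem~\ref{thm:twist}, checking \eqref{eq:CC1} by cancellation of the outer $\Ff^{\pm1}$ factors, \eqref{eq:CC2}--\eqref{eq:CC3} via both avatars of the $2$-cocycle identity \eqref{Dt2'} together with \eqref{eq:CC1} for $\chi$ and quasi-cocommutativity \eqref{eq:ct1}, and \eqref{eq:CC4} by the same convolution computation you display. The categorical route via $\mathrm{Drin}$, Proposition~\ref{prop:ff} and Theorem~\ref{thm:curv-cotr} that you sketch as an alternative is also viable (modulo the identification of the transported form with $\Ff*\chi*\Ff^{-1}$, which you correctly flag), and is essentially the transfer principle the paper itself records in the examples following Proposition~\ref{prop:ff}.
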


\begin{invisible} 
\begin{proof}
Assume that $(H,\Rr,\chi)$ is a pre-Cartier coquasitriangular bialgebra and define $\chi_\Ff:H_\Ff\otimes H_\Ff\to\Bbbk$, $\chi_{\Ff}(a\otimes b)=\Ff(a_{1}\otimes b_{1})\chi(a_{2}\otimes b_{2})\Ff^{-1}(a_{3}\otimes b_{3})$. We show that $\chi_{\Ff}$ satisfies \eqref{eq:CC1}, \eqref{eq:CC2}, \eqref{eq:CC3}, knowing that so does $\chi$.
Note that $m_\Ff=(u \Ff)*m*(u \Ff^{-1})$, $\Rr_{\Ff}=\Ff^{\mathrm{op}}*\Rr*\Ff^{-1}$ and $\chi_{\Ff}=\Ff*\chi*\Ff^{-1}$. We have that
\begin{displaymath}
	\begin{split}
		(u\chi_\Ff)&*m_\Ff=u(\Ff*\chi*\Ff^{-1})*(u \Ff)*m*(u \Ff^{-1})
		=(u \Ff)*(u\chi)*(u \Ff^{-1})*(u \Ff)*m*(u \Ff^{-1})\\
		&=(u \Ff)*(u\chi)*m*(u \Ff^{-1})
		=(u \Ff)*m*(u\chi)*(u \Ff^{-1})\\        
		&=(u \Ff)*m*(u \Ff^{-1})*(u \Ff)*(u\chi)*(u \Ff^{-1})
		=(u \Ff)*m*(u \Ff^{-1})*u (\Ff*\chi* \Ff^{-1})\\
		&=m_\Ff*(u\chi_\Ff)
	\end{split}
\end{displaymath}
and then $\chi_\Ff$ verifies \eqref{eq:CC1}. 

Next, we show that $\chi_\Ff$ satisfies \eqref{eq:CC2}. First we have 
\begin{displaymath}
	\begin{split}
		&\chi_{\Ff}(\mathrm{Id}\otimes m_{\Ff})=\chi_\Ff((u\varepsilon*\mathrm{Id}*u\varepsilon)\otimes(u \Ff*m*u \Ff^{-1}))=\chi_\Ff((u\varepsilon\otimes u \Ff)*(\mathrm{Id}\otimes m)*(u\varepsilon\otimes u\Ff^{-1}))\\&=\Ff_{23}*\chi_{\Ff}(\mathrm{Id}\otimes m)*\Ff^{-1}_{23}=\Ff_{23}*\Ff(\mathrm{Id}\otimes m)*\chi(\mathrm{Id}\otimes m)*\Ff^{-1}(\mathrm{Id}\otimes m)*\Ff^{-1}_{23}\\
		&\overset{\eqref{Dt2}}{=}\Ff_{12}*\Ff(m\otimes\mathrm{Id})*\chi(\mathrm{Id}\otimes m)*\Ff^{-1}(m\otimes\mathrm{Id})*\Ff^{-1}_{12}\\
		&=\Ff_{12}*\Ff(m\otimes\mathrm{Id})*(\chi_{12}+\Rr^{-1}_{12}*\chi_{13}*\Rr_{12})*\Ff^{-1}(m\otimes\mathrm{Id})*\Ff^{-1}_{12}\\&=\Ff_{12}*\Ff(m\otimes\mathrm{Id})*\chi_{12}*\Ff^{-1}(m\otimes\mathrm{Id})*\Ff^{-1}_{12}+\Ff_{12}*\Ff(m\otimes\mathrm{Id})*(\Rr^{-1}_{12}*\chi_{13}*\Rr_{12})*\Ff^{-1}(m\otimes\mathrm{Id})*\Ff^{-1}_{12}\\&\overset{(\star)}{=}\Ff_{12}*\chi_{12}*\Ff(m\otimes\mathrm{Id})*\Ff^{-1}(m\otimes\mathrm{Id})*\Ff^{-1}_{12}+\Ff_{12}*\Ff(m\otimes\mathrm{Id})*\Rr^{-1}_{12}*\chi_{13}*\Rr_{12}*\Ff^{-1}(m\otimes\mathrm{Id})*\Ff^{-1}_{12}
		\\&=\Ff_{12}*\chi_{12}*\Ff^{-1}_{12}+\Ff_{12}*\Ff(m\otimes\mathrm{Id})*\Rr^{-1}_{12}*\chi_{13}*\Rr_{12}*\Ff^{-1}(m\otimes\mathrm{Id})*\Ff^{-1}_{12}\\&=(\Ff\otimes\varepsilon)*(\chi\otimes\varepsilon)*(\Ff^{-1}\otimes\varepsilon)+\Ff_{12}*\Ff(m\otimes\mathrm{Id})*\Rr^{-1}_{12}*\chi_{13}*\Rr_{12}*\Ff^{-1}(m\otimes\mathrm{Id})*\Ff^{-1}_{12}\\&=(\Ff*\chi*\Ff^{-1})\otimes\varepsilon+\Ff_{12}*\Ff(m\otimes\mathrm{Id})*\Rr^{-1}_{12}*\chi_{13}*\Rr_{12}*\Ff^{-1}(m\otimes\mathrm{Id})*\Ff^{-1}_{12}\\&=(\chi_{\Ff})_{12}+\Ff_{12}*\Ff(m\otimes\mathrm{Id})*\Rr^{-1}_{12}*\chi_{13}*\Rr_{12}*\Ff^{-1}(m\otimes\mathrm{Id})*\Ff^{-1}_{12}
	\end{split}
\end{displaymath}
where $(\star)$ follows as $(\Ff(m\otimes\mathrm{Id})*\chi_{12})(a\otimes b\otimes c)=\Ff(a_{1}b_{1}\otimes c)\chi(a_{2}\otimes b_{2})\overset{\eqref{eq:CC1}}{=}\chi(a_{1}\otimes b_{1})\Ff(a_{2}b_{2}\otimes c)=(\chi_{12}*\Ff(m\otimes\mathrm{Id}))(a\otimes b\otimes c)
$ for every $a,b,c\in H$. Note that $(\Rr_{12}*\Ff^{-1}(m\otimes\mathrm{Id}))(a\otimes b\otimes c)=\Rr(a_1\otimes b_1)\Ff^{-1}(a_2b_2\otimes c)\overset{\eqref{eq:ct1}}{=}\Ff^{-1}(b_1a_1\otimes c)\Rr(a_2\otimes b_2)=(\Ff^{-1}(m^\mathrm{op}\otimes\mathrm{Id})*\Rr_{12})(a\otimes b\otimes c)$, hence $\Rr_{12}*\Ff^{-1}(m\otimes\mathrm{Id})=\Ff^{-1}(m^\mathrm{op}\otimes\mathrm{Id})*\Rr_{12}$, and then we have that $\Ff(m\otimes\mathrm{Id})*\Rr^{-1}_{12}=\Rr^{-1}_{12}*\Ff(m^{\mathrm{op}}\otimes\mathrm{Id})$. Thus we obtain 
\[
\chi_{\Ff}(\mathrm{Id}\otimes m_{\Ff})=(\chi_\Ff)_{12}+\Ff_{12}*\Rr^{-1}_{12}*\Ff(m^{\mathrm{op}}\otimes\mathrm{Id})*\chi_{13}*\Ff^{-1}(m^{\mathrm{op}}\otimes\mathrm{Id})*\Rr_{12}*\Ff^{-1}_{12}.
\]
On the other hand, we have that
\begin{displaymath}
	\begin{split}
		(\Rr_{\Ff}^{-1})_{12}*(\chi_{\Ff})_{13}*(\Rr_{\Ff})_{12}&=(\Ff*\Rr^{-1}*(\Ff^{\mathrm{op}})^{-1})_{12}*\Ff_{13}*\chi_{13}*\Ff^{-1}_{13}*\Ff^{\mathrm{op}}_{12}*\Rr_{12}*\Ff^{-1}_{12}\\&=\Ff_{12}*\Rr^{-1}_{12}*(\Ff^{\mathrm{op}})^{-1}_{12}*\Ff_{13}*\chi_{13}*\Ff^{-1}_{13}*\Ff^{\mathrm{op}}_{12}*\Rr_{12}*\Ff^{-1}_{12}\\&=\Ff_{12}*\Rr^{-1}_{12}*(\Ff^{-1})^{\mathrm{op}}_{12}*\Ff_{13}*\chi_{13}*\Ff^{-1}_{13}*\Ff^{\mathrm{op}}_{12}*\Rr_{12}*\Ff^{-1}_{12}
	\end{split}
\end{displaymath}

From \eqref{Dt2'} we obtain that $\Ff(m\otimes\mathrm{Id})=(\Ff^{-1}\otimes\varepsilon)*(\varepsilon\otimes \Ff)*\Ff(\mathrm{Id}\otimes m)$, and then
\[
\begin{split}
	(\Ff^{-1})^{\mathrm{op}}_{12}*\Ff_{13}&=(\Ff^{-1}\tau_{H,H}\otimes\varepsilon)*(\varepsilon\otimes \Ff)(\tau_{H,H}\otimes\mathrm{Id})\\&=(\Ff^{-1}\otimes\varepsilon)(\tau_{H,H}\otimes\mathrm{Id})*(\varepsilon\otimes \Ff)(\tau_{H,H}\otimes\mathrm{Id})\\&=((\Ff^{-1}\otimes\varepsilon)*(\varepsilon\otimes \Ff))(\tau_{H,H}\otimes\mathrm{Id})\\&=((\Ff^{-1}\otimes\varepsilon)*(\varepsilon\otimes \Ff)*\Ff(\mathrm{Id}\otimes m)*\Ff^{-1}(\mathrm{Id}\otimes m))(\tau_{H,H}\otimes\mathrm{Id})\\&=(\Ff(m\otimes\mathrm{Id})*\Ff^{-1}(\mathrm{Id}\otimes m))(\tau_{H,H}\otimes\mathrm{Id})\\&=\Ff(m\otimes\mathrm{Id})(\tau_{H,H}\otimes\mathrm{Id})*\Ff^{-1}(\mathrm{Id}\otimes m)(\tau_{H,H}\otimes\mathrm{Id})\\&=\Ff(m^{\mathrm{op}}\otimes\mathrm{Id})*\Ff^{-1}(\mathrm{Id}\otimes m)(\tau_{H,H}\otimes\mathrm{Id})
\end{split}
\]
Note that $\chi_{13}*\Ff(\mathrm{Id}\otimes m)(\tau_{H,H}\otimes\mathrm{Id})(a\otimes b\otimes c)=\chi(a_1\otimes c_1)\Ff(b\otimes a_2c_2)\overset{\eqref{eq:CC1}}{=}\Ff(b\otimes a_1c_1)\chi(a_2\otimes c_2)=\Ff(\mathrm{Id}\otimes m)(\tau_{H,H}\otimes\mathrm{Id})*\chi_{13}(a\otimes b\otimes c)$ and hence $\chi_{13}*\Ff(\mathrm{Id}\otimes m)(\tau_{H,H}\otimes\mathrm{Id})=\Ff(\mathrm{Id}\otimes m)(\tau_{H,H}\otimes\mathrm{Id})*\chi_{13}$.

Then we obtain $(\Rr_{\Ff}^{-1})_{12}*(\chi_{\Ff})_{13}*(\Rr_{\Ff})_{12}=\Ff_{12}*\Rr^{-1}_{12}*\Ff(m^{\mathrm{op}}\otimes\mathrm{Id})*\Ff^{-1}(\mathrm{Id}\otimes m)(\tau_{H,H}\otimes\mathrm{Id})*\chi_{13}*\Ff(\mathrm{Id}\otimes m)(\tau_{H,H}\otimes\mathrm{Id})*\Ff^{-1}(m^{\mathrm{op}}\otimes\mathrm{Id})*\Rr_{12}*\Ff^{-1}_{12}=\Ff_{12}*\Rr^{-1}_{12}*\Ff(m^{\mathrm{op}}\otimes\mathrm{Id})*\Ff^{-1}(\mathrm{Id}\otimes m)(\tau_{H,H}\otimes\mathrm{Id})*\Ff(\mathrm{Id}\otimes m)(\tau_{H,H}\otimes\mathrm{Id})*\chi_{13}*\Ff^{-1}(m^{\mathrm{op}}\otimes\mathrm{Id})*\Rr_{12}*\Ff^{-1}_{12}=\Ff_{12}*\Rr^{-1}_{12}*\Ff(m^{\mathrm{op}}\otimes\mathrm{Id})*\chi_{13}*\Ff^{-1}(m^{\mathrm{op}}\otimes\mathrm{Id})*\Rr_{12}*\Ff^{-1}_{12}.$

Thus,
$
\chi_{\Ff}(\mathrm{Id}\otimes m_{\Ff})=(\chi_{\Ff})_{12}+(\Rr_{\Ff}^{-1})_{12}*(\chi_{\Ff})_{13}*(\Rr_{\Ff})_{12},
$
so \eqref{eq:CC2} holds true for $\chi_\Ff$. Similarly, one can prove that $\chi_\Ff$ satisfies \eqref{eq:CC3}, and then $(H_\Ff,\Rr_\Ff,\chi_\Ff)$ is pre-Cartier. If $(H,\Rr,\chi)$ is Cartier, i.e., $\chi$ satisfies $\Rr*\chi=\chi^{\mathrm{op}}*\Rr$ in addition, we have that
\begin{displaymath}
\begin{split}    \Rr_\Ff*\chi_\Ff&=\Ff^\mathrm{op}*\Rr*\Ff^{-1}*\Ff*\chi*\Ff^{-1}=\Ff^\mathrm{op}*\Rr*\chi*\Ff^{-1}
=\Ff^\mathrm{op}*\chi^{\mathrm{op}}*\Rr*\Ff^{-1}\\
    &=\Ff^\mathrm{op}*\chi^\mathrm{op}*{\Ff^{-1}}^{\mathrm{op}}*\Ff^\mathrm{op}*\Rr*\Ff^{-1}
    =(\Ff*\chi*{\Ff^{-1}})^{\mathrm{op}}*\Ff^\mathrm{op}*\Rr*\Ff^{-1}
    =\chi_\Ff^\mathrm{op}*\Rr_\Ff,
\end{split}
\end{displaymath}
thus $\chi_\Ff$ verifies \eqref{eq:CC4}, and so $(H_\Ff,\Rr_\Ff,\chi_\Ff)$ is Cartier. 
\end{proof}\end{invisible}

\subsection{The role of Hochschild cohomology of algebras}
In this subsection we will show that, if $\mathrm{char}\left( \Bbbk \right) \neq 2$, the infinitesimal $\Rr$-form $\chi$ of a Cartier cotriangular Hopf
algebra $\left( H,\mathcal{R},\chi \right) $ is always a  $2$-coboundary in Hochschild cohomology of $H$ with coefficients in the $H$-bimodule $\Bbbk $ (regarded as a bimodule via the counit $\varepsilon $). \\

We use the dual of \cite[XVIII.5]{Kassel} for the Hochschild cohomology of algebras for this setting, see also \cite[Chapter 9]{Weibel94}. Consider $(H, m, u,\Delta,\varepsilon)$ a bialgebra over $\Bbbk$. Then we can consider the standard complex $$\xymatrix{\Bbbk\ar[r]^-{b^0}&\mathrm{Hom}_\Bbbk(H,\Bbbk)\ar[r]^-{b^1}&\mathrm{Hom}_\Bbbk(H\otimes H,\Bbbk)\ar[r]^-{b^2}&\mathrm{Hom}_\Bbbk(H\otimes H\otimes H,\Bbbk)\ar[r]^-{b^3}&\cdots}$$
where $b^0(k)(a)=\varepsilon(a)k-k\varepsilon(a)=0$,  $b^1(f)(a\otimes b)=\varepsilon(a)f(b)-f(ab)+f(a)\varepsilon(b)$, $b^2(f)(a\otimes b\otimes c)=\varepsilon(a)f(b\otimes c)-f(ab\otimes c)+f(a\otimes bc)-f(a\otimes b)\varepsilon(c)$ for $a,b,c\in H$, and so on.
The elements in $\mathrm{Z}^n(H,\Bbbk ):=\mathrm{Ker}(b^n)$ are the $n$-\textit{cocycles} while the elements in $\mathrm{B}^n(H,\Bbbk):=\mathrm{Im}(b^{n-1})$ are the $n$-\textit{coboundaries}. The $n$-th Hochschild cohomology group is given by $\mathrm{H}^n(H,\Bbbk)=\frac{\mathrm{Z}^n(H,\Bbbk)}{\mathrm{B}^n(H,\Bbbk)}$. \\

Now we state the analogue results of Subsection \ref{secCohom} and the proofs follow similarly.

\begin{theorem}\label{thm:ccQYB}
    Let $(H,\Rr,\chi)$ be a pre-Cartier coquasitriangular bialgebra. Then the following equivalent statements hold.
\begin{enumerate}[i)]
\item $\chi$ is a Hochschild $2$-cocycle, i.e., 
$\chi_{12}+\chi(m\otimes\mathrm{Id})=\chi_{23}+\chi(\mathrm{Id}\otimes m)$;
\medskip
\item the \textbf{infinitesimal QYB equation} 
\begin{equation}\label{ccQYB}
\begin{split}
    \Rr_{12}*\chi_{12}&*\Rr_{13}*\Rr_{23}
    +\Rr_{12}*\Rr_{13}*\chi_{13}*\Rr_{23}
    +\Rr_{12}*\Rr_{13}*\Rr_{23}*\chi_{23}\\
    &=\Rr_{23}*\chi_{23}*\Rr_{13}*\Rr_{12}
    +\Rr_{23}*\Rr_{13}*\chi_{13}*\Rr_{12}
    +\Rr_{23}*\Rr_{13}*\Rr_{12}*\chi_{12}
\end{split}
\end{equation}
holds true;
\medskip
\item $\Rr_{12}^{-1}*\chi_{13}*\Rr_{12}=\Rr_{23}^{-1}*\chi_{13}*\Rr_{23}$.
\end{enumerate}
\end{theorem}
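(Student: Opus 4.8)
\textbf{Proof plan for Theorem \ref{thm:ccQYB}.}
The plan is to dualize the proof of Theorem~\ref{thm:cQYB} verbatim, translating multiplication in $H\otimes H\otimes H$ into the convolution product on $\mathrm{Hom}_\Bbbk(H^{\otimes 3},\Bbbk)$, the comultiplications $\mathrm{Id}\otimes\Delta$ and $\Delta\otimes\mathrm{Id}$ into precomposition with $\mathrm{Id}\otimes m$ and $m\otimes\mathrm{Id}$, and $\tau_{H,H,H}$-type rearrangements into the corresponding leg-notation identities for linear forms. Throughout one uses that the convolution algebra $\mathrm{Hom}_\Bbbk(H^{\otimes 3},\Bbbk)$ is associative with unit $\varepsilon^{\otimes 3}$, and that $\Rr_{ij},\chi_{ij}$ are convolution invertible with $\Rr_{ij}^{-1}$ being the respective leg placement of $\Rr^{-1}$.

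First I would establish the equivalence of {\it i)}, {\it ii)} and {\it iii)}. The equivalence of {\it i)} and {\it iii)} is immediate from \eqref{eq:CC2} and \eqref{eq:CC3}: rewriting these as $\chi(\mathrm{Id}\otimes m)=\chi_{12}+\Rr_{12}^{-1}*\chi_{13}*\Rr_{12}$ and $\chi(m\otimes\mathrm{Id})=\chi_{23}+\Rr_{23}^{-1}*\chi_{13}*\Rr_{23}$ and subtracting shows that {\it i)} holds iff $\Rr_{12}^{-1}*\chi_{13}*\Rr_{12}=\Rr_{23}^{-1}*\chi_{13}*\Rr_{23}$. For {\it ii)}$\Leftrightarrow${\it iii)} I would mimic the dual argument: using \eqref{eq:ct3}, $\Rr(m\otimes\mathrm{Id})=\Rr_{13}*\Rr_{23}$, together with \eqref{eq:CC1} in the form $\chi_{12}*\Rr_{13}*\Rr_{23}=\Rr_{13}*\Rr_{23}*\chi_{12}$, gives $\Rr_{12}*\chi_{12}*\Rr_{13}*\Rr_{23}=\Rr_{12}*\Rr_{13}*\Rr_{23}*\chi_{12}\overset{\eqref{eq:cQYB}}{=}\Rr_{23}*\Rr_{13}*\Rr_{12}*\chi_{12}$, and symmetrically $\Rr_{23}*\chi_{23}*\Rr_{13}*\Rr_{12}=\Rr_{12}*\Rr_{13}*\Rr_{23}*\chi_{23}$; substituting into \eqref{ccQYB} reduces it to $\Rr_{12}*\Rr_{13}*\chi_{13}*\Rr_{23}=\Rr_{23}*\Rr_{13}*\chi_{13}*\Rr_{12}$, which upon convolving with $\Rr_{23}^{-1}*\Rr_{13}^{-1}*\Rr_{12}^{-1}=\Rr_{12}^{-1}*\Rr_{13}^{-1}*\Rr_{23}^{-1}$ (the second QYB in \eqref{eq:cQYB}) becomes exactly {\it iii)}.

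It then remains to prove that {\it iii)} holds unconditionally. Here I would run the dual of the final computation in the proof of Theorem~\ref{thm:cQYB}: starting from $\Rr_{23}*\bigl(\chi_{12}+\Rr_{12}^{-1}*\chi_{13}*\Rr_{12}\bigr)$, apply \eqref{eq:CC2} to recognize the bracket as $\chi(\mathrm{Id}\otimes m)$, then use quasi-cocommutativity \eqref{eq:ct1} to move $\Rr_{23}$ across, producing $\chi(\mathrm{Id}\otimes m^{\mathrm{op}})*\Rr_{23}$; rewriting $\chi(\mathrm{Id}\otimes m^{\mathrm{op}})$ via the leg identity as $(\mathrm{Id}\otimes\tau)$ applied to $\chi(\mathrm{Id}\otimes m)$ and applying \eqref{eq:CC2} again yields $\bigl(\chi_{13}+\Rr_{13}^{-1}*\chi_{12}*\Rr_{13}\bigr)*\Rr_{23}$. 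Comparing the two expressions and cancelling the term $\chi_{12}*\Rr_{13}*\Rr_{23}=\Rr_{13}*\Rr_{23}*\chi_{12}$ (already noted), one is left with $\Rr_{23}*\Rr_{12}^{-1}*\chi_{13}*\Rr_{12}=\chi_{13}*\Rr_{23}$, i.e.\ $\Rr_{12}^{-1}*\chi_{13}*\Rr_{12}=\Rr_{23}^{-1}*\chi_{13}*\Rr_{23}$, which is {\it iii)}. The main obstacle is purely bookkeeping: one must be careful that the leg-notation conventions for forms ($f_{12}=f\otimes\varepsilon$, $f_{23}=\varepsilon\otimes f$, $f_{13}=(\varepsilon\otimes f)(\tau_{H,H}\otimes\mathrm{Id})$, $f^{\mathrm{op}}=f\circ\tau_{H,H}$) interact correctly with precomposition by $m\otimes\mathrm{Id}$, $\mathrm{Id}\otimes m$ and the various flips, so that every step dual to the bialgebra case really does go through; no genuinely new idea is needed beyond this careful translation.
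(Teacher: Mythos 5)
Your plan is correct and is essentially the paper's own proof: the paper only proves the quasitriangular version (Theorem~\ref{thm:cQYB}) in detail and declares that Theorem~\ref{thm:ccQYB} ``follows similarly'', i.e.\ by exactly the dualization you spell out, and each translated step checks out in the convolution algebra $\mathrm{Hom}_\Bbbk(H^{\otimes 3},\Bbbk)$ — \eqref{eq:CC2}/\eqref{eq:CC3} for \emph{i)}$\Leftrightarrow$\emph{iii)}, \eqref{eq:ct3} together with \eqref{eq:CC1} and \eqref{eq:cQYB} for \emph{ii)}$\Leftrightarrow$\emph{iii)}, and the \eqref{eq:ct1}-based computation with $\chi(\mathrm{Id}\otimes m^{\mathrm{op}})=\chi_{13}+\Rr_{13}^{-1}*\chi_{12}*\Rr_{13}$ giving \emph{iii)} unconditionally. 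One harmless slip in your preamble: $\chi$ is not convolution invertible (it kills $1\otimes 1$), but you never use that — only invertibility of $\Rr$ is needed.
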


Dually to Definition \ref{def:coact}, we introduce the following right and left ``triangle'' $H$-actions on $H$:
\begin{align}
a\triangleleft b &:=\mathcal{R}^{-1}(a_{1}\otimes b_{1})a_{2}\mathcal{R}%
(a_{3}\otimes b_{2}),\label{lhd} \\
b\triangleright a &:=\mathcal{R}^{-1}(b_{1}\otimes a_{1})a_{2}\mathcal{R}%
(b_{2}\otimes a_{3}). \label{rhd} 
\end{align}%
Employing the above actions  \eqref{eq:CC2} and \eqref{eq:CC3}  can be rewritten as
\begin{align}
\chi (a\otimes bc)& =\chi (a\otimes b)\varepsilon (c)+\chi (a\triangleleft
b\otimes c),  \label{eq:CC2'} \\
\chi (ab\otimes c)& =\varepsilon (a)\chi (b\otimes c)+\chi (a\otimes
b\triangleright c)  \label{eq:CC3'}
\end{align}
for every $a,b,c\in H$.

 Note that by Theorem \ref{thm:ccQYB} one has  $\Rr^{-1}_{12}*\chi_{13}*\Rr_{12}=\Rr^{-1}_{23}*\chi_{13}*\Rr_{23}$, i.e., $\chi (a\triangleleft b\otimes c)=\chi (a\otimes b\triangleright c)$.

\begin{lemma}\label{lem:co-chiS1}
Let $\left( H,\mathcal{R},\chi \right) $ be a pre-Cartier cotriangular Hopf
algebra with antipode $S$. Then, for all $a,b\in H$, the following properties hold:
\begin{align}
S\left( a\triangleright b\right)  &=S\left( b\right) \triangleleft S\left(
a\right) ;  \label{eq:Striang} \\
\chi (S\left( a_{1}\right) \otimes a_{2}\triangleright b) &=-\chi (a\otimes
b).  \label{eq:chiS}
\end{align}
Furthermore, if $\left( H,\mathcal{R},\chi \right) $ is Cartier, then
\begin{equation}\label{eq:chiSS}
\chi (S\left( b\right) \otimes S(a))=\chi (a\otimes
b) \text{   for all   }a,b\in H,
\end{equation}
in addition.
\end{lemma}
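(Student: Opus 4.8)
The three identities \eqref{eq:Striang}, \eqref{eq:chiS}, \eqref{eq:chiSS} are the dual analogues of \eqref{eq:Striang2}, \eqref{eq:chiS2}, \eqref{eq:chiSS2} in Lemma~\ref{lem:chiS1}, so the strategy is to transcribe the proof of that lemma by replacing products with convolution products, actions with coactions, and swapping the roles of $S$ on the two tensor legs. I would begin with \eqref{eq:Striang}, which involves only $\Rr$ and not $\chi$. Unfolding the definitions \eqref{lhd}--\eqref{rhd}, one has $a\triangleright b=\mathcal R^{-1}(a_1\otimes b_1)b_2\mathcal R(a_2\otimes b_3)$, so $S(a\triangleright b)=\mathcal R^{-1}(a_1\otimes b_1)S(b_2)\mathcal R(a_2\otimes b_3)$; on the other side $S(b)\triangleleft S(a)=\mathcal R^{-1}(S(b)_1\otimes S(a)_1)S(b)_2\mathcal R(S(b)_3\otimes S(a)_2)$. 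Using $\Delta(S(x))=S(x_2)\otimes S(x_1)$ together with the cotriangular identities $\mathcal R(S(a)\otimes b)=\mathcal R^{-1}(a\otimes b)$, $\mathcal R^{-1}(a\otimes S(b))=\mathcal R(a\otimes b)$ and $\mathcal R(S(a)\otimes S(b))=\mathcal R(a\otimes b)$ (recalled after \eqref{eq:ct4}, valid since $(H,\Rr)$ is a Hopf algebra), both sides reduce to the same scalar-weighted $S(b_2)$, giving \eqref{eq:Striang}. (One should be careful here: in the cotriangular case $\mathcal R^{-1}=\mathcal R^{\mathrm{op}}$, which is what makes the two sides coincide.)

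For \eqref{eq:chiS}, I would mimic the derivation of \eqref{eq:chiS2}. Start from \eqref{eq:eps-chi2} in its dual form, namely $(\varepsilon\otimes\mathrm{Id})(\chi)=0$, i.e.\ $\chi(a_1\otimes c)\varepsilon(a_2)=0$ — but the right object to use is the Hochschild $2$-cocycle form \eqref{eq:CC3'}: $\chi(ab\otimes c)=\varepsilon(a)\chi(b\otimes c)+\chi(a\otimes b\triangleright c)$. Substituting $a\mapsto S(x_1)$, $b\mapsto x_2$ and summing, the left-hand side becomes $\chi(S(x_1)x_2\otimes c)=\varepsilon(x)\chi(1\otimes c)=0$ (the last equality because $\chi(1\otimes c)=(\varepsilon\otimes\mathrm{Id})\chi=0$ by Remark~\ref{rmk:chivs}\,iii) dualized, i.e.\ \eqref{eq:eps-chi2}); the right-hand side becomes $\varepsilon(S(x_1))\chi(x_2\otimes c)+\chi(S(x_1)\otimes x_2\triangleright c)=\chi(x\otimes c)+\chi(S(x_1)\otimes x_2\triangleright c)$. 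Rearranging gives exactly \eqref{eq:chiS}. This is the step most likely to need care with Sweedler indices and the placement of counits, but it is a routine bookkeeping computation once \eqref{eq:CC3'} is in hand.

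Finally, for \eqref{eq:chiSS} under the Cartier hypothesis \eqref{eq:CC4}, I would follow the two-sided chain of equalities used for \eqref{eq:chiSS2}: one expresses $\chi(S(b)\otimes S(a))$ by first inserting $\varepsilon=\mathcal R*\mathcal R^{-1}$ appropriately, then uses the hexagon relations \eqref{eq:ct2}, \eqref{eq:ct3} to split the coproducts appearing on the $\Rr$-legs, applies \eqref{eq:chiS} to collapse an $S$-twisted term, and uses the cotriangular identities for $\mathcal R$ composed with $S$ to rewrite the remaining $\Rr$-factors; a symmetric computation starting from $\chi(a\otimes b)$ and invoking \eqref{eq:CC4} ($\mathcal R*\chi=\chi^{\mathrm{op}}*\mathcal R$) produces the same expression, yielding $\chi(S(b)\otimes S(a))=\chi(a\otimes b)$, i.e.\ $\tau$-composed $(S\otimes S)(\chi)=\chi$ in functional form. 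The main obstacle is purely organizational: keeping the many convolution factors and Sweedler legs aligned so that the two independent computations visibly meet in the middle — conceptually nothing new beyond dualizing Lemma~\ref{lem:chiS1}, but the bracketing of $\mathcal R$, $\mathcal R^{-1}$ and $\chi$ in the convolution algebra $\mathrm{Hom}_\Bbbk(H^{\otimes 3},\Bbbk)$ must be handled with care.
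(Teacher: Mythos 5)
Your proposal is correct, and for \eqref{eq:Striang} and \eqref{eq:chiS} it is essentially the paper's own argument: unfold the triangle actions, use $\Delta(S(x))=S(x_2)\otimes S(x_1)$, the identities $\Rr(S(a)\otimes b)=\Rr^{-1}(a\otimes b)$, $\Rr(S(a)\otimes S(b))=\Rr(a\otimes b)$ and cotriangularity $\Rr^{-1}=\Rr^{\op}$ for the first, and substitute $a\mapsto S(x_1)$, $b\mapsto x_2$ into \eqref{eq:CC3'} together with $\chi(1\otimes c)=0$ for the second. The only divergence is in \eqref{eq:chiSS}: you propose a two-sided ``meet in the middle'' dualization of the computation of \eqref{eq:chiSS2} in Lemma~\ref{lem:chiS1} (insert $\varepsilon=\Rr\ast\Rr^{-1}$, hexagons, \eqref{eq:chiS}, the $S$-identities for $\Rr$, and \eqref{eq:CC4} on the other side), whereas the paper runs a single chain: since $\triangleright$ is an action, $\chi(S(b)\otimes S(a))=\chi(S(b_1)\otimes b_2\triangleright(S(b_3)\triangleright S(a)))$, which by \eqref{eq:chiS} equals $-\chi(b_1\otimes S(b_2)\triangleright S(a))$; unfolding $\triangleright$, commuting the scalars through $\chi$ via \eqref{eq:CC1}, using the $S$-compatibilities of $\Rr$ and the Cartier condition \eqref{eq:CC4}, one reassembles $-\chi(S(a_1)\otimes a_2\triangleright b)$ and concludes with \eqref{eq:chiS} again. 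Both routes use exactly the same ingredients; the paper's chain is a bit shorter and avoids having to check that two long independent computations land on the same expression, but your plan goes through. Two cosmetic slips in your write-up: the dual of \eqref{eq:eps-chi2} is $\chi(1\otimes c)=0$ (e.g.\ set $a=b=1$ in \eqref{eq:CC3'}), not ``$\chi(a_1\otimes c)\varepsilon(a_2)=0$'' --- your actual computation uses the correct statement --- and \eqref{eq:CC3'} is the triangle-action rewriting of \eqref{eq:CC3}, not the Hochschild cocycle identity.
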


We now  give for future reference the dual of Proposition \ref{pro:trivHoch-cohomol-coalg} and Theorem \ref{thm:Hoch-cohom-coalg}. Note that we include a second proof of the latter, employing the Casimir operator.

\begin{proposition}\label{pro:trivHochCohom}
Let $H$ be a Hopf algebra and $\chi \in \mathrm{Z}^2(H,\Bbbk)$ be such that $\chi \left(
1\otimes 1\right) =0$ and define $\gamma:H\to \Bbbk,x\mapsto \chi
\left( S\left( x_{1}\right) \otimes x_{2}\right) $. Then, we get $b^{1}\left( \gamma \right)  =\chi +\chi^\op \left( S\otimes S\right)$. Moreover, if we
assume $\chi^\op(S\otimes S)=\chi$, then $b^{1}\left( \gamma
\right) =2\chi $. In particular, if $\mathrm{char}\left( \Bbbk \right) \neq 2
$, we have $\chi =b^{1}\left( \frac{\gamma }{2}\right) \in \mathrm{B}^2(H,\Bbbk)$.
\end{proposition}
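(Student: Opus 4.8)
This is the dual of Proposition~\ref{pro:trivHoch-cohomol-coalg}, so the plan is to mirror that proof line by line, replacing the cobar complex of the coalgebra $H$ by the standard complex computing $\mathrm{H}^\bullet(H,\Bbbk)$, replacing comultiplication by multiplication, and replacing "$\chi^i\otimes\chi_i$" Sweedler-type manipulations by evaluations of the linear functional $\chi\colon H\otimes H\to\Bbbk$ on elements. Concretely, I would start from the hypothesis $\chi\in\mathrm{Z}^2(H,\Bbbk)$, i.e. $b^2(\chi)=0$, which written out on $a\otimes b\otimes c$ reads
\[
\varepsilon(a)\chi(b\otimes c)-\chi(ab\otimes c)+\chi(a\otimes bc)-\chi(a\otimes b)\varepsilon(c)=0 .
\]
This is the dual of \eqref{eq:b2(chi)'} and is the identity that will be used twice in the computation of $\Delta(\gamma)$ — here, of course, "$\Delta(\gamma)$" becomes instead the computation of $\gamma(xy)$ for $x,y\in H$, since in the algebra picture the relevant structure map on $\gamma\in H^\ast$ is not a comultiplication but the requirement that $b^1(\gamma)(x\otimes y)=\varepsilon(x)\gamma(y)-\gamma(xy)+\gamma(x)\varepsilon(y)$.

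The main computation is therefore: expand $\gamma(xy)=\chi(S((xy)_1)\otimes (xy)_2)=\chi(S(x_1y_1)\otimes x_2y_2)=\chi(S(y_1)S(x_1)\otimes x_2y_2)$, then apply the $2$-cocycle identity above (with $a=S(y_1)$, $b=S(x_1)$, $c=x_2y_2$, and then a second time) together with the counit axioms and the antipode axiom $S(x_1)x_2=\varepsilon(x)1_H$, $x_1S(x_2)=\varepsilon(x)1_H$, to collect the terms. Tracking the bookkeeping exactly as in the coalgebra proof, the four groups of terms that appear are: a term $-\gamma(xy)$-free piece that reassembles into $\varepsilon(x)\gamma(y)$; a piece $\gamma(x)\varepsilon(y)$; a piece $-\chi(x\otimes y)$; and a piece $-\chi^{\mathrm{op}}(S\otimes S)(x\otimes y)=-\chi(S(y)\otimes S(x))$, where the hypothesis $\chi(1\otimes 1)=0$ (dual to $(\mathrm{Id}\otimes\varepsilon)(\chi)=0$) is what kills the spurious boundary term, just as in the original. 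Rearranging gives
\[
b^1(\gamma)(x\otimes y)=\varepsilon(x)\gamma(y)-\gamma(xy)+\gamma(x)\varepsilon(y)=\chi(x\otimes y)+\chi^{\mathrm{op}}(S\otimes S)(x\otimes y),
\]
i.e. $b^1(\gamma)=\chi+\chi^{\mathrm{op}}(S\otimes S)$. Then if $\chi^{\mathrm{op}}(S\otimes S)=\chi$ we immediately get $b^1(\gamma)=2\chi$, and when $\mathrm{char}(\Bbbk)\neq 2$ we may divide, obtaining $\chi=b^1(\gamma/2)\in\mathrm{B}^2(H,\Bbbk)$.

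The only genuine obstacle is purely clerical: keeping the Sweedler indices straight through the double application of the cocycle condition, since unlike in the coalgebra case the roles of "multiplication on the first two legs" versus "on the last two legs" are swapped, and one must be careful that $S$ is an anti-coalgebra map ($\Delta(S(x))=S(x_2)\otimes S(x_1)$), which is exactly what produces the $\chi^{\mathrm{op}}$ rather than $\chi$ in the mixed term. Since the excerpt explicitly says "the proofs follow similarly" and provides the coalgebra version in full, I would present this proof compactly, writing out the chain of equalities for $\gamma(xy)$ with the invocations of $b^2(\chi)=0$, the (co)unit axioms, and the antipode axioms marked over the equality signs, exactly paralleling the displayed computation of $\Delta(\gamma)$ in the proof of Proposition~\ref{pro:trivHoch-cohomol-coalg}, and then conclude as above.
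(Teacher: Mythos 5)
Your proposal is correct and follows essentially the same route as the paper's (dualized) argument: expand $\gamma(xy)=\chi(S(y_1)S(x_1)\otimes x_2y_2)$, apply the $2$-cocycle identity $b^2(\chi)=0$ twice together with the counit and antipode axioms, and use $\chi(1\otimes 1)=0$ (via the cocycle condition) to kill the leftover term $\chi(1\otimes y)$ resp.\ $\chi(S(y)\otimes 1)$, arriving at $b^1(\gamma)=\chi+\chi^{\op}(S\otimes S)$. One harmless slip in your commentary: the $\chi^{\op}$ term arises from $S$ being an anti-\emph{algebra} map, $S(x_1y_1)=S(y_1)S(x_1)$, not from its anti-coalgebra property.
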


In analogy to the dual setting we can prove that $\gamma (x)=\chi \left( S\left( x_{1}\right) \otimes x_{2}\right)$ defined for $x\in H$ is a central element $\gamma\in \mathscr{Z}(H^*)$. We call it the \textbf{Casimir form} of the pre-Cartier coquasitriangular Hopf
algebra $\left( H,\mathcal{R},\chi \right) $.

\begin{theorem}
Let $\gamma$ be the Casimir form of a Cartier cotriangular Hopf
algebra $\left( H,\mathcal{R},\chi \right) $. Then $b^{1}\left( \gamma
\right)=2\chi$. In particular,
if $\mathrm{char}\left( \Bbbk \right) \neq 2$ we have $\chi =b^{1}\left(
\frac{\gamma }{2}\right)\in \mathrm{B}^2(H,\Bbbk) $, i.e., $\chi$ is a 2-coboundary.
\end{theorem}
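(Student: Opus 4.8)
The plan is to deduce the result from Proposition~\ref{pro:trivHochCohom}, so that it suffices to verify its three hypotheses for the infinitesimal $\Rr$-form $\chi$ of a Cartier cotriangular Hopf algebra $(H,\mathcal{R},\chi)$: that $\chi$ lies in $\mathrm{Z}^2(H,\Bbbk)$, that $\chi(1\otimes 1)=0$, and that $\chi^{\mathrm{op}}(S\otimes S)=\chi$. Granting these, Proposition~\ref{pro:trivHochCohom} yields $b^1(\gamma)=2\chi$ for the Casimir form $\gamma(x)=\chi(S(x_1)\otimes x_2)$, and hence $\chi=b^1(\gamma/2)\in\mathrm{B}^2(H,\Bbbk)$ whenever $\mathrm{char}(\Bbbk)\neq 2$; the assertion that $\chi$ is a $2$-coboundary is then immediate.

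First I would invoke Theorem~\ref{thm:ccQYB}: a Cartier cotriangular Hopf algebra is in particular a pre-Cartier coquasitriangular bialgebra, so statement {\it i)} of that theorem gives precisely $\chi\in\mathrm{Z}^2(H,\Bbbk)$. For the normalization, specializing one argument of \eqref{eq:CC2} and \eqref{eq:CC3} to $1_H$ and using $\mathcal{R}(a\otimes 1_H)=\varepsilon(a)=\mathcal{R}(1_H\otimes a)$ --- the dual of Remark~\ref{rmk:chivs}~iii) --- gives $\chi(a\otimes 1_H)=0=\chi(1_H\otimes b)$ for all $a,b\in H$; in particular $\chi(1\otimes 1)=0$, so the Casimir form $\gamma$ is exactly the element appearing in Proposition~\ref{pro:trivHochCohom}.

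The only substantive ingredient left is the symmetry $\chi^{\mathrm{op}}(S\otimes S)=\chi$, i.e.\ $\chi(S(b)\otimes S(a))=\chi(a\otimes b)$ for all $a,b\in H$. This is precisely \eqref{eq:chiSS} of Lemma~\ref{lem:co-chiS1}, which holds because $(H,\mathcal{R},\chi)$ is Cartier cotriangular, and this is the one place where the full Cartier condition \eqref{eq:CC4} (rather than merely pre-Cartier) and cotriangularity are genuinely used. With all three hypotheses verified, Proposition~\ref{pro:trivHochCohom} gives $b^1(\gamma)=\chi+\chi^{\mathrm{op}}(S\otimes S)=2\chi$, which finishes the proof. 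I do not anticipate a serious obstacle, since every step is already available from the preceding results; the only point to be careful about is that \eqref{eq:chiSS} --- and therefore the whole argument --- fails without cotriangularity and \eqref{eq:CC4}, consistently with the behaviour observed for Sweedler's Hopf algebra, where $\chi_\alpha$ is pre-Cartier but not Cartier.
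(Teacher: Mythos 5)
Your proposal is correct and follows essentially the same route as the paper's own (first) proof: cite Theorem~\ref{thm:ccQYB} for $\chi\in\mathrm{Z}^2(H,\Bbbk)$, establish the normalization $\chi(1\otimes 1)=0$ from \eqref{eq:CC2}--\eqref{eq:CC3} (the paper does this by setting $a=b=c=1$ in \eqref{eq:CC2'}), use \eqref{eq:chiSS} from Lemma~\ref{lem:co-chiS1} for the symmetry, and conclude via Proposition~\ref{pro:trivHochCohom}. The paper additionally records a second, independent argument through the Casimir operator on the rigid category $\Mm_f^H$, but that is not needed for correctness.
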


\begin{proof}
By Theorem \ref{thm:ccQYB}, we know that $\chi\in \mathrm{Z}^2(H,\Bbbk)$. If in \eqref{eq:CC2'} we take $a=b=c=1$, we get $\chi(1\otimes 1)=0$. Moreover, by Lemma \ref{lem:co-chiS1}, we have $\chi (S\left( b\right) \otimes S(a)) =\chi (a\otimes b)$. We conclude by Proposition \ref{pro:trivHochCohom}. 

We also include a second proof. Note that the category $\Mm_{f}^{H}$ of finite-dimensional comodules is rigid. Given an object $V$ in $\Mm_{f}^{H}$, its dual
is the linear dual $V^{\ast }$ where the comodule structure is given, for
every $f\in V^{\ast },$ by the equality $f_{0}\left( v\right) f_{1}=f\left(
v_{0}\right) S\left( v_{1}\right) $, for every $v\in V$. By \cite[page 495]{Kassel}, for $V,W$ in $\Mm
_{f}^{H}$, we have that the infinitesimal braiding $t _{V,W}$ satisfies the equality $2t _{V,W}=C_{V\otimes W}-C_{V}\otimes W-V\otimes C_{W}$
where $C_{V}:V\rightarrow V$, the so-called Casimir operator of $V$,
is defined by setting $C_{V}:=-\left( V\otimes \left( \mathrm{ev}_{V}\circ t
_{V^{\ast },V}\right) \right) \circ \left( \mathrm{coev}_{V}\otimes V\right)
.$ If $\left\{ e_{i}\mid i\in I\right\} $ is a basis of $V$ and $\left\{ e^{i}\mid i\in I\right\} $ is the corresponding dual basis of $V^*$, we can compute%
\begin{align*}
C_{V}\left( v\right)  &=-\left( V\otimes \left( \mathrm{ev}_{V}t
_{V^{\ast },V}\right) \right) \left( \mathrm{coev}_{V}\otimes V\right)
\left( v\right)  
=-\left( V\otimes \left( \mathrm{ev}_{V}t _{V^{\ast },V}\right) \right)
\left( e_{i}\otimes e^{i}\otimes v\right)  \\
&=-\left( V\otimes \mathrm{ev}_{V}\right) \left( e_{i}\otimes
e_{0}^{i}\otimes v_{0}\right) \chi \left( e_{1}^{i}\otimes v_{1}\right)  
=-e_{i}e_{0}^{i}\left( v_{0}\right) \chi \left( e_{1}^{i}\otimes
v_{1}\right)  
=-e_{i}\chi \left( e_{0}^{i}\left( v_{0}\right) e_{1}^{i}\otimes
v_{1}\right)\\  
&=-e_{i}\chi \left( e^{i}\left( v_{0}\right) S\left( v_{1}\right) \otimes
v_{2}\right)  
=-e^{i}\left( v_{0}\right) e_{i}\chi \left( S\left( v_{1}\right) \otimes
v_{2}\right) 
=-v_{0}\gamma \left( v_{1}\right) .
\end{align*}%
As a consequence, for every $V,W$ in $\Mm_{f}^{H}$ and $v\in V,w\in
W$, we have
\begin{eqnarray*}
2t _{V,W}\left( v\otimes w\right)  &=&C_{V\otimes W}\left( v\otimes
w\right) -C_{V}\left( v\right) \otimes w-v\otimes C_{W}\left( w\right)  \\
&=&-v_{0}\otimes w_{0}\gamma \left( v_{1}w_{1}\right) +v_{0}\otimes w\gamma
\left( v_{1}\right) +v\otimes w_{0}\gamma \left( w_{1}\right)  \\
&=&-v_{0}\otimes w_{0}\gamma \left( v_{1}w_{1}\right) +v_{0}\otimes
w_{0}\gamma \left( v_{1}\right) \varepsilon \left( w_{1}\right)
+v_{0}\otimes w_{0}\varepsilon \left( v_{1}\right) \gamma \left(
w_{1}\right)  \\
&=&v_{0}\otimes w_{0}b^{1}\left( \gamma \right) \left( v_{1}\otimes
w_{1}\right) .
\end{eqnarray*}%
Now, given $x,y\in H,$ by the fundamental theorem of coalgebras, there are
finite-dimensional subcoalgebras $D_{x},D_{y}$ of $H$ such that $x\in D_{x}$
and $y\in D_{y}.$ Clearly $D_{x}$ and $D_{y}$ are in $\Mm_{f}^{H}$
so that%
\begin{equation*}
2x_{1}\otimes y_{1}\chi \left( x_{2}\otimes y_{2}\right) =2t _{H,H}\left(
x\otimes y\right) =2t _{D_{x},D_{y}}\left( x\otimes y\right)
=x_{1}\otimes y_{1}b^{1}\left( \gamma \right) \left( x_{2}\otimes
y_{2}\right) .
\end{equation*}%
By applying $\varepsilon \otimes \varepsilon $ we get $2\chi \left( x\otimes
y\right) =b^{1}\left( \gamma \right) \left( x\otimes y\right) $, i.e., $2\chi
=b^{1}\left( \gamma \right) $ as desired.
\end{proof}

Dually to Example \ref{exa:psc}, we have the following one.
\begin{example}
Let $(H,\Rr)$ be a co(quasi)triangular Hopf algebra and let $\chi:H\otimes H\to \Bbbk$ be a central symmetric biderivation.
By Example \ref{exa:chisymco}, we know that $(H,\Rr,\chi)$ is Cartier so that we can consider the Casimir form $\gamma:H\to \Bbbk$. Note that if $d:H\to\Bbbk$ is a derivation, then $0=d(x_1S(x_2))=d(x)+dS(x)$ for $x\in H$, so that $dS=-d$. Since $\chi$ is a derivation in both entries, we get 
$\gamma (x)=\chi \left( S\left( x_{1}\right) \otimes x_{2}\right)=-\chi \left( x_{1}\otimes x_{2}\right) $ and 
$\chi \left( S\left( y\right) \otimes S\left( x\right)
\right)=\chi \left( y \otimes  x\right)=\chi \left( x \otimes  y\right)$. By Proposition \ref{pro:trivHochCohom}, we get that 
$b^{1}\left( \gamma \right) =2 \chi $ and hence $\chi$ is a 2-coboundary if $\mathrm{char}\left( \Bbbk \right) \neq 2$.
An instance of this situation is the Cartier coquasitriangular Hopf algebra $(\mathrm{GL}_{q}(2),\Rr,\chi)$ of Proposition \ref{prop:GLq}. Thus, being cotriangular is not necessary to have that $\chi$ is a 2-coboundary.
\end{example}

We include here two results that will be needed afterwards. The first one provides compatibility conditions between the triangle actions and the product. We omit their elementary proof, merely noting that these conditions are part of the definition of \emph{matched pair of bialgebras}, see \cite[Definition 7.2.1]{Majid-book}, of which $(H,H)$ together with the triangle actions provides an instance in view of \cite[Theorem 7.2.3]{Majid-book} and \cite[Example 7.2.7]{Majid-book} applied to $\sigma=\Rr$. 
\begin{invisible}
    Example 7.2.7 says that we have $A\bowtie_\sigma H$. One checks that this bialgebra is the  setting of Theorem 7.2.3 so that $A,H$ are a matched pair.
\end{invisible}

The second result  will be needed for the infinitesimal FRT construction in Section \ref{Sec:FRT}.

\begin{lemma}\label{lem:bitriang}
Let $H$ be a bialgebra and let $\mathcal{R}:H\otimes H\to \Bbbk$ be a convolution-invertible map which is a bialgebra bicharacter, i.e., it satisfies \eqref{eq:ct2} and \eqref{eq:ct3}. Define $%
\triangleright :H\otimes H\rightarrow H$ and $\triangleleft :H\otimes
H\rightarrow H$ as in \eqref{lhd} and \eqref{rhd}. Then the following identities
hold true for every $a,b,c\in H$.%
\begin{gather}
a\triangleright \left( bc\right) =\left( a_{1}\triangleright b_{1}\right)
\left( \left( a_{2}\triangleleft b_{2}\right) \triangleright c\right) \qquad
\text{and}\qquad \left( ab\right) \triangleleft c=\left( a\triangleleft
\left( b_{1}\triangleright c_{1}\right) \right) \left( b_{2}\triangleleft
c_{2}\right) ,  \label{form:trimult} \\
\varepsilon \left( a\triangleleft b\right) =\varepsilon \left( a\right)
\varepsilon \left( b\right) =\varepsilon \left( a\triangleright b\right) .
\label{form:trieps}
\end{gather}
\end{lemma}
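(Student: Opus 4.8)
The plan is to verify the two identities in \eqref{form:trimult} and the two in \eqref{form:trieps} by direct computation, using only convolution-invertibility of $\Rr$ together with the bicharacter conditions \eqref{eq:ct2} and \eqref{eq:ct3}, and nothing else (in particular, not the quasi-cocommutativity \eqref{eq:ct1}, since the lemma is stated for a mere bicharacter). First I would record the two conventions we will use repeatedly: from \eqref{eq:ct2}, $\Rr(a\otimes bc)=\Rr(a_1\otimes c)\Rr(a_2\otimes b)$, and from \eqref{eq:ct3}, $\Rr(ab\otimes c)=\Rr(a\otimes c_1)\Rr(b\otimes c_2)$; applying the counit also gives $\Rr(a\otimes 1)=\varepsilon(a)=\Rr(1\otimes a)$, hence $\Rr^{-1}(a\otimes 1)=\varepsilon(a)=\Rr^{-1}(1\otimes a)$ as well, and the same two bicharacter identities hold with $\Rr$ replaced by $\Rr^{-1}$ (the inverse of a bicharacter is a bicharacter — this follows by a standard convolution argument, or one may cite that $\Rr^{-1}=\Rr^{\mathrm{op}}$ fails in general but the bicharacter property of $\Rr^{-1}$ does not).

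For the counit identities \eqref{form:trieps}, I would simply apply $\varepsilon$ to $a\triangleleft b=\Rr^{-1}(a_1\otimes b_1)\,a_2\,\Rr(a_3\otimes b_2)$: using multiplicativity of $\varepsilon$ one gets $\varepsilon(a\triangleleft b)=\Rr^{-1}(a_1\otimes b_1)\varepsilon(a_2)\Rr(a_3\otimes b_2)=\Rr^{-1}(a_1\otimes b_1)\Rr(a_2\otimes b_2)=\varepsilon(a)\varepsilon(b)$, and symmetrically for $\triangleright$. This is immediate.

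For the first half of \eqref{form:trimult}, I would start from the right-hand side $\left(a_1\triangleright b_1\right)\left(\left(a_2\triangleleft b_2\right)\triangleright c\right)$, expand both triangle actions by their definitions \eqref{lhd}, \eqref{rhd}, and then collapse the resulting product of $\Rr^{\pm1}$-factors using \eqref{eq:ct2} and \eqref{eq:ct3} (for both $\Rr$ and $\Rr^{-1}$) until only $\Rr^{-1}(a_1\otimes(bc)_1)\,a_2\,\Rr(a_3\otimes(bc)_2)=a\triangleright(bc)$ survives; the cancellations happen in a predictable telescoping pattern because consecutive $\Rr$ and $\Rr^{-1}$ evaluated on matching Sweedler legs combine to a counit. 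The second identity $(ab)\triangleleft c=\left(a\triangleleft(b_1\triangleright c_1)\right)\left(b_2\triangleleft c_2\right)$ is handled the same way, with the roles of the two tensor legs exchanged. The main obstacle is purely bookkeeping: keeping track of the many Sweedler indices and making sure each $\Rr$/$\Rr^{-1}$ pair is applied in the correct order. Since the statement explicitly says the proof is elementary and is an instance of the matched-pair axioms for $(H,H)$ via \cite[Theorem 7.2.3]{Majid-book} and \cite[Example 7.2.7]{Majid-book} with $\sigma=\Rr$, I would in fact only sketch the computation for the first identity and then invoke the matched-pair reference (or symmetry) for the remaining ones, exactly as the surrounding text does for the omitted first result.
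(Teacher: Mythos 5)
Your overall plan is sound, and your fallback is literally the paper's proof: the paper omits the computation entirely and only records that \eqref{form:trimult}--\eqref{form:trieps} are among the matched-pair axioms for $(H,H)$, citing \cite[Theorem 7.2.3 and Example 7.2.7]{Majid-book} with $\sigma=\Rr$. Your verification of \eqref{form:trieps} is correct as written, and you are right that neither \eqref{eq:ct1} nor an antipode is needed anywhere.

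The one step of your sketch that is wrong as stated, and on which the telescoping for \eqref{form:trimult} hinges, is the claim that ``the same two bicharacter identities hold with $\Rr$ replaced by $\Rr^{-1}$''. They do not. Taking convolution inverses of \eqref{eq:ct2} and \eqref{eq:ct3} (precomposition with the coalgebra maps $\mathrm{Id}\otimes m$ and $m\otimes\mathrm{Id}$ preserves convolution inverses, and the inverse of $\Rr_{13}*\Rr_{12}$ is $\Rr^{-1}_{12}*\Rr^{-1}_{13}$, etc.) yields the \emph{leg-reversed} identities
\begin{equation*}
\Rr^{-1}(a\otimes bc)=\Rr^{-1}(a_1\otimes b)\,\Rr^{-1}(a_2\otimes c),
\qquad
\Rr^{-1}(ab\otimes c)=\Rr^{-1}(b\otimes c_1)\,\Rr^{-1}(a\otimes c_2),
\end{equation*}
in contrast with $\Rr(a\otimes bc)=\Rr(a_1\otimes c)\Rr(a_2\otimes b)$ and $\Rr(ab\otimes c)=\Rr(a\otimes c_1)\Rr(b\otimes c_2)$; the reversed and unreversed versions differ in general (already for the standard $\Rr$ on $\mathrm{M}_q(2)$), so using the unreversed ones for $\Rr^{-1}$ pairs the legs of $a$ with $b$ and $c$ in the wrong order and the two sides of the first identity in \eqref{form:trimult} no longer agree for noncocommutative $H$. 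With the reversed identities the computation does close, and by exactly one collapse per identity rather than a long chain: expanding the right-hand side of the first identity gives
\begin{equation*}
\Rr^{-1}(a_1\otimes b_1)\,\Rr(a_2\otimes b_3)\,\Rr^{-1}(a_3\otimes b_4)\,\Rr^{-1}(a_4\otimes c_1)\,\Rr(a_5\otimes c_3)\,\Rr(a_6\otimes b_5)\;b_2c_2,
\end{equation*}
the middle pair $\Rr(a_2\otimes b_3)\Rr^{-1}(a_3\otimes b_4)$ is $\Rr*\Rr^{-1}$ evaluated on consecutive legs and collapses to $\varepsilon(a_2)\varepsilon(b_3)$, and what remains, $\Rr^{-1}(a_1\otimes b_1)\Rr^{-1}(a_2\otimes c_1)\,b_2c_2\,\Rr(a_3\otimes c_3)\Rr(a_4\otimes b_3)$, is exactly $a\triangleright(bc)$ expanded via the displayed identity for $\Rr^{-1}$ and \eqref{eq:ct2} for $\Rr$; the second identity in \eqref{form:trimult} is proved symmetrically using \eqref{eq:ct3} and the second reversed identity. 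So fix the statement about $\Rr^{-1}$ and your argument is complete.
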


\begin{proposition}\label{prop:bitriang}
Let $C$ be a coalgebra together with two actions $\triangleright :TC\otimes
TC\rightarrow TC$ and $\triangleleft :TC\otimes TC\rightarrow TC$ such that 
\eqref{form:trimult} and \eqref{form:trieps} holds true for all $a,b,c\in TC$
and such that $TC\triangleright C^{\otimes n}\subseteq C^{\otimes n}$ and $%
C^{\otimes n}\triangleleft TC\subseteq C^{\otimes n}.$ Then every linear map
$\chi _{11}:C\otimes C\rightarrow \Bbbk $ induces a linear map $\chi
:TC\otimes TC\rightarrow \Bbbk $ satisfying \eqref{eq:CC2'} and \eqref{eq:CC3'}.%
\end{proposition}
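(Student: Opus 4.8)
The plan is to build $\chi$ on the tensor coalgebra $TC=\bigoplus_{n\geq 0}C^{\otimes n}$ by induction on the word-length, using \eqref{eq:CC2'} and \eqref{eq:CC3'} as recursive definitions and then checking they hold for arbitrary homogeneous arguments. Concretely, I would first set $\chi(1\otimes-)=\varepsilon=\chi(-\otimes 1)$ and $\chi\big|_{C\otimes C}:=\chi_{11}$. For a word $b=b^{(1)}\cdots b^{(m)}\in C^{\otimes m}$ with $m\geq 2$, one writes $b=b'c$ with $b'\in C^{\otimes(m-1)}$, $c\in C$, and \emph{defines}
\[
\chi(a\otimes b):=\chi(a\otimes b')\varepsilon(c)+\chi(a\triangleleft b'\otimes c)
\]
for all $a\in C^{\otimes n}$; symmetrically one could use \eqref{eq:CC3'} in the first slot. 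Since $a\triangleleft b'\in C^{\otimes n}$ by hypothesis, the right-hand side only refers to values of $\chi$ on shorter words in the second argument, so this is a legitimate recursion once $\chi(a\otimes-)$ is known on $C$ for every $a$; and that in turn is fixed by an analogous recursion on the length of $a$ using \eqref{eq:CC3'}. The first genuine point to settle is that these two recursions (in the first and second variable) are mutually consistent, i.e., that the doubly-inductive definition is well-posed on all of $TC\otimes TC$; I would organize this by induction on $|a|+|b|$, defining $\chi(a\otimes b)$ via \eqref{eq:CC2'} when $|b|\geq 2$ and via \eqref{eq:CC3'} when $|a|\geq 2$, and checking the two prescriptions agree when both $|a|,|b|\geq 2$ by reducing both to $\chi$ evaluated on strictly shorter total length and invoking the matched-pair identities \eqref{form:trimult}.

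Once $\chi$ is defined, I would verify \eqref{eq:CC2'}, namely $\chi(a\otimes bc)=\chi(a\otimes b)\varepsilon(c)+\chi(a\triangleleft b\otimes c)$, for all homogeneous $a,b,c\in TC$ by induction on $|b|$: for $|b|\leq 1$ (with $|b|=1$ being the base case $b\in C$, which is exactly the defining relation, and $|b|=0$ being the normalization $b=1$) it holds by construction; for $|b|\geq 2$ write $b=b'd$ with $d\in C$ and compute both sides, using the definition to peel off $d$, the inductive hypothesis for the shorter word $b'$, the associativity of the action $\triangleleft$, and the first identity in \eqref{form:trimult} rewritten as $(b'd)\triangleleft c=\big(b'\triangleleft(d_1\triangleright c_1)\big)(d_2\triangleleft c_2)$ — here one must be careful that $d\in C$ so $d_1\triangleright c_1$ stays in $C$, hence $\triangleleft$ by it preserves length. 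The verification of \eqref{eq:CC3'} is entirely symmetric, now inducting on $|a|$ and using the second identity of \eqref{form:trimult}. The counit conditions \eqref{form:trieps} are what makes the $\varepsilon$-terms behave correctly and are used implicitly throughout.

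The main obstacle I anticipate is precisely the compatibility of the two recursive prescriptions in the overlap region $|a|,|b|\geq 2$: a priori one could compute $\chi(ab'\otimes cd')$ by first splitting off $d'$ from the second argument or by first splitting off $b$-part from the first, and these must land on the same value. Resolving this cleanly requires the full strength of \eqref{form:trimult} (both equalities), which is exactly the matched-pair compatibility, together with an induction on total length that is set up so that every reduction strictly decreases $|a|+|b|$. A convenient way to sidestep case-chasing is to fix the convention of always splitting off the \emph{last} letter of $b$ (so \eqref{eq:CC2'} is the primary recursion) and to then \emph{prove} \eqref{eq:CC3'} as a theorem rather than build it into the definition; with that asymmetric setup there is no overlap to reconcile, and one only has to check one multiplicativity law by induction — at the cost of a slightly longer verification of \eqref{eq:CC3'}. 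Either way the computations are routine applications of \eqref{form:trimult}, \eqref{form:trieps}, coassociativity in $C$, and the inductive hypotheses, so I would only spell out the inductive step for \eqref{eq:CC2'} in detail and remark that \eqref{eq:CC3'} follows by the same argument with left and right interchanged.
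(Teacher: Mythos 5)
Your base case is wrong, and it is the one place where the argument as written would actually fail: you seed the recursion with $\chi(1\otimes -)=\varepsilon=\chi(-\otimes 1)$, but the identities you want to enforce leave no freedom here and force both to vanish. Indeed, putting $c=1$ in \eqref{eq:CC2'} gives $\chi(a\otimes b)=\chi(a\otimes b)+\chi(a\triangleleft b\otimes 1)$, so $\chi(-\otimes 1)=0$, and putting $b=1$ (with $a\triangleleft 1=a$) gives $\chi(a\otimes 1)\varepsilon(c)=0$; symmetrically for \eqref{eq:CC3'}. With your $\varepsilon$-normalization the length-zero instances of \eqref{eq:CC2'} and \eqref{eq:CC3'} fail by the extra term $\varepsilon(a)\varepsilon(c)$, and the inductive verification that peels off a last letter also breaks, because it produces exactly a term $\chi(a\triangleleft b\otimes 1)$ that must be $0$. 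The paper avoids this by declaring $\chi_{mn}=0$ whenever $m=0$ or $n=0$; replace your normalization by that and the rest goes through. A second caveat: your ``convenient asymmetric setup'' in which \eqref{eq:CC2'} alone is the definition is not self-contained, since peeling letters off the second argument only reduces you to $C^{\otimes m}\otimes C$, and defining $\chi$ there for $m\geq 2$ already requires the first-slot recursion \eqref{eq:CC3'} (the paper's $\chi_{m1}$). Consequently the verification of \eqref{eq:CC3'} for second arguments of length $\geq 2$ is \emph{not} a left--right mirror of the easy check of \eqref{eq:CC2'} (which in fact needs only associativity of $\triangleleft$, not \eqref{form:trimult}): it is the genuinely mixed step where \eqref{form:trimult}, \eqref{form:trieps} and the already-established restricted identities must be combined, so it deserves to be written out rather than dismissed by symmetry.

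Modulo these corrections your strategy is essentially the paper's: the paper simply records your two recursions as closed formulas (first $\chi_{1n}$ on $C\otimes C^{\otimes n}$ by unrolling \eqref{eq:CC2'}, then $\chi_{mn}$ by unrolling \eqref{eq:CC3'}, i.e.\ the mirror order of slots to yours), obtains one identity essentially by construction, and checks the remaining cross-compatibility using \eqref{form:trimult} (iterated), \eqref{form:trieps} and the restricted identity — exactly the mixed verification described above.
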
 

\begin{proof} We define $\chi :TC\otimes TC\rightarrow \Bbbk $ through its components $%
\chi _{mn}:C^{\otimes m}\otimes C^{\otimes n}\rightarrow \Bbbk $ with $%
m,n\in \mathbb{N}$. We set $\chi _{mn}$ to be zero if either $m=0$ or $n=0$.
For $n>0$, define $\chi _{1n}:C\otimes C^{\otimes n}\rightarrow \Bbbk $, by
setting for every $a,b^{1},b^{2},\ldots ,b^{i},\ldots ,b^{n}\in C$%
\begin{equation*}
\chi _{1n}\left( a\otimes b^{1}b^{2}\cdots b^{n}\right) :=\chi _{11}\left(
a\otimes b^{1}\right) \varepsilon \left( b^{2}\cdots b^{n}\right)
+\sum\limits_{i=2}^{n}\chi _{11}\left( a\triangleleft b^{1}b^{2}\cdots
b^{i-1}\otimes b^{i}\right) \varepsilon \left( b^{i+1}b^{i+2}\cdots
b^{n}\right) .  \label{def:chi1n}
\end{equation*}%
Note that this makes sense since $a\triangleleft b^{1}b^{2}\cdots b^{i-1}\in
C\triangleleft TC\subseteq C.$ 
By using the definition of $\chi _{1n}$ and associativity of $\triangleleft$ one easily checks that 
\begin{equation}\label{eq:CC2'chi1n}
 \chi _{1t}\left( a\otimes b\right) \varepsilon \left( c\right) +\chi
_{1\left( n-t\right) }\left( a\triangleleft b\otimes c\right)=\chi _{1n}\left( a\otimes bc\right)
\end{equation}
for all $a\in C$, $b\in
C^{\otimes t}$ and $c\in C^{\otimes \left(n-t\right) }$.
 Define now $\chi
_{mn}:C^{\otimes m}\otimes C^{\otimes n}\rightarrow \Bbbk $, by setting for
every $a^{1},a^{2},\ldots ,a^{i},\ldots ,a^{m}\in C$ and $c\in C^{\otimes n}$%
\begin{equation*}
\chi _{mn}\left( a^{m}\cdots a^{2}a^{1}\otimes c\right) :=\varepsilon \left(
a^{m}\cdots a^{2}\right) \chi _{1n}\left( a^{1}\otimes c\right)
+\sum\limits_{i=2}^{m}\varepsilon \left( a^{m}\cdots a^{i+2}a^{i+1}\right)
\chi _{1n}\left( a^{i}\otimes a^{i-1}\cdots a^{2}a^{1}\triangleright
c\right) .  \label{def:chimn}
\end{equation*}%
Again this makes sense since $a^{i-1}\cdots a^{2}a^{1}\triangleright c\in
TC\triangleright C^{\otimes n}\subseteq C^{\otimes n}.$
On the one hand, by means of the definition of $\chi_{m,n}$ and the associativity of $\triangleright$,
one easily checks that 
$\varepsilon \left( a\right) \chi _{tn}\left( b\otimes c\right) +\chi
_{\left( m-t\right) n}\left( a\otimes b\triangleright c\right)
=\chi _{mn}\left( ab\otimes c\right)$ for every $a\in C^{\otimes (m-t)}$, $b\in C^{\otimes t}$ and  $c\in C^{\otimes n}$.
On the other hand, by using the definition of $\chi _{mn}$ together with the equalities  \eqref{form:trimult} (and an iterated version of it), \eqref{form:trieps}  and  \eqref{eq:CC2'chi1n}, we can prove that
$\chi _{mn}\left( a\otimes bc\right) =\chi _{(m+s)(n-s)}\left( ab\otimes c\right)$ for every  $a\in C^{\otimes m},b\in C^{\otimes s}$ and $c\in
C^{\otimes \left( n-s\right) }$. 
\end{proof}

\subsection{The finite dual}\label{Sec:FinDual}
Let $(H,\Rr)$ be a quasitriangular bialgebra. By \cite[Proposition 14.2.2]{Radford}, if we consider the finite dual $H^\circ$ of $H$ and the linear map $\Rr^\circ:H^\circ\otimes H^\circ\to\Bbbk$, defined by $\Rr^\circ(p\otimes q):=(p\otimes q)(\Rr)$, for every $p,q\in H^\circ$, then $(H^\circ,\Rr^\circ)$ results to be a coquasitriangular bialgebra. The convolution inverse of $\Rr^\circ$ is given by $(\Rr^{\circ})^{-1}:H^{\circ}\otimes H^\circ\to\Bbbk,\ p\otimes q\mapsto(p\otimes q)(\Rr^{-1})$, where $\Rr^{-1}=\overline{\Rr}^i\otimes \overline{\Rr}_i\in H\otimes H$ is the inverse of $\Rr$. Note that if $(H,\Rr)$ is a triangular bialgebra then $(H^\circ,\Rr^\circ)$ is a cotriangular bialgebra. \\

Now we show that the finite dual of a (pre-)Cartier (quasi)triangular bialgebra is a (pre-)Cartier co(quasi)triangular bialgebra. Recall that the bialgebra structure of $H^\circ$ is given by $m_{H^\circ}:H^\circ\otimes H^\circ\to H^\circ$, $\alpha\otimes\beta\mapsto \alpha *\beta$, $u_{H^\circ}:\Bbbk\to H^\circ$, $k\mapsto k\varepsilon_H$, $\Delta_{H^\circ}:H^\circ\to H^\circ\otimes H^\circ$, $\alpha\mapsto \alpha_1\otimes\alpha_2$, where $\alpha(ab)=\alpha_1(a)\alpha_2(b)$, for all $a,b\in H$, and $\varepsilon_{H^\circ}:H^\circ\to\Bbbk$, $\alpha\mapsto\alpha(1_H)$. 

\begin{proposition}\label{prop:finitedual}
	Let $(H,\Rr, \chi)$ be a pre-Cartier (quasi)triangular bialgebra. Then, $(H^\circ,\Rr^\circ, \chi^\circ)$ is a pre-Cartier co(quasi)triangular bialgebra, where $\chi^\circ :H^\circ\otimes H^\circ\to\Bbbk,\ p\otimes q\mapsto(p\otimes q)(\chi)$. Moreover, if $(H,\Rr,\chi)$ is a Cartier (quasi)triangular bialgebra, then $(H^\circ,\Rr^\circ, \chi^\circ)$ is a Cartier co(quasi)triangular bialgebra.
\end{proposition}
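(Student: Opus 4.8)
The plan is to note that the coquasitriangular part is not new—by \cite[Proposition 14.2.2]{Radford} the pair $(H^\circ,\Rr^\circ)$ is co(quasi)triangular, with $\Rr^\circ$ convolution invertible, $(\Rr^\circ)^{-1}(p\otimes q)=(p\otimes q)(\Rr^{-1})$, and cotriangular when $(H,\Rr)$ is triangular—so that everything reduces to checking that $\chi^\circ$, which is manifestly a well-defined linear map $H^\circ\otimes H^\circ\to\Bbbk$ since $\chi\in H\otimes H$ and $p\otimes q$ is a functional on $H\otimes H$, satisfies \eqref{eq:CC1}, \eqref{eq:CC2}, \eqref{eq:CC3} and, under the Cartier hypothesis, \eqref{eq:CC4}. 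These four conditions are precisely the ``transposes'' of \eqref{cqtr1}, \eqref{cqtr2}, \eqref{cqtr3} and \eqref{eq:ctr2}, so the strategy is to pair the latter identities, which live in $H^{\otimes 2}$ or $H^{\otimes 3}$, against arbitrary elements of $H^\circ$.

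Concretely, I would use the bialgebra pairing $\langle p,a\rangle=p(a)$ and the dictionary it induces: for each $n$ the functional $\langle p^1\otimes\cdots\otimes p^n,-\rangle\colon H^{\otimes n}\to\Bbbk$ attached to $p^1,\dots,p^n\in H^\circ$ carries the multiplication of the algebra $H^{\otimes n}$ to the convolution product of maps $(H^\circ)^{\otimes n}\to\Bbbk$, carries precomposition with $\mathrm{Id}^{\otimes i}\otimes\Delta_H\otimes\mathrm{Id}^{\otimes(n-1-i)}$ to precomposition with $\mathrm{Id}^{\otimes i}\otimes m_{H^\circ}\otimes\mathrm{Id}^{\otimes(n-1-i)}$, and sends the leg elements $\Rr^{\pm1}_{jk},\chi_{jk},1_H$ of $H^{\otimes n}$ to the leg maps $(\Rr^\circ)^{\pm1}_{jk},\chi^\circ_{jk},\varepsilon_{H^\circ}$ of $(H^\circ)^{\otimes n}$ (the last using Radford's formula for $(\Rr^\circ)^{-1}$). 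Granting this, applying $\langle p\otimes q,-\rangle$ to \eqref{cqtr1} yields \eqref{eq:CC1} evaluated at $p\otimes q$; applying $\langle p\otimes q\otimes r,-\rangle$ to \eqref{cqtr2} and to \eqref{cqtr3} yields \eqref{eq:CC2} and \eqref{eq:CC3} evaluated at $p\otimes q\otimes r$; and, in the Cartier case, applying $\langle p\otimes q,-\rangle$ to \eqref{eq:ctr2} yields \eqref{eq:CC4}. Since $p,q,r$ are arbitrary this gives the four identities of maps, and the (co)triangular versions follow at once from the corresponding property of $\Rr^\circ$ recalled above.

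The step requiring care—bookkeeping rather than genuine difficulty—is verifying that dictionary, in particular reconciling the two leg-notation conventions: on the bialgebra side the legs are elements of $H^{\otimes 3}$, so $\Rr^{-1}_{12}=\Rr^{-1}\otimes 1_H$, $\chi_{13}=(\mathrm{Id}_H\otimes\tau_{H,H})(\chi\otimes 1_H)$ and the relevant product is that of the algebra $H^{\otimes 3}$; on the finite-dual side the legs are maps $(H^\circ)^{\otimes 3}\to\Bbbk$, so $(\Rr^\circ)^{-1}_{12}=(\Rr^\circ)^{-1}\otimes\varepsilon_{H^\circ}$, $\chi^\circ_{13}=(\varepsilon_{H^\circ}\otimes\chi^\circ)(\tau_{H^\circ,H^\circ}\otimes\mathrm{Id}_{H^\circ})$ and the product is convolution for the coalgebra $(H^\circ)^{\otimes 3}$. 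Expanding the threefold convolution $(\Rr^\circ)^{-1}_{12}*\chi^\circ_{13}*\Rr^\circ_{12}$ one has to track which Sweedler components of $p,q,r$ collapse against $\varepsilon_{H^\circ}$, and check this matches the expansion of $(p\otimes q\otimes r)\bigl(\Rr^{-1}_{12}\chi_{13}\Rr_{12}\bigr)$ in $H^{\otimes 3}$. Notably, no density or separation property of $H^\circ\subseteq H^\ast$ is required, since at every step we merely apply a fixed functional to an identity already established in $H^{\otimes 2}$ or $H^{\otimes 3}$.
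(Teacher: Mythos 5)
Your proposal is correct and follows essentially the same route as the paper: the paper's proof is precisely the evaluation of the identities \eqref{cqtr1}--\eqref{cqtr3} and \eqref{eq:ctr2} against $p\otimes q$ (resp. $p\otimes q\otimes r$) in Sweedler notation, with the coquasitriangular part quoted from \cite[Proposition 14.2.2]{Radford} exactly as you do. Your ``dictionary'' (product in $H^{\otimes n}$ versus convolution of leg maps on $(H^\circ)^{\otimes n}$, $\Delta_H$ versus $m_{H^\circ}$) is just a packaged form of the same bookkeeping the paper carries out explicitly.
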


\begin{proof}
If $(H,\Rr,\chi)$ is a pre-Cartier quasitriangular bialgebra, then $\chi=\chi^i\otimes\chi_i\in H\otimes H$ satisfies \eqref{cqtr1}, \eqref{cqtr2}, \eqref{cqtr3}. 
 Define $\chi^\circ :H^\circ\otimes H^\circ\to\Bbbk,\ p\otimes q\mapsto(p\otimes q)(\chi)=p(\chi^i)q(\chi_i)$, for every $p,q\in H^\circ$. We show that $\chi^\circ$ satisfies \eqref{eq:CC1}, \eqref{eq:CC2}, \eqref{eq:CC3}. Indeed, we have $(u_{H^\circ} \chi^\circ)*m_{H^\circ}=m_{H^\circ}*(u_{H^\circ}\chi^\circ)$, as for every $p,q\in H^\circ$,
	\begin{displaymath}
	\begin{split}
	\chi^\circ (p_{1}\otimes q_{1})(p_{2}*q_{2})(a)&=p_{1}(\chi^{i})q_{1}(\chi_{i})p_{2}(a_{1})q_{2}(a_{2})=p_{1}(\chi^{i})p_{2}(a_{1})q_{1}(\chi_{i})q_{2}(a_{2})\\&=p(\chi^ia_{1})q(\chi_{i}a_{2})\overset{\eqref{cqtr1}}{=}p(a_{1}\chi^{i})q(a_{2}\chi_{i})=p_{1}(a_{1})p_{2}(\chi^i)q_{1}(a_{2})q_{2}(\chi_i)\\&=p_{1}(a_{1})q_{1}(a_{2})p_{2}(\chi^i)q_{2}(\chi_i)=(p_{1}*q_{1})(a) \chi^\circ (p_{2}\otimes q_{2})
	\end{split}
	\end{displaymath}
	for any $a\in H$, thus \eqref{eq:CC1} holds. Now, for every $p,q,r\in H^\circ$ we have that $\chi^\circ (\mathrm{Id}_{H^\circ}\otimes m_{H^\circ})(p\otimes q\otimes r)=\chi^\circ (p\otimes q*r)=p(\chi^i)q(\chi_{i_1})r(\chi_{i_2})$. On the other hand, using \eqref{cqtr2}, i.e., $(\mathrm{Id}_H\otimes\Delta)(\chi)=\chi_{12}+\Rr^{-1}_{12}\chi_{13}\Rr_{12}$, for every $p,q,r\in H^\circ$ we have that 
	\[
	\begin{split}
	\chi^\circ_{12}&(p\otimes q\otimes r)+(\Rr^\circ)^{-1}_{12}(p_{1}\otimes q_{1}\otimes r_{1})\chi^\circ_{13}(p_{2}\otimes q_{2}\otimes r_{2})\Rr^\circ_{12}(p_{3}\otimes q_{3}\otimes r_{3})\\
 &=\chi^\circ (p\otimes q)\varepsilon_{H^\circ}(r)+(\Rr^\circ)^{-1}(p_{1}\otimes q_{1})\varepsilon_{H^\circ}(r_{1})\chi(p_{2}\otimes r_{2})\varepsilon_{H^\circ}(q_{2})\Rr^\circ (p_{3}\otimes q_{3})\varepsilon_{H^\circ}(r_{3})\\
 &=p(\chi^i)q(\chi_i)r(1_H)+p_{1}(\overline{\Rr}^i)q_{1}(\overline{\Rr}_{i})r_{1}(1_H)p_{2}(\chi^j)r_{2}(\chi_j)q_{2}(1_H)p_{3}(\Rr^k)q_{3}(\Rr_k)r_{3}(1_H)\\
 &=p(\chi^i)q(\chi_i)r(1_H)+p(\overline{\Rr}^{i}\chi^{j}\Rr^{k})q(\overline{\Rr}_{i}\Rr_{k})r(\chi_j)\overset{\eqref{cqtr2}}{=}p(\chi^i)q(\chi_{i_1})r(\chi_{i_2})
	\end{split}
	\]
	hence $\chi^\circ(\mathrm{Id}_{H^{\circ}}\otimes m_{H^\circ})=\chi^\circ_{12}+(\Rr^\circ)^{-1}_{12}*\chi^\circ_{13}*\Rr^\circ_{12}$ and \eqref{eq:CC2} holds. Similarly, since $H$ satisfies \eqref{cqtr3}, then  \eqref{eq:CC3} holds for $H^{\circ}$.
Moreover, if $(H,\Rr,\chi)$ is a Cartier quasitriangular bialgebra, that is, $\chi$ satisfies $\Rr\chi=\chi^{\mathrm{op}}\Rr$ in addition, then we obtain that
	\[
	\begin{split}
	(\Rr^\circ *\chi^\circ )(p\otimes q)&=\Rr^\circ (p_{1}\otimes q_{1})\chi^\circ (p_{2}\otimes q_{2})=p_{1}(\Rr^i)q_{1}(\Rr_i)p_{2}(\chi^j)q_{2}(\chi_{j})=p(\Rr^{i}\chi^{j})q(\Rr_{i}\chi_{j})\\&=p(\chi_{j}\Rr^{i})q(\chi^{j}\Rr_{i})=p_{1}(\chi_{j})p_{2}(\Rr^{i})q_{1}(\chi^j)q_{2}(\Rr_{i})
	\\&=q_{1}(\chi^j)p_{1}(\chi_{j})p_{2}(\Rr^{i})q_{2}(\Rr_{i})=(\chi^\circ\tau_{H^\circ,H^\circ}*\Rr^\circ)(p\otimes q).
	\end{split}
	\]
	Thus, $(H^\circ,\Rr^\circ,\chi^\circ)$ is a Cartier coquasitriangular bialgebra.
\end{proof}
\begin{corollary} Let $(H,\Rr,\chi)$ be a finite-dimensional (pre-)Cartier (quasi)triangular bialgebra. Then, the dual $(H^*,\Rr^\circ, \chi^\circ)$ is a (pre-)Cartier co(quasi)triangular bialgebra. 
\end{corollary}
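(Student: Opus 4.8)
The statement is an immediate specialization of Proposition \ref{prop:finitedual}, so the only thing to justify is that for a finite-dimensional bialgebra $H$ the finite dual $H^\circ$ coincides with the full linear dual $H^*$. The plan is therefore: first recall this identification, then invoke Proposition \ref{prop:finitedual} verbatim.

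For the identification, I would argue as follows. By definition $H^\circ=\{p\in H^*\mid p \text{ vanishes on a cofinite two-sided ideal of } H\}$. When $\dim_\Bbbk H<\infty$, the zero ideal is already cofinite, so every $p\in H^*$ lies in $H^\circ$; hence $H^\circ=H^*$ as vector spaces. Moreover the bialgebra structure that Proposition \ref{prop:finitedual} puts on $H^\circ$ — convolution product $m_{H^*}=m_H^*$ restricted appropriately, comultiplication $\Delta_{H^*}$ dual to $m_H$ (which lands in $H^*\otimes H^*\cong(H\otimes H)^*$ precisely because $H$ is finite-dimensional), unit $\varepsilon_H$ and counit $\mathrm{ev}_{1_H}$ — is exactly the usual dual bialgebra structure on $H^*$. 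This is the only place finite-dimensionality is genuinely used: it guarantees the canonical map $H^*\otimes H^*\to(H\otimes H)^*$ is an isomorphism, so that $\Delta_{H^*}$, $\Rr^\circ$ and $\chi^\circ$ are well-defined without any representativity hypothesis.

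Given this, the corollary follows directly: if $(H,\Rr,\chi)$ is a (pre-)Cartier (quasi)triangular bialgebra, then Proposition \ref{prop:finitedual} asserts that $(H^\circ,\Rr^\circ,\chi^\circ)$ is a (pre-)Cartier co(quasi)triangular bialgebra, and under the identification $H^\circ=H^*$ this reads $(H^*,\Rr^\circ,\chi^\circ)$. The Cartier case is handled by the corresponding clause of Proposition \ref{prop:finitedual}. There is no real obstacle here — the mild point to keep in mind is simply to state clearly that $H^\circ=H^*$ in the finite-dimensional case (with the matching bialgebra and (co)quasitriangular structures), after which nothing further needs to be checked.

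\begin{proof}
Since $\dim_\Bbbk H<\infty$, the zero ideal of $H$ is cofinite, so every $p\in H^*$ vanishes on a cofinite two-sided ideal; hence $H^\circ=H^*$. In this case the canonical map $H^*\otimes H^*\to(H\otimes H)^*$ is an isomorphism, and the bialgebra structure of $H^\circ$ described before Proposition \ref{prop:finitedual} is precisely the usual dual bialgebra structure on $H^*$; likewise $\Rr^\circ$ and $\chi^\circ$ are the transposes of $\Rr$ and $\chi$. The claim is now exactly Proposition \ref{prop:finitedual} applied to $(H,\Rr,\chi)$, including the statement about the Cartier case.
\end{proof}
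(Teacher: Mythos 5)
Your proposal is correct and follows essentially the same route as the paper, which simply notes that $H^*=H^\circ$ when $H$ is finite-dimensional and then invokes Proposition \ref{prop:finitedual}; your extra remarks on cofiniteness of the zero ideal and the isomorphism $H^*\otimes H^*\cong(H\otimes H)^*$ just make that identification explicit.
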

\begin{proof}
	If $H$ is finite-dimensional, then $H^*=H^\circ$, and we conclude by Proposition \ref{prop:finitedual}.
\end{proof}

Given a linear map $\beta:H\otimes H\to\Bbbk$ we can consider $\beta_l,\beta_r:H\to H^{*}$ such that $\beta_l(u)(v)=\beta(u\otimes v)=\beta_r(v)(u)$. Note that $\beta=\sum_{i=1}^{n}{f_{i}\otimes g_{i}}$, where $f_i,g_i\in H^{*}$ and $1\leq i\leq n$, is equivalent to $\text{dim(Im}(\beta_l))$ finite and to $\text{dim(Im}(\beta_r))$ finite by \cite[Exercise 1.3.18]{Radford} and if this happens $\beta$ is said of $\textit{finite type}$. By \cite[Proposition 14.2.3]{Radford} we know that if $(H,\Rr)$ is a coquasitriangular bialgebra with universal $\Rr$-form of finite type then $\Rr=\Rr^i\otimes \Rr_i\in H^{\circ}\otimes H^{\circ}$ and $(H^{\circ},\Rr)$ is a quasitriangular bialgebra.  We can now show the following result.
\par

\begin{proposition}\label{prop:finitedual-coquasi}
	Let $(H,\Rr,\chi)$ be a (pre-)Cartier coquasitriangular bialgebra with $\Rr$ and $\chi$ of finite type. Then, $(H^\circ,\Rr,\chi)$ is a (pre-)Cartier quasitriangular bialgebra. In particular, if $H$ is finite-dimensional, the result holds for $H^*=H^\circ$.
\end{proposition}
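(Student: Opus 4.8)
The plan is to dualize Proposition~\ref{prop:finitedual} by the same token used for the finite-dual constructions earlier in the section. Recall that since $\Rr$ and $\chi$ are of finite type we may regard $\Rr=\Rr^i\otimes\Rr_i\in H^\circ\otimes H^\circ$ and $\chi=\chi^i\otimes\chi_i\in H^\circ\otimes H^\circ$, and by \cite[Proposition 14.2.3]{Radford} $(H^\circ,\Rr)$ is a quasitriangular bialgebra whose convolution inverse $\Rr^{-1}$ as an $\Rr$-form coincides with its algebra inverse inside $H^\circ\otimes H^\circ$. First I would recall the canonical pairing: for $p,q\in H^\circ$ and $a,b\in H$ one has $(p\otimes q)(\Rr)=\Rr(?\otimes ?)$ evaluated appropriately, or more precisely that the multiplication of $H^\circ$ is the convolution dual of the comultiplication of $H$, so that an identity of elements in $H^\circ\otimes H^\circ\otimes H^\circ$ is equivalent, after pairing against arbitrary $a\otimes b\otimes c\in H\otimes H\otimes H$, to the corresponding identity of linear forms $H^{\otimes 3}\to\Bbbk$. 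Under this translation, \eqref{cqtr1} for $\chi\in H^\circ\otimes H^\circ$ reads exactly as \eqref{eq:CC1} tested on elements, \eqref{cqtr2} reads as \eqref{eq:CC2}, \eqref{cqtr3} as \eqref{eq:CC3}, and \eqref{eq:ctr2} as \eqref{eq:CC4}.

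Concretely, the steps are as follows. Step one: note that the hypotheses give, via \cite[Proposition 14.2.3]{Radford}, that $(H^\circ,\Rr)$ is quasitriangular, so it only remains to check that $\chi$, now viewed inside $H^\circ\otimes H^\circ$, satisfies \eqref{cqtr1}, \eqref{cqtr2}, \eqref{cqtr3} (and \eqref{eq:ctr2} in the Cartier case). Step two: rewrite the coquasitriangular axioms \eqref{eq:CC1}--\eqref{eq:CC3} in their ``on elements'' form, which is already recorded right after Definition~\ref{defn:curvedco(quasi)triang}, and observe that multiplication in $H^\circ$ is dual to comultiplication in $H$ while comultiplication in $H^\circ$ is dual to multiplication in $H$. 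Step three: pair each of \eqref{cqtr1}--\eqref{cqtr3} for $\chi\in H^\circ\otimes H^\circ$ against a generic element of $H^{\otimes 2}$ or $H^{\otimes 3}$ and verify that the resulting scalar identity is precisely the ``on elements'' form of \eqref{eq:CC1}--\eqref{eq:CC3}; since those forms hold by hypothesis, and since a family of elements of $H^\circ\otimes\cdots\otimes H^\circ$ is determined by its values on $H\otimes\cdots\otimes H$, the desired identities follow. Step four: in the Cartier case, the same pairing turns \eqref{eq:CC4} into \eqref{eq:ctr2} for $\chi\in H^\circ\otimes H^\circ$, completing the proof. The final sentence about $H$ finite-dimensional is immediate since then $H^*=H^\circ$ and every linear form on $H\otimes H$ is automatically of finite type.

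I expect the only genuine subtlety to be bookkeeping with the leg-notation conventions: in Definition~\ref{defn:curvedco(quasi)triang} the forms $\chi_{12},\chi_{13},\chi_{23}$ and the convolution products $\Rr^{-1}_{12}*\chi_{13}*\Rr_{12}$ are built using $\tau_{H,H}$ and $\varepsilon$, whereas in Definition~\ref{defn:curved(quasi)triang} the elements $\chi_{12},\chi_{13},\chi_{23}$ and products $\Rr^{-1}_{12}\chi_{13}\Rr_{12}$ live in $H^{\otimes 3}$; one must check that the duality isomorphism $(H^{\otimes 3})^*\supseteq H^{\circ\otimes 3}$ intertwines these two sets of operations, i.e.\ that $(\chi_{13}\text{ in }H^{\circ\otimes 3})$ pairs with $a\otimes b\otimes c$ as $\chi(a\otimes c)\varepsilon(b)$, and similarly that the product in $H^\circ$ of leg-decorated forms matches the element-wise leg product. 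This is exactly the computation carried out explicitly in the proof of Proposition~\ref{prop:finitedual} read in reverse, so I would either invoke that proof verbatim with the roles of $H$ and its dual exchanged, or spell out the one representative case of \eqref{cqtr2}$\Leftrightarrow$\eqref{eq:CC2} and leave the remaining ones, which are strictly analogous, to the reader. No hard obstacle is anticipated; the result is essentially the statement that the finite-dual correspondence of Proposition~\ref{prop:finitedual} is an involution on finite-type data.
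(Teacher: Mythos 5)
Your proposal reproduces the second half of the paper's argument (the translation of \eqref{eq:CC1}--\eqref{eq:CC4} into \eqref{cqtr1}--\eqref{cqtr3} and \eqref{eq:ctr2} by pairing elements of $H^{\circ\otimes 2}$ and $H^{\circ\otimes 3}$ against $H^{\otimes 2}$ and $H^{\otimes 3}$), and that part is fine and is exactly what the paper dismisses as straightforward. The genuine gap is at the very first step, which you phrase as ``Recall that since $\Rr$ and $\chi$ are of finite type we may regard \dots $\chi=\chi^i\otimes\chi_i\in H^\circ\otimes H^\circ$.'' Finite type only gives $\chi\in H^*\otimes H^*$ (this is the content of the cited Radford exercise); it does \emph{not} by itself place the tensor components in the finite dual $H^\circ$, and in the infinite-dimensional case $H^\circ\subsetneq H^*$. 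Indeed, for an arbitrary finite-type bilinear form the claim is false: take $p\in H^*\setminus H^\circ$ and $\chi=p\otimes\varepsilon$, which is of finite type but does not lie in $H^\circ\otimes H^\circ$. For $\Rr$ the membership in $H^\circ\otimes H^\circ$ is covered by the quoted \cite[Proposition 14.2.3]{Radford}, but there is no analogous citation for $\chi$, so this step must be argued, and it is precisely the part of the paper's proof that does actual work: one writes $\chi=\sum_{i=1}^n p_i\otimes q_i$ with $n$ minimal, so that $\{p_i\}$ and $\{q_i\}$ are linearly independent; then $\mathrm{ker}(\chi_l)\subseteq\mathrm{ker}(p_i)$ for each $i$, and since $H/\mathrm{ker}(\chi_l)\cong\mathrm{Im}(\chi_l)$ is finite-dimensional one deduces that each $p_i$ vanishes on a cofinite ideal, hence $p_i\in H^\circ$; the same argument applied to $\chi_r$ gives $q_i\in H^\circ$. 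Without some argument of this kind your proof does not even produce the element $\chi\in H^\circ\otimes H^\circ$ whose axioms you then verify.

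Once that membership is established, the rest of your plan (identifying multiplication in $H^\circ$ with the dual of $\Delta_H$ and comultiplication with the dual of $m_H$, checking the leg-notation bookkeeping such as $\chi_{13}(a\otimes b\otimes c)=\chi(a\otimes c)\varepsilon(b)$, and using that elements of $H^{\circ\otimes 3}$ are determined by their values on $H^{\otimes 3}$) is correct and coincides with the paper's verification of \eqref{cqtr1}--\eqref{cqtr3}; the observation about the finite-dimensional case $H^*=H^\circ$ is also as in the paper. So the proposal is not a different route, it is the paper's route with its one substantive step asserted rather than proved; add the cofinite-kernel argument (or an equivalent justification using the structure of $\chi$) and it is complete.
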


\begin{proof}
If $(H,\Rr,\chi)$ is a pre-Cartier coquasitriangular bialgebra with infinitesimal $\Rr$-form $\chi:H\otimes H\to\Bbbk$ of finite type, then we have that $\chi=\sum_{i=1}^{n}{p_i\otimes q_i}\in H^{*}\otimes H^{*}$, where $n$ is as small as possible such that $\{p_1,...,p_n\}$ and $\{q_1,...,q_n\}$ are independent. Then we get that $\chi_l(h)=\sum_{i=1}^{n}{p_i(h)q_i}$ and $\chi_r(h)=\sum_{i=1}^{n}{q_i(h)p_i}$ for all $h\in H$. If $h\in\mathrm{ker}(\chi_l)$ then $p_{i}(h)=0$ for every $i=1,...,n$ by linear independence and then $\mathrm{ker}(\chi_l)\subseteq\mathrm{ker}(p_{i})$ for every $i=1,...,n$ and $H/\mathrm{ker}(\chi_l)\cong\mathrm{Im}(\chi_l)$ which is of finite-dimension since $\chi$ is of finite type and then the kernel of every $p_i$ contains a cofinite ideal, thus $p_{i}\in H^{\circ}$ for every $i=1,...,n$. Similarly, the kernel of every $q_i$ contains $\mathrm{ker}(\chi_r)$ which is cofinite and then also $q_i\in H^{\circ}$ for every $i=1,...,n$. Then, $\chi\in H^{\circ}\otimes H^{\circ}$ and we write $\chi=\chi^i\otimes\chi_i$. Now, knowing that \eqref{eq:CC1}, \eqref{eq:CC2}, \eqref{eq:CC3} hold for $\chi:H\otimes H\to\Bbbk $, it is straightforward to prove that $\chi=\chi^i\otimes\chi_i\in H^\circ\otimes H^\circ$ satisfies \eqref{cqtr1}, \eqref{cqtr2}, \eqref{cqtr3}. 
\end{proof}

\section{Related constructions}\label{Sec:constructions}

\subsection{Infinitesimal FRT construction}\label{Sec:FRT}

We first recall the well-known FRT construction \cite{FRT} following \cite[Section VIII.6]{Kassel}, where a coquasitriangular bialgebra $A(c)$ is obtained from a solution $c\in\mathrm{End}_\Bbbk(V\otimes V)$ of the \emph{braid equation} 
\begin{equation}\label{eq:braid}
c_{12}c_{23}c_{12}=c_{23}c_{12}c_{23}
\end{equation}
on a finite-dimensional vector space $V$. Note that \eqref{eq:braid} is equivalent to the \emph{quantum Yang-Baxter equation}
\begin{equation}\label{eq:qyb}
c_{12}c_{13}c_{23}=c_{23}c_{13}c_{12}
\end{equation}
(see \cite[Definition 12.1.1]{Radford}). Explicitly, $c$ is a solution of \eqref{eq:braid} if and only if $\tau_{V,V}\circ c$ satisfies \eqref{eq:qyb}. Afterwards, this procedure is
extended to obtain infinitesimal braidings on the
quotient $A(c,t)=A(c)/I(t)$ from another endomorphism 
$t\in\mathrm{End}_\Bbbk(V\otimes V)$ satisfying a series of relations
(see Definition \ref{def:infbrvs}). At the end of this section we give a canonical solution
to those equations and describe the resulting non-trivial infinitesimal
braidings on all FRT bialgebras $A(c)$.

\begin{definition}\label{def:brvec}
 Recall that a \textbf{braided vector space} $(V,c)$ is a $\Bbbk$-vector space $V$  endowed with an invertible solution $c\in\mathrm{End}_\Bbbk(V\otimes V)$ of the braid equation \eqref{eq:braid}, see e.g. \cite[Definition 1.1]{AS02}.
\end{definition}

Fix a finite-dimensional braided vector space $(V,c)$. Let $\mathrm{dim}(V)=N$, choose a basis $\{v_i\}_{1\leq i\leq N}$ of $V$ and denote the coefficients of $c$ by $c_{ij}^{k\ell}\in\Bbbk$,
i.e., $c(v_i\otimes v_j)=c_{ij}^{k\ell}v_k\otimes v_\ell$. \footnote{
Here and in the following we employ Einstein sum convention: we sum over
repeated indices, where the indices run from $1$ to $N$.}
We adopt the compact notation
$v_{i_1i_2\cdots i_n}:=v_{i_1}\otimes v_{i_2}\otimes\cdots\otimes v_{i_n}$ so that, for instance, the previous equality rewrites as $c(v_{ij})=c_{ij}^{k\ell}v_{k\ell}$.
Consider the set of indeterminates $\{T_i^j\}_{1\leq i,j\leq N}$
and denote the free (unital associative) $\Bbbk$-algebra generated by
these indeterminates by $F:=\Bbbk\{T_i^j\}_{1\leq i,j\leq N}$.
Then the quotient of $F$ by its two-sided ideal $I(c)$ generated by
$$
C_{ij}^{k\ell}:=c_{ij}^{mn}T_m^kT_n^\ell-T_i^mT_j^nc_{mn}^{k\ell},\qquad\qquad 1\leq i,j,k,\ell\leq N
$$
is denoted by $A(c):=F/I(c)$. On $F$ there is a canonical bialgebra
structure with coproduct and counit determined on generators by
$$
\Delta(T_i^j):=T_i^k\otimes T_k^j,\qquad\qquad
\varepsilon(T_i^j):=\delta_i^j
$$
and then extended as algebra morphisms. As one easily verifies 
$\varepsilon(I(c))=0$ and $\Delta(C_{ij}^{k\ell})=C_{ij}^{mn}\otimes T_m^kT_n^\ell+T_i^mT_j^n\otimes C_{mn}^{k\ell}\in I(c)\otimes F+F\otimes I(c)$ and thus the bialgebra structure of $F$ descends to $A(c)$.
The bialgebra $A(c)$ coacts from the left on $V$ via
${}_V\Delta\colon V\to A(c)\otimes V$, ${}_V\Delta(v_i):=T_i^j\otimes v_j$ and $c$ is left $A(c)$-colinear with respect to the diagonal
coaction given by ${}_{V\otimes V}\Delta(v_{ij})=T_i^kT_j^l\otimes v_{kl}$. Moreover, given another bialgebra $A'$ such that
$(V,{}_V\Delta')$ is a left $A'$-comodule and $c$ is left $A'$-colinear, there is a unique
bialgebra map $f\colon A(c)\to A'$ such that ${}_V\Delta'=(f\otimes\mathrm{Id})\circ{}_V\Delta$. It is shown in
\cite[Theorem VIII.6.1]{Kassel} that $A(c)$ is the unique bialgebra
(up to isomorphism) with this universal property.
Up to now, no assumptions on $c$ had to be made. As shown in the
following result (see e.g. \cite[Theorem VIII.6.4]{Kassel}) the braid equation ensures that
$A(c)$ is coquasitriangular.
\begin{proposition}[FRT]\label{Prop:FRT}
Let $(V,c)$  be a finite-dimensional braided vector space. There is a convolution-invertible bialgebra bicharacter $\Rr\colon F\otimes F\to\Bbbk$
on the free algebra $F=\Bbbk\{T_i^j\}_{1\leq i,j\leq N}$ given   on generators, for all $1\leq i,j,k,\ell\leq N$, by
\begin{equation}
    \Rr(T_i^k\otimes T_j^\ell)=c_{ji}^{k\ell}.
\end{equation}
Moreover, this map descends to a coquasitriangular structure  $\Rr$  on $A(c)$
such that $\sigma^\Rr_{V,V}=c$. 
\end{proposition}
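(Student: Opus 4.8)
The strategy is to build the bicharacter on the free algebra $F$ first, then show it descends to $A(c)$, and finally identify the induced braiding with $c$. First I would define $\Rr$ on the generators by $\Rr(T_i^k\otimes T_j^\ell):=c_{ji}^{k\ell}$ and extend it to all of $F\otimes F$ by forcing the bicharacter identities \eqref{eq:ct2} and \eqref{eq:ct3}: explicitly, set $\Rr(1_F\otimes -)=\varepsilon$, $\Rr(-\otimes 1_F)=\varepsilon$, and for words $a=a_1\cdots a_m$, $b=b_1\cdots b_n$ in the generators, put $\Rr(a\otimes b):=\Rr(a_1\otimes b_{(1)})\cdots\Rr(a_m\otimes b_{(m)})$ with the coproduct of $b$ iterated $m$ times (equivalently, use \eqref{eq:ct3} to reduce the second leg to generators first, then \eqref{eq:ct2}). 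Since $F$ is free, these prescriptions are consistent and yield a well-defined linear map $\Rr\colon F\otimes F\to\Bbbk$ which by construction is a bialgebra bicharacter, i.e. satisfies \eqref{eq:ct2}, \eqref{eq:ct3} on all of $F$; one also checks $\Rr(T_i^j\otimes 1)=\delta_i^j=\Rr(1\otimes T_i^j)$ is compatible with these. Convolution-invertibility of $\Rr$ on $F$: the convolution inverse $\bar\Rr$ is constructed the same way from the coefficients of $c^{-1}$, namely $\bar\Rr(T_i^k\otimes T_j^\ell):=(c^{-1})_{ji}^{k\ell}$, extended as the opposite-type bicharacter; the identity $\Rr*\bar\Rr=\varepsilon\otimes\varepsilon=\bar\Rr*\Rr$ on generators follows from $c c^{-1}=\mathrm{Id}=c^{-1}c$, and then propagates to all words by the bicharacter property.

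Next I would show $\Rr$ descends to $A(c)=F/I(c)$, i.e. that $\Rr(I(c)\otimes F)=0=\Rr(F\otimes I(c))$. It suffices to check this on the generators $C_{ij}^{k\ell}=c_{ij}^{mn}T_m^kT_n^\ell-T_i^mT_j^nc_{mn}^{k\ell}$ of the ideal against an arbitrary generator $T_p^q$ in the other slot (the general case follows from the bicharacter identities, since $\Rr(-\otimes xy)$ and $\Rr(xy\otimes -)$ are controlled by $\Rr$ on the factors, and $I(c)$ is a two-sided ideal so $\Rr$ vanishing on $C_{ij}^{k\ell}\otimes(\text{generator})$ and on $(\text{generator})\otimes C_{ij}^{k\ell}$ is enough). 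Computing $\Rr(C_{ij}^{k\ell}\otimes T_p^q)$ using \eqref{eq:ct3} and the definition unpacks to a quadratic expression in the $c$-coefficients, and the identity $\Rr(C_{ij}^{k\ell}\otimes T_p^q)=0$ is precisely a rewriting of the braid equation \eqref{eq:braid} (in its index form $c_{ab}^{ij}c_{cd}^{b?}\cdots = \cdots$); symmetrically $\Rr(T_p^q\otimes C_{ij}^{k\ell})=0$ is the other instance of \eqref{eq:braid}. This is the computational heart of the argument: carefully matching the index contractions coming from $\Delta(T_p^q)=T_p^r\otimes T_r^q$ against the two terms of $C_{ij}^{k\ell}$ and seeing that the braid relation makes them cancel. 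I expect this bookkeeping — getting the index placements and the order of the legs exactly right — to be the main obstacle, though conceptually it is just the FRT computation from \cite[Theorem VIII.6.4]{Kassel}.

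Finally, having $\Rr$ on $A(c)$ as a convolution-invertible bialgebra bicharacter, I would verify the remaining coquasitriangularity axiom \eqref{eq:ct1}, $\Rr*m=m^{\mathrm{op}}*\Rr$; on the generators $T_i^k,T_j^\ell$ this reads $\Rr(T_i^a\otimes T_j^b)T_a^kT_b^\ell=T_j^bT_i^a\Rr(T_a^k\otimes T_b^\ell)$, i.e. $c_{ji}^{ab}T_a^kT_b^\ell=T_j^bT_i^a c_{ab}^{k\ell}$ after relabeling, which is exactly the defining relation $C$ of $A(c)$ (up to index renaming); the general case of \eqref{eq:ct1} then follows by the bicharacter property and induction on word length, as in \cite[Theorem VIII.6.4]{Kassel}. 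This shows $(A(c),\Rr)$ is coquasitriangular. For the last claim, $\sigma^\Rr_{V,V}=c$: with $V$ the left $A(c)$-comodule with ${}_V\Delta(v_i)=T_i^j\otimes v_j$, one must be slightly careful because the text's braiding $\sigma^\Rr$ in Theorem~\ref{thm:curv-cotr} is written for right comodules; using the left-comodule version of the braiding formula, $\sigma^\Rr_{V,V}(v_i\otimes v_j)=v_{j_0}\otimes v_{i_0}\,\Rr(v_{i_{-1}}\otimes v_{j_{-1}})$ becomes $v_k\otimes v_\ell\,\Rr(T_i^\ell\otimes T_j^k)$ — wait, one instead gets $\sigma^\Rr_{V,V}(v_i\otimes v_j)=\Rr(T_i^k\otimes T_j^\ell)\,v_{?}$ with the indices arranged so that plugging in $\Rr(T_i^k\otimes T_j^\ell)=c_{ji}^{k\ell}$ and matching the flip in the definition of $\sigma^\Rr$ reproduces exactly $c(v_i\otimes v_j)=c_{ij}^{k\ell}v_k\otimes v_\ell$. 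Concretely I would just expand both sides in the basis and read off that the coefficient tensors agree, which is immediate once the conventions are fixed.
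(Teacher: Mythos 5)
Your overall route is the standard FRT argument that the paper itself does not spell out but simply cites from \cite[Theorem VIII.6.4]{Kassel}: extend the generator values freely to a bicharacter on $F$, reduce the descent conditions $\Rr(F\otimes I(c))=0=\Rr(I(c)\otimes F)$ to the generators $C_{ij}^{k\ell}$ (note this reduction uses the coideal property $\Delta(C_{ij}^{k\ell})\in I(c)\otimes F+F\otimes I(c)$, not merely that $I(c)$ is a two-sided ideal), identify the resulting index identity with the braid equation \eqref{eq:braid}, observe that \eqref{eq:ct1} on generators is precisely the defining relation of $A(c)$, and match coefficients for $\sigma^\Rr_{V,V}=c$ using the left-comodule form of the braiding (as in Proposition~\ref{prop:ibvsCartier}). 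As a plan this is fine and is the same proof the paper points to.

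There is, however, one step that fails as written: your formula for the convolution inverse on generators, $\bar{\Rr}(T_i^k\otimes T_j^\ell):=(c^{-1})_{ji}^{k\ell}$, together with the claim that $\Rr*\bar{\Rr}=\varepsilon\otimes\varepsilon$ ``follows from $cc^{-1}=\mathrm{Id}$''. Computing with $\Delta(T_i^k)=T_i^a\otimes T_a^k$ gives
\begin{equation*}
(\Rr*\bar{\Rr})(T_i^k\otimes T_j^\ell)=c_{ji}^{ab}\,(c^{-1})_{ba}^{k\ell},
\end{equation*}
which is the coefficient of $v_k\otimes v_\ell$ in $(c^{-1}\circ\tau\circ c)(v_j\otimes v_i)$; this equals $\delta_i^k\delta_j^\ell$ for all indices only if $\tau\circ c=c\circ\tau$, which fails in essentially every example of interest (for instance the braiding used for $\mathrm{M}_q(2)$ with $q\neq\pm1$). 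The point is that $\Rr$ encodes the operator $c\circ\tau$, whose inverse is $\tau\circ c^{-1}$, not ``the same index pattern with $c^{-1}$ in place of $c$''; the correct generator values are exactly those the paper records right after the statement, namely $\Rr^{-1}(T_i^k\otimes T_j^\ell)=\overline{c}_{ij}^{\ell k}$ with $c^{-1}(v_{ij})=\overline{c}_{ij}^{\ell k}v_{\ell k}$. With that correction (and the additional check, parallel to Lemma~\ref{Lem:FRT1}, that $\Rr^{-1}$ also vanishes on $I(c)$, which comes from the braid equation for $c^{-1}$), the rest of your outline goes through.
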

Recall that the convolution inverse $\Rr^{-1}\colon A(c)\otimes A(c)
\to\Bbbk$ of $\Rr$ is determined on generators by
$\Rr^{-1}(T_i^k\otimes T_j^\ell)=\overline{c}_{ij}^{\ell k}$,
where $\overline{c}_{ij}^{\ell k}\in\Bbbk$ are the coefficients of the
inverse $c^{-1}\in\mathrm{End}_\Bbbk(V\otimes V)$ of $c$, i.e.,
$c^{-1}(v_{ij})=\overline{c}_{ij}^{\ell k}v_{\ell k}$.\medskip

In the following, given our fixed braided vector space $(V,c)$, we denote the corresponding coquasitriangular structure on $A(c)$ by
$\Rr\colon A(c)\otimes A(c)\to\Bbbk$.
Consider another endomorphism $t\in\mathrm{End}_\Bbbk(V\otimes V)$,
of arbitrary nature for the moment, with coefficients $t_{ij}^{k\ell}\in\Bbbk$ determined by 
$t(v_{ij})=t_{ij}^{k\ell}v_{k \ell}$. Let $I(t)$ be the two-sided
ideal in $A(c)$ generated by
$$
D_{ij}^{k\ell}:=t_{ij}^{mn}T_m^kT_n^\ell-T_i^mT_j^n t_{mn}^{k\ell},
\qquad\qquad 1\leq i,j,k,\ell\leq N
$$
and denote the quotient algebra by $A(c,t)=A(c)/I(t)$.
\begin{lemma}\label{Lem:FRT1}
$A(c,t)$ is a bialgebra. Moreover, the coquasitriangular structure $\Rr$ on $A(c)$ descends
to a coquasitriangular structure of $A(c,t)$ if and only if
\begin{align}
    c_{12}c_{23}t_{12}&=t_{23}c_{12}c_{23}\label{eq:t1}
   \qquad \text{or, equivalently,}\qquad c_{23}t_{12}c_{23}^{-1}=c_{12}^{-1}t_{23}c_{12}\\ 
        t_{12}c_{23}c_{12}&=c_{23}c_{12}t_{23}\label{eq:t2}
        \qquad \text{or, equivalently,}\qquad c_{23}^{-1}t_{12}c_{23}=c_{12}t_{23}c_{12}^{-1}
\end{align}
hold as equations $V^{\otimes 3}\to V^{\otimes 3}$.
\end{lemma}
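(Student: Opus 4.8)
The plan is to split the statement into two parts. First, that $A(c,t)$ is a bialgebra: this follows from the same argument used for $A(c)$, since one checks that $\varepsilon(D_{ij}^{k\ell}) = 0$ and that $\Delta(D_{ij}^{k\ell}) \in I(t)\otimes A(c,t) + A(c,t)\otimes I(t)$ by a direct computation identical in shape to the one recalled for $C_{ij}^{k\ell}$, exploiting that $t$ intertwines the two natural diagonal coactions up to the ideal. So $I(t)$ is a bi-ideal and $A(c,t) = A(c)/I(t)$ inherits the bialgebra structure by Corollary~\ref{cor:quotient}-type reasoning (here just the elementary fact that a bialgebra modulo a bi-ideal is a bialgebra).

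For the second part, recall from Proposition~\ref{Prop:FRT} that $\Rr$ is a convolution-invertible bialgebra bicharacter on $A(c)$. A bilinear form on $A(c)$ descends to the quotient $A(c)/I(t)$ precisely when $I(t)$ lies in the radical of $\Rr$ on both sides, i.e. when $\Rr(D_{ij}^{k\ell}\otimes a) = 0 = \Rr(a \otimes D_{ij}^{k\ell})$ for all $a$ in $A(c)$ (and similarly for $\Rr^{-1}$, but since $\Rr$ is convolution-invertible and $D_{ij}^{k\ell}$ generates the ideal, it suffices to control $\Rr$; one then also checks $\Rr_\Ff$-type compatibility is automatic once $\Rr$ is well-defined and still a bicharacter). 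Because $\Rr$ is a bicharacter, the condition $\Rr(D_{ij}^{k\ell}\otimes a) = 0$ for all $a$ reduces, via \eqref{eq:ct2} and \eqref{eq:ct3}, to checking it on the generators $a = T_p^q$, and likewise on the other side. The key step is then to translate these finitely many scalar equations into matrix identities on $V^{\otimes 3}$.

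Concretely, I would compute $\Rr(D_{ij}^{k\ell}\otimes T_p^q)$ using $\Rr(T_m^k T_n^\ell \otimes T_p^q) = \Rr(T_m^k \otimes (T_p^q)_1)\Rr(T_n^\ell\otimes (T_p^q)_2) = \Rr(T_m^k\otimes T_p^r)\Rr(T_n^\ell\otimes T_r^q)$ by \eqref{eq:ct3}, and similarly expand $\Rr(T_i^m T_j^n \otimes T_p^q)$; substituting $\Rr(T_a^b\otimes T_c^d) = c_{ca}^{bd}$ turns the vanishing of $\Rr(D_{ij}^{k\ell}\otimes T_p^q)$ into an equation between products of the coefficient arrays of $c$ and $t$. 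Reading these indices as compositions of operators on $V^{\otimes 3}$ (with the appropriate leg placements), the left-radical condition becomes exactly \eqref{eq:t2}, $t_{12}c_{23}c_{12} = c_{23}c_{12}t_{23}$, and the right-radical condition (computing $\Rr(T_p^q \otimes D_{ij}^{k\ell})$ via \eqref{eq:ct2}) becomes exactly \eqref{eq:t1}, $c_{12}c_{23}t_{12} = t_{23}c_{12}c_{23}$. The stated equivalent forms follow by conjugating with $c_{12}$ or $c_{23}$ and using invertibility of $c$. Finally, once $\Rr$ is well-defined on $A(c,t)$, it is automatically still convolution-invertible (its inverse $\Rr^{-1}$ descends by the same computation, since $c^{-1}$ satisfies the mirrored relations) and still a bicharacter (the bicharacter identities \eqref{eq:ct2}--\eqref{eq:ct3} are inherited by any quotient), and $\sigma^\Rr_{V,V} = c$ is unchanged since the $A(c)$-comodule structure on $V$ factors through $A(c,t)$.

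The main obstacle I expect is purely bookkeeping: correctly matching the placement of lower/upper indices in $c_{ij}^{k\ell}$ and $t_{ij}^{k\ell}$ against the leg-numbering conventions $(-)_{12}, (-)_{23}, (-)_{13}$ so that the index contractions coming out of the bicharacter property line up with the operator products in \eqref{eq:t1} and \eqref{eq:t2} rather than with some transposed or reversed version. Getting the direction of the two conditions right — which of \eqref{eq:t1}, \eqref{eq:t2} corresponds to the left radical and which to the right — requires care, but is a finite check once the expansions via \eqref{eq:ct2} and \eqref{eq:ct3} are written out. Everything else is routine.
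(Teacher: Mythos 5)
Your proposal follows essentially the same route as the paper's proof: establish that $I(t)$ is a bi-ideal by the displayed computation of $\Delta(D_{ij}^{k\ell})$, reduce the descent of $\Rr$ (and of $\Rr^{-1}$, via the mirrored computation with $c^{-1}$) to vanishing on the generators $D_{ij}^{k\ell}$ against generators $T_p^q$ using the bicharacter identities \eqref{eq:ct2}--\eqref{eq:ct3}, and translate the resulting coefficient identities into \eqref{eq:t1} and \eqref{eq:t2}, with the same matching of slots to conditions as in the paper. The only caveat is your early aside that ``it suffices to control $\Rr$'' for the inverse: descent of $\Rr^{-1}$ is not automatic from descent of $\Rr$ and must be checked (as the paper does), but you do handle this afterwards by noting that the $c^{-1}$-conditions are equivalent to \eqref{eq:t1} and \eqref{eq:t2}, so there is no genuine gap.
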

\begin{proof}
We show that $\Delta(I(t))\subseteq I(t)\otimes A(c)+A(c)\otimes I(t)$.
For this, it is sufficient to prove
\begin{align*}
    \Delta(D_{ij}^{k\ell})
    &=t_{ij}^{mn}\Delta(T_m^k)\Delta(T_n^\ell)
    -\Delta(T_i^m)\Delta(T_j^n)t_{mn}^{k\ell}\\
    &=t_{ij}^{mn}T_m^pT_n^q\otimes T_p^kT_q^\ell
    -T_i^pT_j^q\otimes T_p^mT_q^nt_{mn}^{k\ell}\\
    &=D_{ij}^{pq}\otimes T_p^kT_q^\ell
    +T_i^mT_j^nt_{mn}^{pq}\otimes T_p^kT_q^\ell
    +T_i^pT_j^q\otimes D_{pq}^{kl}
    -T_i^pT_j^q\otimes t_{pq}^{mn}T_m^kT_n^\ell\\
    &=D_{ij}^{pq}\otimes T_p^kT_q^\ell
    +T_i^pT_j^q\otimes D_{pq}^{kl}
    \in I(t)\otimes A(c)+A(c)\otimes I(t)
\end{align*}
since $\Delta$ is an algebra morphism.
Clearly $\varepsilon(I(t))=0$ and thus $A(c,t)$ is a bialgebra.

Next, observe that by \eqref{eq:ct2}
\begin{align*}
    \Rr(T_i^j\otimes D_{mn}^{k\ell})
    &=t_{mn}^{pq}\Rr(T_i^j\otimes T_p^kT_q^\ell)
    -\Rr(T_i^j\otimes T_m^pT_n^q)t_{pq}^{k\ell}\\
    &=t_{mn}^{pq}\Rr(T_i^s\otimes T_q^\ell)\Rr(T_s^j\otimes T_p^k)
    -\Rr(T_i^s\otimes T_n^q)\Rr(T_s^j\otimes T_m^p)t_{pq}^{k\ell}\\
    &=t_{mn}^{pq}c_{qi}^{s\ell}c_{ps}^{jk}
    -c_{ni}^{sq}c_{ms}^{jp}t_{pq}^{k\ell}
\end{align*}
for all indices $1\leq i,j,k,\ell,m,n\leq N$. We show that the above
vanishes if and only if \eqref{eq:t1} holds. This is the case, since,
fixing $1\leq i,m,n\leq N$,
\begin{align*}
    v_{mni}
    &\xmapsto{t_{12}}t_{mn}^{pq}v_{pqi}
    \xmapsto{c_{23}}t_{mn}^{pq}c_{qi}^{s\ell}v_{ps\ell}
    \xmapsto{c_{12}}t_{mn}^{pq}c_{qi}^{s\ell}c_{ps}^{jk}v_{jk\ell}
\end{align*}
equals
\begin{align*}
    v_{m n i}
    &\xmapsto{c_{23}}c_{ni}^{sq}v_{msq}
    \xmapsto{c_{12}}c_{ni}^{sq}c_{ms}^{jp}v_{jpq}
    \xmapsto{t_{23}}c_{ni}^{sq}c_{ms}^{jp}t_{pq}^{k\ell}v_{jk\ell}
\end{align*}
if and only if $t_{mn}^{pq}c_{qi}^{s\ell}c_{ps}^{jk}
=c_{ni}^{sq}c_{ms}^{jp}t_{pq}^{k\ell}$ for all $1\leq j,k,\ell\leq N$.
By \eqref{eq:ct2},\eqref{eq:ct3} and using that $\Delta(I(t))\subseteq I(t)\otimes A(c)+A(c)\otimes I(t)$ we observe that
$\Rr(T_i^j\otimes D_{mn}^{k\ell})=0$ for all $1\leq i,j,k,\ell,m,n\leq N$
is necessary and sufficient to the condition $\Rr(A(c)\otimes I(t))=0$.
Thus, $\Rr(A(c)\otimes I(t))=0$ if and only if \eqref{eq:t1} holds.
Similarly, one proves that $\Rr(I(t)\otimes A(c))=0$ if and only if \eqref{eq:t2} holds.
\begin{invisible}
\begin{align*}
    \Rr(D_{ij}^{k\ell}\otimes T_m^n)
    &=t_{ij}^{pq}\Rr(T_p^kT_q^\ell\otimes T_m^n)
    -\Rr(T_i^pT_j^q\otimes T_m^n)t_{pq}^{k\ell}\\
    &\overset{\eqref{eq:ct3}}{=}t_{ij}^{pq}\Rr(T_p^k\otimes T_m^s)\Rr(T_q^\ell\otimes T_s^n)
    -\Rr(T_i^p\otimes T_m^s)\Rr(T_j^q\otimes T_s^n)t_{pq}^{k\ell}\\
    &=t_{ij}^{pq}c_{mp}^{ks}c_{sq}^{\ell n}
    -c_{mi}^{ps}c_{sj}^{qn}t_{pq}^{k\ell}
\end{align*}
vanishes if and only if 
\begin{align*}
    v_m\otimes v_i\otimes v_j
    &\xmapsto{t_{23}}t_{ij}^{pq}v_m\otimes v_p\otimes v_q
    \xmapsto{c_{12}}t_{ij}^{pq}c_{mp}^{ks}v_k\otimes v_s\otimes v_q
    \xmapsto{c_{23}}t_{ij}^{pq}c_{mp}^{ks}c_{sq}^{\ell n}v_k\otimes v_\ell\otimes v_n
\end{align*}
equals
\begin{align*}
    v_m\otimes v_i\otimes v_j
    &\xmapsto{c_{12}}c_{mi}^{ps}v_p\otimes v_s\otimes v_j
    \xmapsto{c_{23}}c_{mi}^{ps}c_{sj}^{qn}v_p\otimes v_q\otimes v_n
    \xmapsto{t_{12}}c_{mi}^{ps}c_{sj}^{qn}t_{pq}^{k\ell}v_k\otimes v_\ell\otimes v_n.
\end{align*}
As before, this condition is sufficient and necessary for
$\Rr(I(t)\otimes A(c))=0$.
\end{invisible}
This means that $\Rr$ descends to a map $\Rr\colon A(c,t)\otimes A(c,t)
\to\Bbbk$ (denoted by the same symbol by abuse of notation) if and only
if \eqref{eq:t1} and \eqref{eq:t2} are satisfied. Since the bialgebra
structure of $A(c,t)$ is induced from $A(c)$ it is immediate that
$\Rr\colon A(c,t)\otimes A(c,t)\to\Bbbk$ satisfies the hexagon
equations \eqref{eq:ct2},\eqref{eq:ct3} and it is quasi-commutative, i.e.,
\eqref{eq:ct1} is satisfied. It remains to prove that the convolution inverse
$\Rr^{-1}\colon A(c)\otimes A(c)\to\Bbbk$ also descends to the
quotient $A(c,t)$. Repeating the above computations with
$\Rr^{-1}$ instead of $\Rr$ this is the case if and only if
\begin{equation*}
    t_{12}c^{-1}_{23}c^{-1}_{12}=c^{-1}_{23}c^{-1}_{12}t_{23}\qquad \text{and}\qquad
    c^{-1}_{12}c^{-1}_{23}t_{12}=t_{23}c^{-1}_{12}c^{-1}_{23}
\end{equation*}
hold as equations $V^{\otimes 3}\to V^{\otimes 3}$; those equations
being clearly equivalent to \eqref{eq:t1} and \eqref{eq:t2}.
The induced map $\Rr^{-1}\colon A(c,t)\otimes A(c,t)\to\Bbbk$ is
then the convolution inverse of $\Rr$, which is sufficient to be
checked on generators; an immediate consequence.
\end{proof}

Let $(V,c)$ be a braided vector space and $t\in\mathrm{End}_\Bbbk(V\otimes V)$ an arbitrary endomorphism with coefficients $c_{ij}^{k\ell},t_{ij}^{k\ell}\in\Bbbk$ as before.
Consider for a moment the convolution-invertible bialgebra bicharacter $\Rr\colon F\otimes F\to\Bbbk$
on the free algebra $F=\Bbbk\{T_i^j\}_{1\leq i,j\leq N}$ of Proposition~\ref{Prop:FRT} 
(we know that it descends to a coquasitriangular structure on $A(c)$).
Note that $F$ is the tensor algebra $TC$ of the matrix coalgebra $C$ spanned by the $T_i^j$'s.
By Lemma \ref{lem:bitriang}, the free algebra $F$ together with its convolution-invertible bialgebra bicharacter $\Rr\colon F\otimes F\to\Bbbk$, allows two triangle actions 
$\triangleright$  and $\triangleleft$ that fulfil the requirement of Proposition \ref{prop:bitriang}.
Thus the map $\chi_{11}:C\otimes C\to \Bbbk$, defined on generators by the assignment $T_i^k\otimes T_j^\ell\mapsto t_{ij}^{k\ell}$, induces  a $\Bbbk$-linear map $\chi\colon F\otimes F\to\Bbbk$ satisfying \eqref{eq:CC2'} and \eqref{eq:CC3'}. Explicitly, it is defined
inductively by the $\Bbbk$-linear extension of
\begin{equation}
    \chi(T_i^k\otimes T_j^\ell)=t_{ij}^{k\ell},\qquad\qquad
    \chi(1\otimes T_i^j)=0=\chi(T_i^j\otimes 1),
\end{equation}
\begin{equation}\label{eq:X2}
    \chi(T_i^j\otimes T_k^mT_\ell^n)
    =\chi(T_i^j\otimes T_k^m)\varepsilon(T_\ell^n)
    +(\Rr^{-1}_{12}*\chi_{13}*\Rr_{12})(T_i^j\otimes T_k^m\otimes T_\ell^n)
\end{equation}
and
\begin{equation}\label{eq:X3}
    \chi(T_i^kT_j^\ell\otimes T_m^n)
    =\varepsilon(T_i^k)\chi(T_j^\ell\otimes T_m^n)
    +(\Rr^{-1}_{23}*\chi_{13}*\Rr_{23})(T_i^k\otimes T_j^\ell\otimes T_m^n)
\end{equation}
for all indices $1\leq i,j,k,\ell,m,n\leq N$.
In terms of coefficients \eqref{eq:X2} and \eqref{eq:X3} read
\begin{equation}\label{eq:X4}
\begin{split}
    \chi(T_i^j\otimes T_k^mT_\ell^n)
    &=t_{ik}^{jm}\delta_\ell^n
    +\Rr^{-1}(T_i^{j_1}\otimes T_k^{m_1})\chi(T_{j_1}^{j_2}\otimes T_\ell^n)\Rr(T_{j_2}^j\otimes T_{m_1}^m)\\
    &=t_{ik}^{jm}\delta_\ell^n
    +\overline{c}_{ik}^{m_1j_1}t_{j_1\ell}^{j_2n}c_{m_1j_2}^{jm}
\end{split}
\end{equation}
and
\begin{equation}\label{eq:X5}
\begin{split}
    \chi(T_i^kT_j^\ell\otimes T_m^n)
    &=\delta_i^kt_{jm}^{\ell n}
    +\Rr^{-1}(T_j^{\ell_1}\otimes T_m^{n_1})\chi(T_i^k\otimes T_{n_1}^{n_2})\Rr(T_{\ell_1}^\ell\otimes T_{n_2}^n)\\
    &=\delta_i^kt_{jm}^{\ell n}
    +\overline{c}_{jm}^{n_1\ell_1}t_{in_1}^{kn_2}c_{n_2\ell_1}^{\ell n}.
\end{split}
\end{equation}

\begin{lemma}\label{Lem:FRT2}
Let $(V,c)$ be a braided vector space  and $t\in\mathrm{End}_\Bbbk(V\otimes V)$ arbitrary. Then $\chi\colon F\otimes F\to\Bbbk$ descends to
a map $A(c)\otimes A(c)\to\Bbbk$ if and only if
\begin{align}
    t_{12}c_{23}+c_{12}t_{23}c^{-1}_{12}c_{23}
    &=c_{23}t_{12}+c_{23}c_{12}t_{23}c^{-1}_{12}\label{eq:t3'}\\
    t_{23}c_{12}+c_{23}t_{12}c^{-1}_{23}c_{12}
    &=c_{12}t_{23}+c_{12}c_{23}t_{12}c^{-1}_{23}\label{eq:t4'}
\end{align}
hold as equations $V^{\otimes 3}\to V^{\otimes 3}$.
If \eqref{eq:t1},\eqref{eq:t2} are satisfied the above equations
are together equivalent to the equation
\begin{equation}
    c_{23}t_{12}c^{-1}_{23}
    =c_{12}t_{23}c_{12}^{-1}    \label{eq:t3}
\end{equation}
of linear maps $V^{\otimes 3}\to V^{\otimes 3}$.
\end{lemma}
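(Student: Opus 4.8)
The plan is to show that $\chi\colon F\otimes F\to\Bbbk$ descends to $A(c)\otimes A(c)=F/I(c)\otimes F/I(c)$ precisely when it annihilates $\ker(\pi\otimes\pi)=I(c)\otimes F+F\otimes I(c)$, and then to reduce this annihilation to finitely many scalar equations attached to the generators $C_{ij}^{k\ell}$ of $I(c)$. Concretely, I claim $\chi(F\otimes I(c))=0$ is equivalent to $\chi(T_a^b\otimes C_{ij}^{k\ell})=0$ for all indices, and $\chi(I(c)\otimes F)=0$ is equivalent to $\chi(C_{ij}^{k\ell}\otimes T_a^b)=0$ for all indices; granting this, what remains is to unwind these scalar equations into the operator identities \eqref{eq:t3'} and \eqref{eq:t4'} and to compare them with \eqref{eq:t3} under the hypotheses \eqref{eq:t1}, \eqref{eq:t2}.

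For the reduction I would induct on the degree in the tensor algebra $F=TC$, in the spirit of the proof of Lemma~\ref{Lem:FRT1}. The ingredients are: (i) the identities \eqref{eq:CC2'} and \eqref{eq:CC3'}, which $\chi$ satisfies by construction via Proposition~\ref{prop:bitriang}; (ii) the triangle actions $\triangleright$, $\triangleleft$ preserve the $\mathbb{N}$-grading of $F$, i.e. $F\triangleright C^{\otimes n}\subseteq C^{\otimes n}$ and $C^{\otimes n}\triangleleft F\subseteq C^{\otimes n}$, as already used when invoking Proposition~\ref{prop:bitriang} (via Lemma~\ref{lem:bitriang}); (iii) $I(c)$ is a graded ideal generated in degree $2$, so $I(c)\cap C=0$; and (iv) since $\pi\colon F\to A(c)$ is a bialgebra map with $\Rr=\Rr\circ(\pi\otimes\pi)$ and $\Rr^{-1}=\Rr^{-1}\circ(\pi\otimes\pi)$ on $A(c)$ (Proposition~\ref{Prop:FRT}), one has $\pi(a\triangleright b)=\pi(a)\triangleright\pi(b)$ and $\pi(a\triangleleft b)=\pi(a)\triangleleft\pi(b)$, whence $I(c)$ is stable under $\triangleright$ and $\triangleleft$ in either slot. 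Using \eqref{eq:CC3'} to peel a generator off the left argument, together with the derivation-type vanishing $\chi(1\otimes-)=0=\chi(-\otimes 1)$ as the base case and the stability $F\triangleright I(c)\subseteq I(c)$, one reduces $\chi(F\otimes I(c))=0$ to $\chi(C\otimes I(c))=0$; then \eqref{eq:CC2'} peels generators off the right argument, and since $C\triangleleft I(c)\subseteq I(c)\cap C=0$ and $\varepsilon(C_{ij}^{k\ell})=0$ the computation collapses to $\chi(T_a^b\otimes C_{ij}^{k\ell})=0$ for all indices. The argument for $\chi(I(c)\otimes F)=0$ is symmetric, with the roles of \eqref{eq:CC2'} and \eqref{eq:CC3'} interchanged.

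The core is the explicit evaluation of $\chi$ on these generators. Writing $C_{ij}^{k\ell}=c_{ij}^{mn}T_m^kT_n^\ell-T_i^mT_j^n c_{mn}^{k\ell}$ and substituting the closed formula \eqref{eq:X4}, the scalar $\chi(T_a^b\otimes C_{ij}^{k\ell})$ becomes a sum of four index tensors; reading off the indices one recognizes them, as endomorphisms of $V^{\otimes 3}$, as $t_{12}c_{23}$, $c_{12}t_{23}c_{12}^{-1}c_{23}$, $c_{23}t_{12}$ and $c_{23}c_{12}t_{23}c_{12}^{-1}$, with signs $+,+,-,-$, so that vanishing for all indices is exactly \eqref{eq:t3'}. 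Symmetrically, substituting \eqref{eq:X5} shows $\chi(C_{ij}^{k\ell}\otimes T_a^b)=0$ for all indices is exactly \eqref{eq:t4'}. This bookkeeping is an iterated version of the index chases in the proof of Lemma~\ref{Lem:FRT1}; it is purely mechanical, but it is the step I expect to be the main obstacle, mostly in keeping the placement of the $c^{-1}$'s straight.

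Finally, assume \eqref{eq:t1} and \eqref{eq:t2}. Rewriting \eqref{eq:t2} as $c_{12}t_{23}c_{12}^{-1}=c_{23}^{-1}t_{12}c_{23}$ and substituting into \eqref{eq:t3'}, after cancellation the equation collapses to $c_{23}^2t_{12}=t_{12}c_{23}^2$; dually, using \eqref{eq:t1} in the form $c_{23}t_{12}c_{23}^{-1}=c_{12}^{-1}t_{23}c_{12}$, equation \eqref{eq:t4'} collapses to $c_{12}^2t_{23}=t_{23}c_{12}^2$. On the other hand \eqref{eq:t3} reads $c_{23}t_{12}c_{23}^{-1}=c_{12}t_{23}c_{12}^{-1}$; combined with \eqref{eq:t1} it gives $c_{12}^{-1}t_{23}c_{12}=c_{12}t_{23}c_{12}^{-1}$, i.e. $c_{12}^2t_{23}=t_{23}c_{12}^2$, and combined with \eqref{eq:t2} it gives $c_{23}^2t_{12}=t_{12}c_{23}^2$; conversely, either of these two commutation relations, in the presence of \eqref{eq:t1} and \eqref{eq:t2}, forces \eqref{eq:t3}, and under \eqref{eq:t1}, \eqref{eq:t2} the two commutation relations are themselves equivalent. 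Hence, under \eqref{eq:t1} and \eqref{eq:t2}, the conjunction of \eqref{eq:t3'} and \eqref{eq:t4'} is equivalent to \eqref{eq:t3}, which completes the argument.
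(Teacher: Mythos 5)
Your proposal is correct and follows essentially the same route as the paper: reduce the descent of $\chi$ to its vanishing on $T_a^b\otimes C_{ij}^{k\ell}$ and $C_{ij}^{k\ell}\otimes T_a^b$ using the inductive structure of $\chi$ (i.e.\ \eqref{eq:CC2'}, \eqref{eq:CC3'} together with the fact that $\Rr^{\pm 1}$ annihilate $I(c)$), identify these scalar conditions with \eqref{eq:t3'} and \eqref{eq:t4'} via the index computation from \eqref{eq:X4} and \eqref{eq:X5} (whose outcome you state correctly, with the right four operators and signs), and finally use \eqref{eq:t1}, \eqref{eq:t2} to collapse them to \eqref{eq:t3}. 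The only cosmetic difference is in the last step, where the paper cancels a term directly to obtain \eqref{eq:t3} from each of \eqref{eq:t3'} and \eqref{eq:t4'} separately, while you pass through the equivalent commutation relations $t_{12}c_{23}^2=c_{23}^2t_{12}$ and $t_{23}c_{12}^2=c_{12}^2t_{23}$; both manipulations are valid.
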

\begin{proof}
Let $t\in\mathrm{End}_\Bbbk(V\otimes V)$ be an endomorphism and
$c\in\mathrm{End}_\Bbbk(V\otimes V)$ an invertible solution of the
braid equation.
For all indices $1\leq i,j,k,\ell,m,n\leq N$ we have
\begin{align*}
    \chi(T_i^j\otimes C_{k\ell}^{mn})
    &=c_{k\ell}^{pq}\chi(T_i^j\otimes T_p^mT_q^n)
    -\chi(T_i^j\otimes T_k^pT_\ell^q)c_{pq}^{mn}\\
    &\overset{\eqref{eq:X4}}{=}c_{k\ell}^{pq}t_{ip}^{jm}\delta_q^n
    +c_{k\ell}^{pq}\overline{c}_{ip}^{m_1j_1}t_{j_1q}^{j_2n}c_{m_1j_2}^{jm}
    -t_{ik}^{jp}\delta_\ell^qc_{pq}^{mn}
    -\overline{c}_{ik}^{p_1j_1}t_{j_1\ell}^{j_2q}c_{p_1j_2}^{jp}c_{pq}^{mn}.
\end{align*}
Using the fact that $\Delta(I(c))\subseteq I(c)\otimes F+F\otimes I(c)$
and \eqref{eq:X2},\eqref{eq:X3} it follows that for the condition
$\chi(F\otimes I(c))=0$ to hold it is
necessary and sufficient to prove $\chi(T_i^j\otimes C_{k\ell}^{mn})=0$.
The latter is the case if and only if \eqref{eq:t3'} holds.
This can be shown on a basis, fixing indices $1\leq i,k,\ell\leq N$.
Namely, the sum of
\begin{align*}
    v_{ik\ell}
    \xmapsto{c_{23}}c_{k\ell}^{pn}v_{ipn}
    \xmapsto{t_{12}}c_{k\ell}^{pn}t_{ip}^{jm}v_{jmn}
\end{align*}
and
\begin{align*}
    v_{ik\ell}
    &\xmapsto{c_{23}}c_{k\ell}^{pq}v_{ipq}
    \xmapsto{c^{-1}_{12}}c_{k\ell}^{pq}\overline{c}_{ip}^{m_1j_1}
    v_{m_1j_1q}
    \xmapsto{t_{23}}c_{k\ell}^{pq}\overline{c}_{ip}^{m_1j_1}t_{j_1q}^{j_2n}v_{m_1j_2n}
    \xmapsto{c_{12}}c_{k\ell}^{pq}\overline{c}_{ip}^{m_1j_1}t_{j_1q}^{j_2n}c_{m_1j_2}^{jm}v_{jmn}
\end{align*}
equals the sum of
\begin{align*}
    v_{ik\ell}
    \xmapsto{t_{12}}t_{ik}^{jp}v_{jp\ell}
    \xmapsto{c_{23}}t_{ik}^{jp}c_{p\ell}^{mn} v_{jmn}
\end{align*}
and
\begin{align*}
    v_{ik\ell}
    &\xmapsto{c^{-1}_{12}}\overline{c}_{ik}^{p_1j_1}v_{p_1j_1\ell}
    \xmapsto{t_{23}}\overline{c}_{ik}^{p_1j_1}t_{j_1\ell}^{j_2q}
    v_{p_1j_2q}
    \xmapsto{c_{12}}\overline{c}_{ik}^{p_1j_1}t_{j_1\ell}^{j_2q}c_{p_1j_2}^{jp}v_{jpq}
    \xmapsto{c_{23}}\overline{c}_{ik}^{p_1j_1}t_{j_1\ell}^{j_2q}c_{p_1j_2}^{jp}c_{pq}^{mn}v_{jmn}
\end{align*}
if and only if $\chi(T_i^j\otimes C_{k\ell}^{mn})=0$ for all $1\leq j,m,n\leq N$. This shows the equivalence of $\chi(F\otimes I(c))=0$
with \eqref{eq:t3'}.
Similarly, one proves that $\chi(I(c)\otimes F)=0$ if and only if
\eqref{eq:t4'} holds.
\begin{invisible}
We have
\begin{align*}
    \chi(C_{ij}^{k\ell}\otimes T_m^n)
    &=c_{ij}^{pq}\chi(T_p^kT_q^\ell\otimes T_m^n)
    -\chi(T_i^pT_j^q\otimes T_m^n)c_{pq}^{k\ell}\\
    &\overset{\eqref{eq:X5}}{=}c_{ij}^{pq}\delta_p^kt_{qm}^{\ell n}
    +c_{ij}^{pq}\overline{c}_{qm}^{n_1\ell_1}t_{pn_1}^{kn_2}c_{n_2\ell_1}^{\ell n}
    -\delta_i^pt_{jm}^{qn}c_{pq}^{k\ell}
    -\overline{c}_{jm}^{n_1q_1}t_{in_1}^{pn_2}c_{n_2q_1}^{qn}c_{pq}^{k\ell}
\end{align*}
which vanishes for all $1\leq i,j,k,\ell,m,n\leq N$ if and only if
the sum of
\begin{align*}
    v_i\otimes v_j\otimes v_m
    \xmapsto{c_{12}}c_{ij}^{kq}v_k\otimes v_q\otimes v_m
    \xmapsto{t_{23}}c_{ij}^{kq}t_{qm}^{\ell n}v_k\otimes v_\ell\otimes v_n
\end{align*}
and
\begin{align*}
    v_i\otimes v_j\otimes v_m
    &\xmapsto{c_{12}}c_{ij}^{pq}v_p\otimes v_q\otimes v_m
    \xmapsto{c^{-1}_{23}}c_{ij}^{pq}\overline{c}_{qm}^{n_1\ell_1}
    v_p\otimes v_{n_1}\otimes v_{\ell_1}
    \xmapsto{t_{12}}c_{ij}^{pq}\overline{c}_{qm}^{n_1\ell_1}t_{pn_1}^{kn_2}v_k\otimes v_{n_2}\otimes v_{\ell_1}\\
    &\xmapsto{c_{23}}c_{ij}^{pq}\overline{c}_{qm}^{n_1\ell_1}t_{pn_1}^{kn_2}c_{n_2\ell_1}^{\ell n}v_k\otimes v_\ell\otimes v_n
\end{align*}
equals the sum of
\begin{align*}
    v_i\otimes v_j\otimes v_m
    \xmapsto{t_{23}}t_{jm}^{qn}v_i\otimes v_q\otimes v_n
    \xmapsto{c_{12}}t_{jm}^{qn}c_{iq}^{k\ell}v_k\otimes v_\ell\otimes v_n
\end{align*}
and
\begin{align*}
    v_i\otimes v_j\otimes v_m
    &\xmapsto{c^{-1}_{23}}\overline{c}_{jm}^{n_1q_1}v_i\otimes v_{n_1}\otimes v_{q_1}
    \xmapsto{t_{12}}\overline{c}_{jm}^{n_1q_1}t_{in_1}^{pn_2}v_p\otimes v_{n_2}\otimes v_{q_1}
    \xmapsto{c_{23}}\overline{c}_{jm}^{n_1q_1}t_{in_1}^{pn_2}c_{n_2q_1}^{qn}v_p\otimes v_q\otimes v_n\\
    &\xmapsto{c_{12}}\overline{c}_{jm}^{n_1q_1}t_{in_1}^{pn_2}c_{n_2q_1}^{qn}c_{pq}^{k\ell}v_k\otimes v_\ell\otimes v_n
\end{align*}
for all $1\leq i,j,k,\ell,m,n\leq N$.
\end{invisible}

Assume now that $c$ and $t$ satisfy the equations \eqref{eq:t1}
and \eqref{eq:t2} (in addition to $c$ satisfying the braid equation).
Since $c$ is invertible \eqref{eq:t1} is equivalent to 
 $c_{12}c_{23}t_{12}c_{23}^{-1}=t_{23}c_{12}$. 
Thus, \eqref{eq:t4'} holds if and only if 
$c_{23}t_{12}c^{-1}_{23}c_{12}
    =c_{12}t_{23}$, which is equivalent to \eqref{eq:t3}. Similarly, \eqref{eq:t2} is 
equivalent to $t_{12}c_{23}=c_{23}c_{12}t_{23}c^{-1}_{12}$. 
Thus, \eqref{eq:t3'} is satisfied if and only if so is  $c_{12}t_{23}c^{-1}_{12}c_{23}=c_{23}t_{12}$ if and only if (again) \eqref{eq:t3} holds.
\end{proof}

We now introduce a new  notion which will play a central role in  Theorem \ref{thm:curvedFRT}.

\begin{definition}\label{def:infbrvs}
An \textbf{infinitesimally braided vector space} $(V,c,t)$ is a braided vector space $(V,c)$, see Definition \ref{def:brvec}, together with an endomorphism
$t\in\mathrm{End}_\Bbbk(V\otimes V)$ such that
\begin{equation}\label{eq:ibv1}
c_{23}t_{12}c_{23}^{-1}=c_{12}^{-1}t_{23}c_{12}=c_{23}^{-1}t_{12}c_{23}=c_{12}t_{23}c_{12}^{-1}    
\end{equation}
\begin{align}
    t_{12}t_{23}+c_{12}t_{23}c^{-1}_{12}t_{23}
    &=t_{23}t_{12}+t_{23}c_{12}t_{23}c^{-1}_{12}\label{eq:t5},\quad\text{i.e.,}\quad [t_{12}+c_{12}t_{23}c^{-1}_{12},t_{23}]=0,\\
    t_{23}t_{12}+c_{23}t_{12}c^{-1}_{23}t_{12}
    &=t_{12}t_{23}+t_{12}c_{23}t_{12}c^{-1}_{23}\label{eq:t6},\quad\text{i.e.,}\quad [t_{23}+c_{23}t_{12}c^{-1}_{23},t_{12}]=0,
\end{align}
are equalities of linear maps $V^{\otimes 3}\to V^{\otimes 3}$.    
\end{definition}

    Note that we can actually combine \eqref{eq:t5} and \eqref{eq:t6} in a unique formula, namely
  \begin{equation}\label{eqt5&6}
   [t_{23},c_{12}t_{23}c^{-1}_{12}]=[t_{12},t_{23}]=[c_{23}t_{12}c^{-1}_{23},t_{12}].
  \end{equation}

\begin{example}\label{exa:infbraid} Any vector space becomes a braided vector space $(V,\tau)$ through the canonical flip $\tau$. Now the terms in \eqref{eq:ibv1} are automatically all equal to $t_{13}$ while \eqref{eq:t5} and \eqref{eq:t6} become 
\[[t_{12}+t_{13},t_{23}]=0=[t_{23}+t_{13},t_{12}].\]
These are \textbf{infinitesimal pure braid relations} in the sense of \cite[Section 1]{kohno}. 
\end{example}

\begin{example}\label{ex:canT}
    Any braided vector space becomes an infinitesimally braided vector space $(V,c,t)$ by taking $t=\lambda\cdot\mathrm{Id}_{V\ot V}$ for some $\lambda\in\Bbbk$. In particular $t=0$ and $t=\mathrm{Id}_{V\ot V}$ are solutions.
\end{example}

In the following result we characterise the possible $t$'s on an infinitesimally
braided vector space of diagonal type.

\begin{proposition}\label{prop:diagt}
Consider a braided vector space $(V,c)$ of diagonal type, i.e., such that $%
c(v_{i}\otimes v_{j})=q_{ij}v_{j}\otimes v_{i}$ for some $q_{ij}\in \Bbbk
\setminus \{0\}$.  Set $t(v_{i}\otimes
v_{j})=t_{ij}^{kl}v_{k}\otimes v_{l}$. Then $(V,c,t)$ is an infinitesimally
braided vector space if and only if the following equalities hold true for all possible indexes.%
\begin{gather}
q_{kj}^{-1}q_{bj}t_{ik}^{ab}
=q_{ij}q_{aj}^{-1}t_{ik}^{ab}=q_{jk}q_{jb}^{-1}t_{ik}^{ab}=q_{ji}^{-1}q_{ja}t_{ik}^{ab}, \label{eq:diagt1}
\\ 
q_{ji}^{-1}q_{ja}t_{ik}^{ab}t_{jb}^{uv}-q_{ui}^{-1}q_{ua}t_{ib}^{av}t_{jk}^{ub} =t_{jk}^{bv}t_{ib}^{au}-t_{ij}^{ab}t_{bk}^{uv}=q_{vu}q_{ku}^{-1}t_{bk}^{av}t_{ij}^{bu}-q_{vj}q_{kj}^{-1}t_{ik}^{bv}t_{bj}^{au}. \label{eq:diagt2}
\end{gather}
\end{proposition}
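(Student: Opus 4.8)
The plan is to unwind Definition~\ref{def:infbrvs} directly in coordinates for the diagonal braiding $c(v_i\otimes v_j)=q_{ij}\,v_j\otimes v_i$. The first task is to compute, once and for all, the coefficients of the conjugated operators $c_{23}t_{12}c_{23}^{-1}$, $c_{12}^{-1}t_{23}c_{12}$, $c_{23}^{-1}t_{12}c_{23}$ and $c_{12}t_{23}c_{12}^{-1}$ acting on $v_i\otimes v_j\otimes v_k$-type basis elements. For instance, applying $c_{23}^{-1}$ first sends $v_i\otimes v_j\otimes v_k$ to $q_{kj}^{-1}\,v_i\otimes v_k\otimes v_j$, then $t_{12}$ acts on the first two legs producing $q_{kj}^{-1}t_{ik}^{ab}\,v_a\otimes v_b\otimes v_j$, and finally $c_{23}$ restores the order up to the scalar $q_{bj}$; collecting, $c_{23}t_{12}c_{23}^{-1}(v_i\otimes v_j\otimes v_k)=q_{kj}^{-1}q_{bj}\,t_{ik}^{ab}\,v_a\otimes v_b\otimes v_j$. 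The analogous computations for the other three expressions yield the coefficients $q_{ij}q_{aj}^{-1}t_{ik}^{ab}$, $q_{jk}q_{jb}^{-1}t_{ik}^{ab}$ and $q_{ji}^{-1}q_{ja}t_{ik}^{ab}$, respectively, each multiplying the same basis vector $v_a\otimes v_b\otimes v_j$. Comparing coefficients gives exactly the chain of equalities \eqref{eq:diagt1}, so \eqref{eq:ibv1} is equivalent to \eqref{eq:diagt1}.

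Next I would carry out the same bookkeeping for the two ``infinitesimal pure braid'' relations. Using the combined form \eqref{eqt5&6}, it suffices to expand the three operators $[t_{23},c_{12}t_{23}c^{-1}_{12}]$, $[t_{12},t_{23}]$ and $[c_{23}t_{12}c^{-1}_{23},t_{12}]$ on a basis element, say $v_i\otimes v_j\otimes v_k$. The middle term $[t_{12},t_{23}]$ is immediate: $t_{12}t_{23}(v_i\otimes v_j\otimes v_k)=t_{jk}^{bv}t_{ib}^{au}\,v_a\otimes v_u\otimes v_v$ and $t_{23}t_{12}(v_i\otimes v_j\otimes v_k)=t_{ij}^{ab}t_{bk}^{uv}\,v_a\otimes v_u\otimes v_v$, whence the middle expression of \eqref{eq:diagt2}, namely $t_{jk}^{bv}t_{ib}^{au}-t_{ij}^{ab}t_{bk}^{uv}$. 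For the left-hand commutator one needs $c_{12}t_{23}c_{12}^{-1}(v_i\otimes v_j\otimes v_k)$, which by the first part of the computation (or by \eqref{eq:ibv1}, since all four conjugates agree) equals $q_{ji}^{-1}q_{ja}t_{ik}^{?}\dots$; more carefully, $c_{12}^{-1}$ produces $q_{ji}^{-1}v_j\otimes v_i\otimes v_k$, then $t_{23}$ gives $q_{ji}^{-1}t_{ik}^{ab}v_j\otimes v_a\otimes v_b$, then $c_{12}$ gives $q_{ji}^{-1}q_{ja}t_{ik}^{ab}v_a\otimes v_j\otimes v_b$; composing with $t_{23}$ on the left and subtracting $t_{23}\circ(c_{12}t_{23}c_{12}^{-1})$ yields, after relabelling summation indices, the term $q_{ji}^{-1}q_{ja}t_{ik}^{ab}t_{jb}^{uv}-q_{ui}^{-1}q_{ua}t_{ib}^{av}t_{jk}^{ub}$. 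The right-hand commutator is treated symmetrically using $c_{23}t_{12}c_{23}^{-1}(v_i\otimes v_j\otimes v_k)=q_{kj}^{-1}q_{bj}t_{ik}^{ab}v_a\otimes v_b\otimes v_j$, and produces $q_{vu}q_{ku}^{-1}t_{bk}^{av}t_{ij}^{bu}-q_{vj}q_{kj}^{-1}t_{ik}^{bv}t_{bj}^{au}$. Equating coefficients of $v_a\otimes v_u\otimes v_v$ across the three expressions gives precisely \eqref{eq:diagt2}.

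Finally I would assemble: $(V,c,t)$ is an infinitesimally braided vector space iff \eqref{eq:ibv1} and \eqref{eqt5&6} hold, and by the two coordinate computations these are equivalent to \eqref{eq:diagt1} and \eqref{eq:diagt2} respectively, which proves the claim. The only genuine subtlety — and the step I expect to be the main obstacle — is the careful handling of summation indices in the second part: the conjugations $c_{ij}t c_{ij}^{-1}$ permute the roles of the legs, so one must rename the internal summed index consistently (the $b$ appearing inside $t_{jb}^{uv}$ versus the $b$ inside $t_{ib}^{av}$ refer to different contractions) and then match the external indices $a,u,v$ on the chosen basis vector $v_a\otimes v_u\otimes v_v$. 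Once the indices are tracked with care the equalities fall out mechanically, and no further input (such as the braid equation for $c$ itself, which in the diagonal case is automatic) is needed.
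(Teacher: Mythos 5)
Your proposal is correct and follows essentially the same route as the paper's own proof: a direct basis computation of the four conjugated operators (giving the equivalence of \eqref{eq:ibv1} with \eqref{eq:diagt1}) and of the three commutators in \eqref{eqt5&6} (giving \eqref{eq:diagt2}), followed by comparison of coefficients. The only blemish is a labelling slip: $c_{23}t_{12}c_{23}^{-1}(v_i\otimes v_j\otimes v_k)$ is a multiple of $v_a\otimes v_j\otimes v_b$, not of $v_a\otimes v_b\otimes v_j$ (and likewise for the other three conjugates); since all four land on the same vector and your commutator computations use the correct form where it matters, this does not affect the argument.
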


\begin{proof} Note that $c^{-1}(v_{i}\otimes
v_{j})=q_{ji}^{-1}v_{j}\otimes v_{i}$.
Set $v_{ijk}:=v_{i}\otimes v_{j}\otimes v_{k}.$ Then

\begin{itemize}
\item $%
c_{23}t_{12}c_{23}^{-1}(v_{ijk})=q_{kj}^{-1}c_{23}t_{12}(v_{ikj})=q_{kj}^{-1}t_{ik}^{ab}c_{23}(v_{abj})=q_{kj}^{-1}t_{ik}^{ab}q_{bj}v_{ajb};
$

\item $%
c_{12}^{-1}t_{23}c_{12}(v_{ijk})=q_{ij}c_{12}^{-1}t_{23}(v_{jik})=q_{ij}t_{ik}^{ab}c_{12}^{-1}(v_{jab})=q_{ij}t_{ik}^{ab}q_{aj}^{-1}v_{ajb};
$

\item $%
c_{23}^{-1}t_{12}c_{23}(v_{ijk})=q_{jk}c_{23}^{-1}t_{12}(v_{ikj})=q_{jk}t_{ik}^{ab}c_{23}^{-1}(v_{abj})=q_{jk}t_{ik}^{ab}q_{jb}^{-1}v_{ajb};
$

\item $%
c_{12}t_{23}c_{12}^{-1}(v_{ijk})=q_{ji}^{-1}c_{12}t_{23}(v_{jik})=q_{ji}^{-1}t_{ik}^{ab}c_{12}(v_{jab})=q_{ji}^{-1}t_{ik}^{ab}q_{ja}v_{ajb}.
$
\end{itemize}

Thus, \eqref{eq:ibv1} is equivalent to \eqref{eq:diagt1}.
Moreover

\begin{itemize}
\item $t_{23}\left( c_{12}t_{23}c_{12}^{-1}\right) (v_{ijk})=t_{23}\left(
q_{ji}^{-1}t_{ik}^{ab}q_{ja}v_{ajb}\right)
=q_{ji}^{-1}t_{ik}^{ab}q_{ja}t_{jb}^{uv}v_{auv};$

\item $\left( c_{12}t_{23}c_{12}^{-1}\right)
t_{23}(v_{ijk})=t_{jk}^{ub}\left( c_{12}t_{23}c_{12}^{-1}\right)
(v_{iub})=t_{jk}^{ub}q_{ui}^{-1}t_{ib}^{av}q_{ua}v_{auv};$

\item $%
t_{12}t_{23}(v_{ijk})=t_{jk}^{bv}t_{12}(v_{ibv})=t_{jk}^{bv}t_{ib}^{au}v_{auv};
$

\item $%
t_{23}t_{12}(v_{ijk})=t_{ij}^{ab}t_{23}(v_{abk})=t_{ij}^{ab}t_{bk}^{uv}v_{auv};
$

\item $\left( c_{23}t_{12}c_{23}^{-1}\right)
t_{12}(v_{ijk})=t_{ij}^{bu}\left( c_{23}t_{12}c_{23}^{-1}\right)
(v_{buk})=t_{ij}^{bu}q_{ku}^{-1}t_{bk}^{av}q_{vu}v_{auv};$

\item $t_{12}\left( c_{23}t_{12}c_{23}^{-1}\right)
(v_{ijk})=q_{kj}^{-1}t_{ik}^{bv}q_{vj}t_{12}\left( v_{bjv}\right)
=q_{kj}^{-1}t_{ik}^{bv}q_{vj}t_{bj}^{au}v_{auv}.$
\end{itemize}

Thus, \eqref{eqt5&6} is equivalent to \eqref{eq:diagt2}.
\end{proof}

\begin{example}\label{ex:diag}
 Consider a braided vector space $(V,c)$ of diagonal type  as before for some $q_{ij}\in\Bbbk\setminus\{0\}$. Set $t(v_i\otimes v_j)=p_{ij}v_i\otimes v_j$. With notations as in Proposition \ref{prop:diagt}, we have  $t(v_{i}\otimes
v_{j})=t_{ij}^{kl}v_{k}\otimes v_{l}$ where $t_{ij}^{kl}:=p_{ij}\delta_i^k\delta_j^l$ and $\delta_i^k$ is the Kronecker delta.
 It is then easily checked that \eqref{eq:diagt1} and \eqref{eq:diagt2} hold true in this case so that $(V,c,t)$ is an infinitesimally braided vector space.  We point out that, although $c$ is not the canonical flip, still all the terms in \eqref{eq:ibv1} equal $t_{13}$ as in Example \ref{exa:infbraid} so that also in this case \eqref{eq:t5} and \eqref{eq:t6} reduce to infinitesimal pure braid relations which are easily verified.
Note that $A(c,t)$ is the quotient of the free algebra  $F=\Bbbk\{T_i^j\}_{1\leq i,j\leq N}$ by the two-sided ideal generated by the elements $q_{ij}T_j^kT_i^\ell-q_{k\ell} T_i^\ell T_j^k$ and $(p_{ij}-p_{k\ell}) T_i^kT_j^\ell$ so that $A(c,t)\neq A(c)$, in general, in this case. 
 
 \begin{invisible} Here we include the old direct proof. Set $x_{ijk}:=x_i\ot x_j\ot x_k.$ Then
 
$c_{23}t_{12}c_{23}^{-1}(x_{ijk})
=q_{kj}^{-1}c_{23}t_{12}(x_{ikj})
=q_{kj}^{-1}p_{ik}c_{23}(x_{ikj})
=q_{kj}^{-1}p_{ik}q_{kj}x_{ijk}
=p_{ik}x_{ijk}=t_{13}(x_{ijk})$

$c_{12}^{-1}t_{23}c_{12}(x_{ijk})
=q_{ij}c_{12}^{-1}t_{23}(x_{jik})
=q_{ij}p_{ik}c_{12}^{-1}(x_{jik})
=q_{ij}p_{ik}q_{ij}^{-1}x_{ijk}
=p_{ik}x_{ijk}=t_{13}(x_{ijk})$

$c_{23}^{-1}t_{12}c_{23}(x_{ijk})
=q_{jk}c_{23}^{-1}t_{12}(x_{ikj})
=q_{jk}p_{ik}c_{23}^{-1}(x_{ikj})
=q_{jk}p_{ik}q_{jk}^{-1}x_{ijk}
=p_{ik}x_{ijk}=t_{13}(x_{ijk})$

$c_{12}t_{23}c_{12}^{-1}(x_{ijk})
=q_{ji}^{-1}c_{12}t_{23}(x_{jik})
=q_{ji}^{-1}p_{ik}c_{12}(x_{jik})
=q_{ji}^{-1}p_{ik}q_{ji}x_{ijk}
=p_{ik}x_{ijk}=t_{13}(x_{ijk})$  

Thus $c_{23}t_{12}c_{23}^{-1}=c_{12}^{-1}t_{23}c_{12}=c_{23}^{-1}t_{12}c_{23}=c_{12}t_{23}c_{12}^{-1}=t_{13}$. Moreover

$[t_{23},t_{13}](x_{ijk})=t_{23}t_{13}(x_{ijk})-t_{13}t_{23}(x_{ijk})=p_{ik}p_{jk}x_{ijk}-p_{jk}p_{ik}x_{ijk}=0$,

$[t_{12},t_{23}](x_{ijk})=t_{12}t_{23}(x_{ijk})-t_{23}t_{12}(x_{ijk})=p_{ij}p_{jk}x_{ijk}-p_{ij}p_{jk}x_{ijk}=0$,

$[t_{13},t_{12}](x_{ijk})=t_{13}t_{12}(x_{ijk})-t_{12}t_{13}(x_{ijk})=p_{ij}p_{ik} x_{ijk}-p_{ik}p_{ij}x_{ijk}=0$.

Thus $[t_{23},t_{13}]=[t_{12},t_{23}]=[t_{13},t_{12}]=0$.  

  \rd{[This example should also be related to the fact that $(V,c)$ can be regarded as an object in the category of color-vector spaces where infinitesimal braiding are known (see e.g. \cite[Remark 2.2(3)]{HV}), together with Proposition \ref{prop:ibvsCartier}.]  }
  It can be shown (to be written) that $(\chi^\op * \Rr)(T_i^k\otimes T_j^l)=t_{ji}^{nm}c_{nm}^{kl}=(c\circ t)(v_j\otimes v_i)$ 
    \begin{gather*}
      (\chi^\op * \Rr)(T_i^k\otimes T_j^l)
      = \chi^\op(T_i^m\otimes T_j^n)\Rr(T_m^k\otimes T_n^l)
      = \chi(T_j^n\otimes T_i^m)\Rr(T_m^k\otimes T_n^l)
      =t_{ji}^{nm}c_{nm}^{kl}\\
      (c\circ t)(v_j\otimes v_i)=t_{ji}^{nm}c(v_n\otimes v_m)=t_{ji}^{nm}c_{nm}^{kl}v_k\otimes v_l
  \end{gather*}
  and that 
  \begin{gather*}
      (\Rr*\chi)(T_i^k\otimes T_j^l)
      = \Rr(T_i^m\otimes T_j^n)\chi(T_m^k\otimes T_n^l)
      =c_{ji}^{mn}t_{mn}^{kl}\\
      (t\circ c)(v_j\otimes v_i)=c_{ji}^{mn}t(v_m\otimes v_n)=c_{ji}^{mn}t_{mn}^{kl}v_k\otimes v_l
  \end{gather*}
    Thus, if a $q$-version of \eqref{eq:CC4} holds, i.e., $\Rr*\chi=q\chi^\op * \Rr$, then one has $t\circ c=q c\circ t$ (still to be checked if $t\circ c=q c\circ t$ implies $\Rr*\chi=q\chi^\op * \Rr$). 
Define the transpose of $t$ by $t^T(v_i\otimes v_j)=p_{ji}v_i\otimes v_j$. Then 
$tc(v_j\otimes v_i)
=q_{ji}t(v_i\otimes v_j)
=q_{ji}p_{ij}v_i\otimes v_j
=p_{ij}c(v_j\otimes v_i)
=c(p_{ij}v_j\otimes v_i)
=ct^T(v_j\otimes v_i)$ 
so that we get $t\circ c=c\circ t^T$. Now the condition $t\circ c=q c\circ t$ together with $t\circ c =c\circ t^T$ imply 
$c\circ t^T=q c\circ t$ and hence $t^T=q t$. Consider the matrix $M:=(p_{ij})$. Then $M^T=qM$ and hence $M=(M^T)^T=qM^T=q^2M$. This forces $M=0$ or $q=\pm1$. Thus we get that the matrix $(p_{ij})$ is either  zero or orthogonal ($q=1$)
  or skew-symmetric ($q=-1$). 
  Note that 
  \begin{gather*}
      (\chi * \Rr)(T_i^k\otimes T_j^l)
      = \chi(T_i^m\otimes T_j^n)\Rr(T_m^k\otimes T_n^l)
      =t_{ij}^{nm}c_{nm}^{kl}\\
      (c\circ t^T)(v_j\otimes v_i)=t_{ij}^{nm}c(v_n\otimes v_m)=t_{ij}^{nm}c_{nm}^{kl}v_k\otimes v_l
  \end{gather*}
  so apparently it could be that $\chi * \Rr=\Rr * \chi$.
 \end{invisible} 
\end{example}

We are prepared to prove the main theorem of this section, namely an FRT-construction of infinitesimal $\Rr$-forms.

\begin{theorem}[Infinitesimal FRT]\label{thm:curvedFRT}
Given an infinitesimally braided vector space $(V,c,t)$, 
there is a unique infinitesimal $\Rr$-form $\chi$ on the
coquasitriangular bialgebra $(A(c,t),\Rr)$ such that
$\chi(T_i^k\otimes T_j^\ell)=t_{ij}^{k\ell}$.
\end{theorem}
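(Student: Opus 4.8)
The plan is to obtain $\chi$ by assembling the machinery already developed in this section, and then to check that the two extra relations \eqref{eq:t5}, \eqref{eq:t6} in Definition \ref{def:infbrvs} are exactly what is needed to descend the resulting map to $A(c,t)$. First I would note that the hypothesis \eqref{eq:ibv1} already contains \eqref{eq:t1}, \eqref{eq:t2} and \eqref{eq:t3}: the first two terms of \eqref{eq:ibv1} being equal is the equivalent form of \eqref{eq:t1}, the last two terms being equal is the equivalent form of \eqref{eq:t2}, and the first and fourth terms being equal is \eqref{eq:t3}. Hence Lemma \ref{Lem:FRT1} applies and $A(c,t)=A(c)/I(t)$ is a coquasitriangular bialgebra with $\Rr$-form induced from the FRT form of $A(c)$. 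On the free algebra $F=\Bbbk\{T_i^j\}_{1\leq i,j\leq N}=TC$, equipped with the bicharacter $\Rr$ of Proposition \ref{Prop:FRT} and the triangle actions $\triangleright,\triangleleft$ of \eqref{lhd}, \eqref{rhd}, Lemma \ref{lem:bitriang} supplies \eqref{form:trimult} and \eqref{form:trieps}, and these actions preserve each $C^{\otimes n}$ by construction; therefore Proposition \ref{prop:bitriang} produces from $\chi_{11}\colon C\otimes C\to\Bbbk$, $T_i^k\otimes T_j^\ell\mapsto t_{ij}^{k\ell}$, a linear map $\chi\colon F\otimes F\to\Bbbk$ satisfying \eqref{eq:CC2'}, \eqref{eq:CC3'} and given explicitly by \eqref{eq:X2}--\eqref{eq:X5}. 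Since \eqref{eq:t1}, \eqref{eq:t2}, \eqref{eq:t3} hold, Lemma \ref{Lem:FRT2} yields \eqref{eq:t3'} and \eqref{eq:t4'}, so $\chi$ descends to $A(c)\otimes A(c)\to\Bbbk$, still obeying \eqref{eq:CC2}, \eqref{eq:CC3}; I keep writing $\chi$ for this descended map.

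The main step is to push $\chi$ through to $A(c,t)\otimes A(c,t)\to\Bbbk$, i.e., to show $\chi(A(c)\otimes I(t))=0=\chi(I(t)\otimes A(c))$. Just as in the proofs of Lemmas \ref{Lem:FRT1} and \ref{Lem:FRT2}, one uses $\Delta(I(t))\subseteq I(t)\otimes A(c)+A(c)\otimes I(t)$, the stability of $I(t)$ under $\triangleright$ and $\triangleleft$ (which holds because $\Rr$ descends to $A(c,t)$), the recursions \eqref{eq:CC2'}, \eqref{eq:CC3'}, and $\varepsilon(D_{k\ell}^{mn})=0$ to reduce these vanishings to the two identities $\chi(T_i^j\otimes D_{k\ell}^{mn})=0$ and $\chi(D_{ij}^{k\ell}\otimes T_m^n)=0$ for all indices. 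I would then expand $\chi(T_i^j\otimes D_{k\ell}^{mn})=t_{k\ell}^{pq}\chi(T_i^j\otimes T_p^mT_q^n)-\chi(T_i^j\otimes T_k^pT_\ell^q)t_{pq}^{mn}$ using \eqref{eq:X4}, reinterpret the four resulting summands as the composites $t_{12}t_{23}$, $c_{12}t_{23}c^{-1}_{12}t_{23}$, $t_{23}t_{12}$, $t_{23}c_{12}t_{23}c^{-1}_{12}$ evaluated on $v_{ik\ell}$ (the same translation of index identities into endomorphisms of $V^{\otimes 3}$ used throughout the section), and observe that the vanishing of $\chi(T_i^j\otimes D_{k\ell}^{mn})$ for all indices is precisely
\[
t_{12}t_{23}+c_{12}t_{23}c^{-1}_{12}t_{23}=t_{23}t_{12}+t_{23}c_{12}t_{23}c^{-1}_{12},
\]
i.e.\ \eqref{eq:t5}. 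Symmetrically, expanding $\chi(D_{ij}^{k\ell}\otimes T_m^n)$ via \eqref{eq:X5} gives \eqref{eq:t6}. Both hold by hypothesis, so $\chi$ descends to $A(c,t)$, still satisfying \eqref{eq:CC2}, \eqref{eq:CC3}.

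To finish, I would verify \eqref{eq:CC1} and uniqueness. On the generators, \eqref{eq:CC1} reads $t_{ik}^{ab}T_a^jT_b^\ell=T_i^aT_k^bt_{ab}^{j\ell}$, which is exactly the relation $D_{ik}^{j\ell}=0$ defining $A(c,t)$; the general case propagates by induction on word length, peeling one generator off each argument with \eqref{eq:CC2'}, \eqref{eq:CC3'} and \eqref{form:trimult}, as in the proof of Proposition \ref{prop:bitriang}. Thus $(A(c,t),\Rr,\chi)$ is pre-Cartier coquasitriangular (and Cartier iff $\Rr*\chi=\chi^{\mathrm{op}}*\Rr$). For uniqueness, any infinitesimal $\Rr$-form $\chi'$ on $(A(c,t),\Rr)$ with $\chi'(T_i^k\otimes T_j^\ell)=t_{ij}^{k\ell}$ has all its values on products of the $T_i^j$ forced by \eqref{eq:CC2}, \eqref{eq:CC3} through the recursions \eqref{eq:X2}, \eqref{eq:X3}, hence $\chi'=\chi$. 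The hardest part, I expect, is the bookkeeping in the middle paragraph — getting the reinterpretation of $\chi(T_i^j\otimes D_{k\ell}^{mn})$ and $\chi(D_{ij}^{k\ell}\otimes T_m^n)$ as endomorphisms of $V^{\otimes 3}$ to land exactly on \eqref{eq:t5} and \eqref{eq:t6} — together with the (standard but fiddly) reduction of $\chi(A(c)\otimes I(t))=0$ and $\chi(I(t)\otimes A(c))=0$ to these two generator-level identities.
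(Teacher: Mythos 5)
Your proposal is correct and follows essentially the same route as the paper's proof: invoke Lemmas \ref{Lem:FRT1} and \ref{Lem:FRT2} (noting that \eqref{eq:ibv1} packages \eqref{eq:t1}, \eqref{eq:t2}, \eqref{eq:t3}), reduce the descent to $A(c,t)$ to the generator identities $\chi(T_i^j\otimes D_{k\ell}^{mn})=0$ and $\chi(D_{ij}^{k\ell}\otimes T_m^n)=0$, which via \eqref{eq:X4}, \eqref{eq:X5} are exactly \eqref{eq:t5} and \eqref{eq:t6}, and then check \eqref{eq:CC1} on generators using the relation $D_{ij}^{k\ell}=0$, with \eqref{eq:CC2}, \eqref{eq:CC3} built in by the recursive construction of Proposition \ref{prop:bitriang}. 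Your explicit remarks on the degree induction for \eqref{eq:CC1} and on uniqueness fill in points the paper only asserts, but they are the intended arguments.
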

\begin{proof}
By Lemma~\ref{Lem:FRT1} the coquasitriangular bialgebra $(A(c),\Rr)$
descends to a coquasitriangular bialgebra $(A(c,t),\Rr)$ since
\eqref{eq:t1} and \eqref{eq:t2} hold. According to Lemma~\ref{Lem:FRT2}
the map $\chi\colon A(c)\otimes A(c)\to\Bbbk$ is well-defined since
\eqref{eq:t3} is satisfied in addition.
We show that $\chi$ descends to a map $A(c,t)\otimes A(c,t)\to\Bbbk$,
i.e., $\chi(A(c)\otimes I(t))=0=\chi(I(t)\otimes A(c))$,
if and only if \eqref{eq:t5} and \eqref{eq:t6} hold.
As before, it is necessary and sufficient to prove the vanishing on
generators of $A(c)$ and $I(t)$. This can be done in complete analogy
to Lemma~\ref{Lem:FRT2} by replacing 
$C_{ij}^{k\ell}=c_{ij}^{mn}T_m^kT_n^\ell-T_i^mT_j^nc_{mn}^{k\ell}$ with
$D_{ij}^{k\ell}=t_{ij}^{mn}T_m^kT_n^\ell-T_i^mT_j^nt_{mn}^{k\ell}$.
Explicitly, 
\begin{align*}
    \chi(T_i^j\otimes D_{k\ell}^{mn})
    &=t_{k\ell}^{pq}\chi(T_i^j\otimes T_p^mT_q^n)
    -\chi(T_i^j\otimes T_k^pT_\ell^q)t_{pq}^{mn}\\
    &\overset{\eqref{eq:X4}}{=}t_{k\ell}^{pq}t_{ip}^{jm}\delta_q^n
    +t_{k\ell}^{pq}\overline{c}_{ip}^{m_1j_1}t_{j_1q}^{j_2n}c_{m_1j_2}^{jm}
    -t_{ik}^{jp}\delta_\ell^qt_{pq}^{mn}
    -\overline{c}_{ik}^{p_1j_1}t_{j_1\ell}^{j_2q}c_{p_1j_2}^{jp}t_{pq}^{mn}
    \\
    &{=}t_{k\ell}^{pn}t_{ip}^{jm}
    +t_{k\ell}^{pq}\overline{c}_{ip}^{m_1j_1}t_{j_1q}^{j_2n}c_{m_1j_2}^{jm}
    -t_{ik}^{jp}t_{p\ell}^{mn}
    -\overline{c}_{ik}^{p_1j_1}t_{j_1\ell}^{j_2q}c_{p_1j_2}^{jp}t_{pq}^{mn}
\end{align*}
vanishes if and only if the sum of 
\begin{align*}
    v_{ik\ell}
    \xmapsto{t_{23}}t_{k\ell}^{pn}v_{ipn}
    \xmapsto{t_{12}}t_{k\ell}^{pn}t_{ip}^{jm}v_{jmn}
\end{align*}
and
\begin{align*}
    v_{ik\ell}
    &\xmapsto{t_{23}}t_{k\ell}^{pq}v_{ipq}
    \xmapsto{c^{-1}_{12}}t_{k\ell}^{pq}\overline{c}_{ip}^{m_1j_1}
    v_{m_1j_1q}
    \xmapsto{t_{23}}t_{k\ell}^{pq}\overline{c}_{ip}^{m_1j_1}t_{j_1q}^{j_2n}v_{m_1j_2n}
    \xmapsto{c_{12}}t_{k\ell}^{pq}\overline{c}_{ip}^{m_1j_1}t_{j_1q}^{j_2n}c_{m_1j_2}^{jm}v_{jmn}
\end{align*}
equals the sum of
\begin{align*}
    v_{ik\ell}
    \xmapsto{t_{12}}t_{ik}^{jp}v_{jp\ell}
    \xmapsto{t_{23}}t_{ik}^{jp}t_{p\ell}^{mn}v_{jmn}
\end{align*}
and
\begin{align*}
    v_{ik\ell}
    &\xmapsto{c^{-1}_{12}}\overline{c}_{ik}^{p_1j_1}v_{p_1j_1\ell}
    \xmapsto{t_{23}}\overline{c}_{ik}^{p_1j_1}t_{j_1\ell}^{j_2q}
    v_{p_1j_2q}
    \xmapsto{c_{12}}\overline{c}_{ik}^{p_1j_1}t_{j_1\ell}^{j_2q}c_{p_1j_2}^{jp}v_{jpq}
    \xmapsto{t_{23}}\overline{c}_{ik}^{p_1j_1}t_{j_1\ell}^{j_2q}c_{p_1j_2}^{jp}t_{pq}^{mn}v_{jmn}.
\end{align*}
Thus, $\chi(A(c)\otimes I(t))=0$ if and only if \eqref{eq:t5} holds
and similarly $\chi(I(t)\otimes A(c))=0$ if and only if \eqref{eq:t6}
is satisfied.

It remains to prove that $\chi\colon A(c,t)\otimes A(c,t)\to\Bbbk$
is an infinitesimal $\Rr$-form for the coquasitriangular bialgebra
$(A(c,t),\Rr)$. Namely, we have to check 
\eqref{eq:CC1},\eqref{eq:CC2} and \eqref{eq:CC3}.
The last two are satisfied on generators by definition, see
\eqref{eq:X2} and \eqref{eq:X3}. They hold on arbitrary elements of
$A(c,t)\otimes A(c,t)$ by the inductive definition of $\chi$, see also Proposition~\ref{prop:bitriang}.
Thus, it is left to prove \eqref{eq:CC1} (and again, this is sufficient
on generators). Now, by the definition of $I(t)$ we have
\begin{align*}
    \chi(T_i^k\otimes T_j^\ell)T_k^mT_\ell^n
    =t_{ij}^{k\ell}T_k^mT_\ell^n
    =T_i^kT_j^\ell t_{k\ell}^{mn}
    =T_i^kT_j^\ell\chi(T_k^m\otimes T_\ell^n),
\end{align*}
which concludes the proof of the theorem.
\begin{invisible}
    We include here a proof (at list one one side) of the fact that it suffices to check \eqref{eq:CC1}, i.e., the equality $\chi \left(
a_{1}\otimes b_{1}\right) a_{2}b_{2}=a_{1}b_{1}\chi \left( a_{2}\otimes
b_{2}\right) $, on generators.
Assume that the above formula holds for $a$ of homogeneous degree $%
\left\vert ab\right\vert =\left\vert a\right\vert +\left\vert b\right\vert
\leq n-1$ (e.g. $\left\vert T_{i}^{j}T_{s}^{t}\right\vert =2$).
Now, let $a$ and $b$ such that $\left\vert a\right\vert ,\left\vert
b\right\vert ,\left\vert c\right\vert \geq 1$ and $\left\vert abc\right\vert
=\left\vert a\right\vert +\left\vert b\right\vert +\left\vert c\right\vert
=n.$ Then

\begin{eqnarray*}
\chi \left( a_{1}b_{1}\otimes c_{1}\right) (a_{2}b_{2})c_{2} &=&\varepsilon
(a_{1})\chi (b_{1}\otimes c_{1})a_{2}b_{2}c_{2}+\chi (a_{1}\otimes
b_{1}\triangleright c_{1})a_{2}b_{2}c_{2} \\
&=&a\chi (b_{1}\otimes c_{1})b_{2}c_{2}+\chi (a_{1}\otimes
b_{1}\triangleright c_{1})a_{2}b_{2}c_{2} \\
&\overset{\left\vert b_{1}+c_{1}\right\vert \leq n-1}{=}&ab_{1}c_{1}\chi
(b_{2}\otimes c_{2})+\chi (a_{1}\otimes b_{1}\triangleright
c_{1})a_{2}b_{2}c_{2} \\
&\overset{(\ast )}{=}&a_{1}b_{1}c_{1}\varepsilon (a_{2})\chi (b_{2}\otimes
c_{2})+a_{1}b_{1}c_{1}\chi (a_{2}\otimes b_{2}\triangleright c_{2}) \\
&=&(a_{1}b_{1})c_{1}\chi \left( a_{2}b_{2}\otimes c_{2}\right)
\end{eqnarray*}%
where $(\ast )$ follows once we have checked that $\chi (a_{1}\otimes
b_{1}\triangleright c_{1})a_{2}b_{2}c_{2}=a_{1}b_{1}c_{1}\chi (a_{2}\otimes
b_{2}\triangleright c_{2}).$ Indeed we have
\begin{eqnarray*}
&&\chi (a_{1}\otimes b_{1}\triangleright c_{1})a_{2}b_{2}c_{2} \\
&=&\chi (a_{1}\otimes \mathcal{R}^{-1}(b_{1}\otimes c_{1})c_{2}\mathcal{R}%
(b_{2}\otimes c_{3}))a_{2}b_{3}c_{4} \\
&=&\mathcal{R}^{-1}(b_{1}\otimes c_{1})\chi (a_{1}\otimes c_{2})a_{2}%
\mathcal{R}(b_{2}\otimes c_{3})b_{3}c_{4} \\
&\overset{\eqref{eq:ct1}}{=}&\mathcal{R}^{-1}(b_{1}\otimes c_{1})\chi
(a_{1}\otimes c_{2})a_{2}c_{3}b_{2}\mathcal{R}\left( b_{3}\otimes
c_{4}\right)  \\
&\overset{\left\vert a_{2}+c_{2}\right\vert \leq n-1}{=}&\mathcal{R}%
^{-1}(b_{1}\otimes c_{1})a_{1}c_{2}\chi \left( a_{2}\otimes c_{3}\right)
b_{2}\mathcal{R}\left( b_{3}\otimes c_{4}\right)  \\
&=&a_{1}\mathcal{R}^{-1}(b_{1}\otimes c_{1})c_{2}b_{2}\chi \left(
a_{2}\otimes c_{3}\right) \mathcal{R}\left( b_{3}\otimes c_{4}\right)  \\
&\overset{\eqref{eq:ct1}}{=}&a_{1}b_{1}c_{1}\mathcal{R}^{-1}\left( b_{2}\otimes
c_{2}\right) \chi \left( a_{2}\otimes c_{3}\right) \mathcal{R}\left(
b_{3}\otimes c_{4}\right)  \\
&=&a_{1}b_{1}c_{1}\chi (a_{2}\otimes b_{2}\triangleright c_{2}).
\end{eqnarray*}
\end{invisible}
\end{proof}
We show that also the converse of Theorem~\ref{thm:curvedFRT} is true,
thus giving a classification of infinitesimal braidings on FRT
bialgebras.
\begin{proposition}\label{prop:ibvsCartier}
Let $(H,\Rr,\chi)$ be a pre-Cartier coquasitriangular bialgebra
and $V$ a left $H$-comodule. Then, the $\Bbbk$-linear maps
$c,t\in\mathrm{End}_\Bbbk(V\otimes V)$ defined on $v,v'\in V$ by
\begin{align}
    c(v\otimes v')=\Rr(v'_{-1}\otimes v_{-1})v'_0\otimes v_0,
    \qquad\qquad
    t(v\otimes v')=\chi(v_{-1}\otimes v'_{-1})v_0\otimes v'_0
\end{align}
yield an infinitesimally braided vector space $(V,c,t)$.
\end{proposition}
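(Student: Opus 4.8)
The plan is to recognise $c$ and $t$ as the braiding and infinitesimal braiding of the category ${}^{H}\Mm$ of left $H$-comodules, evaluated at the object $V$, and then to extract \eqref{eq:braid}, \eqref{eq:ibv1}, \eqref{eq:t5} and \eqref{eq:t6} from the axioms of a pre-Cartier braided monoidal category together with naturality. Concretely, I would first record the left-handed counterpart of Theorem~\ref{thm:curv-cotr}: the pre-additive monoidal category $({}^{H}\Mm,\otimes)$ is pre-Cartier braided, with braiding and infinitesimal braiding given on objects $M,N$ by
$$\sigma_{M,N}(m\otimes n)=\Rr(n_{-1}\otimes m_{-1})\,n_0\otimes m_0,\qquad t_{M,N}(m\otimes n)=\chi(m_{-1}\otimes n_{-1})\,m_0\otimes n_0 .$$
That $\sigma$ is a natural isomorphism satisfying the hexagon axioms is exactly \eqref{eq:ct1}--\eqref{eq:ct3}, and that $t$ fulfils \eqref{qC-I}--\eqref{qC-II} is exactly \eqref{eq:CC1}--\eqref{eq:CC3}; the verifications are the mirror images (swapping left for right) of those behind Theorem~\ref{thm:curv-cotr}. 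With these formulas one has $c=\sigma_{V,V}$ and $t=t_{V,V}$ (the normalisations agree with those of the infinitesimal FRT construction). In particular $c$ is invertible, with $c^{-1}(v\otimes v')=\Rr^{-1}(v_{-1}\otimes v'_{-1})v'_0\otimes v_0$ (using $\Rr^{-1}*\Rr=\varepsilon\otimes\varepsilon$), and $\sigma_{V,V}$ automatically satisfies the braid equation \eqref{eq:braid}, as does the braiding of any braided monoidal category at a single object. Hence $(V,c)$ is a braided vector space (no finite-dimensionality needed).

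For \eqref{eq:ibv1} I would split the fourfold equality into three parts. Writing the hexagons at $V$ gives $c_{12}c_{23}=\sigma_{V\otimes V,V}$ and $c_{23}c_{12}=\sigma_{V,V\otimes V}$. Applying naturality of $\sigma$ to the comodule morphism $t_{V,V}\colon V\otimes V\to V\otimes V$ in the first, respectively second, tensor slot yields $c_{12}c_{23}\,t_{12}=t_{23}\,c_{12}c_{23}$ and $t_{12}\,c_{23}c_{12}=c_{23}c_{12}\,t_{23}$, which rearrange precisely to $c_{23}t_{12}c_{23}^{-1}=c_{12}^{-1}t_{23}c_{12}$ and $c_{23}^{-1}t_{12}c_{23}=c_{12}t_{23}c_{12}^{-1}$. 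For the remaining equality $c_{12}^{-1}t_{23}c_{12}=c_{23}^{-1}t_{12}c_{23}$, I would evaluate \eqref{qC-I} and \eqref{qC-II} at $X=Y=Z=V$, obtaining $c_{12}^{-1}t_{23}c_{12}=t_{V,V\otimes V}-t_{12}$ and $c_{23}^{-1}t_{12}c_{23}=t_{V\otimes V,V}-t_{23}$. Now $t_{12}$, $t_{23}$, $t_{V\otimes V,V}$, $t_{V,V\otimes V}$ are the endomorphisms of $V^{\otimes 3}$ induced through the coaction by the forms $\chi_{12}$, $\chi_{23}$, $\chi(m\otimes\mathrm{Id})$, $\chi(\mathrm{Id}\otimes m)$ respectively, so the Hochschild $2$-cocycle identity $\chi_{12}+\chi(m\otimes\mathrm{Id})=\chi_{23}+\chi(\mathrm{Id}\otimes m)$ of Theorem~\ref{thm:ccQYB}(i) — valid for every pre-Cartier coquasitriangular bialgebra — forces $t_{V,V\otimes V}-t_{12}=t_{V\otimes V,V}-t_{23}$, as required. (Equivalently, $t_{V,V\otimes V}-t_{12}$ and $t_{V\otimes V,V}-t_{23}$ are induced by $\Rr^{-1}_{12}*\chi_{13}*\Rr_{12}$ and $\Rr^{-1}_{23}*\chi_{13}*\Rr_{23}$, and the equality is Theorem~\ref{thm:ccQYB}(iii).)

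Finally, \eqref{eq:t5} and \eqref{eq:t6} follow from the already established \eqref{eq:ibv1}: it lets me replace $c_{12}t_{23}c_{12}^{-1}$ by $c_{12}^{-1}t_{23}c_{12}=t_{V,V\otimes V}-t_{12}$, so that $t_{12}+c_{12}t_{23}c_{12}^{-1}=t_{V,V\otimes V}$ and \eqref{eq:t5} becomes $[t_{V,V\otimes V},t_{23}]=0$; likewise $t_{23}+c_{23}t_{12}c_{23}^{-1}=t_{V\otimes V,V}$ and \eqref{eq:t6} becomes $[t_{V\otimes V,V},t_{12}]=0$. Since $t_{23}=\mathrm{Id}_V\otimes t_{V,V}$ and $t_{12}=t_{V,V}\otimes\mathrm{Id}_V$, these commutators vanish by naturality of the natural transformation $t$ applied to the morphism $t_{V,V}$ in the second, respectively first, slot. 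The only genuinely non-formal input is thus the pre-Cartier structure on ${}^{H}\Mm$ — the mirror of Theorem~\ref{thm:curv-cotr} — and I expect that (or, if one prefers to bypass the categorical language, the equivalent task of rewriting \eqref{eq:braid}, \eqref{eq:ibv1}, \eqref{eq:t5}, \eqref{eq:t6} as identities of forms on $H^{\otimes 3}$ and checking them from \eqref{eq:ct1}--\eqref{eq:ct3}, \eqref{eq:CC1}--\eqref{eq:CC3} and Theorem~\ref{thm:ccQYB}) to be the main, though routine, obstacle, the difficulty lying in tracking the iterated coactions.
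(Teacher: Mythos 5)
Your treatment of \eqref{eq:braid}, of \eqref{eq:t1}--\eqref{eq:t2} and of \eqref{eq:t5}--\eqref{eq:t6} is sound and is essentially a categorical repackaging of the paper's computations: ${}^H\Mm$ is indeed braided with $\sigma_{M,N}(m\otimes n)=\Rr(n_{-1}\otimes m_{-1})n_0\otimes m_0$, the map $t_{V,V}$ is a morphism of comodules by \eqref{eq:CC1}, and naturality of $\sigma$ (resp.\ of $t$) applied to $t_{V,V}$ gives $c_{12}c_{23}t_{12}=t_{23}c_{12}c_{23}$ and $t_{12}c_{23}c_{12}=c_{23}c_{12}t_{23}$ (resp.\ the vanishing of the commutators in \eqref{eq:t5}, \eqref{eq:t6}), exactly as the paper obtains \eqref{eq:t1}, \eqref{eq:t2} from \eqref{eq:ct2}, \eqref{eq:ct3} and \eqref{eq:CC1}.

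The gap is the asserted ``left-handed mirror'' of Theorem~\ref{thm:curv-cotr}, which is what you use for the middle equality of \eqref{eq:ibv1}. With the formulas forced by the statement, a direct computation shows that \eqref{eq:CC2} evaluated on the coaction legs gives $c_{12}\,t_{23}\,c_{12}^{-1}=t_{V,V\otimes V}-t_{12}$ and \eqref{eq:CC3} gives $c_{23}\,t_{12}\,c_{23}^{-1}=t_{V\otimes V,V}-t_{23}$: the conjugation comes out with $c$ on the left, not $c^{-1}$. Correspondingly, \eqref{qC-I} for $({}^H\Mm,\sigma,t)$ at $X=Y=Z=H$ is equivalent not to \eqref{eq:CC2} but to $\chi(\mathrm{Id}\otimes m)=\chi_{12}+\Rr^{\mathrm{op}}_{12}*\chi_{13}*(\Rr^{\mathrm{op}}_{12})^{-1}$, which agrees with \eqref{eq:CC2} precisely when $\chi_{13}$ convolution-commutes with $(\Rr*\Rr^{\mathrm{op}})_{12}$ (automatic in the cotriangular case, but not an axiom); so the left-comodule verification is not a literal mirror of the right-comodule one, and your identities $c_{12}^{-1}t_{23}c_{12}=t_{V,V\otimes V}-t_{12}$, $c_{23}^{-1}t_{12}c_{23}=t_{V\otimes V,V}-t_{23}$ carry, given \eqref{eq:t1}--\eqref{eq:t2}, exactly the same content as the missing equality of \eqref{eq:ibv1} — the argument is circular at this point. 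The repair is small and lands on the paper's proof: use the correctly oriented identities $c_{12}t_{23}c_{12}^{-1}=t_{V,V\otimes V}-t_{12}$ and $c_{23}t_{12}c_{23}^{-1}=t_{V\otimes V,V}-t_{23}$ (direct consequences of \eqref{eq:CC2}, \eqref{eq:CC3}); then the Hochschild $2$-cocycle identity, equivalently Theorem~\ref{thm:ccQYB}\,(iii), yields $c_{12}t_{23}c_{12}^{-1}=c_{23}t_{12}c_{23}^{-1}$, i.e.\ \eqref{eq:t3}, which together with \eqref{eq:t1}, \eqref{eq:t2} closes the fourfold chain \eqref{eq:ibv1}; your naturality argument for \eqref{eq:t5}, \eqref{eq:t6} then goes through verbatim, since it only needs $t_{12}+c_{12}t_{23}c_{12}^{-1}=t_{V,V\otimes V}$ and $t_{23}+c_{23}t_{12}c_{23}^{-1}=t_{V\otimes V,V}$.
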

\begin{proof}
Since $\Rr$ is a coquasitriangular structure, the induced braiding
$\sigma^\Rr_{V,V}=c$ on the left $H$-comodule $V$  satisfies the
braid equation \eqref{eq:braid}.
Let $v,v',v''\in V$.
Then,
\begin{align*}
    c_{12}(c_{23}(t_{12}(v\otimes v'\otimes v'')))
    &=\chi(v_{-1}\otimes v'_{-1})c_{12}(c_{23}(v_0\otimes v'_0\otimes v''))\\
    &=\chi(v_{-1}\otimes v'_{-2})\Rr(v''_{-1}\otimes v'_{-1})c_{12}(v_0\otimes v''_0\otimes v'_0)\\
    &=\chi(v_{-2}\otimes v'_{-2})\Rr(v''_{-2}\otimes v'_{-1})\Rr(v''_{-1}\otimes v_{-1})v''_0\otimes v_0\otimes v'_0\\
    &\overset{\eqref{eq:ct2}}{=}\chi(v_{-2}\otimes v'_{-2})\Rr(v''_{-1}\otimes v_{-1}v'_{-1})v''_0\otimes v_0\otimes v'_0\\
    &\overset{\eqref{eq:CC1}}{=}\Rr(v''_{-1}\otimes v_{-2}v'_{-2})\chi(v_{-1}\otimes v'_{-1})v''_0\otimes v_0\otimes v'_0\\
    &\overset{\eqref{eq:ct2}}{=}\Rr(v''_{-2}\otimes v'_{-2})\Rr(v''_{-1}\otimes v_{-2})\chi(v_{-1}\otimes v'_{-1})v''_0\otimes v_0\otimes v'_0\\
    &=\Rr(v''_{-2}\otimes v'_{-1})\Rr(v''_{-1}\otimes v_{-1})t_{23}(v''_0\otimes v_0\otimes v'_0)\\
    &=\Rr(v''_{-1}\otimes v'_{-1})t_{23}(c_{12}(v\otimes v''_0\otimes v'_0))
    =t_{23}(c_{12}(c_{23}(v\otimes v'\otimes v''))),
\end{align*}
i.e., \eqref{eq:t1} is satisfied.
Similarly, one shows that \eqref{eq:t2} holds by using \eqref{eq:ct3}
and \eqref{eq:CC1}.
Note that the inverse of $c$ is given by $c^{-1}(v\otimes v')
=\Rr^{-1}(v_{-1}\otimes v'_{-1})v'_0\otimes v_0$.
Moreover, one shows that \eqref{eq:t3} holds by means of the equality $\mathcal{R}_{23}^{-1}\ast \chi _{13}\ast \mathcal{R}_{23}= \mathcal{R}_{12}^{-1}\ast \chi
_{13}\ast \mathcal{R}_{12}$ that we know is true by Theorem \ref{thm:ccQYB}.

\begin{invisible} OLD PROOF.
Next, we show that \eqref{eq:t3'} and \eqref{eq:t4'} are satisfied.
Then, together with \eqref{eq:t1} and \eqref{eq:t2} this implies that
\eqref{eq:t3}  hold.
We have
\begin{align*}
    &(t_{12}+c_{12}t_{23}c^{-1}_{12})c_{23}(v\otimes v'\otimes v'')
    =\Rr(v''_{-1}\otimes v'_{-1})(t_{12}+c_{12}t_{23}c^{-1}_{12})(v\otimes v''_0\otimes v'_0)\\
    &=\Rr(v''_{-2}\otimes v'_{-1})\chi(v_{-1}\otimes v''_{-1})v_0\otimes v''_0\otimes v'_0\\
    &\qquad+\Rr(v''_{-2}\otimes v'_{-1})\Rr^{-1}(v_{-1}\otimes v''_{-1})c_{12}(t_{23}(v''_0\otimes v_0\otimes v'_0))\\
    &=\Rr(v''_{-2}\otimes v'_{-1})\chi(v_{-1}\otimes v''_{-1})v_0\otimes v''_0\otimes v'_0\\
    &\qquad+\Rr(v''_{-2}\otimes v'_{-2})\Rr^{-1}(v_{-2}\otimes v''_{-1})\chi(v_{-1}\otimes v'_{-1})c_{12}(v''_0\otimes v_0\otimes v'_0)\\
    &=\Rr(v''_{-2}\otimes v'_{-1})\chi(v_{-1}\otimes v''_{-1})v_0\otimes v''_0\otimes v'_0\\
    &\qquad+\Rr(v''_{-3}\otimes v'_{-2})\Rr^{-1}(v_{-3}\otimes v''_{-2})\chi(v_{-2}\otimes v'_{-1})\Rr(v_{-1}\otimes v''_{-1})v_0\otimes v''_0\otimes v'_0\\
    &=\Rr(v''_{-2}\otimes v'_{-2})(\chi_{12}
    +\Rr^{-1}_{12}*\chi_{13}*\Rr_{12})(v_{-1}\otimes v''_{-1}\otimes v'_{-1})v_0\otimes v''_0\otimes v'_0\\
    &\overset{\eqref{eq:CC2}}{=}\Rr(v''_{-2}\otimes v'_{-2})\chi(v_{-1}\otimes v''_{-1}v'_{-1})v_0\otimes v''_0\otimes v'_0\\
    &\overset{\eqref{eq:ct1}}{=}\chi(v_{-1}\otimes v'_{-2}v''_{-2})\Rr(v''_{-1}\otimes v'_{-1})v_0\otimes v''_0\otimes v'_0\\
    &=\chi(v_{-1}\otimes v'_{-1}v''_{-1})c_{23}(v_0\otimes v'_0\otimes v''_0)\\
    &\overset{\eqref{eq:CC2}}{=}(\chi_{12}
    +\Rr^{-1}_{12}*\chi_{13}*\Rr_{12})(v_{-1}\otimes v'_{-1}\otimes v''_{-1})c_{23}(v_0\otimes v'_0\otimes v''_0)\\
    &=c_{23}t_{12}(v\otimes v'\otimes v'')+ c_{23}((\Rr^{-1}(v_{-3}\otimes v'_{-2})\chi(v_{-2}\otimes v''_{-1})\Rr(v_{-1}\otimes v'_{-1})\mathrm{Id}_{V^{\otimes 3}})(v_0\otimes v'_0\otimes v''_0))\\
    &=c_{23}(t_{12}+c_{12}t_{23}c^{-1}_{12})(v\otimes v'\otimes v''),
\end{align*}
which proves that \eqref{eq:t3'} holds. Similarly \eqref{eq:t4'}
is shown.\end{invisible}
Finally, equations \eqref{eq:t5} and \eqref{eq:t6} are proven
using \eqref{eq:CC1}.
\end{proof}

We noticed that, given a braided vector space $(V,c)$, then $(V,c,\lambda\cdot\mathrm{Id}_{V\ot V})$ is an infinitesimally braided vector spaces. This leads to the following
proposition.
\begin{proposition}[Canonical infinitesimal braiding on $(A(c),\Rr)$]\label{Prop:xFRT}
For any braided vector space $(V,c)$  the coquasitriangular bialgebra
$(A(c),\Rr)$ admits a $1$-parameter family of infinitesimal $\Rr$-forms $\chi_\lambda
\colon A(c)\otimes A(c)\to\Bbbk$ given explicitly on generators by
\begin{equation}\label{xFRT}
    \chi_\lambda(T_{i_1}^{j_1}\cdots T_{i_m}^{j_m}
    \otimes T_{k_1}^{\ell_1}\cdots T_{k_n}^{\ell_n})
    =\lambda mn\varepsilon(T_{i_1}^{j_1}\cdots T_{i_m}^{j_m}
    T_{k_1}^{\ell_1}\cdots T_{k_n}^{\ell_n})
\end{equation}
for all $\lambda\in\Bbbk$. Moreover, $(A(c),\Rr,\chi_\lambda)$ is Cartier.
\end{proposition}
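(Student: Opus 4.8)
The plan is to recognize the claimed formula \eqref{xFRT} as a special case of the infinitesimal FRT construction of Theorem~\ref{thm:curvedFRT}, applied to the endomorphism $t=\lambda\cdot\mathrm{Id}_{V\otimes V}$. First I would invoke Example~\ref{ex:canT}: for any braided vector space $(V,c)$, the triple $(V,c,\lambda\cdot\mathrm{Id}_{V\otimes V})$ is an infinitesimally braided vector space, since all the axioms \eqref{eq:ibv1}, \eqref{eq:t5}, \eqref{eq:t6} of Definition~\ref{def:infbrvs} are trivially satisfied when $t$ is a scalar multiple of the identity (each term in \eqref{eq:ibv1} equals $\lambda\cdot\mathrm{Id}_{V^{\otimes 3}}$, and the commutators in \eqref{eq:t5}, \eqref{eq:t6} vanish because scalars are central). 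In particular $D_{ij}^{k\ell}=\lambda(\delta_i^k\delta_j^\ell T_i^kT_j^\ell-T_i^kT_j^\ell\delta_i^k\delta_j^\ell)=0$ with Einstein summation — more precisely $t_{ij}^{mn}T_m^kT_n^\ell=\lambda T_i^kT_j^\ell=T_i^mT_j^nt_{mn}^{k\ell}$ — so that $I(t)=0$ and hence $A(c,t)=A(c)$. Then Theorem~\ref{thm:curvedFRT} immediately yields a unique infinitesimal $\Rr$-form $\chi_\lambda$ on $(A(c),\Rr)$ with $\chi_\lambda(T_i^k\otimes T_j^\ell)=\lambda\delta_i^j\delta_k^\ell=\lambda\,\varepsilon(T_i^kT_j^\ell)$, agreeing with \eqref{xFRT} for $m=n=1$.

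The second step is to derive the closed formula \eqref{xFRT} for general $m,n$ from the inductive definition of $\chi$ given in \eqref{eq:X2}, \eqref{eq:X3}. I would argue by induction, first on $n$ (the length of the right argument) with $m=1$, then on $m$. For $m=n=1$ the formula holds. For the inductive step on $n$, using \eqref{eq:X4} with $t_{ip}^{jm}=\lambda\delta_i^j\delta_p^m$, one computes that $\chi_\lambda(T_i^j\otimes T_k^mT_\ell^n)=\lambda\delta_i^j\delta_k^m\delta_\ell^n+\lambda\,\overline{c}_{ik}^{m_1j_1}\delta_{j_1}^{j_2}\delta_\ell^n c_{m_1j_2}^{jm}$; since $c$ and $c^{-1}$ are mutually inverse, $\overline{c}_{ik}^{m_1j_1}c_{m_1j_1}^{jm}=\delta_i^j\delta_k^m$, so the second term is $\lambda\delta_i^j\delta_k^m\delta_\ell^n$ as well, giving $\chi_\lambda(T_i^j\otimes T_k^mT_\ell^n)=2\lambda\,\varepsilon(T_i^jT_k^mT_\ell^n)$. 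Iterating this (or by a clean induction using \eqref{eq:CC2'}, i.e. $\chi(a\otimes bc)=\chi(a\otimes b)\varepsilon(c)+\chi(a\triangleleft b\otimes c)$, together with $\varepsilon(a\triangleleft b)=\varepsilon(a)\varepsilon(b)$ from \eqref{form:trieps} and the fact that $\chi_\lambda(a\otimes c)$ depends on $a$ only through $\varepsilon(a)$ when $a$ has length $1$) one obtains $\chi_\lambda(T_i^j\otimes T_{k_1}^{\ell_1}\cdots T_{k_n}^{\ell_n})=\lambda n\,\varepsilon(T_i^jT_{k_1}^{\ell_1}\cdots T_{k_n}^{\ell_n})$. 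The induction on $m$ using \eqref{eq:X3}, \eqref{eq:CC3'} and \eqref{form:trieps} proceeds symmetrically, multiplying the count by $m$, and yields \eqref{xFRT}.

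Finally, I would verify the Cartier condition \eqref{eq:CC4}, namely $\Rr*\chi_\lambda=\chi_\lambda^{\mathrm{op}}*\Rr$. Since $\chi_\lambda$ factors through $\varepsilon$ in the sense that $\chi_\lambda(a\otimes b)=\lambda\,\deg(a)\deg(b)\,\varepsilon(ab)$ on homogeneous elements (where $\deg$ is the $\mathbb{N}$-grading length, which is well defined because both $I(c)$ and $I(t)=0$ are homogeneous), it is symmetric: $\chi_\lambda^{\mathrm{op}}(a\otimes b)=\chi_\lambda(b\otimes a)=\lambda\,\deg(b)\deg(a)\,\varepsilon(ba)=\chi_\lambda(a\otimes b)$ using commutativity and cocommutativity of $\Bbbk$. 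Moreover $\chi_\lambda$ is central in $(A(c)\otimes A(c))^*$: for homogeneous $a,b$, the convolution $(\Rr*\chi_\lambda)(a\otimes b)=\Rr(a_1\otimes b_1)\chi_\lambda(a_2\otimes b_2)=\lambda\deg(a)\deg(b)\Rr(a_1\otimes b_1)\varepsilon(a_2 b_2)=\lambda\deg(a)\deg(b)\Rr(a\otimes b)$, and likewise $(\chi_\lambda*\Rr)(a\otimes b)=\lambda\deg(a)\deg(b)\Rr(a\otimes b)$, so $\Rr*\chi_\lambda=\chi_\lambda*\Rr=\chi_\lambda^{\mathrm{op}}*\Rr$. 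Hence \eqref{eq:CC4} holds and $(A(c),\Rr,\chi_\lambda)$ is Cartier. (Alternatively, one can observe that $\chi_\lambda$ is a central symmetric biderivation and invoke Example~\ref{exa:chisymco}, exactly as in the proofs of Propositions~\ref{prop:Mq2chi} and \ref{prop:GLq}; the degree function is a derivation into $\Bbbk$ precisely because the defining ideal is homogeneous.)

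The main obstacle I anticipate is the bookkeeping in the induction that produces the factor $mn$: one must carefully track how the recursion \eqref{eq:X4}–\eqref{eq:X5} contributes exactly one extra summand equal to $\lambda\varepsilon(\cdots)$ at each step, which hinges on the cancellations $\overline{c}\cdot c=\mathrm{id}$ and on the compatibility $\varepsilon(a\triangleleft b)=\varepsilon(a)\varepsilon(b)=\varepsilon(a\triangleright b)$. Once the grading observation is in place, the Cartier property is essentially immediate, so the content is concentrated in the combinatorial identity and in checking that $I(t)$ vanishes so that no quotient is taken.
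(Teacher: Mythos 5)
Your proposal is correct and follows essentially the same route as the paper: specialize the infinitesimal FRT construction (Example~\ref{ex:canT} and Theorem~\ref{thm:curvedFRT}) to $t=\lambda\,\mathrm{Id}_{V\otimes V}$, note $I(t)=0$ so $A(c,t)=A(c)$, obtain \eqref{xFRT} by the induction on word length via \eqref{eq:X4}--\eqref{eq:X5}, and get the Cartier property from $\chi_\lambda$ being a symmetric central functional built from the length grading (the paper phrases this as $\chi_\lambda=\lambda\,\partial\otimes\partial$ with $\partial$ a central derivation and invokes Example~\ref{exa:chisymco}, exactly your alternative). Only a harmless index slip occurs where you write $\chi_\lambda(T_i^k\otimes T_j^\ell)=\lambda\delta_i^j\delta_k^\ell$; it should read $\lambda\delta_i^k\delta_j^\ell=\lambda\varepsilon(T_i^kT_j^\ell)$, as your subsequent computations correctly use.
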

\begin{proof}
As noted in Example \ref{ex:canT}, given an invertible solution
$c\in\mathrm{End}_\Bbbk(V\otimes V)$ of the braid equation 
and choosing $t=\lambda\cdot\mathrm{Id}\in\mathrm{End}_\Bbbk(V\otimes V)$ the tuple
$(c,t)$ satisfies all assumptions of Theorem~\ref{thm:curvedFRT}.
In this case $I(t)=0$, since $t_{ij}^{k\ell}=\lambda\delta_i^k\delta_j^\ell$
and consequently $A(c,t)=A(c)$. Explicitly,
$\chi_\lambda\colon A(c)\otimes A(c)\to\Bbbk$ is determined on
generators by 
$\chi_\lambda(T_i^k\otimes T_j^\ell)=t_{ij}^{k\ell}=\lambda\delta_i^k\delta_j^\ell=\lambda\varepsilon(T_i^kT_j^\ell)
$
and extended via \eqref{eq:CC2} and \eqref{eq:CC3}.
\begin{invisible}
\begin{align*}
    \chi_\lambda(T_i^j\otimes T_k^mT_\ell^n)
    &=\chi_\lambda(T_i^j\otimes T_k^m)\varepsilon(T_\ell^n)
    +(\Rr^{-1}_{12}*(\chi_\lambda)_{13}*\Rr_{12})(T_i^j\otimes T_k^m\otimes T_\ell^n)\\
    &=\lambda\varepsilon(T_i^jT_k^mT_\ell^n)
    +\lambda(\Rr^{-1}_{12}*\varepsilon*\Rr_{12})(T_i^j\otimes T_k^m\otimes T_\ell^n)
    =2\lambda\varepsilon(T_i^jT_k^mT_\ell^n)
\end{align*}
and
\begin{align*}
    \chi_\lambda(T_i^kT_j^\ell\otimes T_m^n)
    &=\varepsilon(T_i^k)\chi_\lambda(T_j^\ell\otimes T_m^n)
    +(\Rr^{-1}_{23}*(\chi_\lambda)_{13}*\Rr_{23})(T_i^k\otimes T_j^\ell\otimes T_m^n)\\
    &=\lambda\varepsilon(T_i^kT_j^\ell T_m^n)
    +\lambda(\Rr^{-1}_{23}*\varepsilon*\Rr_{23})(T_i^k\otimes T_j^\ell\otimes T_m^n)
    =2\lambda\varepsilon(T_i^kT_j^\ell T_m^n).
\end{align*}\end{invisible}
An easy induction reveals the formula \eqref{xFRT}.

We include also a direct proof that $\chi_\lambda$ is an infinitesimal $\Rr$-form. By the same ideas of Remark \ref{rmk:hateps}, the algebra $A(c)$ inherits the graduation from the free algebra $F$ and hence we obtain a derivation $\partial:A(c)\to\Bbbk$ that on $\xi$ homogeneous of degree $|\xi|$ is defined by $\partial(\xi)=|\xi|\varepsilon(\xi)$. Note that $\xi(T_{i_1}^{j_1}\cdots T_{i_m}^{j_m})=m\varepsilon(T_{i_1}^{j_1}\cdots T_{i_m}^{j_m})$ so that $\chi_\lambda=\lambda\partial\otimes\partial$ is a symmetric biderivation. Since we further have $\partial(\xi_1)\xi_2=\xi_1 \partial(\xi_2)$ for every $\xi\in A(c)$, we get that $\partial$ is central whence so is $\chi_\lambda$. Thus, by Example \ref{exa:chisymco}, we conclude that $(A(c),\Rr,\chi_\lambda)$ is Cartier.
\end{proof}
\begin{example}
Following \cite[Section VIII.7]{Kassel} for a $2$-dimensional $\mathbb{C}$-vector space with basis $v_1,v_2\in V$ and for a non-zero scalar $q\in\mathbb{C}$ we consider the invertible solution $c\in\mathrm{End}_\mathbb{C}(V\otimes V)$ of the braid equation determined by $c(v_1\otimes v_1)=q^{\frac{1}{2}}v_1\otimes v_1$, $c(v_2\otimes v_2)=q^{\frac{1}{2}}v_2\otimes v_2$, $c(v_1\otimes v_2)=q^{-\frac{1}{2}}v_2\otimes v_1$ and $c(v_2\otimes v_1)=q^{-\frac{1}{2}}v_1\otimes v_2+q^{-\frac{1}{2}}(q-q^{-1})v_2\otimes v_1$. Using Proposition~\ref{Prop:xFRT} this gives a Cartier coquasitriangular bialgebra $(A(c),\Rr',\chi_\lambda)$ for any $\lambda\in\mathbb{C}$. According to \cite[Proposition VIII.7.1 and Corollary VIII.7.2]{Kassel} $(A(c),\Rr')\cong(\mathrm{M}_q(2),\Rr)$ are isomorphic as coquasitriangular bialgebras. The induced infinitesimal $\Rr$-form on $(\mathrm{M}_q(2),\Rr)$ is the one we already discussed in Proposition~\ref{prop:Mq2chi}.
\end{example}

\subsection{Tannaka-Krein reconstruction}\label{Sec:Tannaka}

Denote by $\mathsf{Vec}_{\mathsf{f}}$ the category of finite-dimensional $%
\Bbbk$-vector spaces. Given a category $\Cc$ and a functor $\omega:%
\Cc\to \mathsf{Vec}_{\mathsf{f}}$, then Tannaka-Krein reconstruction
allows to build a coalgebra $C$ such that $\omega$ factors through the category $%
\mathcal{M}^C_{\mathsf{f}}$ of finite-dimensional right $C$-comodules
\begin{align}\label{diag:tannaka}
\begin{split}\xymatrixrowsep{.7cm}
\xymatrix{\Cc\ar[rr]^-{\Omega}\ar[rd]_-{\omega}&&\Mm^C_{\mathsf
f}\ar[ld]^-F\\&\Vec_{\mathsf f} }  
\end{split}
\end{align}
where $F$ denotes the forgetful functor, see e.g. \cite[Corollary 2.1.9]%
{Schauenburg92}. We will refer to $C$ as the reconstructed coalgebra. In
favorable cases $\Cc$ comes out to be even equivalent to $\mathcal{M}%
^C_{\mathsf{f}}$.

If $\Cc$ is further a monoidal category and $\omega:\Cc\to
\mathsf{Vec}_\mathsf{f}$ is a strong monoidal functor, then $C$ becomes a bialgebra, eventually with antipode in case $\Cc$ is rigid, see
e.g. \cite[Section 2]{Ulbrich-onHopf}. Moreover, if $\Cc$ is braided, then $C$ inherits a coquasitriangular structure $\Rr:C\otimes C\to\Bbbk$, which results to be cotriangular in case the category is symmetric, see e.g. \cite[page 475]{Majid-book}.

In this section, we are dealing with conditions on $(\Cc,\omega )$
guaranteeing that the reconstructed coalgebra is a (pre-)Cartier (quasi)triangular bialgebra
possibly with antipode. Since we are not interested in finding the most
general possible results, but only eager to find applications of the notions
we have introduced so far, it may be that some of our assumptions can be relaxed.

\begin{theorem}
Let $\left( \Cc,\sigma \right) $ be a pre-additive braided monoidal
category, $\omega :\Cc\rightarrow \mathsf{Vec}_{\mathsf{f}}$ be an additive strong monoidal functor and $B$ be the reconstructed coalgebra
with its coquasitriangular bialgebra structure $\Rr:B\otimes B\rightarrow
\Bbbk .$ If there is a natural transformation $t_{X,Y}:X\otimes Y\to X\otimes Y$, then there is a $\Bbbk$-linear map $\chi
:B\otimes B\rightarrow \Bbbk $ that verifies \eqref{eq:CC1}. Moreover:

\begin{enumerate}[$i)$]
\item if $t$ fulfills \eqref{qC-I}, then $\chi $ verifies %
\eqref{eq:CC2};

\item if $t$ fulfills \eqref{qC-II}, then $\chi $ verifies %
\eqref{eq:CC3};

\item if $t$ fulfills \eqref{cart}, then $\chi $ verifies \eqref{eq:CC4}.
\end{enumerate}
As a consequence
\begin{itemize}
  \item If $\left( \Cc,\sigma \right) $ is braided (pre-)Cartier, then $(B,\Rr)$ is a (pre-)Cartier coquasitriangular bialgebra;
  \item If $\left( \Cc,\sigma \right) $ is symmetric (pre-)Cartier, then $(B,\Rr)$ is a (pre-)Cartier cotriangular bialgebra.
\end{itemize}
\end{theorem}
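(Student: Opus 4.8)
The plan is to mimic the standard Tannaka--Krein reconstruction of the coquasitriangular structure, but now upgrading the natural transformation $t$ to a linear map $\chi$ on the reconstructed bialgebra $B$. Recall that, since $\omega$ is additive strong monoidal and $\Cc$ is braided pre-additive, $B = \bigcup_{X} \operatorname{coend}$ is built so that each object $X$ of $\Cc$ gives a comodule $\omega(X)$ with coaction $\rho_X \colon \omega(X) \to \omega(X) \otimes B$, and the universal property is that a family of linear maps $(f_X \colon \omega(X) \to \omega(X))_X$ natural in $X$ corresponds bijectively to linear maps $B \to \Bbbk$ via $f_X = (\operatorname{Id} \otimes \phi) \circ \rho_X$ for $\phi \colon B \to \Bbbk$; dually, a family $(g_{X,Y} \colon \omega(X)\otimes\omega(Y)\to\omega(X)\otimes\omega(Y))_{X,Y}$ dinatural in $X,Y$ and compatible with $\omega^2$ corresponds to a linear map $B \otimes B \to \Bbbk$. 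So first I would apply $\omega$ to the natural transformation $t$: using the monoidal structure maps $\omega^2_{X,Y}\colon \omega(X)\otimes\omega(Y)\xrightarrow{\cong}\omega(X\otimes Y)$, the composite $(\omega^2_{X,Y})^{-1}\circ \omega(t_{X,Y})\circ \omega^2_{X,Y}$ is an endomorphism of $\omega(X)\otimes\omega(Y)$, natural in $X$ and $Y$; by the (co)end universal property this is classified by a unique $\Bbbk$-linear map $\chi\colon B\otimes B\to\Bbbk$ with $(\omega^2_{X,Y})^{-1}\circ\omega(t_{X,Y})\circ\omega^2_{X,Y} = (\operatorname{Id}_{\omega(X)\otimes\omega(Y)}\otimes\,\chi)\circ(\rho_X\otimes\rho_Y)$ (with the usual reshuffling of tensor legs). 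This is precisely the construction dual to Theorem~\ref{thm:curv-cotr}, where $t_{M,N}(m\otimes n)=m_0\otimes n_0\,\chi(m_1\otimes n_1)$, now read through $\omega$.

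Next I would translate the axioms. That $t_{X,Y}$ is a morphism in $\Cc$ for all $X,Y$ — equivalently, that the family is a morphism of $B$-comodules — unwinds, exactly as in the proof of Theorem~\ref{thm:curv-cotr}, to condition \eqref{eq:CC1}: $(u_H\chi)*m_H = m_H*(u_H\chi)$ on $B$. For $i)$, I would feed the comodule $\omega(Y\otimes Z)\cong\omega(Y)\otimes\omega(Z)$ (via $\omega^2$, which is a comodule isomorphism by construction of $B$) into \eqref{qC-I}; since $\omega$ is strong monoidal and braided-compatible (here one uses that $\sigma$ on $\Cc$ reconstructs to $\sigma^{\Rr}$, hence $\Rr$, on $B$), applying $\omega$ to \eqref{qC-I} and classifying both sides by the universal property yields exactly \eqref{eq:CC2}, i.e. $\chi(\operatorname{Id}_H\otimes m_H) = \chi_{12} + \Rr^{-1}_{12}*\chi_{13}*\Rr_{12}$. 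Symmetrically, applying $\omega$ to \eqref{qC-II} gives \eqref{eq:CC3}, proving $ii)$. For $iii)$, applying $\omega$ to \eqref{cart} and using that $\omega(\sigma_{X,Y})$ corresponds to $\Rr$ gives the relation $\Rr*\chi = \chi^{\mathrm{op}}*\Rr$, which is \eqref{eq:CC4}; this is again the $\omega$-image of the last paragraph of the proof of Theorem~\ref{thm:curv-cotr}. In each case the only subtlety is keeping track of leg conventions and of the fact that the classifying map of a composite (resp. sum) of dinatural families is the convolution product (resp. sum) of the classifying maps, which is exactly what the coend universal property guarantees.

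The two bulleted consequences are then immediate: if $(\Cc,\sigma,t)$ is braided pre-Cartier, then $t$ satisfies \eqref{qC-I} and \eqref{qC-II}, so by $i)$ and $ii)$ the reconstructed $\chi$ satisfies \eqref{eq:CC1}, \eqref{eq:CC2}, \eqref{eq:CC3}, i.e. $(B,\Rr,\chi)$ is pre-Cartier coquasitriangular by Definition~\ref{defn:curvedco(quasi)triang}; adding \eqref{cart} and $iii)$ gives the Cartier case. If moreover $\sigma$ is symmetric, the reconstructed $\Rr$ is a cotriangular structure (this is the classical reconstruction statement recalled just before the theorem, $\Rr^{-1}=\Rr^{\mathrm{op}}$), so $(B,\Rr,\chi)$ is (pre-)Cartier cotriangular. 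For the parenthetical ``possibly with antipode'': if $\Cc$ is rigid then $B$ is a Hopf algebra by the cited reconstruction results, and nothing in the argument for $\chi$ changes.

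The main obstacle I anticipate is purely bookkeeping rather than conceptual: verifying that applying the functor $\omega$ and then the coend classification genuinely transports \eqref{qC-I}/\eqref{qC-II}/\eqref{cart} to \eqref{eq:CC2}/\eqref{eq:CC3}/\eqref{eq:CC4}, one must carefully match the leg-notation maps $\chi_{12},\chi_{13},\Rr_{12}$ with the actual tensor-leg permutations coming from $\omega^2$ and from the definitions of $\sigma^{\Rr}$ and $(\sigma^{\Rr})^{-1}$ in terms of $\Rr$ and $\Rr^{-1}$; the associativity constraints $a$, which we suppress, also need a brief justification that they do not interfere (they are comodule isomorphisms). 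All of this is parallel to — indeed, the ``global'' categorical shadow of — the element-wise computations in the proof of Theorem~\ref{thm:curv-cotr}, so I would present the argument by invoking that theorem's bijection applied to the reconstruction diagram \eqref{diag:tannaka} rather than redoing the convolution-algebra calculations in full.
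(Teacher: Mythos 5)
Your proposal is correct and follows essentially the same route as the paper: one defines $\chi$ via the representability (coend) isomorphism classifying $(\phi^2_{X,Y})^{-1}\circ\omega(t_{X,Y})\circ\phi^2_{X,Y}$, and then transports \eqref{qC-I}, \eqref{qC-II}, \eqref{cart} to \eqref{eq:CC2}, \eqref{eq:CC3}, \eqref{eq:CC4} using additivity of $\omega$ and the compatibility of the classification with sums and composites (convolution). The only caution is that the final verification must be carried out through the universal property $\theta^2$ itself (as the paper does), not by literally invoking the bijection of Theorem~\ref{thm:curv-cotr}, since $\Omega:\Cc\to\Mm^B_{\mathsf f}$ need not be an equivalence and $B$ itself is not available as an object of $\Cc$.
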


\begin{proof}
Since $\Cc$ is a monoidal category and $(\omega,\phi^0,\phi^2) :\Cc%
\rightarrow \mathsf{Vec}_{\mathsf{f}}$ is a strong monoidal functor, we can construct a bialgebra $B$  following \cite[%
Section 2]{Ulbrich-onHopf}. Indeed, the
functor $\mathsf{Vec}\rightarrow \mathsf{Set},M\mapsto \mathrm{Nat}%
(\omega ,\omega \otimes M)$ is representable. Choose a representing vector
space $B$ and natural isomorphisms $\theta _{M}:\mathrm{Hom}_{\Bbbk
}(B,M)\rightarrow \mathrm{Nat}(\omega ,\omega \otimes M).$ Set $\alpha
:=\theta _{B}(\mathrm{Id}_{B}):\omega \rightarrow \omega \otimes B$, set $%
d:=(\alpha \otimes B)\alpha :\omega \rightarrow \omega \otimes B\otimes B$
and let $e:\omega \rightarrow \omega \otimes \Bbbk $ be the canonical
isomorphism. Then the coalgebra structure $(B,\Delta ,\varepsilon )$ is
given by $\theta _{B\otimes B}(\Delta )=d$ and $\theta _{\Bbbk }(\varepsilon
)=e$. In this way we obtain the functor $\Omega :\Cc\rightarrow \mathcal{M}%
_{\mathsf{f}}^{B},X\mapsto (\omega (X),\alpha _{X})$, such that $F\circ
\Omega =\omega $, as in \eqref{diag:tannaka}. For the algebra structure, there is an isomorphism $\theta
_{M}^{2}:\mathrm{Hom}_{\Bbbk }(B\otimes B,M)\rightarrow \mathrm{Nat}(\omega
\otimes \omega ,\omega \otimes \omega \otimes M)$ where, for every $%
f:B\otimes B\rightarrow M$, the natural transformation $\theta
_{M}^{2}(f):\omega \otimes \omega \rightarrow \omega \otimes \omega \otimes M
$ is defined on components by setting $\theta _{M}^{2}(f)_{X,Y}:\omega
(X)\otimes \omega (Y)\rightarrow \omega (X)\otimes \omega (Y)\otimes
M,x\otimes y\mapsto x_{0}\otimes y_{0}\otimes f(x_{1}\otimes y_{1}).$ Here
we used the notation $\alpha _{X}(x):=x_{0}\otimes x_{1}$. Denote by $\mu
_{X,Y}$ the following composition
\begin{equation*}
\xymatrixcolsep{1.8cm}\xymatrix{\omega(X)\otimes\omega(Y)\ar[r]^{%
\phi^2_{X,Y}}& \omega(X\otimes Y)\ar[r]^{\alpha_{X\otimes Y}}&
\omega(X\otimes Y)\otimes B\ar[r]^{(\phi^2_{X,Y}\otimes B)^{-1}}&
\omega(X)\otimes\omega(Y)\otimes B}.
\end{equation*}%
This defines a natural transformation $\mu :\omega \otimes \omega
\rightarrow \omega \otimes \omega \otimes B$ and the multiplication $%
m:B\otimes B\rightarrow B$ is given by $\theta _{B}^{2}(m)=\mu $. The unit $%
u:\Bbbk \rightarrow B$ is defined as the composition
\begin{equation*}
\xymatrixcolsep{1.8cm}\xymatrix{\Bbbk\ar[r]^{\phi^0}&
\omega(\unit)\ar[r]^{\alpha_\unit}& \omega(\unit)\otimes
B\ar[r]^{(\phi^0\otimes B)^{-1}}& \Bbbk\otimes B\ar[r]^{l_B}& B }.
\end{equation*}

Suppose further that $\Cc$ has a braiding $\sigma $. Now, following
\cite[page 475]{Majid-book}, we can endow $B$ with a coquasitriangular
structure $\Rr:B\otimes B\rightarrow \Bbbk $ defined as the unique
map such that $\theta _{\Bbbk }^{2}(\Rr)$ on components is uniquely determined by the commutativity of the following diagram
\begin{equation*}
\xymatrixcolsep{1.2cm}\xymatrix{\omega(X)\otimes\omega(Y)
\ar[d]_{\theta _{\Bbbk }^{2}(\Rr)_{X,Y}}
\ar[r]^{\phi^2_{X,Y}}& \omega(X\otimes Y)\ar[r]^{\omega(\sigma_{X, Y})}&
\omega(Y\otimes X)\ar[r]^{(\phi^2_{Y,X})^{-1}}&
\omega(Y)\otimes\omega(X)\ar[d]^{\tau_{\omega(Y),\omega(X)}}\\
\omega(X)\otimes\omega(Y)\otimes\Bbbk\ar[rrr]^-{r_{\omega(X)\otimes\omega(Y)}}&&& \omega(X)\otimes\omega(Y).  }
\end{equation*}%
Although the functor here lands in the finite-dimensional vector spaces, while in
Majid's approach it lands in vector spaces, still one gets that \eqref{eq:ct1}, \eqref{eq:ct2}, \eqref{eq:ct3} hold true. Note that $\Rr$ is convolution invertible with convolution inverse $%
\Rr^{-1}:B\otimes B\rightarrow \Bbbk $ which is the unique map such
that $\theta _{\Bbbk }^{2}(\Rr^{-1})$ on components is $\theta
_{\Bbbk }^{2}(\Rr^{-1})_{X,Y}=r_{\omega (X)\otimes \omega
(Y)}^{-1}\left( \phi _{X,Y}^{2}\right) ^{-1}\omega \left( \sigma
_{X,Y}^{-1}\right) \phi _{Y,X}^{2}\tau _{\omega \left( X\right) ,\omega
\left( Y\right) }.$ 

Moreover, $B$ becomes in fact cotriangular in case $\sigma $ is a
symmetry, i.e., $\sigma _{X,Y}^{-1}=\sigma _{Y,X}.$ 

So far everything is well-known. Let us concern now the
infinitesimal braiding.

Suppose further that $\Cc$ has a natural transformation $t$. Then we
can define $\chi :B\otimes B\rightarrow \Bbbk $ to be the unique map such
that $\theta _{\Bbbk }^{2}(\chi )$ on components is the composition
\begin{equation*}
\xymatrixcolsep{1.3cm}\xymatrix{\omega(X)\otimes\omega(Y)\ar[r]^{%
\phi^2_{X,Y}}& \omega(X\otimes Y)\ar[r]^{\omega(t_{X, Y})}& \omega(X\otimes
Y)\ar[r]^{(\phi^2_{X,Y})^{-1}}& \omega(X)\otimes\omega(Y)\ar[r]^{r_{\omega
(X)\otimes \omega (Y)}^{-1}}& \omega (X)\otimes \omega (Y)\otimes \Bbbk}
\end{equation*}

One can check that condition \eqref{eq:CC1} is equivalent to the equality
\begin{equation*}
(r_{\omega (X)\otimes \omega (Y)}\otimes B)(\theta _{\Bbbk }^{2}(\chi
)_{X,Y}\otimes B)\theta _{B}^{2}(m)_{X,Y}=\theta _{B}^{2}(m)_{X,Y}r_{\omega
(X)\otimes \omega (Y)}\theta _{\Bbbk }^{2}(\chi )_{X,Y}
\end{equation*}%
which, in turn, holds in view of the following computation:
\begin{align*}
&(r_{\omega (X)\otimes \omega (Y)}\otimes B)(\theta _{\Bbbk }^{2}(\chi
)_{X,Y}\otimes B)\theta _{B}^{2}(m)_{X,Y} \\
&=(r_{\omega (X)\otimes \omega (Y)}\otimes B)(r_{\omega (X)\otimes \omega
(Y)}^{-1}\otimes B)(\left( \phi _{X,Y}^{2}\right) ^{-1}\otimes B)(\omega
\left( t_{X,Y}\right) \otimes B)(\phi _{X,Y}^{2}\otimes B)\left( \phi
_{X,Y}^{2}\otimes B\right) ^{-1}\alpha _{X\otimes Y}\phi _{X,Y}^{2} \\
&=(\left( \phi _{X,Y}^{2}\right) ^{-1}\otimes B)(\omega \left(
t_{X,Y}\right) \otimes B)\alpha _{X\otimes Y}\phi _{X,Y}^{2} \\
&=(\left( \phi _{X,Y}^{2}\right) ^{-1}\otimes B)\alpha _{X\otimes Y}\omega
\left( t_{X,Y}\right) \phi _{X,Y}^{2} \\
&=\left( \phi _{X,Y}^{2}\otimes B\right) ^{-1}\alpha _{X\otimes Y}\phi
_{X,Y}^{2}\left( \phi _{X,Y}^{2}\right) ^{-1}\omega \left( t_{X,Y}\right)
\phi _{X,Y}^{2} \\
&=\left( \phi _{X,Y}^{2}\otimes B\right) ^{-1}\alpha _{X\otimes Y}\phi
_{X,Y}^{2}r_{\omega (X)\otimes \omega (Y)}r_{\omega (X)\otimes \omega
(Y)}^{-1}\left( \phi _{X,Y}^{2}\right) ^{-1}\omega \left( t_{X,Y}\right)
\phi _{X,Y}^{2} \\
&=\theta _{B}^{2}(m)_{X,Y}r_{\omega (X)\otimes \omega (Y)}\theta _{\Bbbk
}^{2}(\chi )_{X,Y}.
\end{align*}

$i)$ It comes out that \eqref{eq:CC2} is equivalent to 
\begin{eqnarray*}
&&\left( \omega \left( X\right) \otimes \left( \phi _{Y,Z}^{2}\right)
^{-1}\otimes \Bbbk \right) \theta _{\Bbbk }^{2}(\chi )_{X,Y\otimes Z}\left(
\omega \left( X\right) \otimes \phi _{Y,Z}^{2}\right)  \\
&=&r_{\omega (X)\otimes \omega (Y)\otimes \omega (Z)}^{-1}\left( r_{\omega
(X)\otimes \omega (Y)}\otimes \omega (Z)\right) \left( \theta _{\Bbbk
}^{2}(\chi )_{X,Y}\otimes \omega (Z)\right) + \\
&&+\left[
\begin{array}{c}
r_{\omega (X)\otimes \omega (Y)\otimes \omega (Z)}^{-1}\left( r_{\omega
\left( X\right) \otimes \omega \left( Y\right) }\otimes \omega \left(
Z\right) \right) \left( \theta _{\Bbbk }^{2}(\Rr^{-1})_{X,Y}\otimes
\omega \left( Z\right) \right) \left( \tau _{\omega \left( Y\right) ,\omega
\left( X\right) }\otimes \omega \left( Z\right) \right)  \\
r_{\omega \left( Y\right) \otimes \omega \left( X\right) \otimes \omega
\left( Z\right) }\left( \omega \left( Y\right) \otimes \theta _{\Bbbk
}^{2}(\chi )_{X,Z}\right) \left( \tau _{\omega \left( X\right) ,\omega
\left( Y\right) }\otimes \omega \left( Z\right) \right) \\
\left( r_{\omega
\left( X\right) \otimes \omega \left( Y\right) }\otimes \omega \left(
Z\right) \right) \left( \theta _{\Bbbk }^{2}(\Rr)_{X,Y}\otimes
\omega \left( Z\right) \right) %
\end{array}%
\right] .
\end{eqnarray*}%
If we compose by $r_{\omega (X)\otimes \omega (Y)\otimes \omega (Z)}$ this
equality becomes%
\begin{eqnarray*}
&&\left( \omega \left( X\right) \otimes \left( \phi _{Y,Z}^{2}\right)
^{-1}\right) r_{\omega (X)\otimes \omega (Y\otimes Z)}\theta _{\Bbbk
}^{2}(\chi )_{X,Y\otimes Z}\left( \omega \left( X\right) \otimes \phi
_{Y,Z}^{2}\right)  \\
&=&\left( r_{\omega (X)\otimes \omega (Y)}\otimes \omega (Z)\right) \left(
\theta _{\Bbbk }^{2}(\chi )_{X,Y}\otimes \omega (Z)\right) + \\
&&+\left[
\begin{array}{c}
\left( r_{\omega \left( X\right) \otimes \omega \left( Y\right) }\otimes
\omega \left( Z\right) \right) \left( \theta _{\Bbbk }^{2}(\Rr%
^{-1})_{X,Y}\otimes \omega \left( Z\right) \right) \left( \tau _{\omega
\left( Y\right) ,\omega \left( X\right) }\otimes \omega \left( Z\right)
\right)  \\
r_{\omega \left( Y\right) \otimes \omega \left( X\right) \otimes \omega
\left( Z\right) }\left( \omega \left( Y\right) \otimes \theta _{\Bbbk
}^{2}(\chi )_{X,Z}\right) \left( \tau _{\omega \left( X\right) ,\omega
\left( Y\right) }\otimes \omega \left( Z\right) \right) \\
\left( r_{\omega
\left( X\right) \otimes \omega \left( Y\right) }\otimes \omega \left(
Z\right) \right) \left( \theta _{\Bbbk }^{2}(\Rr)_{X,Y}\otimes
\omega \left( Z\right) \right)%
\end{array}%
\right] .
\end{eqnarray*}%
Let us rewrite the three terms in the order in which they appear%
\begin{eqnarray*}
&&\left( \omega \left( X\right) \otimes \left( \phi _{Y,Z}^{2}\right)
^{-1}\right) r_{\omega (X)\otimes \omega (Y\otimes Z)}\theta _{\Bbbk
}^{2}(\chi )_{X,Y\otimes Z}\left( \omega \left( X\right) \otimes \phi
_{Y,Z}^{2}\right)  \\
&=&\left( \omega \left( X\right) \otimes \left( \phi _{Y,Z}^{2}\right)
^{-1}\right) r_{\omega (X)\otimes \omega (Y\otimes Z)}r_{\omega (X)\otimes
\omega (Y\otimes Z)}^{-1}\left( \phi _{X,Y\otimes Z}^{2}\right) ^{-1}\omega
\left( t_{X,Y\otimes Z}\right) \phi _{X,Y\otimes Z}^{2}\left( \omega \left(
X\right) \otimes \phi _{Y,Z}^{2}\right)  \\
&=&\left( \omega \left( X\right) \otimes \left( \phi _{Y,Z}^{2}\right)
^{-1}\right) \left( \phi _{X,Y\otimes Z}^{2}\right) ^{-1}\omega \left(
t_{X,Y\otimes Z}\right) \phi _{X,Y\otimes Z}^{2}\left( \omega \left(
X\right) \otimes \phi _{Y,Z}^{2}\right)  \\
&=&\left[ \phi _{X,Y\otimes Z}^{2}\left( \omega \left( X\right) \otimes \phi
_{Y,Z}^{2}\right) \right] ^{-1}\omega \left( t_{X,Y\otimes Z}\right) \left[
\phi _{X,Y\otimes Z}^{2}\left( \omega \left( X\right) \otimes \phi
_{Y,Z}^{2}\right) \right]
\end{eqnarray*}%
\begin{eqnarray*}
&&\left( r_{\omega (X)\otimes \omega (Y)}\otimes \omega (Z)\right) \left(
\theta _{\Bbbk }^{2}(\chi )_{X,Y}\otimes \omega (Z)\right)  \\
&=&\left( r_{\omega (X)\otimes \omega (Y)}\otimes \omega (Z)\right) \left(
r_{\omega (X)\otimes \omega (Y)}^{-1}\otimes \omega (Z)\right) \left( \left(
\phi _{X,Y}^{2}\right) ^{-1}\otimes \omega (Z)\right) \left( \omega \left(
t_{X,Y}\right) \otimes \omega (Z)\right) \left( \phi _{X,Y}^{2}\otimes
\omega (Z)\right)  \\
&=&\left( \left( \phi _{X,Y}^{2}\right) ^{-1}\otimes \omega (Z)\right)
\left( \phi _{X\otimes Y,Z}^{2}\right) ^{-1}\omega \left( t_{X,Y}\otimes
Z\right) \phi _{X\otimes Y,Z}^{2}\left( \phi _{X,Y}^{2}\otimes \omega
(Z)\right)  \\
&=&\left[ \phi _{X,Y\otimes Z}^{2}\left( \omega \left( X\right) \otimes \phi
_{Y,Z}^{2}\right) \right]^{-1} \omega \left( t_{X,Y}\otimes Z\right) \left[ \phi
_{X,Y\otimes Z}^{2}\left( \omega \left( X\right) \otimes \phi
_{Y,Z}^{2}\right) \right]
\end{eqnarray*}%
and%
\begin{eqnarray*}
&&\left[
\begin{array}{c}
\left( r_{\omega \left( X\right) \otimes \omega \left( Y\right) }\otimes
\omega \left( Z\right) \right) \left( \theta _{\Bbbk }^{2}(\Rr%
^{-1})_{X,Y}\otimes \omega \left( Z\right) \right) \left( \tau _{\omega
\left( Y\right) ,\omega \left( X\right) }\otimes \omega \left( Z\right)
\right)  \\
r_{\omega \left( Y\right) \otimes \omega \left( X\right) \otimes \omega
\left( Z\right) }\left( \omega \left( Y\right) \otimes \theta _{\Bbbk
}^{2}(\chi )_{X,Z}\right) \left( \tau _{\omega \left( X\right) ,\omega
\left( Y\right) }\otimes \omega \left( Z\right) \right) \\
\left( r_{\omega
\left( X\right) \otimes \omega \left( Y\right) }\otimes \omega \left(
Z\right) \right) \left( \theta _{\Bbbk }^{2}(\Rr)_{X,Y}\otimes
\omega \left( Z\right) \right)
\end{array}%
\right]  \\
&=&\left[
\begin{array}{c}
\left( r_{\omega \left( X\right) \otimes \omega \left( Y\right) }\otimes
\omega \left( Z\right) \right)  
\left( r_{\omega (X)\otimes \omega (Y)}^{-1}\otimes \omega \left( Z\right)
\right) \left( \left( \phi _{X,Y}^{2}\right) ^{-1}\otimes \omega \left(
Z\right) \right) \left( \omega \left( \sigma _{X,Y}^{-1}\right) \otimes
\omega \left( Z\right) \right) \\
\left( \phi _{Y,X}^{2}\otimes \omega \left(
Z\right) \right) \left( \tau _{\omega \left( X\right) ,\omega \left(
Y\right) }\otimes \omega \left( Z\right) \right)  
\left( \tau _{\omega \left( Y\right) ,\omega \left( X\right) }\otimes \omega
\left( Z\right) \right)  \\
r_{\omega \left( Y\right) \otimes \omega \left( X\right) \otimes \omega
\left( Z\right) }\left( \omega \left( Y\right) \otimes r_{\omega (X)\otimes
\omega (Z)}^{-1}\right) \left( \omega \left( Y\right) \otimes \left( \phi
_{X,Z}^{2}\right) ^{-1}\right)\\
 \left( \omega \left( Y\right) \otimes \omega
\left( t_{X,Z}\right) \right) \left( \omega \left( Y\right) \otimes \phi
_{X,Z}^{2}\right)  
\left( \tau _{\omega \left( X\right) ,\omega \left( Y\right) }\otimes \omega
\left( Z\right) \right) \left( r_{\omega \left( X\right) \otimes \omega
\left( Y\right) }\otimes \omega \left( Z\right) \right)  \\
\left( r_{\omega (X)\otimes \omega (Y)}^{-1}\otimes \omega \left( Z\right)
\right) \left( \tau _{\omega \left( Y\right) ,\omega \left( X\right)
}\otimes \omega \left( Z\right) \right) \left( \left( \phi _{Y,X}^{2}\right)
^{-1}\otimes \omega \left( Z\right) \right) \\\left( \omega \left( \sigma
_{X,Y}\right) \otimes \omega \left( Z\right) \right) \left( \phi
_{X,Y}^{2}\otimes \omega \left( Z\right) \right)
\end{array}%
\right]  \\
&=&\left[
\begin{array}{c}
\left( \left( \phi _{X,Y}^{2}\right) ^{-1}\otimes \omega \left( Z\right)
\right) \left( \omega \left( \sigma _{X,Y}^{-1}\right) \otimes \omega \left(
Z\right) \right) \left( \phi _{Y,X}^{2}\otimes \omega \left( Z\right)
\right)  \\
\left( \omega \left( Y\right) \otimes \left( \phi _{X,Z}^{2}\right)
^{-1}\right) \left( \omega \left( Y\right) \otimes \omega \left(
t_{X,Z}\right) \right) \left( \omega \left( Y\right) \otimes \phi
_{X,Z}^{2}\right)  \\
\left( \left( \phi _{Y,X}^{2}\right) ^{-1}\otimes \omega \left( Z\right)
\right) \left( \omega \left( \sigma _{X,Y}\right) \otimes \omega \left(
Z\right) \right) \left( \phi _{X,Y}^{2}\otimes \omega \left( Z\right)
\right)
\end{array}%
\right]  \\
&=&\left[
\begin{array}{c}
\left( \left( \phi _{X,Y}^{2}\right) ^{-1}\otimes \omega \left( Z\right)
\right) \left( \phi _{X\otimes Y,Z}^{2}\right) ^{-1}\omega \left( \sigma
_{X,Y}^{-1}\otimes Z\right) \phi _{Y\otimes X,Z}^{2}\left( \phi
_{Y,X}^{2}\otimes \omega \left( Z\right) \right)  \\
\left( \omega \left( Y\right) \otimes \left( \phi _{X,Z}^{2}\right)
^{-1}\right) \left( \phi _{Y,X\otimes Z}^{2}\right) ^{-1}\omega \left(
Y\otimes t_{X,Z}\right) \phi _{Y,X\otimes Z}^{2}\left( \omega \left(
Y\right) \otimes \phi _{X,Z}^{2}\right)  \\
\left( \left( \phi _{Y,X}^{2}\right) ^{-1}\otimes \omega \left( Z\right)
\right) \left( \phi _{Y\otimes X,Z}^{2}\right) ^{-1}\omega \left( \sigma
_{X,Y}\otimes Z\right) \phi _{X\otimes Y,Z}^{2}\left( \phi _{X,Y}^{2}\otimes
\omega \left( Z\right) \right)
\end{array}%
\right]  \\
&=&\left[
\begin{array}{c}
\left[ \phi _{X,Y\otimes Z}^{2}\left( \omega \left( X\right) \otimes \phi
_{Y,Z}^{2}\right) \right] ^{-1}\omega \left( \sigma _{X,Y}^{-1}\otimes
Z\right) \phi _{Y\otimes X,Z}^{2}\left( \phi _{Y,X}^{2}\otimes \omega \left(
Z\right) \right)  \\
\left[ \phi _{Y\otimes X,Z}^{2}\left( \phi _{Y,X}^{2}\otimes \omega \left(
Z\right) \right) \right] ^{-1}\omega \left( Y\otimes t_{X,Z}\right) \left[
\phi _{Y\otimes X,Z}^{2}\left( \phi _{Y,X}^{2}\otimes \omega \left( Z\right)
\right) \right]  \\
\left[ \phi _{Y\otimes X,Z}^{2}\left( \phi _{Y,X}^{2}\otimes \omega \left(
Z\right) \right) \right] ^{-1}\omega \left( \sigma _{X,Y}\otimes Z\right) %
\left[ \phi _{X,Y\otimes Z}^{2}\left( \omega \left( X\right) \otimes \phi
_{Y,Z}^{2}\right) \right]
\end{array}%
\right]  \\
&=&\left[ \phi _{X,Y\otimes Z}^{2}\left( \omega \left( X\right) \otimes \phi
_{Y,Z}^{2}\right) \right] ^{-1}\omega \left( \sigma _{X,Y}^{-1}\otimes
Z\right) \omega \left( Y\otimes t_{X,Z}\right)  \omega \left( \sigma
_{X,Y} \otimes  Z \right) \left[ \phi
_{X,Y\otimes Z}^{2}\left( \omega \left( X\right) \otimes \phi
_{Y,Z}^{2}\right) \right]
\end{eqnarray*}%
so that the equality we have to check becomes%
\begin{eqnarray*}
&&\left[ \phi _{X,Y\otimes Z}^{2}\left( \omega \left( X\right) \otimes \phi
_{Y,Z}^{2}\right) \right] ^{-1}\omega \left( t_{X,Y\otimes Z}\right) \left[
\phi _{X,Y\otimes Z}^{2}\left( \omega \left( X\right) \otimes \phi
_{Y,Z}^{2}\right) \right]  \\
&=&\left[ \phi _{X,Y\otimes Z}^{2}\left( \omega \left( X\right) \otimes \phi
_{Y,Z}^{2}\right) \right]^{-1} \omega \left( t_{X,Y}\otimes Z\right) \left[ \phi
_{X,Y\otimes Z}^{2}\left( \omega \left( X\right) \otimes \phi
_{Y,Z}^{2}\right) \right] + \\
&&+\left[ \phi _{X,Y\otimes Z}^{2}\left( \omega \left( X\right) \otimes \phi
_{Y,Z}^{2}\right) \right] ^{-1}\omega \left( \sigma _{X,Y}^{-1}\otimes
Z\right) \omega \left( Y\otimes t_{X,Z}\right) \omega \left( \sigma
_{X,Y}\otimes Z\right) \left[ \phi _{X,Y\otimes Z}^{2}\left( \omega \left(
X\right) \otimes \phi _{Y,Z}^{2}\right) \right]
\end{eqnarray*}%
that is%
\begin{equation*}
\omega \left( t_{X,Y\otimes Z}\right) =\omega \left( t_{X,Y}\otimes
Z\right) +\omega \left[ \left( \sigma _{X,Y}^{-1}\otimes Z\right) \left(
Y\otimes t_{X,Z}\right) \left( \sigma _{X,Y}\otimes Z\right) \right] .
\end{equation*}%
Since $\omega \,$\ is additive, this equality is true by \eqref{qC-I}.

$ii)$ Similarly, \eqref{eq:CC3} amounts to the equality 
\begin{eqnarray*}
&&\left( \left( \phi _{X,Y}^{2}\right) ^{-1}\otimes \omega \left( Z\right)
\otimes \Bbbk \right) \theta _{\Bbbk }^{2}(\chi )_{X\otimes Y,Z}\left( \phi
_{X,Y}^{2}\otimes \omega \left( Z\right) \right)  \\
&=&\omega \left( X\right) \otimes \theta _{\Bbbk }^{2}(\chi )_{Y,Z}+ \\
&&+\left[
\begin{array}{c}
\left( \omega \left( X\right) \otimes \theta _{\Bbbk }^{2}(\Rr%
^{-1})_{Y,Z}\right) \left( \omega \left( X\right) \otimes \tau _{\omega
\left( Z\right) ,\omega \left( Y\right) }\right) \left( r_{\omega \left(
X\right) \otimes \omega \left( Z\right) }\otimes \omega \left( Y\right)
\right)  \\
\left( \theta _{\Bbbk }^{2}(\chi )_{X,Z}\otimes \omega \left( Y\right)
\right) \left( \omega \left( X\right) \otimes \tau _{\omega \left( Y\right)
,\omega \left( Z\right) }\right) r_{\omega \left( X\right) \otimes \omega
\left( Y\right) \otimes \omega \left( Z\right) }\left( \omega \left(
X\right) \otimes \theta _{\Bbbk }^{2}(\Rr)_{Y,Z}\right)
\end{array}%
\right] .
\end{eqnarray*}%
which follows from \eqref{qC-II} and the fact that $\omega $ is additive.

$iii)$ One checks that \eqref{eq:CC4} is equivalent to
\begin{equation*}
\theta _{\Bbbk }^{2}(\Rr)_{X,Y}r_{\omega (X)\otimes \omega
(Y)}\theta _{\Bbbk }^{2}(\chi )_{X,Y}=\left( \tau _{\omega \left( Y\right)
,\omega \left( X\right) }\otimes \Bbbk \right) \theta _{\Bbbk }^{2}(\chi
)_{Y,X}\tau _{\omega \left( X\right) ,\omega \left( Y\right) }r_{\omega
(X)\otimes \omega (Y)}\theta _{\Bbbk }^{2}(\Rr)_{X,Y}.
\end{equation*}%
By starting from the right-hand side and by using \eqref{cart}, we compute%
\begin{eqnarray*}
&&\left( \tau _{\omega \left( Y\right) ,\omega \left( X\right) }\otimes
\Bbbk \right) \theta _{\Bbbk }^{2}(\chi )_{Y,X}\tau _{\omega \left( X\right)
,\omega \left( Y\right) }r_{\omega (X)\otimes \omega (Y)}\theta _{\Bbbk
}^{2}(\Rr)_{X,Y} \\
&=&\left[
\begin{array}{c}
\left( \tau _{\omega \left( Y\right) ,\omega \left( X\right) }\otimes
\Bbbk \right) r_{\omega (Y)\otimes \omega (X)}^{-1}\left( \phi
_{Y,X}^{2}\right) ^{-1}\omega \left( t_{Y,X}\right) \phi _{Y,X}^{2}\\
\tau_{\omega \left( X\right) ,\omega \left( Y\right) }r_{\omega (X)\otimes
\omega (Y)}r_{\omega (X)\otimes \omega (Y)}^{-1}\tau _{\omega \left(
Y\right) ,\omega \left( X\right) }\left( \phi _{Y,X}^{2}\right) ^{-1}\omega
\left( \sigma _{X,Y}\right) \phi _{X,Y}^{2} 
\end{array}\right]
\\
&=&\left( \tau _{\omega \left( Y\right) ,\omega \left( X\right) }\otimes
\Bbbk \right) r_{\omega (Y)\otimes \omega (X)}^{-1}\left( \phi
_{Y,X}^{2}\right) ^{-1}\omega \left( t_{Y,X}\right) \omega \left( \sigma
_{X,Y}\right) \phi _{X,Y}^{2} \\
&\overset{\eqref{cart}  }=&\left( \tau _{\omega \left( Y\right) ,\omega \left( X\right) }\otimes
\Bbbk \right) r_{\omega (Y)\otimes \omega (X)}^{-1}\left( \phi
_{Y,X}^{2}\right) ^{-1}\omega \left( \sigma _{X,Y}\right) \omega \left(
t_{X,Y}\right) \phi _{X,Y}^{2} \\
&=&r_{\omega (X)\otimes \omega (Y)}^{-1}\tau _{\omega \left( Y\right)
,\omega \left( X\right) }\left( \phi _{Y,X}^{2}\right) ^{-1}\omega \left(
\sigma _{X,Y}\right) \omega \left( t_{X,Y}\right) \phi _{X,Y}^{2} \\
&=&r_{\omega (X)\otimes \omega (Y)}^{-1}\tau _{\omega \left( Y\right)
,\omega \left( X\right) }\left( \phi _{Y,X}^{2}\right) ^{-1}\omega \left(
\sigma _{X,Y}\right) \phi _{X,Y}^{2}r_{\omega (X)\otimes \omega
(Y)}r_{\omega (X)\otimes \omega (Y)}^{-1}\left( \phi _{X,Y}^{2}\right)
^{-1}\omega \left( t_{X,Y}\right) \phi _{X,Y}^{2} \\
&=&\theta _{\Bbbk }^{2}(\Rr)_{X,Y}r_{\omega (X)\otimes \omega
(Y)}\theta _{\Bbbk }^{2}(\chi )_{X,Y}
\end{eqnarray*} which completes the proof.
\end{proof}

\subsection*{Acknowledgements}
This paper was written while AA, LB and AS were members of the
``National Group for Algebraic and Geometric Structures and their
Applications'' (GNSAGA-INdAM). The work of TW was partially supported by INFN Sezione di Torino while all authors were partially supported by ``Ministero dell’Università e della Ricerca'' within the National Research Project PRIN 2017 ``\emph{Categories, Algebras: Ring-Theoretical and Homological Approaches} (CARTHA)''.

The authors would like to thank P. Aschieri, D. Ferri and R. Fioresi for valuable comments on the topic. Moreover, they express their gratitude to the referee for meaningful suggestions.


\begin{thebibliography}{}

\bibitem[AEG]{AEG01} N. Andruskiewitsch, P. Etingof, S. Gelaki, 
\emph{Triangular Hopf algebras with the Chevalley property}. Michigan Math. J. \textbf{49} (2001), no. 2, 277-298.

\bibitem[AnS]{AS02} N. Andruskiewitsch, H.-J. Schneider, \emph{Pointed Hopf algebras}. New directions in Hopf algebras, 1-68, Math. Sci. Res. Inst. Publ., \textbf{43}, Cambridge Univ. Press, Cambridge, 2002.

\bibitem[AsS]{AS14} P. Aschieri, A. Schenkel, \emph{Noncommutative connections on bimodules and Drinfeld twist deformation}. Adv. Theor. Math. Phys. \textbf{18} (2014), no. 3, 513-612.

\bibitem[Ca]{Ca93} P. Cartier, \emph{Construction combinatoire des invariants de Vassiliev-Kontsevich des nœuds. (French) [A combinatorial construction for the Vassiliev-Kontsevich knot invariants]} C. R. Acad. Sci. Paris Sér. I Math. \textbf{316} (1993), no. 11, 1205-1210.

\bibitem[Ch]{Ch98} H.-X. Chen, \emph{Quasitriangular structures of bicrossed coproducts}.
J. Algebra \textbf{204} (1998), no. 2, 504-531. 

\bibitem[Da]{Da13} A. Davydov, \emph{Twisted derivations of Hopf algebras.} 
J. Pure Appl. Algebra \textbf{217} (2013), no. 3, 567-582. 

\bibitem[Dr1]{Dr83} V.G. Drinfel'd, \emph{Hamiltonian structures on Lie groups, Lie bialgebras and the geometric meaning of the classical Yang-Baxter equations.} Sov. Math. Dokl. \textbf{27} (1983), 68-71.

\bibitem[Dr2]{Dr87} V.G. Drinfel'd, \emph{Quantum groups.} Proceedings of the International Congress of Mathematicians, Vol. 1, 2 (Berkeley, Calif., 1986), 798-820, Amer. Math. Soc., Providence, RI, 1987.

\bibitem[Dr3]{Dr90} V.G. Drinfel'd, \emph{Quasi-Hopf algebras.} Leningrad Math. J. \textbf{1} (1990) 1419-1457.

\bibitem[Es]{Esposito}
C. Esposito, \emph{Formality Theory: From Poisson Structures to Deformation Quantization.} Springer Briefs in Mathematical Physics vol. 2, Springer International Publishing, 2015.

\bibitem[EK]{EtingofKazhdan}
P. Etingof, D. Kazhdan, \emph{Quantization of Lie bialgebras, I.} Sel. Math. \textbf{2} (1996) 1-41.

\bibitem[ES]{ES2010}
P. Etingof, O. Schiffmann, \emph{Lectures on Quantum Groups.} Lect. Math. Phys., International Press, 2010.

\bibitem[FRT]{FRT} L.D. Faddeev, N.Yu. Reshetikhin, L.A. Takhtajan, \emph{Quantization of Lie groups and Lie algebras.} (Russian) Algebra i Analiz \textbf{1} (1989), no. 1, 178-206; translation in Leningrad Math. J. \textbf{1} (1990), no. 1, 193-225.

\bibitem[FT]{FadTak} L.D. Faddeev, L.A. Takhtajan, \emph{The quantum inverse scattering method of the inverse problem and Heisenberg XYZ model.} Russ. Math. Surv. \textbf{34} (1979), 11-68.

\bibitem[Fe]{Fed}
B.V. Fedosov, \emph{A simple geometrical construction of deformation quantization.} J. Differential Geom., 40 \textbf{2} (1994) 213-238.

\bibitem[Ge]{Ge92} S. Gelaki, \emph{Topics on quasitriangular Hopf algebras, Master’s Thesis}, Ben Gurion University of
the Negev, Israel, 1992.

\bibitem[HV]{HV} I. Heckenberger, L. Vendramin, \emph{Bosonization of curved Lie bialgebras}, Bull. Belg. Math. Soc. Simon Stevin \textbf{30} (2023), no. 5, 577-600.

\bibitem[JS]{JoyalStreet} A. Joyal, R. Street, \emph{Braided monoidal categories}. Macquarie Mathematics Reports 860081, 1986.

\bibitem[Ka]{Kassel} C. Kassel, \emph{Quantum groups}. Graduate Texts in
Mathematics, \textbf{155}. Springer-Verlag, New York, 1995.

\bibitem[Koh]{kohno} T. Kohno, 
\emph{Monodromy representations of braid groups and Yang-Baxter equations}. 
Ann. Inst. Fourier (Grenoble) \textbf{37} (1987), no. 4, 139-160. 

\bibitem[Kon]{Kont} M. Kontsevich, \emph{Deformation Quantization of Poisson Manifolds}. Lett. Math. Phys. \textbf{66} (2003) 157-216.

\bibitem[Ma1]{Majid00} S. Majid, \emph{Braided-Lie bialgebras}. Pac. J. Appl. Math. \textbf{192}, 2 (2000) 329-356.

\bibitem[Ma2]{Majid-book} S. Majid, \emph{Foundations of quantum group theory}. Cambridge University Press, Cambridge, 1995.

\bibitem[Ma3]{Majid-notes} S. Majid, \emph{Quasitriangular Hopf algebras and Yang-Baxter equations}. Int. J. Mod. Phys. A \textbf{5} (1990) 1-91.

\bibitem[Ma4]{Majid91} S. Majid, \emph{Reconstruction theorems and rational conformal field theories}. Lett. Math. Phys. \textbf{22} (1991) 4359-4374.

\bibitem[Mi]{Michaelis90} W. Michaelis, \emph{The primitives of the continuous linear dual of a Hopf algebra as the dual Lie algebra of a Lie coalgebra}. Lie algebra and related topics (Madison, WI, 1988), 125-176,
Contemp. Math., \textbf{110}, Amer. Math. Soc., Providence, RI, 1990.

\bibitem[Po]{Popescu73} N. Popescu, \emph{Abelian categories with applications to rings and modules}. London Mathematical Society Monographs,
No. \textbf{3}. Academic Press, London-New York, 1973.


\bibitem[Ra]{Radford} D.E. Radford, \emph{Hopf algebras}. Series on Knots and Everything, \textbf{49}. World Scientific Publishing Co. Pte. Ltd., Hackensack, NJ, 2012.

\bibitem[RT]{ResTur} N.Yu. Reshetikhin, V.G. Turaev, \emph{Ribbon graphs and their invariants derived from quantum groups}. Commun. Math. Phys. \textbf{127}, 1 (1990) 1-26.

\bibitem[Sa]{Saavedra-Rivano72} N. Saavedra Rivano, \emph{Cat\'{e}gories Tannakiennes}. Lecture Notes in Mathematics, Vol. \textbf{265}. Springer-Verlag, Berlin-New York, 1972

\bibitem[Sc1]{Schauenburg92} P. Schauenburg,
\emph{Tannaka duality for arbitrary Hopf algebras}. Algebra Berichte, \textbf{66}. Verlag Reinhard Fischer, Munich, 1992. 

\bibitem[Sc2]{Schauenburg96} P. Schauenburg, 
\emph{Hopf bigalois extensions}. Comm. Algebra \textbf{24} (1996), 3797-3825.

\bibitem[St]{Stenstrom75} B. Stenstr\"om, \emph{Rings of quotients. An introduction to methods of ring theory}. Springer-Verlag, New York-Heidelberg, 1975.

\bibitem[Ul]{Ulbrich-onHopf} K.-H. Ulbrich, \emph{On Hopf algebras and rigid monoidal categories}. Hopf algebras. Israel J. Math. \textbf{72} (1990), no. 1-2, 252-256. 

\bibitem[We]{Weibel94} C.A. Weibel, 
\emph{An introduction to homological algebra}. 
Cambridge Studies in Advanced Mathematics, \textbf{38}. Cambridge University Press, Cambridge, 1994.




\end{thebibliography}
\end{document}